\newtheorem{thm}{Theorem}[section]
\newtheorem{lemma}[thm]{Lemma}
\newtheorem{corollary}[thm]{Corollary}
\newtheorem{cor}[thm]{Corollary}
\newtheorem{prop}[thm]{Proposition}
\theoremstyle{definition}
\newtheorem{defn}[thm]{Definition}
\newtheorem{conj}[thm]{Conjecture}
\newtheorem{example}[thm]{Example}
\newtheorem{exmp}[thm]{Example}
\theoremstyle{remark}
\newtheorem{remark}[thm]{Remark}
\newtheorem{rmk}[thm]{Remark}
\DeclareMathOperator*{\colim}{colim}
\begin{document}

\numberwithin{equation}{section}

\newcommand{\hs}{\mbox{\hspace{.4em}}}
\newcommand{\ds}{\displaystyle}
\newcommand{\bd}{\begin{displaymath}}
\newcommand{\ed}{\end{displaymath}}
\newcommand{\bcd}{\begin{CD}}
\newcommand{\ecd}{\end{CD}}

\newcommand{\on}{\operatorname}
\newcommand{\proj}{\operatorname{Proj}}
\newcommand{\bproj}{\underline{\operatorname{Proj}}}
\newcommand{\spec}{\operatorname{Spec}}
\newcommand{\Spec}{\operatorname{Spec}}
\newcommand{\bspec}{\underline{\operatorname{Spec}}}
\newcommand{\pline}{{\mathbf P} ^1}
\newcommand{\aline}{{\mathbf A} ^1}
\newcommand{\pplane}{{\mathbf P}^2}
\newcommand{\cone}{\operatorname{cone}}
\newcommand{\aplane}{{\mathbf A}^2}
\newcommand{\coker}{{\operatorname{coker}}}
\newcommand{\ldb}{[[}
\newcommand{\rdb}{]]}

\newcommand{\Sym}{\operatorname{Sym}^{\bullet}}
\newcommand{\Symp}{\operatorname{Sym}}
\newcommand{\Pic}{\bf{Pic}}
\newcommand{\Aut}{\operatorname{Aut}}
\newcommand{\codim}{\operatorname{codim}}
\newcommand{\PAut}{\operatorname{PAut}}

\newcommand{\Fqbar}{\overline{\mathbb{F}}_q}
\newcommand{\Fq}{{\mathbb{F}_q}}

\newcommand{\too}{\twoheadrightarrow}
\newcommand{\C}{{\mathbb C}}
\newcommand{\Z}{{\mathbb Z}}
\newcommand{\Q}{{\mathbb Q}}
\newcommand{\R}{{\mathbb R}}
\newcommand{\Cx}{{\mathbb C}^{\times}}
\newcommand{\Cbar}{\overline{\C}}
\newcommand{\Cxbar}{\overline{\Cx}}
\newcommand{\cA}{{\mathcal A}}
\newcommand{\fA}{{\mathfrak A}}
\newcommand{\cS}{{\mathcal S}}
\newcommand{\cV}{{\mathcal V}}
\newcommand{\cM}{{\mathcal M}}
\newcommand{\bA}{{\mathbf A}}
\newcommand{\cB}{{\mathcal B}}
\newcommand{\cC}{{\mathcal C}}
\newcommand{\cD}{{\mathcal D}}
\newcommand{\D}{{\mathcal D}}
\newcommand{\cs}{{\mathbf C} ^*}
\newcommand{\boldc}{{\mathbf C}}
\newcommand{\cE}{{\mathcal E}}
\newcommand{\cF}{{\mathcal F}}
\newcommand{\bF}{{\mathbb F}}
\newcommand{\cG}{{\mathcal G}}
\newcommand{\G}{{\mathbb G}}
\newcommand{\cH}{{\mathcal H}}
\newcommand{\bH}{{\mathbf H}}
\newcommand{\CI}{{\mathcal I}}
\newcommand{\cJ}{{\mathcal J}}
\newcommand{\cK}{{\mathcal K}}
\newcommand{\cL}{{\mathcal L}}
\newcommand{\baL}{{\overline{\mathcal L}}}
\newcommand{\M}{{\mathcal M}}
\newcommand{\Mf}{{\mathfrak M}}
\newcommand{\bM}{{\mathbf M}}
\newcommand{\bm}{{\mathbf m}}
\newcommand{\cN}{{\mathcal N}}
\newcommand{\theo}{\mathcal{?}{O}}
\newcommand{\cP}{{\mathcal P}}
\newcommand{\cR}{{\mathcal R}}
\newcommand{\Pp}{{\mathbb P}}
\newcommand{\boldp}{{\mathbf P}}
\newcommand{\boldq}{{\mathbf Q}}
\newcommand{\bbL}{{\mathbf L}}
\newcommand{\cQ}{{\mathcal Q}}
\newcommand{\cO}{{\mathcal O}}
\newcommand{\cT}{{\mathcal T}}
\newcommand{\Oo}{{\mathcal O}}
\newcommand{\cY}{{\mathcal Y}}
\newcommand{\OX}{{\Oo_X}}
\newcommand{\OY}{{\Oo_Y}}
\newcommand{\all}{{\mathrm{all}}}
\newcommand{\cZ}{{\mathcal Z}}
\newcommand{\bD}{\mathbb{D}}
\newcommand{\DMod}{\mathcal{D}}
\newcommand{\cDMod}{\breve{\mathcal{D}}}
\newcommand{\rnD}{\cDMod}
\newcommand{\otY}{{\underset{\OY}{\ot}}}
\newcommand{\otX}{{\underset{\OX}{\ot}}}
\newcommand{\cU}{{\mathcal U}}\newcommand{\cX}{{\mathcal X}}
\newcommand{\cW}{{\mathcal W}}
\newcommand{\boldz}{{\mathbf Z}}
\newcommand{\Rees}{\operatorname{Rees}}
\newcommand{\IC}{\IndCoh}
\newcommand{\ssupp}{\operatorname{SS}}
\newcommand{\qgr}{\operatorname{q-gr}}
\newcommand{\gr}{\operatorname{gr}}
\newcommand{\rk}{\operatorname{rk}}
\newcommand{\Sh}{\operatorname{Sh}}
\newcommand{\Shv}{\operatorname{Sh}}
\newcommand{\SH}{{\underline{\operatorname{Sh}}}}
\newcommand{\End}{\operatorname{End}}
\newcommand{\uEnd}{\underline{\operatorname{End}}}
\newcommand{\Hom}{\operatorname{Hom}}
\newcommand{\uHom}{\underline{\operatorname{Hom}}}
\newcommand{\Sing}{\operatorname{Sing}}
\newcommand{\uHomY}{\uHom_{\OY}}
\newcommand{\uHomX}{\uHom_{\OX}}
\newcommand{\Ext}{\operatorname{Ext}}
\newcommand{\bExt}{\operatorname{\bf{Ext}}}
\newcommand{\Tor}{\operatorname{Tor}}

\newcommand*\brslash{\vcenter{\hbox{\includegraphics[height=3.6mm]{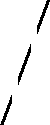}}}}

\newcommand{\inv}{^{-1}}
\newcommand{\airtilde}{\widetilde{\hspace{.5em}}}
\newcommand{\airhat}{\widehat{\hspace{.5em}}}
\newcommand{\nt}{^{\circ}}
\newcommand{\del}{\partial}

\newcommand{\supp}{\operatorname{supp}}
\newcommand{\GK}{\operatorname{GK-dim}}
\newcommand{\hd}{\operatorname{hd}}
\newcommand{\pt}{\operatorname{pt}}
\newcommand{\id}{\operatorname{id}}
\newcommand{\res}{\operatorname{res}}
\newcommand{\lrar}{\leadsto}
\newcommand{\im}{\operatorname{Im}}
\newcommand{\hh}{HH}
\newcommand{\hn}{HN}
\newcommand{\hc}{HC}
\newcommand{\hp}{HP}

\newcommand{\TF}{\operatorname{TF}}
\newcommand{\Bun}{\operatorname{Bun}}

\newcommand{\mix}{{\mathrm{m}}}

\newcommand{\F}{\mathcal{F}}
\newcommand{\Ff}{\mathbb{F}}
\newcommand{\nthord}{^{(n)}}
\newcommand{\Gr}{{\mathfrak{Gr}}}

\newcommand{\BB}{\mathbb{B}}

\newcommand{\Ting}{\mathbb{T}^*_{[\mh 1]}}

\newcommand{\bL}{\mathbb{L}}
\newcommand{\deeq}{{\mathrm{dq}}}
\newcommand{\Fr}{\operatorname{Fr}}
\newcommand{\GL}{\operatorname{GL}}
\newcommand{\Perv}{\operatorname{Perv}}
\newcommand{\gl}{\mathfrak{gl}}
\newcommand{\SL}{\operatorname{SL}}
\newcommand{\KPerf}{\operatorname{KPerf}}
\newcommand{\ff}{\footnote}
\newcommand{\ot}{\otimes}
\def\Ext{\operatorname {Ext}}
\def\Hom{\operatorname {Hom}}
\def\Ind{\operatorname {Ind}}
\newcommand{\MHM}{\operatorname{MHM}}

\def\bbZ{{\mathbb Z}}

\newcommand{\Irr}{\mathrm{Irr}}

\newcommand{\nc}{\newcommand}
\nc{\ol}{\overline} \nc{\cont}{\on{cont}} \nc{\rmod}{\on{mod}}
\nc{\Mtil}{\widetilde{M}} \nc{\wb}{\overline} \nc{\wt}{\widetilde}
\nc{\wh}{\widehat} \nc{\sm}{\setminus} \nc{\mc}{\mathcal}
\nc{\mbb}{\mathbb}  \nc{\K}{{\mc K}} \nc{\Kx}{{\mc K}^{\times}}
\nc{\Ox}{{\mc O}^{\times}} \nc{\unit}{{\bf \on{unit}}}
\nc{\boxt}{\boxtimes} \nc{\xarr}{\stackrel{\rightarrow}{x}}

\newcommand{\bC}{\mathbb{C}}

\nc{\Ga}{\G_a}
 \nc{\PGL}{{\on{PGL}}}
 \nc{\PU}{{\on{PU}}}

\nc{\h}{{\mathfrak h}} \nc{\kk}{{\mathfrak k}}
 \nc{\Gm}{\G_m}
\nc{\Gabar}{\wb{\G}_a} \nc{\Gmbar}{\wb{\G}_m} \nc{\Gv}{G^\vee}
\nc{\Tv}{T^\vee} \nc{\Bv}{B^\vee} 
\nc{\g}{{\mathfrak g}}
\nc{\gv}{{\mathfrak g}^\vee} \nc{\RGv}{\on{Rep}\Gv}
\nc{\RTv}{\on{Rep}T^\vee}
 \nc{\Flv}{{\mathcal B}^\vee}
 \nc{\TFlv}{T^*\Flv}
 \nc{\Fl}{{\mathfrak Fl}}
\nc{\RR}{{\mathcal R}} \nc{\Nv}{{\mathcal{N}}^\vee}
\nc{\St}{{\mathcal St}} \nc{\ST}{{\underline{\mathcal St}}}
\nc{\Hec}{{\bf{\mathcal H}}} \nc{\Hecblock}{{\bf{\mathcal
H_{\alpha,\beta}}}} \nc{\dualHec}{{\bf{\mathcal H^\vee}}}
\nc{\dualHecblock}{{\bf{\mathcal H^\vee_{\alpha,\beta}}}}
\newcommand{\ramBun}{{\bf{Bun}}}
\newcommand{\ramBuno}{\ramBun^{\circ}}

\nc{\Buntheta}{{\bf Bun}_{\theta}} \nc{\Bunthetao}{{\bf
Bun}_{\theta}^{\circ}} \nc{\BunGR}{{\bf Bun}_{G_\R}}
\nc{\BunGRo}{{\bf Bun}_{G_\R}^{\circ}}
\nc{\HC}{{\mathcal{HC}}}
\nc{\risom}{\stackrel{\sim}{\to}} \nc{\Hv}{{H^\vee}}
\nc{\bS}{{\mathbf S}}

\def\Conn{\operatorname {Conn}}

\nc{\Vect}{{\operatorname{Vect}}}
\nc{\Hecke}{{\operatorname{Hecke}}}

\newcommand{\ZZ}{{Z_{\bullet}}}
\nc{\HZ}{{\mc H}\ZZ} \nc{\eps}{\epsilon}

\nc{\CN}{\mathcal N} \nc{\BA}{\mathbb A}
\nc{\XYX}{X\times_Y X}

\nc{\bQ}{\mathbb{Q}}

\nc{\ul}{\underline}

\nc{\bn}{\mathbf n} \nc{\Sets}{{\on{Sets}}} \nc{\Top}{{\on{Top}}}

\nc{\Simp}{{\mathbf \Delta}} \nc{\Simpop}{{\mathbf\Delta^\circ}}

\nc{\Cyc}{{\mathbf \Lambda}} \nc{\Cycop}{{\mathbf\Lambda^\circ}}

\nc{\Mon}{{\mathbf \Lambda^{mon}}}
\nc{\Monop}{{(\mathbf\Lambda^{mon})\circ}}

\nc{\Aff}{{\on{Aff}}} \nc{\Sch}{{\on{Sch}}}

\nc{\bul}{\bullet}
\nc{\module}{{\operatorname{-mod}}}

\nc{\dstack}{{\mathcal D}}

\nc{\BL}{{\mathbb L}}

\nc{\BD}{{\mathbb D}}

\nc{\BR}{{\mathbb R}}

\nc{\BT}{{\mathbb T}}

\nc{\bT}{\mathbb{T}}

\nc{\SCA}{{\mc{SCA}}}
\nc{\DGA}{{\mc DGA}}

\nc{\DSt}{{DSt}}

\nc{\lotimes}{{\otimes}^{\mathbf L}}

\nc{\bs}{\backslash}

\nc{\Lhat}{\widehat{\mc L}}

\newcommand{\Coh}{{\on{Coh}}}

\newcommand{\TT}{\mathbb{T}^{*[\mh 1]}}

\nc{\QC}{\operatorname{QC}}
\nc\Perf{\on{Perf}}
\nc{\Cat}{{\on{Cat}}}
\nc{\dgCat}{{\on{dgCat}}}
\nc{\bLa}{{\mathbf \Lambda}}
\nc{\QCoh}{\QC}
\newcommand{\IndCoh}{\QC^!}

\nc{\RHom}{\mathbf{R}\hspace{-0.15em}\on{Hom}}
\nc{\REnd}{\mathbf{R}\hspace{-0.15em}\on{End}}
\nc{\oo}{\infty}
\nc\Mod{\on{Mod}}

\nc\fh{\mathfrak h}
\nc\al{\alpha}
\nc\la{\alpha}
\nc\BGB{B\bs G/B}
\nc\QCb{QC^\flat}
\nc\qc{\cQ}

\nc{\fg}{\mathfrak g}

\nc{\fn}{\mathfrak n}
\nc{\Map}{\on{Map}} \nc{\fX}{\mathfrak X}

\nc{\Tate}{\operatorname{Tate}}

\nc{\ch}{\check}
\nc{\fb}{\mathfrak b} \nc{\fu}{\mathfrak u} \nc{\st}{{st}}
\nc{\fU}{\mathfrak U}
\nc{\fZ}{\mathfrak Z}

\nc\fk{\mathfrak k} \nc\fp{\mathfrak p}

\nc{\RP}{\mathbf{RP}} \nc{\rigid}{\text{rigid}}
\nc{\glob}{\text{glob}}

\nc{\cI}{\mathcal I}

\nc{\La}{\mathcal L}

\nc{\quot}{/\hspace{-.25em}/}

\nc\aff{\it{aff}}
\nc\BS{\mathbb S}

\nc\Loc{{\mc Loc}}
\nc\Ch{{\mc Ch}}

\nc\git{/\hspace{-0.2em}/}
\nc{\fc}{\mathfrak c}
\nc\BC{\mathbb C}
\nc\BZ{\mathbb Z}
\nc\bZ{\mathbb Z}

\nc\stab{\text{\it st}}
\nc\Stab{\text{\it St}}

\nc\perf{\on{-perf}}

\nc\intHom{\mathcal{H}om}
\nc\intEnd{\mathcal{E}nd}

\nc\gtil{\widetilde\fg}

\newcommand{\shear}{{\mathbin{\mkern-6mu\fatslash}}}
\newcommand{\unshear}{{\mathbin{\mkern-6mu\fatbslash}}}

\def\adjquot{/_{\hspace{-0.2em}ad}\hspace{0.1em}}

\nc\mon{\text{\it mon}}
\nc\bimon{\text{\it bimon}}
\nc\uG{{\underline{G}}}
\nc\uB{{\underline{B}}}
\nc\uN{{\underline{\cN}}}
\nc\uNtil{{\underline{\wt{\cN}}}}
\nc\ugtil{{\underline{\wt{\fg}}}}
\nc\uH{{\underline{H}}}
\nc\uX{{\ul{X}}}
\nc\uY{\ul{Y}}
\nc\upi{\ul{\pi}}
\nc{\uZ}{\ul{Z}}
\nc{\ucZ}{\ul{\cZ}}
\nc{\ucH}{\ul{\cH}}
\nc{\ucS}{\ul{\cS}}
\nc{\ahat}{{\wh{a}}}
\nc{\shat}{{\wh{s}}}
\nc{\DCoh}{{\rm DCoh}}

\newcommand{\Lie}{\operatorname{Lie}}

\newcommand{\cind}{\operatorname{cInd}}
\newcommand{\LL}{\cL}

\newcommand{\Tot}{\operatorname{Tot}}

\newcommand{\mf}{\mathfrak}

\newcommand{\mantodo}[1]{\textbf{\textcolor{red}{todo: #1}}}
\newcommand{\help}[1]{\todo[color=green]{HELP: #1}}
\newcommand{\comm}[1]{\todo[color=green]{HC: #1}}
\newcommand{\chen}[1]{\todo[color=green]{HC: #1}}
\newcommand{\dbz}[1]{\todo[color=orange]{DBZ: #1}}
\newcommand{\helm}[1]{\todo[color=yellow]{DH: #1}}
\newcommand{\nadler}[1]{\todo[color=blue]{DN: #1}}

\newcommand{\gwt}{\operatorname{wt}_{\mathbb{G}_m}}

\newcommand{\cat}{\mathbf}
\newcommand{\OO}{\cO}
\newcommand{\dmod}{\operatorname{-mod}}
\newcommand{\DMOD}{\textbf{-mod}}
\newcommand{\dperf}{\operatorname{-perf}}
\newcommand{\dcmod}{\operatorname{-cmod}}
\newcommand{\dtors}{\operatorname{-tors}}

\newcommand{\dcomod}{\operatorname{-comod}}
\newcommand{\dcoh}{\operatorname{-coh}}
\newcommand{\ICoh}{\operatorname{ICoh}}
\newcommand{\dR}{\operatorname{dR}}
\newcommand{\LLf}{\widehat{\cL}}
\newcommand{\GG}{G_{\mathrm{gr}}}
\newcommand{\ggr}{{\mathrm{gr}}}
\newcommand{\Hgr}{{H_{\mathrm{gr}}}}
\newcommand{\Bgr}{{B_{\mathrm{gr}}}}

\newcommand{\Fun}{\operatorname{Fun}}

\newcommand{\NN}{\widetilde{\mathcal{N}}}

\newcommand{\actson}{\circlearrowright}

\newcommand{\Haff}{\mathcal{H}}
\newcommand{\grH}{\bar{\mathcal{H}}}

\newcommand{\Hcat}{\mathbf{H}}

\newcommand{\ubQ}{\underline{\mathbb{Q}}}

\newcommand{\Rep}{\operatorname{Rep}}

\nc\Tr{{\mathbf{Tr}}} 
\newcommand{\tr}{\mathcal{T}r} 
\newcommand{\chern}{\operatorname{ch}}

\newcommand{\tilN}{\widetilde{\mathcal{N}}}

\newcommand{\un}{{un}}

\newcommand{\TS}{\mathbf{T}}

\newcommand{\DLnil}{\mathcal{V}}

\newcommand{\bQl}{\overline{\mathbb{Q}}_\ell}

\newcommand{\tens}[1]{%
  \mathbin{\mathop{\otimes}\limits_{#1}}%
}

\newcommand{\utimes}[1]{%
  \mathbin{\mathop{\times}\limits_{#1}}%
}

\mathchardef\md="2D
\mathchardef\mh="2D

\newcommand{\BG}{\cC}

\newcommand{\NH}{\mathbf{H}}

\newcommand{\bfI}{{\widecheck{\mathbf{I}}}}
\newcommand{\bfG}{{\widecheck{\mathbf{G}}}}

\newcommand{\LS}{\mathcal{LS}}
\newcommand{\APerf}{\operatorname{APerf}}

\newcommand{\bG}{\mathbb{G}}


\title[Coherent Springer Theory and Categorical Deligne-Langlands]{Coherent Springer Theory and the Categorical Deligne-Langlands Correspondence}
\author{David Ben-Zvi} \address{Department of Mathematics\\University
  of Texas\\Austin, TX 78712-0257} \email{benzvi@math.utexas.edu}
\author{Harrison Chen} \address{Institute of Mathematics\\Academia Sinica\\Taipei 106319, Taiwan} \email{chenhi@gate.sinica.edu.tw}
  
  \author{David Helm} \address{Department of Mathematics\\Imperial College\\  London SW7 2BU, United Kingdom} \email{d.helm@imperial.ac.uk}
\author{David Nadler} \address{Department of Mathematics\\University
  of California\\Berkeley, CA 94720-3840}
\email{nadler@math.berkeley.edu}

\begin{abstract}
Kazhdan and Lusztig identified the affine Hecke algebra $\mathcal{H}$ with an equivariant $K$-group of the Steinberg variety, and applied this to prove the Deligne-Langlands conjecture, i.e., the local Langlands parametrization of irreducible representations of reductive groups over nonarchimedean local fields $F$ with an Iwahori-fixed vector. We apply techniques from derived algebraic geometry to pass from $K$-theory to Hochschild homology and thereby identify $\mathcal{H}$ with the endomorphisms of a coherent sheaf on the stack of unipotent Langlands parameters, the \emph{coherent Springer sheaf}.  As a result the derived category of $\mathcal{H}$-modules is realized as a full subcategory of coherent sheaves on this stack, confirming expectations from strong forms of the local Langlands correspondence (including recent conjectures of Fargues-Scholze, Hellmann and Zhu).  

In the case of the general linear group our result allows us to lift the local Langlands classification of irreducible representations to a categorical statement: we construct a full embedding of the derived category of smooth representations of $\mathrm{GL}_n(F)$ into coherent sheaves on the stack of Langlands parameters. 
\end{abstract}

\maketitle


\tableofcontents

\section{Introduction}\label{introduction}
Our goals in this paper are to provide a spectral description of the category of representations of the affine Hecke algebra and deduce applications to the local Langlands correspondence. We begin with a quick review of Springer theory and then discuss our main results starting in Section~\ref{main results section}.

We will work in the setting of derived algebraic geometry over a field $k$ of characteristic zero, as presented in~\cite{GR}. In particular all operations, sheaves, categories etc will be derived unless otherwise noted.

\medskip

\subsection{Springer theory and Hecke algebras}\label{Springer}
We first review some key points of Springer theory, largely following the perspective of~\cite{CG,ginzburg intro}. Let $G$ denote a complex reductive group with Lie algebra $\fg$ and Borel $B\subset G$. We denote by $\cB\simeq G/B$ the flag variety, $\cN$ the nilpotent cone, $\mu:\tilN=T^{\ast} \cB\to \cN$ the Springer resolution, and $\cZ=\tilN\times_\cN \tilN$ the Steinberg variety. 

The Springer correspondence provides a geometric realization of representations of the Weyl group $W$ of $G$. The Weyl group is in bijection with the Bruhat double cosets $B\bs G/B=G\bs(\cB \times \cB)$, and hence with the conormals to the Schubert varieties, which form the irreducible components of the Steinberg variety $\cZ$. In fact the group algebra of the Weyl group can be identified with the top Borel-Moore homology of $\cZ$ under the convolution product
$$\C W\simeq H_d^{BM}(\cZ; \C),$$
where $d = \dim(\cN) = \dim(\wt{\cN}) = \dim(\cZ)$.  This realization of $W$ can be converted into a sheaf-theoretic statement.
The Springer sheaf $$\bS=\mu_* \C_{\tilN}[d]\in \Perv(\cN/G)$$ is the equivariant perverse sheaf on the nilpotent cone given by the pushforward of the (shifted) constant sheaf on the Springer resolution. Thanks to the definition of $\cZ$ as the self-fiber-product $\cZ=\tilN\times_\cN\tilN$, a simple base-change calculation provides an isomorphism 
$$H_{d}^{BM}(\cZ; \C)\simeq \End_{\cN/G}(\bS)$$
between the endomorphisms of $\bS$ and the top homology of $\cZ$, i.e., the group algebra $\C W$.
By Lusztig's generalized Springer correspondence~\cite[Theorem 6.5]{lusztig 84} the abelian category $\Perv(\cN/G)$ is semisimple, thus all objects are projective and we may interpret this isomorphism as a full embedding of the abelian category of representations of $W$ into equivariant perverse sheaves on the nilpotent cone,
$$\Rep(W)=\C W\module\simeq\langle \bS\rangle \subset \Perv(\cN/G).$$

One important role for this embedding is provided by the representation theory of Chevalley groups.
The universal unipotent principal series representation\footnote{Note that the finite Hecke algebra and hence the category of unipotent principal series representations is insensitive to Langlands duality. From our perspective it is in fact more natural to consider here representations of the Langlands dual Chevalley group $\Gv(\Ff_q)$.}
 $$\C G(\Ff_q)\actson \C[\cB(\Ff_q)]$$ has as endomorphism algebra the finite Hecke algebra
$$\Haff^f=\C[B(\Ff_q) \bs G(\Ff_q) / B(\Ff_q)] = \End_{G(\Ff_q)}(\C[G(\Ff_q)/B(\Ff_q)]),$$
which (after choosing a square root of $q$) may be identified with $\C W$. Thus Springer theory provides a full embedding
$$\{\mbox{unipotent principal series of $G(\Ff_q)$}\}\simeq \Haff^f\module\stackrel{\sim}{\longrightarrow} \langle \bS\rangle \subset \Perv(\cN/G)$$
where we say a representation of $G(\Ff_q)$ is in the unipotent principal series if it is generated by its $B(\Ff_q)$-invariants.
\medskip

\subsection{Affine Hecke algebras}

We now let $G$ be a reductive group, 
Langlands dual to a split group $\Gv(F)$ over a nonarchimedean local field $F$ with ring of integers $O$ and residue field $\Ff_q$.  We write $\GG=G\times \Gm$ as shorthand, which acts on $\cZ$ by $(g, z) \cdot (x, B, B') = (z^{-1}gxg^{-1}, gB, gB')$.

\begin{defn}
Let $G$ be a reductive group with maximal torus $T$.  The (extended) \emph{affine Weyl group} of the dual group $G^\vee$ is the semidirect product $W_a = W \ltimes X_\bullet(T^\vee) = W \ltimes X^\bullet(T)$ of the finite Weyl group with the cocharacter lattice of $T^\vee$.  The \emph{affine Hecke algebra} $\Haff$ is a certain $q$-deformation of the group ring $\C {W}_a$ such that specializing $q$ at a prime power gives the Iwahori-Hecke algebra:
$$\Haff_q = \C_c[I\bs G^\vee(F)/I] = \End_{\Rep(G^\vee(F))}(\C_c[G^\vee(F)/I])$$
where $I \subset G^\vee(F)$ is an Iwahori subgroup.  Explicit presentations of the affine Hecke algebra can be found, for example, in Section 7.1 of \cite{CG}.  Unlike the finite Hecke algebra, $\cH_q \not\simeq \C W_a$.
\end{defn}

Our starting point is the celebrated theorem of Kazhdan-Lusztig~\cite{KL} (as later extended and modified by Ginzburg, see~\cite{CG} and Lusztig~\cite{lusztig bases}), providing a geometric realization of the affine Hecke algebra in terms of the Steinberg variety.  

\medskip 

\begin{thm}~\cite{KL,CG,lusztig bases}\label{KL theorem}
Suppose that $G$ has simply connected derived subgroup.  There is an isomorphism of algebras $\Haff\simeq K_0(\cZ/\GG) \otimes_{\Z} \C$, compatible with the Bernstein isomorphism $Z(\Haff)\simeq \C[\GG]^{\GG} \simeq K_0^{\GG}(\pt) \otimes_{\Z} \C$ between the center of $\Haff$ and the ring of equivariant parameters. 
\end{thm}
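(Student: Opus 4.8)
The plan is to follow the approach of Kazhdan--Lusztig~\cite{KL}, in the streamlined form due to Ginzburg and Lusztig~\cite{CG,lusztig bases}: equip $K_0(\cZ/\GG) = K_0^{\GG}(\cZ)$ with a convolution algebra structure, write down an explicit ring map from the Bernstein--Lusztig presentation of $\Haff$, verify the defining relations by a low-dimensional local computation, and prove bijectivity using the filtration of $\cZ$ by its components. (We treat $\Haff$ as carrying a formal parameter $q$ matched with the $\Gm$-weight, so that both sides are $\Z[q^{\pm1}]$-algebras and one base-changes to $\C$ at the end; the center statement is then the $\C$-linear shadow of an integral one.)

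\emph{The convolution product.} First I would make $K_0^{\GG}(\cZ)$ into an algebra. Regarding $\cZ = \tilN\times_\cN\tilN$ as a self-correspondence of $\tilN$ inside $\tilN\times\tilN$ and using the three projections of $\tilN\times\tilN\times\tilN$, one sets $a\star b = p_{13,\ast}\bigl(p_{12}^{\ast}a\otimes p_{23}^{\ast}b\bigr)$ (derived pullbacks and tensor product), the pushforward being defined because $p_{13}$ is proper on the triple fibre product $\tilN\times_\cN\tilN\times_\cN\tilN$ --- which follows from properness of the Springer resolution $p\colon\tilN\to\cN$ --- and associativity being the routine base-change and projection-formula check. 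The same formalism exhibits $K_0^{\GG}(\tilN)$ as a module over $K_0^{\GG}(\cZ)$; here I would also record $K_0^{\GG}(\tilN) = K_0^{\GG}(T^{\ast}\cB)\cong K_0^{\GG}(\cB)$ (homotopy invariance for the vector bundle $T^{\ast}\cB\to\cB$), and that pullback along $\cZ\to\pt$ produces a central subalgebra $K_0^{\GG}(\pt)\hookrightarrow K_0^{\GG}(\cZ)$.

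\emph{The homomorphism and the relations.} Next I would construct $\phi\colon\Haff\to K_0^{\GG}(\cZ)$ from the Bernstein--Lusztig presentation. Recall $\Haff$ is generated by the finite Hecke subalgebra (generators $T_s$ for $s$ a simple reflection, with braid and quadratic relations) together with the commutative Bernstein subalgebra $\cA$, subject to the Bernstein cross-relation. I would send $\theta_\lambda\in\cA$ to the class of the line bundle $\cL_\lambda$ on $\cB$ pulled back along the diagonal $\tilN\hookrightarrow\cZ$ --- this identifies $\cA$ with $K_0^{\GG}(\cB)$ via the standard Langlands-dual dictionary between the lattice underlying $\cA$ and the character lattice of the torus of $G$ --- and send $T_s$ to an explicit class supported on the closure $\cZ_{\leq s}$ of the conormal bundle to the $s$-th $G$-orbit $Y_s\subset\cB\times\cB$, normalised so that the quadratic relation will come out right. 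The hard part is checking that $\phi$ respects all the relations: the braid relations are essentially formal from the geometry of $G$-orbits on $\cB\times\cB$, but the quadratic relation $(T_s-q)(T_s+1)=0$ and the Bernstein cross-relation both require an honest convolution computation which, pushing forward along the projection $\cB\to\cB_s$ to the $s$-th partial flag variety, reduces to a calculation on a $\Pp^1$-bundle --- essentially the rank-one $\SL_2$/$\PGL_2$ case. I expect this relation-checking, together with fixing the class $\phi(T_s)$ precisely, to be the main obstacle.

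\emph{Bijectivity and the Bernstein compatibility.} To conclude I would use the filtration of $\cZ$ whose successive open strata are the conormal bundles $T^{\ast}_{Y_w}(\cB\times\cB)$ to the $G$-orbits $Y_w\subset\cB\times\cB$, indexed by $w\in W$. Since each $Y_w$ is an affine-space bundle over $\cB$, each stratum is a $\GG$-equivariant vector bundle over $\cB$, so $K_0^{\GG}(T^{\ast}_{Y_w}(\cB\times\cB))\cong K_0^{\GG}(\cB)\cong\cA$ is free of rank one over $\cA$; since the filtration is a cellular fibration (\cite{CG}), the localization sequences split and $\gr K_0^{\GG}(\cZ)\cong\bigoplus_{w\in W}\cA$. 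A unitriangularity argument --- the class $\phi(T_w)$, computed from a reduced word via the relations above, has leading term on $\cZ_{\leq w}$ equal to the generator of the $w$-th graded piece --- then shows that $\{\phi(T_w)\}_{w\in W}$ is an $\cA$-basis of $K_0^{\GG}(\cZ)$, matching the Bernstein basis $\{T_w\}$ of $\Haff$ over $\cA$; hence $\phi$ is an isomorphism of algebras. (Alternatively, injectivity follows from faithfulness of the convolution action on $K_0^{\GG}(\tilN)$, proved by deforming $\cZ$ to the Grothendieck--Springer variety $\gtil\times_\fg\gtil$ and specializing.) Finally, Bernstein's theorem identifies $Z(\Haff)$ with $\cA^W$, Chevalley restriction identifies $\C[\GG]^{\GG}=\C[q^{\pm1}]\otimes\C[G]^G$ with $\C[q^{\pm1}]\otimes\C[T]^W=K_0^{\GG}(\pt)\otimes_\Z\C$, and $\phi$ carries $\cA^W$ onto the central subalgebra $K_0^{\GG}(\pt)\otimes\C\subset K_0^{\GG}(\cZ)\otimes\C$ (the $W$-symmetrisations of the diagonal classes $\phi(\theta_\lambda)$ being precisely the classes pulled back from $\pt$); so the square commutes.
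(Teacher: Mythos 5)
Your outline is a faithful reconstruction of the Kazhdan--Lusztig/Ginzburg proof, but it is worth being clear that the paper does not prove this statement: Theorem~\ref{KL theorem} is quoted from \cite{KL}, \cite{CG} and \cite{lusztig bases}, and the only argument the paper itself supplies (in the proof of Theorem~\ref{KL thm}) concerns the two points where its formulation differs from the classical one --- namely that $K_0$ is insensitive to the derived structure of $\cZ=\tilN\times_{\fg}\tilN$ (the ideal sheaf of $\pi_0(\cZ)\hookrightarrow\cZ$ acts nilpotently on any coherent complex, so $K_0(\cZ/\GG)\cong K_0(\pi_0(\cZ)/\GG)$), and that the arguments of \cite{KL,CG} are algebraic and hence valid over any algebraically closed field of characteristic zero. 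Your sketch correctly assembles the substance of the cited proof: the convolution product via $p_{13,\ast}(p_{12}^{\ast}\otimes p_{23}^{\ast})$, the map $\phi$ from the Bernstein--Lusztig presentation with $\theta_\lambda\mapsto$ diagonal line-bundle classes and $T_s\mapsto$ a class on $\cZ_{\le s}$, the reduction of the quadratic and cross relations to the rank-one $\Pp^1$-bundle computation, and bijectivity via the cellular-fibration filtration by conormals $T^{\ast}_{Y_w}(\cB\times\cB)$ giving $\gr K_0^{\GG}(\cZ)\cong\bigoplus_{w\in W_f}\cA$ together with unitriangularity of the $\phi(T_w)$. What your route buys is an actual proof strategy; what it costs is that the two genuinely hard verifications you flag (the precise normalization of $\phi(T_s)$ and the convolution computations establishing the quadratic and Bernstein relations) are exactly where the content of \cite{CG}, Theorem 7.2.5, and \cite{lusztig bases} lives, so as written the proposal is an accurate roadmap rather than a self-contained argument.

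Two points you should make explicit to match the statement as used in the paper. First, the isomorphism requires the standing hypothesis that $G$ has simply connected derived subgroup (so that, by Steinberg--Pittie, $R(T)$ is free over $R(G)$ and the Bernstein subalgebra matches $K_0^{\GG}(\cB)$ correctly); for general reductive $G$ the map need not be an isomorphism. Second, since the paper's $\cZ$ is the derived fiber product, you should either note (as the paper does) that $K_0$ of $\Coh(\cZ/\GG)$ agrees with that of the classical Steinberg variety, or observe that your convolution product, defined with derived pullbacks and tensor products, is precisely the Tor-corrected product of \cite{CG} on the classical variety --- either way the comparison with the classical references needs a sentence.
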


\medskip

Kazhdan and Lusztig famously applied Theorem~\ref{KL theorem} to prove the Deligne-Langlands conjecture, as refined by Lusztig. The category of representations of $\Haff_q$ is identified with the ``Iwahori block", the (smooth) representations of $\Gv(F)$ that are generated by their $I$-invariants (i.e., ``appear in the decomposition of $C^\infty_c(\Gv(F)/I; \bQl)$"). Equivalently this is the unramified principal series, the representations of $\Gv(F)$ appearing in the parabolic induction of unramified characters of a split torus (i.e., ``appear in the decomposition of $C^\infty(\Gv(F)/N^\vee(F)T^\vee(O); \bQl)$"). 
The Deligne-Langlands conjecture provides a classification of irreducible representations in the Iwahori block (i.e. with an Iwahori fixed vector), or equivalently irreducible $\Haff_q$ modules, in terms of Langlands parameters:

\medskip

\begin{thm}~\cite{KL, reeder}\label{DL conjecture}
The irreducible representations of $\Haff_q$ are in bijection with $G$-conjugacy classes of $q$-commuting pairs of semisimple and nilpotent elements in $G$
 $$\{s\in G^{ss}, n\in \cN: gng^{-1} =qn\}/G,$$ 
 together with a $G$-equivariant local system on the orbit of $(s,n)$ which appears in the decomposition of a corresponding Springer sheaf. 
\end{thm}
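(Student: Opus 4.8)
The plan is to deduce the classification from Theorem~\ref{KL theorem} by the Kazhdan--Lusztig/Ginzburg method of central-character reduction followed by equivariant localization (see \cite{CG} for a detailed treatment). First I would fix the prime power $q$ (viewed as a complex number that is not a root of unity) and use the Bernstein isomorphism $Z(\Haff) \simeq \C[\GG]^{\GG}$ to identify characters of the center of $\Haff_q$ with $G$-conjugacy classes of semisimple elements $s \in G$; pairing each with the fixed scalar $q$ gives exactly the $G$-orbits of pairs $a = (s,q) \in \GG$. Every irreducible $\Haff_q$-module admits such a central character, so it suffices to classify, for each fixed semisimple $a = (s,q)$, the irreducible modules over the specialization $\Haff_{q,a} := \Haff_q \otimes_{Z(\Haff)} \C_a$. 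Applying the concentration (localization) theorem in equivariant $K$-theory together with bivariant Riemann--Roch, I would identify $\Haff_{q,a}$ (up to completion and Morita equivalence) with a convolution algebra built from the fixed-point Steinberg variety $\cZ^a \subset \cZ$ and the reductive centralizer $A := Z_{\GG}(a)$, acting on the Borel--Moore homology of the $a$-fixed Springer fibers $\cB_n^a = (p\inv n)^a$ for $n$ in the nilpotent fixed locus $\cN^a = \{n \in \cN : s n s\inv = q n\}$, equivalently $n$ in the $q$-eigenspace $\fg_q$ of $\mathrm{Ad}(s)$.

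Next I would construct the geometric standard modules. For each $n \in \cN^a$ the group $A$ acts on $\tilN^a_n$ with orbit $A \cdot n$, and one obtains a module $M_n := H_*^{BM}(\cB_n^a)$ over $\Haff_{q,a}$ via the $\cZ^a$-convolution action. Decomposing $M_n$ under the action of the component group of the stabilizer, equivalently under the simple summands of the Springer sheaf of the connected centralizer $A^\circ$ acting on $\fg_q$ restricted over $A \cdot n$, yields standard modules $M_{n,\chi}$ indexed by the irreducible $A$-equivariant local systems $\chi$ on $A\cdot n$ that actually occur; this is precisely the condition ``$\chi$ appears in the decomposition of a corresponding Springer sheaf.'' Note that, because we are in the Iwahori (unramified principal series) block, only the ordinary --- not the generalized --- Springer sheaf intervenes. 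By the Chriss--Ginzburg cellular-fibration argument together with the decomposition theorem, the classes of the $M_{n,\chi}$ form a basis of the localized Grothendieck group $K_0(\cZ^a / A)_a \otimes \C$, so there are exactly as many standard modules as the theorem predicts parameters.

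Finally I would run the Langlands-type argument. Each $M_{n,\chi}$ carries a natural nondegenerate pairing (self-duality of standard modules), and a support/transversality estimate for the convolution action shows that $\Hom(M_{n,\chi}, M_{n',\chi'}) \neq 0$ forces $A \cdot n \subseteq \overline{A \cdot n'}$, with equality forcing a one-dimensional space of endomorphisms. It follows that $M_{n,\chi}$ has a unique irreducible quotient $L_{n,\chi}$, occurring with multiplicity one in $M_{n,\chi}$ and only with $A\cdot n \subsetneq \overline{A\cdot n'}$ in other $M_{n',\chi'}$, so the transition matrix between $\{[M_{n,\chi}]\}$ and $\{[L_{n,\chi}]\}$ is unitriangular for the closure order on $A$-orbits in $\cN^a$. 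Hence the $L_{n,\chi}$ are pairwise nonisomorphic, and by the basis count above they exhaust $\Irr(\Haff_{q,a})$. Reassembling over all central characters $[s]$, and matching $A$-equivariant local systems on $A \cdot n$ with $G$-equivariant local systems on the $G$-orbit of $(s,n)$, gives the asserted bijection.

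I expect the main obstacle to be this last step: controlling the module structure of the standard modules --- establishing ``multiplicity one in the head'' and the vanishing of the relevant $\Hom$-spaces via the geometry of $\cZ^a$ (purity, the decomposition theorem, and the analysis of the convolution action on homology of Springer fibers). This is exactly where the hypothesis that $q$ is not a root of unity is essential, guaranteeing that $\Haff_{q,a}$ has the expected size and that no extra degeneracies among standard modules occur, and where the component groups $\pi_0(Z_G(s) \cap Z_G(n))$ and the precise identification of admissible local systems with Springer-theoretic data must be pinned down. The remaining ingredients --- the Bernstein center identification, equivariant localization, and bivariant Riemann--Roch --- I would treat as essentially formal consequences of Theorem~\ref{KL theorem} and the machinery of \cite{CG,KL}.
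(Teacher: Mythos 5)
The paper does not prove this statement: it is imported verbatim from Kazhdan--Lusztig \cite{KL}, with \cite{CG} cited as the exposition, so there is no internal proof to compare against. Your sketch is an accurate outline of exactly that standard argument (Bernstein-center reduction, concentration/localization in equivariant $K$-theory plus bivariant Riemann--Roch to pass to the convolution algebra on $H_\bullet^{BM}(\cZ^a)$, standard modules from homology of fixed-point Springer fibers, decomposition under component groups, and the unitriangularity/unique-quotient step), and you have correctly located where the real work lies --- the sharp form of the localization theorem after completion at the central character, and the purity/decomposition-theorem input needed to control the heads of the standard modules and to see that $q$ not a root of unity rules out degeneracies. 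As a proposal it is faithful to \cite{KL,CG}; as a proof it is of course only a roadmap, since each of those steps occupies a chapter of \cite{CG}.
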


\medskip

For fixed $(s, q)$ the variety $\cN^{(s,q)}$ of $(s,q)$-fixed points on the nilpotent cone can be interpreted as a variety of {\em Langlands parameters}. Representations with a fixed Langlands parameter $(s, n)$ form an {\em L-packet}, and are described in terms of irreducible representations of the component group of the stabilizer. These representations can then be interpreted as equivariant local systems on the orbit of the Langlands parameter. 
Indeed general conjectures going back to work of Lusztig~\cite{lusztig examples}, Zelevinsky~\cite{zelevinsky} and Vogan~\cite{vogan} describe the representation theory of $\Gv(F)$ at a fixed central character with the geometry of equivariant perverse sheaves on suitable spaces of Langlands parameters, generalizing the appearance of $\cN^{(s,q)}$ above. 

\medskip

However, unlike the classical Springer theory story for $\Haff^f_q \simeq \C W$, the realization of $\Haff$ by equivariant $K$-theory in Theorem~\ref{KL theorem} does not immediately lead to a realization of $\Haff$ as endomorphisms of a sheaf, and therefore to a sheaf-theoretic description of the entire category of $\Haff$-modules. Rather, in applications equivariant $K$-theory is used as an intermediate step on the way to equivariant Borel-Moore homology, which leads back to variants of the Springer correspondence. Namely, by fixing a central character for $\Haff$, i.e. a Weyl group orbit of
$(s,q)\in T \times \Gm$, the central completions of equivariant $K$-theory are identified by Lusztig~\cite{cuspidal1,gradedAHA} with graded Hecke algebras, which have a geometric description where we replace the nilpotent cone $\cN$, Springer resolution $\wt{\cN}$ and Steinberg variety $\cZ$ by their $(s,q)$-fixed points.  For example, the Chern character identifies the completion of $\Haff$ at the trivial central character with the $\GG = G \times \G_m$-equivariant homology of the Steinberg variety $\cZ$. This algebra is identified via Theorem 8.11 of \cite{cuspidal2} with the full Ext-algebra of the Springer sheaf in the equivariant derived category
$$\cH^{gr}\simeq H_\bullet^{BM}(\cZ/\GG; \C)=R\Gamma(\cZ/\GG,\omega_{\cZ/\GG})\simeq \Ext^\bullet_{\cN/\GG}(\bS).$$ Moreover, by a theorem of Rider~\cite{rider} this Ext algebra is formal, hence we obtain a full embedding 
\begin{equation}\label{graded Hecke}
\cH^{gr}\module \simeq \langle \bS \rangle \subset \Shv(\cN/\GG)
\end{equation} of representations of $\cH^{gr}$ into the equivariant derived category of the nilpotent cone.
More generally, for $(s,q)\in T \times \Gm$,  we have an identification 
$$\cH^{gr}_{(s,q)}\simeq H_\bullet^{BM}(\cZ^{(s,q)}/\GG^{(s,q)}; \C)\simeq \Ext^\bullet_{\cN^{(s,q)}/\GG^{(s,q)}}(\bS^{(s,q)})$$
of the corresponding graded Hecke algebra in terms of an $(s,q)$-variant of the Springer sheaf. This provides a geometric approach to constructing and studying modules\footnote{Further if one had an $(s,q)$-version of Rider's formality theorem, one could deduce a full embedding of the corresponding module categories into equivariant derived categories of constructible sheaves on $\cN^{(s,q)}$.  See Theorem 3.1 of \cite{kato} for an accounting.} of $\Haff$, see~\cite{CG}.

These developments give satisfying descriptions of the representation theory of $\Haff$ at a fixed central character. However there are numerous motivations to seek a description of {\em families} of representations of varying central character, including classical harmonic analysis (for example in the setting of spherical varieties~\cite{SV}), $K$-theory and the Baum-Connes conjecture~\cite{ABPS}, and modular and integral representation theory~\cite{emerton helm, curtis, HM converse}.

\medskip
\subsection{Coherent Springer Theory}\label{main results section} 

In this paper we apply ideas from derived algebraic geometry to deduce from Theorem~\ref{KL theorem} a different, and in some sense simpler, geometric realization of the affine Hecke algebra, in which we first replace $K$-theory by Hochschild homology, and then derive a description of its entire category of representations as a category of coherent sheaves (without the need for specifying central characters).  For technical reasons, we will need to replace the nilpotent cone $\cN$ with its formal completion $\wh{\cN} \subset \mf{g}$, and likewise the Steinberg variety $\cZ = \wt{\cN} \times_{\mf{g}} \wt{\cN}$ will be defined via a derived fiber product.  For precise definitions of objects in this context, see Section \ref{rep thy defs}.

\medskip
 
\begin{thm}[Theorem \ref{ThmHH}, Corollary \ref{thm no sc}]\label{first main thm intro}
Let $k = \bQl$ or $\C$, and $G$ a reductive algebraic group over $k$.  The trace map from connective $K$-theory to Hochschild homology on $\Coh(\cZ/\GG)$ factors through an isomorphism of $K_0$ and $HH_\bullet$ (which is concentrated in cohomological degree zero):
$$\xymatrix{ K_\bullet(\Coh(\cZ/\GG)) \otimes_{\Z} k \ar[r] \ar[d] & HH_\bullet(\Coh(\cZ/\GG))\ar[d]^-\simeq\\
K_0(\Coh(\cZ/\GG)) \otimes_{\Z} k \ar[r]^-\simeq & HH_0(\Coh(\cZ/\GG)).}$$ 
\end{thm}

\begin{rmk}
Our results also allow for an identification of monodromic variants of the affine Hecke category.  See Remark \ref{hecke variants} for details.
\end{rmk}

\medskip

The Hochschild homology of categories of coherent sheaves admits a description in the derived algebraic geometry of loop spaces.
In particular, we deduce an isomorphism of the affine Hecke algebra with volume forms on the derived loop space to the Steinberg stack,
$$\Haff\simeq R\Gamma(\cL(\cZ/\GG),\omega_{\LL(\cZ/\GG)}).$$ More significantly,  the geometry of derived loop spaces provides a natural home for the entire category of $\Haff$-modules, without fixing central characters. 
\begin{defn}\label{intro springer sheaf} 
Let $\wh{\cN} \subset \mf{g}$ be the formal completion\footnote{Note that for any formal completion $\wh{Z}$ along a closed substack $Z \subset X$, following \cite{GR} we define the category $\Coh(\wh{Z})$ so that it is canonically equivalent to the category $\Coh_{Z}(X)$ of coherent sheaves on the ambient stack set-theoretically supported at $Z$.  Thus the reader unfamiliar with formal completions may replace $\wh{\cN}$ with $\mf{g}$, and impose nilpotent support conditions on all categories of sheaves.} of the nilpotent cone, $\wt{\cN}$ the usual (reduced) Springer resolution and $\mu: \wt{\cN} \rightarrow \cN \hookrightarrow \wh{\cN}$ the composition of the Springer resolution with the inclusion. The \emph{coherent Springer sheaf} $\cS_G \in \Coh(\cL(\wh{\cN}/\GG))$ (or simply $\cS$) is the pushforward of the structure sheaf under the loop map $\cL \mu:\cL(\tilN/\GG)\to \cL(\wh{\cN}/\GG)$: 
$$\cS_G =\cL \mu_*\cO_{\cL(\tilN/\GG)} \in \Coh(\cL(\wh{\cN}/\GG)).$$
Equivalently, $\cS_G$ is given by applying the parabolic induction correspondence
$$\xymatrix{
\LL(\wh{\{0\}}/T)  & \cL(\wh{\mf{n}}/B) = \cL(\wh{\wt{\cN}}/G)\ar[l]\ar[r] & \cL(\wh{\cN}/G)}$$
to the (reduced) structure sheaf of $\cL({\{0\}}/T)$.
\end{defn}

A priori the coherent Springer sheaf is only a complex of sheaves. However we show, using the theory of traces for monoidal categories in higher algebra, that its Ext algebra is concentrated in degree zero, and is identified with the affine Hecke algebra. This provides the following ``coherent Springer correspondence'', realizing the representations of the affine Hecke algebra as coherent sheaves.
\begin{thm}[Theorem \ref{thm endomorph}]\label{second main thm intro}
Let $G$ be a reductive algebraic group over $k = \bQl$ or $\bC$.
\begin{enumerate}
\item There is an isomorphism of algebras $\Haff_G\simeq \End_{\cL(\wh{\cN}/\GG)}(\cS_G)$
and all other self-$\Ext$ groups of $\cS_G$ vanish.
\item There is an embedding of dg derived categories
$$\begin{tikzcd}
D(\Haff_G) \arrow[rr, "\simeq"', "{- \otimes_{\End(\cS)} \cS_G}"] & &  \langle \cS_G \rangle \arrow[r, hook] &  \IndCoh(\cL(\wh{\cN}/\GG)).\end{tikzcd}$$
\item The embedding takes the anti-spherical module to the projection of the dualizing sheaf to the Springer subcategory
$$D(\Haff_G) \ni \Ind_{\cH^f}^{\cH}(\mathrm{sgn}) \longmapsto \mathrm{pr}_{\cS_G}(\omega_{\cL(\wh{\cN}/\GG)}) \in \IndCoh(\cL(\wh{\cN}/\GG)).$$
\item The embedding is compatible with parabolic induction of affine Hecke algebras, i.e. if $P$ is a parabolic subgroup of $G$ with Levi quotient $M$, then there is a commuting diagram
$$\begin{tikzcd}
D(\Haff_{M}) \arrow[d, "{\cH_G \otimes_{\cH_M} -}"']\arrow[r, hook] & \IndCoh(\cL(\wh{\cN}_M/\wt{M}))\arrow[d, "\cL\mu_* \circ \cL\nu^*"]\\
D(\Haff_{G}) \arrow[r, hook] &  \IndCoh(\cL(\wh{\cN}_G/\GG)),
  \end{tikzcd}$$
where $\cL\mu_* \circ \cL\nu^*$ is the pull-push along the correspondence obtained by applying $\cL$ to the usual parabolic induction correspondence
$$\begin{tikzcd}
\cL(\wh{\cN}_M/\wt{M}) & \arrow[l, "\cL\mu"'] \cL(\wh{\cN}_P/\wt{P}) \arrow[r, "\cL\nu"] & \cL(\wh{\cN}_G/\GG).
\end{tikzcd}$$
In particular, $\cL\mu_* \cL\nu^* \cS_M \simeq \cS_G$.
\end{enumerate}
\end{thm}
One consequence of the theorem is an interpretation of the coherent Springer sheaf as a universal family of $\Haff$-modules.

\medskip

We also conjecture (Conjecture \ref{springer heart}) -- and check for $SL_2$ -- that $\cS$ is actually a coherent sheaf (i.e., lives in the heart of the standard t-structure on coherent sheaves). The vanishing of all nonzero Ext groups of $\cS$ suggests the existence of a natural ``exotic" t-structure for which $\cS$ is a compact projective object in the heart. For such a t-structure we would then automatically obtain a full embedding of the abelian category $\Haff\module$ into ``exotic" coherent sheaves, where one could expect a geometric description of simple objects. 

\medskip

In \cite{ihes} we will explain how equivariant localization and Koszul duality patterns in derived algebraic geometry (as developed in~\cite{loops and reps, Ch, koszul}) provide the precise compatibility between this coherent Springer theory and the usual perverse Springer theory, one parameter at a time.

\medskip

\subsection{Applications to the local Langlands correspondence}\label{local Langlands applications}

We will consider a derived stack $\BL_{q,G}^u$ of {\em unipotent Langlands parameters}, which parametrizes the unipotent Weil-Deligne representations for a local field $F$ with residue field $\mathbb{F}_q$, and whose set of $k$-points is a variant of the set of Deligne-Langlands parameters in Theorem~\ref{DL conjecture} (with semisimplicity of $s$ dropped).  Note that the following notions make sense for any $q \in \mathbb{C}$, with applications to local Langlands when $q$ is a prime power, and that, in line with expectations, the stack of \emph{unipotent} Langlands parameters depends only on the order of the residue field of $F$.

\medskip

\begin{defn}\label{q-Springer} \;
Let $q = p^r$ be a prime power.
\begin{enumerate}
\item The \emph{stack of unipotent Langlands parameters} $\BL_{q,G}^u= \cL_q(\wh{\cN}/G)$ (or simply $\BL_q^u$) is the derived fixed point stack of multiplication by $q\in \Gm$ on $\wh{\cN}/G$.  Equivalently, it is the fiber of the loop (or derived inertia) stack of the nilpotent cone over $q\in \Gm$,
$$\xymatrix{ \mathbb{L}_{q,G}^u \ar[r]\ar[d] & \cL(\wh{\cN}/\GG)\ar[d]\\
\{q\} \ar[r] & \cL(\pt/\Gm)=\Gm/\Gm.}$$ 
By Proposition \ref{no derived structure}, the derived inf-stack $\BL_{q,G}^u$  has no derived nor infinitesimal structure, i.e. $\cL_q(\wh{\cN}/G) = \cL_q(\mf{g}/G)$, and by \cite{DHKM} it is reduced, so we may equivalently define $\BL_{q,G}^u$ using the classical fiber product of the reduced nilpotent cone $\cN$, i.e.
$$\BL_{q,G}^u \simeq \{g\in G, n\in \cN: gng^{-1}=qn\}/G.$$
\item The \emph{$q$-coherent Springer sheaf} $\cS_{q,G}\in \Coh(\BL_q^u)$ (or simply $\cS_q$) is the $*$-specialization of $\cS_G$ to the fiber $\BL_q^u$ over $q$. 
Equivalently, $\cS_{q,G}$ is given by applying the parabolic induction correspondence
$$\xymatrix{ \BL_{q,T}^u &\BL_{q,B}^u \ar[l]\ar[r] & \BL_{q,G}^u }$$
to the structure sheaf of $\BL_{q,T}^u \simeq T \times BT$. 
\end{enumerate}
\end{defn}


\medskip

Specializing Theorem~\ref{second main thm intro} to $q\in \Gm$ we obtain the following.  Note that Theorem 2.2, Proposition 2.4 and Corollary 2.5 of \cite{OS} apply in the case where $q$ is specialized away from roots of unity; in particular, $\cH_{q,G}$ has finite cohomological dimension if $q$ is not a root of unity.  Thus in the following statement we implicitly identify the compact objects $D_{perf}(\Haff_G) \subset D(\Haff_G)$ (i.e. the subcategory of perfect complexes) with the bounded derived category of coherent complexes.
\begin{thm}[Theorem \ref{thm endomorph}]\label{third main thm intro} 
Suppose that $q = p^r$ is a prime power (or more generally, $q \in \G_m$ is not a root of unity), and let $G$ be a reductive algebraic group over $k = \bQl$ or $\bC$.
\begin{enumerate}
\item There an isomorphism of algebras $\cH_{q,G} \simeq \End_{\BL_{q,G}}(\cS_{q,G})$ and a full embedding 
$$\begin{tikzcd}
D_{perf}(\Haff_{q,G}) = D_{coh}(\Haff_{q,G}) \arrow[rr, "\simeq"', "{- \otimes_{\End(\cS)} \cS_{q,G}}"] & & \langle \cS_{q,G} \rangle \arrow[r, hook] & \Coh(\BL_{q,G}^u).\end{tikzcd}$$
In particular, this gives a full embedding of the principal block of $\Gv(F)$ into coherent sheaves on the stack of unipotent Langlands parameters.
\item The embedding takes the anti-spherical module to the structure sheaf $\cO_{\BL_{q,G}^u} \in \Coh(\BL_{q,G}^u)$.
\item The embedding is compatible with parabolic induction, i.e. if $P^{\vee} \subset G^\vee$ is a parabolic with quotient Levi $M^{\vee}$, then we have a commutative diagram
$$\begin{tikzcd}
\{\mbox{unramified principal series of  $M^{\vee}(F)$}\} \simeq D_{coh}(\Haff_{q,M}) \arrow[d, "{i_{P^\vee}^{G^\vee}}"']\arrow[r, hook] & \Coh(\BL_{q,M}^u)\arrow[d, "(\mu^q)_* \circ (\nu^q)^*"]\\
  \{\mbox{unramified principal series of $\Gv(F)$}\}\simeq D_{coh}(\Haff_{q,G}) \arrow[r, hook] &  \Coh(\BL_{q,G}^u),
  \end{tikzcd}$$
 where $i_{P^\vee}^{G^\vee}: \Rep^{sm}_{f.g.}(M^\vee(F)) \rightarrow \Rep^{sm}_{f.g.}(G^\vee(F))$ is the parabolic induction functor from smooth finitely-generated\footnote{I.e. the corresponding modules for Hecke algebras are finitely generated.} reprentations of $M^\vee(F)$ to $G^\vee(F)$ restricted to the unramified principal series, and the map $(\mu^q)_* \circ (\nu^q)^*$ is the pull-push along  the correspondence obtained by applying taking derived $q$-invariants of the usual parabolic induction correspondence
$$\begin{tikzcd}
\BL_{q,M}^u & \arrow[l, "\mu^q"'] \BL_{q,P}^u \arrow[r, "\nu^q"] & \BL_{q,G}^u.
\end{tikzcd}$$
In particular, $(\mu^q)_* (\nu^q)^* \cS_{q,M} \simeq \cS_{q,G}$.
\end{enumerate}
\end{thm}
Note that due to Proposition \ref{no derived structure}, in the $q$-specialized setting of the above theorem the stack of parameters has no infinitesimal structure, i.e. $\cL_q(\mf{g}/G) = \cL_q(\wh{\cN}/G)$.  This has two consequences: first, due to Proposition \ref{calabi yau}, which does not apply in the context of Theorem~\ref{second main thm intro}, we may identify the anti-spherical sheaf at specialized $q$ with the structure sheaf, which is equivalent to the dualizing sheaf.  Second, the anti-spherical sheaf at specialized $q$ is a compact object in the category, i.e. a coherent sheaf, whereas the sheaf appearing in Theorem~\ref{second main thm intro} is not.

\medskip

The existence of such an equivalence was conjectured independently by Hellmann in~\cite{hellmann}, whose work we learned of at a late stage in the preparation of his paper.  Indeed, the above result resolves Conjecture 3.2 of ~\cite{hellmann}.  
Hellmann's work also gives an alternative characterization
of the ($q$-specialized) coherent Springer sheaf as the Iwahori invariants of a certain family of admissible representations on $\BL_{q,G}^u$ constructed by Emerton and the third author in \cite{emerton helm}.

A much more general categorical form of the local Langlands correspondence is formulated by Fargues-Scholze~\cite{scholze lecture} and Zhu~\cite{xinwen survey}, as well as compatibility with a categorical global Langlands correspondence. In {\em loc. cit.} a forthcoming proof by Hemo and Zhu~\cite{hemo zhu} of a result closely parallel to ours is also announced.

\begin{rmk}
The local Langlands correspondence depends on a choice of Whittaker normalization; that is, a choice of a pair $(U,\psi)$, where $U$ is the unipotent radical of a Borel subgroup of $\Gv$ and $\psi$ is a generic character
of $U(F)$, up to $\Gv(F)$-conjugacy, and indeed, the conjecture in \cite{hellmann} and the announced result in \cite{hemo zhu} depend on such a choice.  In the formulation of Theorem~\ref{third main thm intro} no such choice
appears explicitly, but instead comes from the integral structure on $G^{\vee}$, which in particular gives us a distinguished hyperspecial subgroup $G^{\vee}(O)$ of $G^{\vee}(F)$.

Indeed, for any unramified group $G^{\vee}$ over $F$
there is a natural bijection between $G^{\vee}(F)$-conjugacy classes of Whittaker data $(U,\psi)$ for $G^{\vee}$ and $G^{\vee}(F)$-conjugacy classes of triples $(K_x, U_x, \psi_x)$, where $K_x$ is a hyperspecial subgroup
of $G^{\vee}(F)$, $U_x$ is the unipotent radical of a Borel subgroup of the reductive quotient $G^{\vee}_x$ of $K_x$, and $\psi_x$ is a generic character of $U_x$.  This bijection has the property that if $(U,\psi)$
corresponds to $(K_x,U_x,\psi_x)$, then the summand of the compact induction $\cind_{U(F)}^{\Gv(F)} \psi$ corresponding to the unipotent principal	 series block is isomorphic to 
$\cind_{K_x}^{\Gv(F)} \operatorname{St}_x$, where $\operatorname{St}_x$ denotes the inflation to $K_x$ of the Steinberg representation of the reductive quotient $G^{\vee}_x$.  In particular the  ``unipotent principal series part'' of
$\cind_{U(F)}^{\Gv(F)} \psi$ depends only on the conjugacy class of hyperspecial subgroup associated to $(U,\psi)$, and not the whole tuple $(K_x, U_x, \psi_x)$.  This means that the
restriction of the local Langlands correspondence to the unramified principal series depends only on a choice of hyperspecial subgroup (which we have fixed).

Note in particular that for any choice of Whittaker datum $(U,\psi)$ compatible with our hyperspecial subgroup $\Gv(O)$, the $\Haff_{q,G}$-module associated to the compact induction $\cind_{U(F)}^{\Gv(F)} \psi$
is precisely the antispherical module, so property (2) of Theorem~\ref{third main thm intro} is consistent with (and indeed, equivalent to) the Whittaker normalization appearing in \cite{hellmann}.
\end{rmk}

\medskip

In the case of the general linear group and its Levi subgroups, one can go much further. Namely, in Section \ref{moduli} we combine the local Langlands classification of irreducible representations due to Harris-Taylor and Henniart with the Bushnell-Kutzko theory of types and the ensuing inductive reduction of all representations to the principal block. The result is a spectral description of the entire category of smooth $\GL_n(F)$ representations. To do so it is imperative to first have a suitable stack of Langlands parameters. These have been studied extensively in
mixed characteristic, for instance in \cite{curtis} in the case of $\GL_n$, or more recently in
\cite{bellovin-gee, booher-patrikis}, and  \cite{DHKM} for more general groups.  
Since in our present context we work over $\C$, the results we need are in general simpler than the
results of the above papers, and have not appeared explicitly in the literature in the form we need.

\medskip

\begin{thm}[\cite{curtis}]\label{Langlands stack} Let $F$ be a local field with residue field $\mathbb{F}_q$.  There is a classical Artin stack locally of finite type $\bL_{F,\GL_n}$, with the following properties:
\begin{enumerate}
\item The $k$-points of $\bL_{F,\GL_n}$ are identified with the groupoid of continuous $n$-dimensional representations of the Weil-Deligne group of $F$.
\item The formal deformation spaces of Weil-Deligne representations are identified with the formal completions of $\bL_{F,\GL_n}$.
\item The stack $\BL_{q,\GL_n}^u$ of unipotent Langlands parameters is a connected component of $\bL_{F,\GL_n}$.
\end{enumerate}
\end{thm}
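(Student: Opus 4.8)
The plan is to construct $X_{F,\GL_n}$ as a moduli stack of Weil--Deligne representations and verify the three claimed properties. Recall a Weil--Deligne representation is a pair $(\rho, N)$ where $\rho$ is a continuous (hence finite-image on inertia) representation of the Weil group $W_F$ on an $n$-dimensional $k$-vector space and $N$ is a nilpotent endomorphism satisfying $\rho(w) N \rho(w)^{-1} = \|w\| N$. Since $k$ has characteristic zero, continuity forces $\rho$ to factor through a finite quotient on inertia, so the parameter is really the data of a representation of a discretized group: a Frobenius lift $\Phi$, a finite-order element $\sigma$ recording the action of a topological generator of tame inertia, and $N$, subject to $\Phi \sigma \Phi^{-1} = \sigma^q$ (up to the wild part) and $\Phi N \Phi^{-1} = q N$. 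First I would fix the discrete group-theoretic data by stratifying according to the restriction to wild inertia (a finite choice, since the image is finite), reducing to an open-and-closed decomposition indexed by ``inertial types''; on each piece the moduli problem becomes a quotient by $\GL_n$ (or a centralizer thereof) of a closed subscheme of $\GL_n \times \GL_n \times \mathfrak{gl}_n$ cut out by the $q$-commutation relations. This exhibits $X_{F,\GL_n}$ as a disjoint union of quotient stacks $[Z_\tau / H_\tau]$ with $Z_\tau$ an affine scheme of finite type and $H_\tau$ reductive, hence a classical Artin stack locally of finite type; this is property (1) together with the identification of $k$-points, and it essentially recovers the construction of \cite{curtis} in the simpler characteristic-zero setting.

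For property (2), I would compute the tangent complex of $X_{F,\GL_n}$ at a point $(\rho,N)$ and match it with the Galois cohomology controlling the deformation theory of Weil--Deligne representations, namely $H^\bullet$ of the complex $\mathrm{ad}(\rho,N)$ obtained as the cone of $N - q\cdot(\text{conjugation})$ acting on $\mathrm{ad}\,\rho$. Because the moduli problem is literally ``pairs satisfying a closed condition modulo conjugation,'' the cotangent complex is computed directly from the defining equations: $H^1$ of the deformation complex is the Zariski tangent space of $[Z_\tau/H_\tau]$ and $H^2$ is the obstruction. One then checks the formal completion of $X_{F,\GL_n}$ at $(\rho,N)$ pro-represents the functor of deformations of the Weil--Deligne representation by comparing these with the standard framed/unframed deformation rings; this is a formal consequence of the explicit presentation and of Schlessinger-type criteria, with no input from $p$-adic Hodge theory needed since we are over $k$.

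For property (3), I would identify the component of $X_{F,\GL_n}$ corresponding to the trivial inertial type --- i.e. $\rho$ trivial on all of inertia --- as exactly the locus where the parameter is a pair $(\Phi, N)$ with $\Phi \in \GL_n$ semisimple-or-not and $\Phi N \Phi^{-1} = qN$, modulo $\GL_n$-conjugation, which is precisely the informal description $\{g \in G, n \in \cN : gng^{-1} = qn\}/G$ of $\BL_q^u(\GL_n) = \cL_q(\cN/G)$ from Definition \ref{q-Springer}; more precisely, after passing to the formal neighborhood of the unipotent/nilpotent locus one gets the derived fixed-point stack, and by Proposition \ref{no derived structure} (using $q \neq 1$, automatic since $q$ is a prime power) this derived stack is classical and agrees with the set-theoretic component. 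Since the inertial type is locally constant on $X_{F,\GL_n}$, this locus is open and closed, hence a connected component (one checks connectedness of $\BL_q^u(\GL_n)$ separately, e.g. via the contracting $\Gm$-action scaling $n$, which deformation-retracts onto the locus $n = 0$, namely $\{g : \text{$g$ unipotent}\}/\GL_n$, which is connected).

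The main obstacle I anticipate is property (2): making the comparison between the intrinsic cotangent complex of the quotient stack and the Galois-cohomological deformation complex fully rigorous, including the compatibility of $H^2$ obstruction classes, and handling the bookkeeping of wild inertia so that the stratification into inertial types is genuinely by open-and-closed substacks rather than merely a set-theoretic partition. Everything else --- finite type, classicality, the $k$-points, the identification of the unipotent component --- follows from the explicit presentation by quotient stacks of affine schemes, which is the technical heart and must be set up carefully enough to support all three claims uniformly.
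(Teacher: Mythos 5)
Your proposal follows essentially the same route as the paper (which cites \cite{curtis} and reconstructs the moduli stacks in Section \ref{moduli}): stratify by inertial type into open-and-closed pieces $X^{\nu}_{F,\GL_n}$, present each as a finite-type quotient stack to obtain (1) and the deformation-theoretic statement (2), and identify the trivial-type piece with $\BL_q^u(\GL_n)$ using Proposition \ref{no derived structure} for (3). The only slip is that the $N=0$ locus of the unramified component is all of $\GL_n/\GL_n$ (the adjoint quotient), not just the unipotent classes; since $\GL_n$ is connected this does not affect your connectedness argument.
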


\medskip

We then deduce a categorical local Langlands correspondence for $\GL_n$ and its Levi subgroups as follows:

\medskip

\begin{thm}[Theorems \ref{derived local Langlands}, \ref{Springer induction}, and \ref{thm:LL parabolic induction}]\label{fourth main thm intro}
For each Levi subgroup $M$ of $\GL_n(F)$, there is a full embedding
$$D(M)\hookrightarrow \IndCoh(\bL_{F,M})$$
of the derived category of smooth $M$-representations into ind-coherent sheaves on the stack of Langlands parameters, uniquely characterized by the following properties.
\begin{enumerate}
\item If $\pi$ is an irreducible cuspidal representation of $M$, then the image of $\pi$ under this embedding is the skyscraper sheaf supported at the Langlands parameter associated to $\pi$.
\item Let $M'$ be a Levi subgroup of $G$, and let $P$ be a parabolic subgroup of $M'$ with Levi subgroup $M$.  There is a commutative diagram of functors:
$$
\begin{tikzcd}
D(M) \ar[d, "i_M^{M'}"']\ar[r, hook] & \IndCoh(\bL_{F,M})\ar[d, "\mu_* \nu^*"]\\
D(M') \ar[r, hook] & \IndCoh(\bL_{F,M'})
\end{tikzcd}
$$
in which $i_M^{M'}$ is the parabolic induction functor and the right-hand map is obtained by applying the
correspondence 
$$\begin{tikzcd}
\bL_{F,M} & \arrow[l, "\mu"'] \arrow[r, "\nu"]  \bL_{F,P} &\bL_{F,M'}.
\end{tikzcd}$$
\end{enumerate}
\end{thm}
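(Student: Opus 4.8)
The plan is to reduce everything to the principal-block case handled by Theorem~\ref{third main thm intro} (via Theorem~\ref{second main thm intro}), using the Bushnell--Kutzko theory of types, and then to glue over the Bernstein decomposition. First I would recall the Bernstein decomposition of the category of smooth representations of $M(F)$: it splits as a product $\prod_{[L,\sigma]} \Rep(M)_{[L,\sigma]}$ over inertial equivalence classes of cuspidal pairs, and by Bushnell--Kutzko each block $\Rep(M)_{[L,\sigma]}$ is equivalent, via a type $(J,\lambda)$, to the category of modules over a Hecke algebra $\mathcal{H}(M,\lambda)$. For $\GL_n$ and its Levi subgroups, the key input is that these Hecke algebras are products of affine Hecke algebras (with possibly unequal parameters) of smaller $\GL_m$'s --- this is the explicit computation of Bushnell--Kutzko/Sécherre--Stevens. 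Thus on the spectral side each block should land in coherent sheaves on a connected component of $X_{F,M}$, which by construction (Theorem~\ref{Langlands stack}) and the structure of Weil--Deligne representations for $\GL$ is itself a product of stacks of unipotent parameters $\BL_{q^f}^u(\GL_{m_i})$ twisted by the inertial datum. Applying Theorem~\ref{third main thm intro} (in its product form, allowing the twist of $q$ to a power $q^f$ coming from the cuspidal support being defined over an unramified extension) to each factor then gives a full embedding of each block into ind-coherent sheaves on the corresponding component. Assembling over all inertial classes and using that $X_{F,M}$ is the disjoint union of these components yields the desired embedding $D(M)\hookrightarrow \IndCoh(X_{F,M})$.

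Next I would establish the characterizing properties. Property~(1), that an irreducible cuspidal $\pi$ goes to the skyscraper at its Langlands parameter: a cuspidal block is a single point (up to the action of unramified twists, which is exactly the structure of the relevant component of $X_{F,M}$ as a quotient of a torus), and the corresponding Hecke algebra is just a Laurent polynomial ring; the coherent Springer sheaf in that degenerate case is the structure sheaf of that torus-quotient, so an irreducible module --- a skyscraper on the spectral torus --- corresponds to a skyscraper sheaf, and one checks the point is the Harris--Taylor/Henniart parameter of $\pi$ by compatibility of the local Langlands correspondence with the Satake/Kazhdan--Lusztig normalization built into Theorem~\ref{DL conjecture}. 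Property~(2), compatibility with parabolic induction, would follow by combining: (a) the compatibility of Bushnell--Kutzko types with parabolic induction (the ``cover'' formalism of Bushnell--Kutzko, which intertwines $i_P^{M'}$ with an explicit induction of Hecke modules), and (b) the compatibility of the coherent Springer construction with the parabolic-induction correspondence $X_{F,M}\leftarrow X_{F,P}\rightarrow X_{F,M'}$, which is precisely part~(2) of Theorem~\ref{third main thm intro} at the level of principal blocks, extended across the Bernstein decomposition by naturality. Uniqueness is then immediate: properties~(1) and~(2) determine the functor on all irreducible representations (every irreducible is a subquotient of a parabolic induction of a cuspidal, by the Bernstein--Zelevinsky classification), and a functor out of $D(M)$ is determined by its restriction to a set of compact generators, which the images of cuspidals-under-induction provide.

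The main obstacle I anticipate is the bookkeeping around \emph{which} stack of Langlands parameters each Bernstein block maps to, and making the identification canonical. On the automorphic side the block is indexed by an inertial class $[L,\sigma]$; on the Galois side the corresponding component of $X_{F,M}$ should be the locus of Weil--Deligne representations whose restriction to inertia matches $\widehat\sigma|_{I_F}$, and one must check that the Hecke-algebra description of the block (a twisted product of affine Hecke algebras with parameters $q^{f_i}$) matches, factor by factor, the presentation of that component as a product of unipotent-parameter stacks for $\GL_{m_i}$ over the unramified extension of degree $f_i$ --- including getting the unequal parameters right, which is exactly where the subtlety in the $\GL_n$ type theory lives. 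A secondary difficulty is promoting the block-by-block full embeddings into $\Coh$ to a single functor into $\IndCoh(X_{F,M})$ that behaves well with respect to the (infinitely many) components and is compatible with colimits, so that it extends from compact objects to all of $D(M)$; this is a formal but delicate argument using that $\IndCoh$ of a disjoint union is the product and that $D(M)$ is compactly generated by the blocks. Once these identifications are in place, properties~(1) and~(2) and uniqueness are comparatively routine consequences of the type-theoretic dictionary and Theorem~\ref{third main thm intro}.
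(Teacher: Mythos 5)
Your proposal follows essentially the same route as the paper: Bernstein decomposition plus the Bushnell--Kutzko theory of types and covers to identify each block with modules over a tensor product of affine Hecke algebras $\bigotimes_i \Haff_{q^{r_i}}(m_i)$, the decomposition of each inertial component of $X_{F,M}$ as a product of unipotent-parameter stacks $\LL_{q^{r_\eta}}(\cN/\GG)$ over unramified extensions, the principal-block embedding applied factor by factor, and the cover formalism together with the push-pull description of the $\nu$-Springer sheaf for compatibility with parabolic induction. You also correctly flag the two genuine subtleties (matching the unequal parameters $q^{r_i}$ and making the component-by-component identification canonical, i.e., independent of the choices of extensions $\tilde\eta$), which is exactly where the paper's Sections on types and on moduli of parameters do their work.
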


Note that the local Langlands correspondence for cuspidal representations of $\GL_n$ and its Levis, is an {\em input} to the above result.  We do not expect the functor to be an equivalence, see Remark \ref{DL not equivalence}.

As with Theorem~\ref{third main thm intro} our results here were independently conjectured by Hellmann (see in particular Conjecture 3.2 of~\cite{hellmann}) for more general groups $G$; these results also fit the general categorical form of the local Langlands correspondence formulated by Fargues-Scholze~\cite{scholze lecture} and Zhu~\cite{xinwen survey}.

\medskip

\subsubsection{Discussion: Categorical Langlands Correspondence}\label{local langlands section}
Theorems~\ref{third main thm intro} and~\ref{fourth main thm intro} match the expectation in the Langlands program that has emerged in the last couple of years for a strong form of the local Langlands correspondence, in which categories of representations of groups over local fields are identified with categories of coherent sheaves on stacks of Langlands parameters. Such a coherent formulation of the real local Langlands correspondence was discovered in~\cite{loops and reps}, while the current paper finds a closely analogous picture in the Deligne-Langlands setting. As this paper was being completed Xinwen Zhu shared the excellent overview~\cite{xinwen survey} on this topic and Laurent Fargues and Peter Scholze completed the manuscript~\cite{scholze lecture}, to which we refer the reader for more details. We only briefly mention three deep recent developments in this general spirit.

The first derives from the work of V. Lafforgue on the global Langlands correspondence over function fields~\cite{lafforgue,lafforgue ICM}. Lafforgue's construction in Drinfeld's interpretation (cf.~\cite[Section 6]{lafforgue zhu}, ~\cite[Remark 8.5]{lafforgue ICM} and~\cite{dennis shtuka}) predicts the existence of a universal quasicoherent sheaf $\fA_X$ on the stack of representations of $\pi_1(X)$ into $G$ corresponding to the cohomology of moduli spaces of shtukas. The theorem of Genestier-Lafforgue~\cite{genestier} implies that the category of smooth $\Gv(F)$ representations sheafifies over a stack of local Langlands parameters, and the local version $\fA$ of the Drinfeld-Lafforgue sheaf is expected~\cite{xinwen survey} to be a universal $\Gv(F)$-module over the stack of local Langlands parameters. In other words, the fibers $\fA_\sigma$ are built out of the $\Gv(F)$-representations in the L-packet labelled by $\sigma$. The expectation is that the coherent Springer sheaf, which by our results is naturally enriched in $\Haff_q$-modules, is identified with the Iwahori invariants of the local Lafforgue sheaf $\cS_q\simeq \fA^I.$

The second is the theory of categorical traces of Frobenius as developed in~\cite{dennis shtuka,xinwen trace,GKRV}. When applied to a suitably formulated local geometric Langlands correspondence, we obtain an expected equivalence between an automorphic and spectral category. The automorphic category is $\Shv(\Gv(F)/^{\Fr} \Gv(F))$, the category of Frobenius-twisted adjoint equivariant sheaves on $\Gv(F)$, with orbits given by the Kottwitz set $B(\Gv)$ of isomorphism classes of $\Gv$-isocrystals. The spectral category is expected to be a variant of a category $\IndCoh(\BL_{F,G})$ of ind-coherent sheaves over the stack $\BL_{F,G}$ of Langlands parameters into $G$. The former category contains the categories of representations of $\Gv(F)$ and its inner forms as full subcategories, hence we expect a spectral realization in the spirit of Theorems~\ref{third main thm intro} and \ref{fourth main thm intro}.

The last of these developments is the program of Fargues-Scholze~\cite{fargues}, \cite{scholze lecture} in the context of $p$-adic groups, which interprets the local Langlands correspondence as a geometric Langlands correspondence. On the automorphic side one considers  sheaves on the stack $\Bun_{\Gv}$ of bundles on the Fargues-Fontaine curve, whose isomorphism classes $|\Bun_{\Gv}|=B(\Gv)$ are given as before by the Kottwitz set of $\Gv$-isocrystals. This category of sheaves admits a semiorthogonal decomposition indexed by $B(\Gv)$, in which the factor corresponding to $b \in B(\Gv)$ is naturally equivalent to the category of smooth representations of the inner form $\Gv_b(F)$ arising from $b$.  On the spectral side of the picture is the same category of ind-coherent sheaves on the moduli stack of Langlands parameter that we study.  Fargues-Scholze construct a spectral action of the category of perfect complexes on this moduli stack on the category of $\ell$-adic sheaves on $\Bun_{\Gv},$ and conjecture that there is an equivalence of this category with the category of ind-coherent sheaves on the moduli stack of Langlands parameters compatible with this spectral action.  Such an equivalence necessarily has the properties given in Theorem~\ref{fourth main thm intro}, although we do not attempt to verify that our construction is compatible with that of Fargues-Scholze.

\medskip
\subsection{Methods}\label{roma section}
We now discuss the techniques underlying the proofs of Theorems~\ref{first main thm intro} and~\ref{second main thm intro} -- namely, Bezrukavnikov's Langlands duality for the affine Hecke category and the theory of traces of monoidal dg categories. 

\medskip

\subsubsection{Bezrukavnikov's theorem} The Kazhdan-Lusztig theorem (Theorem~\ref{KL theorem}) has been famously categorified in the work of Bezrukavnikov~\cite{roma ICM, roma hecke}, with numerous applications in representation theory and the local geometric Langlands correspondence (see Theorem \ref{roma equiv}).

\medskip

\begin{thm} \label{roma thm}
Let $\mathbf{G} := G^\vee(\overline{\bF}_q((t)))$ denote the loop group viewed as an ind-scheme, and $\mathbf{I} \subset \mathbf{G}$ denote the corresponding Iwahori subgroup.  We define the (derived) Steinberg stack $\cZ/G$ over $\bQl$.  There is a monoidal equivalence on homotopy categories
$$D^b_c( \mathbf{I} \bs \mathbf{G} /\mathbf{I} ; \bQl) \simeq D^b\Coh(\cZ/G)$$
intertwining the pullback by geometric Frobenius and pushforward by multiplication by $q$ automorphisms.
\end{thm}

\medskip

\begin{remark}\label{mixed roma}
In view of Theorem \ref{roma thm}, we define the \emph{affine Hecke category} to be $\Hcat := \Coh(\cZ/G)$.  It is natural to expect a mixed version, identifying the \emph{mixed affine Hecke category} $\Hcat^\mix := \Coh(\cZ/\GG)$ with the mixed Iwahori-equivariant sheaves on the affine flag variety (as studied in~\cite{BY}). Indeed such a version is needed to directly imply the Kazhdan-Lusztig Theorem~\ref{KL theorem} by passing to Grothendieck groups, rather than its specialization at $q=1$.
\end{remark}

\medskip

Theorem~\ref{roma thm} establishes the ``principal block'' part of the local geometric Langlands correspondence. Namely, it implies a spectral description of 
module categories for the affine Hecke category (the geometric counterpart of unramified principal series representations) as suitable sheaves of categories on stacks of Langlands parameters. 

We apply Theorem~\ref{roma thm} in Section~\ref{iwahori} to construct a semiorthogonal decomposition of the affine Hecke category. This allows us to calculate its Hochschild homology and to establish the comparison with algebraic K-theory.

\medskip

\subsubsection{Trace Decategorifications}

To prove Theorem~\ref{second main thm intro} we use the relation between the ``horizontal" and ``vertical" trace decategorifications of a monoidal category, and the calculation of the subtler horizontal trace of the affine Hecke category in~\cite{BNP}.

Let $(\cat{C},\ast)$ denote a monoidal dg category. Then we can take the trace (or Hochschild homology) $\mathrm{tr}(\cat{C})=HH(\cat{C})$ of the underlying (i.e. ignoring the monoidal structure) dg category $\cat{C}$, which forms an associative (or $A_\infty$-)algebra $(\mathrm{tr}(\cat{C}),\ast)$ thanks to the functoriality (specifically the symmetric monoidal structure) of Hochschild homology, as developed in~\cite{TV2,HSS,CP,GKRV}.  This is the naive or ``vertical" trace of $\cat{C}$. On the other hand, a monoidal dg category has another trace or Hochschild homology $\Tr(\cat{C},\ast)$ using the monoidal structure which is itself a dg category -- the categorical or ``horizontal" trace of $(\cat{C},\ast)$. This is the dg category which is the universal receptacle of a trace functor out of the monoidal category $\cat{C}$.  In particular, the trace of the monoidal unit of $\cat{C}$ defines an object $[1_\cat{C}]\in \Tr(\cat{C},\ast)$ -- i.e., $\Tr(\cat{C},\ast)$ is a pointed (or $E_0$-)category\footnote{The horizontal trace is also the natural receptacle for characters of $\cat{C}$-module categories, and $[{\cat{C}}]$ appears as the character of the regular left $\cat{C}$-module, see Definition \ref{two trace maps}.}. Moreover, as developed in~\cite{CP,GKRV} the categorical trace provides a ``delooping" of the naive trace: we have an isomorphism of associative algebras
$$(\mathrm{tr}(\cat{C}),\ast) \simeq \End_{\Tr(\cat{C},\ast)}([1_{\cat{C}}]).$$
 In particular taking Hom from  $[1_{\cat{C}}]$ defines a functor
$$\Hom([1_{\cat{C}}],-): \Tr(\cat{C},\ast) \longrightarrow (HH(\cat{C}),\ast)\module.$$ Under suitable compactness assumptions the left adjoint to this functor embeds the ``naive" decategorification (the right hand side) as a full subcategory of the ``smart" decategorification (the left hand side).

More generally, given a monoidal endofunctor $F$ of $(\cat{C},\ast)$, we can replace Hochschild homology (trace of the identity) by trace of the functor $F$, obtaining two decategorifications (vertical and horizontal) with a similar relation
\begin{equation}
\Hom([1_{\cat{C}}],-):\Tr((\cat{C},\ast), F) \longrightarrow (\mathrm{tr}(\cat{C}, F),\ast)\module.
\end{equation}

\begin{remark}[Trace of Frobenius]\label{trace of F} When $\cat{C}$ is a category of $\ell$-adic sheaves on a stack defined over $\bF_q$ extended to $\overline{\bF}_q$ and $\Fr$ is the corresponding geometric Frobenius morphism, a formalism of categorical traces realizing the function-sheaf correspondence -- i.e. $\mathrm{tr}(\Sh(X), \Fr^*)$ should be the space of functions on $X(\bF_q)$ -- was recently established in \cite{AGKRRV}. The monoidal version of trace decategorification would then allow us to pass from Hecke categories to categories of representations directly.  Zhu~\cite{xinwen trace} explains some of the rich consequences of this formalism that can already be proved directly.
\end{remark}

\begin{example}[Finite Hecke Categories and unipotent representations]\label{finite Hecke} 
For the finite Hecke category $\cat{C}=\Sh(\BGB)$, the main theorem of~\cite{BN character} identifies $\Tr(\cat{C},\ast)$ with the full category of Lusztig unipotent character sheaves on $G$. The object $[1_{\cat{C}}]$ is the Springer sheaf itself, and modules for the naive decategorification $(\mathrm{tr}(\cat{C}, \mathrm{id}_{\cat{C}}), \ast)$ gives the Springer block, or unipotent principal series character sheaves, as modules for the graded Hecke algebra. Likewise the trace of Frobenius on $(\cat{C},\ast)$ is studied in~\cite[Section 3.2]{xinwen trace} (see also~\cite[Section 3.2]{dennis shtuka}), where the categorical trace is the category of all unipotent representations of $G(\bF_q)$, and the coherent Springer sheaf $[1_{\cat{C}}]$ generates the full subcategory consisting of the unipotent principal series, equivalent to modules for the naive decategorification $(\mathrm{tr}(\cat{C}, \Fr), \ast)$.
\end{example}

\medskip

\subsubsection{Trace of the affine Hecke category}
We now consider the two kinds of trace decategorification for the affine Hecke category $\Hcat$.  First our description of the Hochschild homology of the Steinberg stack provides a precise sense in which the affine Hecke category categorifies the affine Hecke algebra.  The following Corollary is a result of Theorems~\ref{roma thm} and~\ref{first main thm intro}.

\medskip

\begin{corollary}
The (vertical/naive) trace of Frobenius on the affine Hecke category is identified with the affine Hecke algebra $\cH_q \simeq \mathrm{tr}(\Hcat, \Fr^*).$ Hence the naive decategorification of $\Hcat\module$ is the category of unramified principal series representations of $\Gv(F)$. 
\end{corollary}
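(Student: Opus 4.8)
The plan is to deduce the Corollary by transporting the Hochschild homology computation of Theorem~\ref{first main thm intro} across Bezrukavnikov's monoidal equivalence of Theorem~\ref{roma thm}, using that the vertical trace is a (lax symmetric monoidal) invariant of dg categories, so that a monoidal equivalence intertwining compatible monoidal endofunctors is carried to an isomorphism of the associated twisted trace \emph{algebras}.

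First I would record the algebra structures. As recalled in Section~\ref{roma section} (following~\cite{HSS,TV2,GKRV}), for a monoidal dg category $(\cat C,\ast)$ and a monoidal endofunctor $F$ the twisted trace $\mathrm{tr}(\cat C,F)=HH_\bullet(\cat C,F)$ is naturally an associative algebra, functorially in the triple. Since the equivalence $\Hcat_q\simeq\Coh(\cZ/G)$ of Theorem~\ref{roma thm} is monoidal and carries geometric Frobenius to pullback by multiplication by $q\in\Gm$, it yields an algebra isomorphism $(\mathrm{tr}(\Hcat_q,\Fr),\ast)\simeq(\mathrm{tr}(\Coh(\cZ/G),q^\ast),\ast)$. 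The next step is to recognize the target as the $q$-specialization of $HH_\bullet(\Coh(\cZ/\GG))$: one uses that $\Coh(\cZ/\GG)$ is the equivariantization of $\Coh(\cZ/G)$ for the scaling action of $\Gm\subset\GG$, so $HH_\bullet(\Coh(\cZ/\GG))$ is a module over $HH_\bullet(\QCoh(B\Gm))=\cO(\cL B\Gm)$; in its loop-space incarnation it is $R\Gamma$ of the dualizing complex on $\cL(\cZ/\GG)$ over $\cL(B\Gm)=\Gm/\Gm$, and restricting along $\{q\}\to\Gm\to\Gm/\Gm$ --- which absorbs the residual $\Gm$-quotient exactly as in Definition~\ref{q-Springer} --- returns $R\Gamma$ of the dualizing complex on the derived $q$-fixed stack $\cL_q(\cZ/G)$, i.e.\ precisely $\mathrm{tr}(\Coh(\cZ/G),q^\ast)$, and compatibly with algebra structures.

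It then remains to specialize. By Theorem~\ref{first main thm intro}(1) the complex $HH_\bullet(\Coh(\cZ/\GG))$ is concentrated in cohomological degree $0$ and equals $K_0(\Coh(\cZ/\GG))\otimes_\Z\C$, which by Kazhdan--Lusztig (Theorem~\ref{KL theorem}, \cite{KL,CG}) is the affine Hecke algebra $\Haff$ as an algebra, compatibly with the subalgebra $K_0^{\Gm}(\pt)\otimes_\Z\C=\C[q^{\pm1}]\subset Z(\Haff)$ governing the $\Gm$-fibration. Since $\Haff$ is free over $\C[q^{\pm1}]$ (with basis $\{T_w\}_{w\in W_a}$), specialization at $q$ is exact and preserves the degree-zero concentration, so chaining the above gives $\mathrm{tr}(\Hcat_q,\Fr)\simeq\Haff\otimes_{\C[q^{\pm1}]}\C_q=\Haff_q$ as algebras. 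The final assertion is then immediate from the identification of $\Haff_q$-modules with the Iwahori block recalled around Theorem~\ref{DL conjecture}: the naive decategorification of $\Hcat_q\module$ is $\Haff_q\module$, which is the category of unramified principal series representations of $\Gv(F)$. I expect the one genuinely delicate point to be the middle step --- identifying $\mathrm{tr}(\Coh(\cZ/G),q^\ast)$ with the $q$-fiber of $HH_\bullet(\Coh(\cZ/\GG))$ \emph{as algebras} --- which requires the monoidal trace formalism together with base change for Hochschild homology over $\cO(\cL B\Gm)$ and care with the $B\Gm$-gerbe appearing in the fiber; the remaining steps are bookkeeping with the already-established theorems.
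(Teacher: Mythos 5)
Your overall architecture is the one the paper intends: Bezrukavnikov's equivalence converts $\mathrm{tr}(\Hcat_q,\Fr)$ into $\hh(\Coh(\cZ/G),q^*)$, and one then wants to identify the latter with the fiber at $q$ of $\hh(\Coh(\cZ/\GG))\simeq\Haff$. The gap is exactly at the step you flag as ``genuinely delicate'' and then leave as a black box: the identification of $\hh(\Coh(\cZ/G),q^*)$ with $\hh(\Coh(\cZ/\GG))\otimes_{k[z,z^{-1}]}k_q$ is \emph{not} a formal base change over $\cO(\cL B\Gm)$. Concretely, $\hh(\Coh(\cZ/\GG))$ is the weight-zero ($\Gm$-invariant) part of the object whose full fiber at $q$ computes $\hh(\Coh(\cZ/G),q^*)$, so the comparison map goes from ``invariants, then specialize'' to ``specialize, then take everything,'' and these differ unless the nonzero-weight contributions die after specialization. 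They do not die at $q=1$: there $\hh(\Coh(\cZ/G),\mathrm{id})\simeq kW_a\otimes_k\Symp_k(\mf{h}^*[-1]\oplus\mf{h}^*[-2])$ (Corollary~\ref{non Gm hh}), which is strictly larger than $\Haff\otimes_{k[z,z^{-1}]}k_1=kW_a$. So whatever base-change principle you invoke must have a hypothesis that fails at $q=1$, and you have neither stated it nor verified it for $q\neq 1$.

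The paper closes this gap by computation rather than formality: using the Iwahori--Matsumoto semiorthogonal decomposition of $\Coh(\cZ/\GG)$ into blocks equivalent to $A_w\dmod_{\Rep(\Gm)}$ with $A_w\simeq\Symp_k\mf{h}^*[-2]$ (Propositions~\ref{Gm structure} and~\ref{compute end}), it reduces to showing $\hh(A_w,q^*)=k$ for $q\neq 1$, which is done by a Tate shearing identifying this trace with $\cO(\mf{h}^q)=k$ (Proposition~\ref{compute trace q}); Corollary~\ref{trace q id} then deduces that specialization of the Block--Getzler complexes is an equivalence, i.e.\ precisely your missing base-change statement, and hence $\hh(\Coh(\cZ/G),q^*)\simeq\Haff_q$. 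With that corollary supplied, the rest of your argument (freeness of $\Haff$ over $k[q^{\pm 1}]$, Theorem~\ref{first main thm intro}, and the identification of $\Haff_q\dmod$ with the Iwahori block) goes through as you describe.
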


\medskip

\begin{remark}
Note that this corollary would follow directly from Theorem~\ref{roma thm} if we had available the hoped-for function-sheaf dictionary for traces of Frobenius on categories of $\ell$-adic sheaves (Remark~\ref{trace of F}). After this paper was complete Xinwen Zhu informed us that Hemo and he have a direct argument for this corollary, see the forthcoming~\cite{hemo zhu}. Combined with Bezrukavnikov's theorem and Theorem~\ref{BNP thm} this gives an alternative argument for the identification of $\Haff_q$ with the Ext algebra of the coherent Springer sheaf.
\end{remark}

The results of~\cite{BNP} (based on the technical results of~\cite{BNP2}) provide an affine analog of the results of~\cite{BN character, BFO} for finite Hecke categories and (thanks to Theorem~\ref{roma thm}) a spectral description of the full decategorification of $\Hcat$.  Statement (1) is directly taken from Theorem 4.4.1 in \cite{BNP}, statements (2)-(3) follow immediately from the same techniques and Theorem 3.8.5 of \cite{GKRV} (see Theorems \ref{trace theorem} and \ref{convolution category trace} and Lemma \ref{traceSpringerSheaf}), and the absence of a singular support condition is discussed in Remark \ref{no sing supp}.

\medskip

\begin{thm}[\cite{BNP}]\label{BNP thm} Let $G$ be a reductive group over $k = \bQl$ or $\C$.
\begin{enumerate}
\item The (horizontal/categorical) trace of the monoidal category $(\Coh(\cZ/G),\ast)$ is identified as
$$\Tr(\Coh(\cZ/G),*) = \Coh(\cL(\wh{\cN}/G)).$$ 
The same assertion holds with $G$ replaced by $\GG=G\times \Gm$. 
\item The trace of multiplication by $q \in \Gm$ acting on the monoidal category $(\Coh(\cZ/G),\ast)$ is identified as
$$\Tr((\Coh(\cZ/G),*), q_*) = \Coh(\BL_q^u).$$
\item The distinguished object $[1_{\cat{C}}]$ in each of these trace decategorifications is given by the coherent Springer sheaf $\cS$ (or its $q$-specialized version $\cS_q$).
Hence the endomorphisms of the coherent Springer sheaf recover the affine Hecke algebra (the vertical trace, as in  Theorem~\ref{second main thm intro}), and the natural functor in Theorem~\ref{two traces} is identified with
$$\Hom(\cS_q,-): \Coh(\BL_q^u)\longrightarrow \Haff_q\module.$$
\end{enumerate}
\end{thm}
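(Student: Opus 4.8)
The plan is to deduce all four assertions from the theory of horizontal (categorical) traces of monoidal dg categories (\cite{HSS, GKRV}) together with the computation of the trace of the spectral affine Hecke category in \cite{BNP}, whose technical underpinnings are in \cite{BNP2}. Statement (1) is precisely the main theorem of that work, which I would take as given; recall only its shape. One views $(\Coh(\cZ/G), \ast)$ as the convolution monoidal category of the Steinberg groupoid $\cZ/G = \tilN/G \times_{\cN/G} \tilN/G \rightrightarrows \tilN/G$ -- equivalently, the monoidal category of $\Coh(\cN/G)$-linear integral kernels acting on the standard module $\Coh(\tilN/G)$. Its categorical trace is computed by the cyclic bar complex of this convolution category, and a descent argument identifies the geometric realization of that complex with $\Coh$ of the derived loop stack, giving $\Tr(\Coh(\cZ/G), \ast) \simeq \Coh(\cL(\cN/G))$. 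Statement (2) is then obtained by rerunning this argument $\Gm$-equivariantly: $\tilN$, $\cN$, $\cZ$, the convolution product, and all of the descent input carry a compatible $\Gm$-action (scaling the cotangent fibers of $T^\ast(G/B)$), so replacing $G$ by $\GG = G \times \Gm$ throughout yields $\Tr(\Coh(\cZ/\GG), \ast) \simeq \Coh(\cL(\cN/\GG))$.

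For statement (3), $q^\ast$ is the monoidal autoequivalence of $\Coh(\cZ/G)$ given by pullback along scaling by $q \in \Gm$, equivalently the monodromy of the $\Gm$-equivariant structure at the point $q$. The trace of a monoidal autoequivalence $F$ is the $F$-twisted Hochschild homology, computed by the same cyclic bar complex with its cyclic gluing twisted by $F$; for $\Coh$ of a stack equipped with an automorphism this produces $\Coh$ of the associated twisted (derived) fixed-point stack. I would therefore derive (3) from (2) by base change of traces along $\{q\} \to \cL(B\Gm) = \Gm/\Gm$: the passage from the $\GG$-equivariant trace to $\Tr((\cC, \ast), q^\ast)$ is precisely restriction to the fiber over $q$, and that fiber of $\cL(\cN/\GG) \to \Gm/\Gm$ is, by Definition \ref{q-Springer}, the stack $\BL_q^u = \cL_q(\cN/G)$. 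Hence $\Tr((\Coh(\cZ/G), \ast), q^\ast) \simeq \Coh(\BL_q^u)$.

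For statement (4) one must track the distinguished object $[1_\cC]$, the image of the monoidal unit under the universal trace functor $\Coh(\cZ/G) \to \Tr(\Coh(\cZ/G), \ast) \simeq \Coh(\cL(\cN/G))$ (and likewise over $\GG$, resp. in the $q$-twisted setting). The unit of the convolution structure is the structure sheaf $\cO_\Delta$ of the diagonal $\Delta : \tilN/G \hookrightarrow \cZ/G$; under the cyclic-bar identification the trace functor ``integrates a kernel around the loop,'' and the unit kernel integrates to the pushforward of the structure sheaf along the loop map $\cL p : \cL(\tilN/G) \to \cL(\cN/G)$ -- which by definition is the coherent Springer sheaf $\cS = (\cL p)_\ast \cO_{\cL(\tilN/G)}$. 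The remaining content of (4) is then formal: the ``delooping'' relation of \cite{GKRV} applied to $[1_\cC]$ gives $\End_{\cL(\cN/\GG)}(\cS) \simeq \Haff$ (matching Theorem \ref{second main thm intro} via Theorem \ref{first main thm intro}) and identifies the natural functor $\Hom([1_\cC], -)$ out of the horizontal trace with $\Hom(\cS_q, -) : \Coh(\BL_q^u) \to \Haff_q\module$.

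The main obstacle is the descent input imported from \cite{BNP, BNP2}: turning the cyclic bar complex of the Steinberg convolution category into $\Coh$ of an honest derived loop stack, despite the Springer map $p$ failing to be flat -- this is where the detailed geometry of $\tilN$ and the structure of $\Coh(\cZ/G)$ as a monoidal category genuinely enter, and it is carried by the affine-character-sheaf machinery of that work. On our side the only delicate point is the bookkeeping in (4): one must keep careful track of which loop map and which pushforward is involved, of the appearance of the \emph{reduced} structure sheaf on the torus side when one matches $\cS$ with the parabolic-induction description of Definition \ref{q-Springer}(2), and of the residual $\Gm$-grading, so as to land on exactly the sheaf of Definition \ref{q-Springer} rather than a shift or twist of it. Combined with Bezrukavnikov's equivalence (Theorem \ref{roma thm}) this transports the entire statement to the topological affine Hecke category $\Hcat_q$.
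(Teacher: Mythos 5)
Your overall route---quoting the trace computation of \cite{BNP} for (1), running it $\Gm$-equivariantly for (2), specializing the trace along $\{q\}\to\Gm/\Gm$ for (3), and identifying $[1_{\cat{C}}]$ with $\cS$ by a base-change computation before invoking the delooping relation of \cite{GKRV} for (4)---is the same as the paper's, which states that (1) is taken directly from \cite{BNP} and that (2)--(4) follow by the same techniques, and which carries out the unit computation and the delooping in Section~\ref{traces}.

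There is, however, one concrete gap. The theorem you cite is not quite the statement you need: \cite[Theorem 3.3.1]{BNP} identifies the horizontal trace with $\IndCoh_{\Lambda_{X/Y}}(\cL Y)$ for a nontrivial singular support condition $\Lambda_{X/Y}$, not with all of $\Coh(\cL(\cN/G))$. To get the theorem as stated you must show this condition is vacuous for the unipotent Steinberg $\cZ=\tilN\times_{\fg}\tilN$. The paper does this by computing, at a point $\eta=(n,g,q)$ with $gng^{-1}=qn$, both the full singular support $\Sing(\cL(\cN/G))_\eta$ and the locus $\DLnil_\eta$ of covectors $v$ lying in a common Borel with $n$, and observing they coincide because a nilpotent $n$ and a commuting $v$ always generate a solvable subalgebra and hence lie in a common Borel. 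This is genuinely special to $\cZ$: for the completed variant $\cZ^\wedge$ the condition is \emph{not} vacuous, essentially because $\Sing(\wt{G}\times_G\wt{G})_{(g,B,B')}=\fn\cap\fn'$ whereas $\Sing(\tilN\times_{\fg}\tilN)_{(n,B,B')}=\fb\cap\fb'$. A second, smaller omission: your appeal to the delooping theorem in (4) requires $(\Coh(\cZ/G),\ast)$ to be rigid with compact unit---in particular that monoidal duals of coherent kernels are again coherent---which the paper verifies using quasi-smoothness of $\cZ$ and the identification of kernels with functors; this is routine but not automatic and should be stated.
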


\medskip

In other words, we identify the entire category of coherent sheaves on the stack of unipotent Langlands parameters as the categorical trace of the affine Hecke category. Inside we find the unramified principal series as modules for the naive trace (the Springer block). Just as the decategorification of the finite Hecke category (Example~\ref{finite Hecke}) knows all unipotent representations of Chevalley groups, 
the horizontal trace $\Coh(\BL_q^u)$ of the affine Hecke category contains in particular all unipotent representations of $\Gv(F)$ -- i.e., the complete L-packets of unramified principal series representations -- thanks to Lusztig's remarkable Langlands duality for unipotent representations:

\begin{thm}[\cite{lusztig unipotent}]\label{unipotent duality}
 The irreducible unipotent representations of $\Gv(F)$ are in bijection with $G$-conjugacy classes of triples $(s,n,\chi)$ with $s,n$ $q$-commuting as in Theorem~\ref{DL conjecture} and $\chi$ an {\em arbitrary} $G$-equivariant local system on the orbit of $(s,n)$. 
\end{thm}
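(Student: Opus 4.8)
The statement in question is Lusztig's Theorem~\ref{unipotent duality}, classifying irreducible unipotent representations of $\Gv(F)$ via triples $(s,n,\chi)$. This is quoted verbatim from Lusztig's paper~\cite{lusztig unipotent}, so I would not attempt an independent proof but rather indicate how it fits the present framework and sketch the main ideas. The plan is to reduce the full classification to the cuspidal case via the theory of types and parabolic induction, and then handle the cuspidal unipotent representations directly.

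First I would set up the reduction: by the theory of unipotent cuspidal representations, every irreducible unipotent representation of $\Gv(F)$ is obtained, via an appropriate (Harish-Chandra or Hecke-algebra-theoretic) induction functor, from a unipotent cuspidal representation of a Levi subgroup $M^\vee$; the endomorphism algebra of the induced module is a (possibly twisted) affine Hecke algebra with unequal parameters, and its irreducible modules are classified by a geometric analogue of Theorem~\ref{DL conjecture} with the nilpotent cone replaced by a suitable variety attached to the cuspidal datum. Concretely one replaces the triple data $(\cN, \tilN, \cZ)$ attached to $G$ by the $(s,q)$-fixed-point analogue, and the extra data $\chi$ records which equivariant local system on the orbit of $(s,n)$ occurs. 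The subtlety over the classical Springer story is that \emph{every} local system is allowed to occur (not just those appearing in the decomposition of the Springer sheaf), because the relevant sheaf is no longer $p_*\C_{\tilN}$ but a sum of cuspidal-datum-twisted versions; this is exactly Lusztig's "generalized Springer correspondence" phenomenon.

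Second, for the cuspidal building blocks I would invoke Lusztig's classification of unipotent cuspidal representations of $\Gv(F)$ and the parametrization of their Langlands parameters; this is where the genuinely hard input lies and where I would simply cite~\cite{lusztig unipotent}. The matching of $q$-commuting pairs $(s,n)$ with Weil--Deligne parameters is as in the discussion following Theorem~\ref{DL conjecture}: $n$ is the logarithm of the monodromy/Deligne operator and $s$ records the Frobenius-semisimple part, with the $q$-commutation relation $sns^{-1}=qn$ encoding the Weil group relation.

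The main obstacle, were one to prove this from scratch rather than cite it, is precisely the cuspidal case: establishing that unipotent cuspidal representations exist in the expected number and are correctly parametrized requires Lusztig's deep work on character sheaves, the generalized Springer correspondence, and the computation of the relevant (unequal-parameter) affine Hecke algebras. In the context of this paper, however, the theorem is used only as motivation — to explain that the decategorification $\Coh(\BL_q^u)$ of the affine Hecke category is large enough to see all complete L-packets of unramified principal series — so no new proof is offered and the statement stands on Lusztig's authority.
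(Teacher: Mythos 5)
The paper offers no proof of this statement: it is imported verbatim from Lusztig~\cite{lusztig unipotent} and used purely as motivation for expecting all unipotent representations to embed in $\IndCoh(\BL_q^u)$. Your proposal correctly recognizes this and your sketch of the underlying strategy (reduction to cuspidal data, generalized Springer correspondence, unequal-parameter Hecke algebras) is consistent with Lusztig's actual argument, so there is nothing to compare against in the paper itself.
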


\medskip

It would be extremely interesting to understand Theorem~\ref{unipotent duality} using trace decategorification of Bezrukavnikov's Theorem~\ref{roma thm}. In particular we expect the full category of unipotent representations to be embedded in $\IndCoh(\BL_q^u)$.

\medskip

\subsection{Assumptions and notation}

We work throughout over a field $k$ of characteristic zero.  Our results on traces hold in this general setting, though most representation theoretic applications will be in the specific case of $k = \bQl$ or $\bC$ (e.g. in Section \ref{two hecke}).  All functors and categories are dg derived unless noted otherwise.  

\subsubsection{Categories}

We work in the setting of $k$-linear stable $\infty$-categories, which for us will arise via applying the dg nerve construction (Construction 1.3.1.6 of \cite{HA}) to a pre-triangulated dg category.  These come in two primary flavors, ``big" and ``small": $\cat{dgCat}_k$ is the $\infty$-category of presentable stable $k$-linear $\infty$-categories (with colimit-preserving functors), and $\cat{dgcat}_k$ is the $\infty$-category of small idempotent-complete stable $k$-linear $\infty$-categories (with exact functors).   We denote the compact objects in a stable $\infty$-category $\cat{C}$ by $\cat{C}^\omega$, i.e. the objects $X \in \cat{C}$ for which $\Hom_{\cat{C}}(X, -)$ commutes with all infinite direct sums.  Both $\cat{dgCat}_k$ and $\cat{dgcat}_k$ are symmetric monoidal $\infty$-categories under the Lurie tensor product, with units $\cat{Vect}_k = k\dmod \in \cat{dgCat}_k$ and $\cat{Perf}_k = k\dperf \in \cat{dgcat}_k$ the dg categories of chain complexes of $k$-vector spaces and perfect chain complexes, respectively. We have a symmetric monoidal ind-completion functor:
$$\Ind: \cat{dgcat}_k\to \cat{dgCat}_k.$$
It defines an equivalence between $\cat{dgcat}_k$ and the subcategory of $\cat{dgCat}_k$ defined by {\em compactly generated} categories and compact functors (functors preserving compact objects, or equivalently, possessing colimit preserving right adjoints).

\medskip

Let $A$ be a Noetherian dg algebra.  We let $A\dmod = D(A) \in \cat{dgCat}_k$ denote the dg derived category of $A$-modules, $A\dperf = D_{perf}(A) \in \cat{dgcat}_k$ denote the full subcategory of perfect complexes, and $A\dcoh = D_{coh}(A)$ denote the full subcategory of cohomologically bounded complexes with coherent (i.e. finitely generated) cohomology.  Let $\cat{C}$ denote a symmetric monoidal dg category, and $A \in \mathrm{Alg}(\cat{C})$ an algebra object.  We denote by $A\dmod_{\cat{C}}$ (resp. $A\dperf_{\cat{C}}$) the category of $A$-module (resp. $A$-perfect) objects in $\cat{C}$; the category $A\dmod_{\cat{C}}$ is compactly generated by $A\dperf_{\cat{C}}$.  When $\cat{A} \in \cat{dgCat}_k$ is a cocomplete monoidal category, we denote by $\cat{A}\mh\cat{mod}$ the $(\infty, 2)$-category of $\cat{A}$-modules in $\cat{dgCat}_k$, i.e. cocomplete $\cat{A}$-module categories (see Section 3.6 of \cite{GKRV} for a definition).

\medskip

Assume that $\cat{C}$ is either small or that it is compactly generated, and let $X \in \cat{C}$ be an object, which we require to be compact in the latter case.  The notation $\langle X \rangle$ denotes the subcategory classically generated by $X$ when $\cat{C}$ is small (i.e. the smallest pretriangulated idempotent-complete subcategory containing $X$), and weakly generated by $X$ when $\cat{C}$ is cocomplete and compactly generated (i.e. the essential image of the left adjoint of $\Hom_{\cat{C}}(X, -)$).

\medskip

\subsubsection{Algebraic geometry}

In Section \ref{traces}, we work in the setting of derived algebraic geometry over an arbitrary field $k$ of characteristic zero as in~\cite{GR}.   Namely, this is a version of algebraic geometry in which functors of (discrete) categories from rings to sets are replaced by {\em prestacks}, functors of ($\infty$-)categories from connective commutative dg $k$-algebras to simplicial sets. Examples of prestacks are given by both classical schemes and stacks and topological spaces (or rather the corresponding simplicial sets of singular chains) such as $S^1$, considered as constant functors. 

\medskip

We will only be concerned with {\em QCA (derived) stacks} (or their formal completions along closed substacks) as in~\cite{DG}, i.e., quasi-compact stacks of finite presentation with affine\footnote{The notion of a QCA stack in \cite{DG} is slightly more general; only automorphism groups at geometric points are required to be affine, and they are not required to be of finite presentation.} finitely-presented diagonal (in fact only with quotients of schemes by affine group-schemes and their formal completions along closed substacks), and use the term {\em stack} to refer to such an object.

\medskip

A stack $X$ carries a symmetric monoidal $\oo$-category $\QC(X) \in \cat{dgCat}_k$ of quasicoherent sheaves, defined by right Kan extension from the case of representable functors $X=\Spec(R)$ which are assigned $\QC(\Spec R)=R\module$. For all stacks we will encounter (and more generally for {\em perfect} stacks in the sense of~\cite{BFN}), we have $\QC(X)\simeq \Ind(\Perf(X))$, i.e., quasicoherent sheaves are compactly generated and the compact objects are perfect complexes.

\medskip

We can also consider the category $\QC^!(X) \in \cat{dgCat}_k$ of {\em ind-coherent sheaves}, whose theory is developed in detail in the book~\cite{GR} (see also the earlier~\cite{indcoh}). The category $\QC^!(X)$ (under our assumption that $X$ is QCA) is compactly generated by $\Coh(X)$, the objects which are coherent after smooth pullback to a scheme (see Theorem 3.3.5 of \cite{DG}). 
For smooth $X$, the notions of coherent and perfect, hence ind-coherent and quasicoherent, sheaves are equivalent.

\medskip

A crucial formalism developed in detail in~\cite{GR} is the functoriality of $\QC^!$. Namely for an almost finite-type map $p:X\to Y$ of stacks, we have colimit-preserving functors of pushforward $p_*:\QC^!(X)\to \QC^!(Y)$ and exceptional pullback $p^!:\QC^!(Y)\to \QC^!(X)$, which form an adjoint pair $(p_*,p^!)$ for $p$ proper. These functors satisfy a strong form of base change, which makes $\QC^!$ a functor -- in fact a symmetric monoidal functor\footnote{In general $\QC^!$ is only lax symmetric monoidal but thanks to~\cite{DG} it is strict on QCA stacks. Also the full correspondence formalism in~\cite{GR} only includes pushforward for [inf,ind-]schematic maps.} -- out of the category of correspondences of stacks (the strongest form of this result is~\cite[Theorem III.3.5.4.3, III.3.6.3]{GR}).

\medskip

We note that for a closed substack $Z \subset X$, the category of quasicoherent (or ind-coherent, or perfect, et cetera) sheaves $\QCoh(\wh{Z})$ on the formal completion $\wh{Z}$ is canonically equivalent to the category $\QCoh_Z(X)$ of sheaves on $X$ set-theoretically supported on $Z$.

\medskip

See Definition 2.3.1 of \cite{Ch} for a definition of the derived loop space $\cL(-)$.  For a stack $X$ with a self-map $f$, we define $\cL_f(X)$ to be the derived fixed points of $f$, i.e. the derived fiber product
$$\begin{tikzcd}
\cL_f(X) \arrow[r] \arrow[d] & X \arrow[d, "{(f, \mathrm{id}_X)}"] \\
X \arrow[r, "\Delta"] & X \times X.
\end{tikzcd}$$
When $f = \mathrm{id}_X$, we have $\cL_f X = \cL X$.  Given a group action $G$ on a scheme $X$, and $f: X \rightarrow X$ commuting with the $G$-action, we have via Proposition 2.1.8 of \cite{Ch} a Cartesian diagram:
$$\begin{tikzcd}
\cL_f(X/G) \arrow[r] \arrow[d] & (X \times G)/G \arrow[d, "{(f \circ \alpha, \mathrm{id}_X)}"] \\
X/G \arrow[r, "\Delta"] & (X \times X)/G
\end{tikzcd}$$
where $\alpha$ is the action map.

\medskip
\subsubsection{Representation theory}\label{rep thy defs}

In Sections \ref{iwahori}, \ref{section coherent springer} and \ref{moduli}, unless otherwise noted, $G$ denotes a split reductive group over a field $k = \bQl$ or $\C$) with a choice of Borel $B$ and torus $T \subset B$ with universal Cartan $H$ and (finite) universal Weyl group $W_f$.  The extended affine Weyl group is denoted $W_a := X^\bullet(H) \rtimes W_f$.   We denote by $\Rep(G) = \QCoh(BG)$ the derived category of rational representations of $G$.  Likewise, $\mathfrak{g} = \Lie(G)$, $\mathfrak{b} = \Lie(B)$, et cetera.  

\medskip

Morally, we view $G$ as a group on the spectral side of Langlands duality.  On the automorphic side, one is interested in representations of the split group $G^\vee(F)$, where we let $F$ denote a non-archimedian local field with ring of integers $O$.   We denote by $I$ the Iwahori subgroup with pro-unipotent radical $I^\circ$, defined by the fixed Borel subgroup $B^\vee \subset G^\vee$ and maximal hyperspecial $G^\vee(O) \subset G^\vee(F)$.  \emph{In Section \ref{types}, we will reverse this convention for ease of reading}, and $G$ will denote a split reductive group over $F$.

\medskip

We will often be interested in equivariance with respect to the trivial extension of $G$ by $\bG_m$ which we denote 
$\GG=G\times \G_m$; this amounts to additional weight grading on coherent sheaves.  We fix once and for all a coordinate $z \in \G_m$.  For any geometric vector space or bundle $V$ (e.g. a Lie algebra $\mf{g}$ or the Springer resolution $\wt{\cN}_G$ introduced below), by convention the coordinate will act on geometric fibers by weight -1, i.e. $z \cdot x = z^{-1} x$ for $x \in V$, and therefore on functions by weight 1 (i.e. $z \cdot f(-) = zf(-)$ for $f \in V^*$).  This negative sign convention corresponds to the convention that the $z=q$ fixed points of $\cN/\GG$ correspond to unipotent Langlands parameters $(s, N)$ for a local field with residue $\mathbb{F}_q$, i.e. $(s, N, q) \cdot N = sNs^{-1}q^{-1} = N$.\footnote{Letting $q$ denote the action by $q$ in the above convention (i.e. multiplication by $q^{-1}$), we have $q_* = \mathbf{q}^*$, where $\mathbf{q}^*$ is the functor in Section 11.1 of \cite{roma hecke}.  Thus, given an identification $\cH \simeq \mathrm{tr}(\Hcat^\mix, \mathrm{id}_{\Hcat^\mix})$ as in Theorem \ref{ThmHH}, this implies an identification $\cH_q \simeq \mathrm{tr}(\Hcat, q_*) \simeq \mathrm{tr}(\Hcat, \mathrm{Fr}^*)$.  This convention is compatible with \cite{KL, CG, AB:affine flags, roma hecke}.}

\medskip

Let $\cB_G = G/B$ denote the flag variety, $\cN_G$ denote the nilpotent cone, and $\wh{\cN}_G$ its \emph{formal neighborhood} inside $\mf{g}$.  We let $\wt{\cN}_G$ denote the (reduced) Springer resolution, and denote by $\mu: \wt{\cN}_G = T^*(\cB_G) \rightarrow \cN_G \hookrightarrow \wh{\cN}_G$ the composition of the Springer resolution with the inclusion, and $\widetilde{\mathfrak{g}}$ the Grothendieck-Springer resolution, which is $\GG$-equivariant.  Sometimes, we take the codomain of $\mu$ to be all of $\mf{g}$.  Let $\cZ_G= \tilN_G \times_{\mathfrak{g}} \tilN_G$ denote the \emph{derived Steinberg
scheme}, $\cZ'_G = \tilN_G \times_{\mathfrak{g}} \widetilde{\mathfrak{g}}$ denote the \emph{non-reduced Steinberg scheme}, and $\cZ^\wedge_G = (\wt{\mf{g}} \times_{\mf{g}} \wt{\mf{g}})^\wedge$ denote the \emph{formal Steinberg scheme} via completing along the nilpotent elements.  We denote by $\pi_0(\cZ_G)$ the \emph{classical Steinberg variety}, which coincides with $(\cZ'_G)^{red} = (\cZ^\wedge_G)^{red}$.  We will drop the subscript if there is no ambiguity regarding the group $G$ in discussion.

\medskip

We denote the \emph{affine Hecke algebra} by $\Haff_{G}$; we use a Coxeter presentation, i.e. a definition on the spectral side, which  can be found e.g. in Definition 7.1.9 of \cite{CG}.  It is a $k[q, q^{-1}]$-algebra whose specializations at prime powers $q = p^r$ are isomorphic to the \emph{Iwahori-Hecke algebras} $\Haff_{q,G} \simeq \cH(G^\vee(F), I) := C^\infty_c(I \bs G^\vee(F)/I; k)$ of compactly supported Iwahori-biequivariant functions on a loop group (or $p$-adic group).  More generally, for a locally compact totally disconnected group $G$ (now viewed on the automorphic side), a compact open subgroup $K \subset G$ and a representation $\tau$ of $K$, we denote its Hecke algebra by $\cH(G, K, \tau) := \End_{G}(\cind_{K}^G \tau)$ (these appear in Section \ref{moduli}).

\medskip

The \emph{mixed affine Hecke category} is defined by $\Hcat^\mix_{G} := \Coh(\cZ/\GG)$, while the \emph{affine Hecke category} is defined to be $\Hcat_{G} := \Coh(\cZ/G)$.  Note that we define these categories directly on the spectral side of Langlands duality, while they are usually defined on the automorphic side.  That is, we implicitly pass through Bezrukavnikov's theorem (Theorem \ref{roma thm}).

\medskip

We define the \emph{coherent Springer sheaf} and the \emph{coherent $q$-Springer sheaf} by:
$$\cS_G := \LL \mu_* \OO_{\LL(\wt{\cN}/\GG)} \simeq \LL \mu_* \omega_{\LL(\wt{\cN}/\GG)} \in \Coh(\LL(\wh{\cN}/\GG)),$$
$$\cS_{q,G} := \LL_q\mu_* \OO_{\LL_q(\wt{\cN}/G)} \simeq  \LL_q\mu_* \omega_{\LL_q(\wt{\cN}/G)} \in \Coh(\LL_q(\wh{\cN}/G)).$$
The coherent $q$-Springer sheaf is a coherent sheaf on the \emph{stack of unipotent Langlands parameters}:
$$\mathbb{L}^u_{q,G} := \cL_q(\wh{\cN}/G).$$
Note that this definition is functorial and makes sense for \emph{any} affine algebraic group $G$ (still completing along nilpotents), and thus the coherent $q$-Springer sheaf may be realized by applying parabolic induction
$$\begin{tikzcd}
\bL^u_{q,T} & \arrow[l, "\nu"'] \bL^u_{q,B} \arrow[r, "\mu"] & \bL^u_{q,G}
\end{tikzcd}$$
to the structure sheaf of $\bL^u_{q,T}$, i.e. $\cS_{q,G} = \mu_*\nu^* \cO_{\bL^u_{q,T}}$ (where $T$ is the quotient torus of $B$, and does not depend on a choice of lift).  By Proposition \ref{no derived structure}, if $G$ is reductive then $\bL^u_{q,G}$ is a classical stack (i.e. no derived and no infinitesimal structure) when $q$ is not a root of unity.  Note that other authors \cite{ bellovin-gee, booher-patrikis, curtis,DHKM, xinwen survey} have defined a moduli stack of Langlands parameters $X_{F, G}$ for a given local field $F$ and a reductive group $G^\vee$ with coefficients in $F$.  Our stack embeds as a connected component of tame Langlands parameters.

\medskip

\subsection{Acknowledgments}

We would like to thank Xinwen Zhu for very enlightening conversations on the topic of categorical traces, the Drinfeld-Lafforgue sheaf and its relation to the coherent Springer sheaf and for sharing with us an early draft of his paper~\cite{xinwen survey}, Pramod Achar for discussions of purity and Tate-ness properties in Springer theory, and Sam Raskin for suggestions related to renormalized categories of sheaves on formal odd tangent bundles.  We would also like to thank Matthew Emerton for comments regarding Whittaker normalizations, Xuhua He for pointing out the reference~\cite{reeder}, Maarten Solleveld for discussion surrounding results in~\cite{OS} and Gurbir Dhillon for numerous helpful discussions.  Finally, we would like to thank the anonymous referee for their thoughtful and detailed suggestions.

\section{Hochschild homology of the affine Hecke category}\label{iwahori}
In this section we calculate the Hochschild homology of the affine Hecke category. In particular in Corollary \ref{actual theorem} we prove that the Chern character from $K$-theory factors through an isomorphism between $K_0$ and Hochschild homology. 
For this we use Bezrukavnikov's Langlands duality for the affine Hecke category to construct a semiorthogonal decomposition on the equivariant derived category of the Steinberg stack with simple components, from which the calculation of localizing invariants is immediate. 

\medskip
\subsection{Background}

We first review some standard notions regarding Hochschild homology and equivariant $\ell$-adic sheaves that we need for our arguments.  In this subsection we take $k$ to be any field of characteristic 0.

\subsubsection{Trace decategorifications and Hochschild homology}\label{HHSec}

An extended discussion of the notions in this subsection can be found in \cite{TV2, GKRV, BN:NT, Ch}.  We recall the notion of a {\em dualizable object} $X$ of a symmetric monoidal $(\infty, 2)$-category $\cat{C}_\otimes$ with monoidal unit $1_\otimes$ (see the Appendix of \cite{GR} for a definition).
\begin{defn}\label{def trace decat}
The object $X$ is \emph{dualizable} if there exists an object $X^\vee$ and coevaluation and evaluation morphisms $$\eta_X: 1_{\otimes} \rightarrow X \otimes X^\vee, \hskip.3in \epsilon_X: X^\vee \otimes X \rightarrow 1_{\otimes}$$ satisfying a standard identity.  Dualizability is a {\em property} rather than an additional structure on $X$ (see Proposition 4.6.1.10 in \cite{HA}).   The {\em trace} of an endomorphism $f\in \End_{\cat{C}}(X)$ of a dualizable object is defined by
$$\mathrm{tr}(X, f) = \epsilon_X \circ (f \otimes 1) \circ  \eta_X \in \End_{\cat{C}_{\otimes}}(1_{\otimes}).$$
\end{defn}

\begin{rmk}
Note that $\End(1_\otimes)$ is naturally enriched as an object of $\cat{C}_\otimes$ which is universal amongst objects tensored over $1_\otimes$, i.e. there is a natural equivalence of algebras $\End(1_\otimes) \simeq 1_\otimes$.  In particular, $\End(1_\otimes)$, which is a priori only an $A_\infty$-algebra, is an $E_\infty$-algebra (see the discussion in Section 4.7.1 of \cite{HA} for details).
\end{rmk}

\medskip

The notion of dualizability depends only on the $1$-categorical structure of $\cat{C}_\otimes$.  However, in our applications, 
we are interested in the case when $X$ is an algebra object in the symmetric monoidal $\infty$-category $\cat{C}_\otimes$, and the resulting algebra structure on traces.  To formulate this, we require a functoriality on traces involving (right-)dualizable 2-morphisms in $\cat{C}_\otimes$; this discussion requires the presence of non-invertible 2-morphisms in $\cat{C}_\otimes$.

\medskip

Since $\cat{C}_\otimes$ is a monoidal $(\infty, 2)$-category, the endomorphisms of the monoidal unit $\cat{End}_{\cat{C}_\otimes}(1_\otimes)$ in fact form an $(\infty, 1)$-category.  We have the following natural functoriality enjoyed by the abstract construction of traces in the higher-categorical setting; see~\cite{TV2,HSS,GKRV} (and~\cite{BN:NT} for an informal discussion). Namely the trace of an object is covariantly functorial under right dualizable morphisms.
\begin{defn}
A \emph{morphism of pairs} $(F, \psi): (X, f) \rightarrow (Y, g)$ is a right dualizable morphism $F: X \rightarrow Y$ (i.e. has a right adjoint $G$) between dualizable objects along with a commuting structure $\psi: F \circ f \rightarrow g \circ F$.  Given a morphism of pairs $(F, \psi)$, it defines a map $\mathrm{tr}(F, \psi)$ on traces via the composition
$$\begin{tikzcd}[column sep=2.3em]
\mathrm{tr}(X, f) \arrow[rr, "{\mathrm{tr}(X, \eta_F \mathrm{id}_f})"] & & \mathrm{tr}(X, GFf) \arrow[rr, "{\mathrm{tr}(X, \mathrm{id}_G \psi})"] & & \mathrm{tr}(X, GgF) \arrow[r, "\simeq"] & \mathrm{tr}(Y, gFG) \arrow[rr, "{\mathrm{tr}(Y, \mathrm{id}_g \epsilon_F)}"] & & \mathrm{tr}(Y, g)
\end{tikzcd}$$
where $\eta_F$ and $\epsilon_F$ are the unit and counit of the adjunction $(F, G)$, and the equivalence in the middle is via cyclic symmetry of traces (see also Definition 3.24 of \cite{BN:NT}).
\end{defn}

Note that taking the trace is canonically \emph{symmetric} monoidal with respect to the monoidal structure in $\cat{C}_\otimes$ and composition in $\cat{End}_{\cat{C}_\otimes}(1_\otimes)$ (or equivalently, tensoring in $1_\otimes$).  The trace construction enhances to a symmetric monoidal functor from the $\infty$-category of endomorphisms of dualizable objects in $\cat{C}_\otimes$ to the category $\cat{End}_{\cat{C}_\otimes}(1_\otimes) \simeq 1_\otimes$, see~\cite[2.5]{TV2},~\cite[2]{HSS}, and \cite[3]{GKRV} for details. In particular, if $X$ is an algebra object in $\cat{C}_\otimes$ with right dualizable unit and multiplication, and $f: X \rightarrow X$ is a map of algebra objects, then $\mathrm{tr}(X, f)$ is an algebra object in $\cat{End}_{\cat{C}_\otimes}(1_\otimes)$.

\medskip

In this paper, we consider the $\infty$-category $\cat{C}_\otimes = \cat{dgCat}_k$ of presentable (i.e. cocomplete) $k$-linear dg categories, with morphisms given by colimit-preserving (i.e. continuous, or left adjoint) functors, with monoidal product the Lurie tensor product.  We now specialize to this setting.
\begin{exmp}
Any presentable compactly generated dg category $\cat{C}=\Ind(\cat{C}^\omega)\in \cat{dgCat}_k$ is dualizable, with dual given by taking the ind-completion of the opposite of compact objects $\cat{C}^\vee=\Ind(\cat{C}^{\omega,op})$. Thus we may speak of traces of its endofunctors, which are endomorphisms of the unit, i.e. \emph{chain complexes}
$$\cat{End}_{\cat{dgCat}_k}(\cat{Vect}_k)\simeq \cat{Vect}_k.$$
Furthermore, note that a right dualizable morphism of presentable compactly generated dg categories must have a \emph{colimit-preserving} right adjoint, or equivalently is a functor which preserves compact objects.
\end{exmp}

\begin{defn} The {\em Hochschild homology} of a dualizable (for instance, compactly generated) presentable $k$-linear dg category $\cat{C} \in \cat{dgCat}_k$ is the trace of the identity functor
 $$\hh(\cat{C}/k):=\mathrm{tr}(\cat{C}, \mathrm{id}_{\cat{C}})\in \cat{Vect}_k.$$ We often omit $k$ from the notation above.  More generally, the Hochschild homology of $\cat{C}$ with coefficients in a colimit-preserving endofunctor $F$ is $HH(\cat{C},F)=\mathrm{tr}(\cat{C},F)\in \cat{Vect}_k$. 
\end{defn}

\begin{rmk}[Large vs. small categories]
The above definition is formulated in terms of large categories, but can be defined for small categories by taking ind-completions.  Since every compactly generated category is dualizable but not conversely, the notion of Hochschild homology for large categories is more general.  We will often not distinguish between the two.
\end{rmk}

We have a notion of characters of compact objects in categories, defined via functoriality of traces.
\begin{defn}\label{character def}
Let $\cat{C} \in \cat{dgCat}_k$ be dualizable, and $F: \cat{C} \rightarrow \cat{C}$ an endofunctor.  Any object $c \in \mathrm{Ob}(\cat{C})$ defines a functor $\alpha_c: \cat{Vect}_k \rightarrow \cat{C}$ by action on the object $c$, and a map $\psi: c \rightarrow F(c)$ defines a commuting structure.  If $c$ is a compact object, then $\alpha_c$ is right dualizable.  Thus, by functoriality of traces, we have a map 
$$\mathrm{tr}(\alpha_c, \psi): HH(\cat{Vect}_k) = k \longrightarrow HH(\cat{C}, F)$$
and the \emph{character}\footnote{This may also sometimes be referred to as a trace, but we call it a character to avoid overloading the term.} $[c] = \mathrm{tr}(\alpha_c, \psi)(1)$ of $c$ is the image of $1 \in k$ under this map.
\end{defn}

\begin{rmk}\label{HH properties}
We highlight a few properties of Hochschild homology which we use in our arguments:
\begin{enumerate}
\item Hochschild homology is a localizing invariant in the sense of \cite{BGT} by Theorem 5.2 of \cite{keller dgcat}, and in particular in the explicit algebraic model of Definition \ref{alg model} one can replace $\mathrm{Ob}(\cat{C})$ with any set of generating objects.
\item Hochschild homology takes (possibly infinite) $F$-stable semiorthogonal decompositions (see Section \ref{semiorthog}) of $\cat{C}$ to direct sums.  This is a consequence of (1) since semiorthogonal decompositions give rise to exact sequences of categories.
\item Let $A$ be a dg algebra, $M$ an dg $A$-bimodule, and define $F_M(-) = M \otimes_A -$.  Then, $HH(A\dmod, F_M) = A \otimes_{A \otimes_k A^{op}} M$. This derived tensor product can be computed via a bar resolution or otherwise.
\item The Hochschild homology receives a Chern character map from the connective $K$-theory spectrum (see Definition \ref{K theory chern character}).
\end{enumerate}
\end{rmk}

\begin{exmp}
We give a toy example to illustrate a canonical identification of two calculations of Hochschild homology.  Let $\cat{C} = \Coh(\mathbb{P}^1)$.  It is well-known that $\cO(-1) \oplus \cO$ generates the category, with endomorphism algebra represented by the Kronecker quiver.  Since the Kronecker quiver has no cycles, we have an identification $\hh(\Coh(\mathbb{P}^1)) \simeq k^2$.  The character map is the (twisted) algebraic Euler characteristic:
$[\cL] = (\chi(\mathbb{P}^1, \cL(1)), \chi(\mathbb{P}^1, \cL)).$

On the other hand, the Hochschild-Kostant-Rosenberg isomorphism (see Theorem 4.1 of \cite{caldararu hkr}) identifies the Hochschild homology of a smooth variety with the global sections of its negatively-shifted algebra of differential forms, which in this example produces an identification $\hh(\Coh(\mathbb{P}^1)) \simeq H^0(\mathbb{P}^1, \cO_{\mathbb{P}^1}) \oplus H^1(\mathbb{P}^1, \Omega^1_{\mathbb{P}^1}) \simeq k^2$.  The character map is the Chern character, i.e. $[\cO(n)] = (1, n)$; compatibility of traces forces a particular identification $H^{0,0}(\mathbb{P}^1) \oplus H^{1,1}(\mathbb{P}^1) \simeq \End(\cO(-1)) \oplus \End(\cO)$.
\end{exmp}

\medskip

\subsubsection{The cyclic bar and Block-Getzler complex}\label{block getzler sec}

The Hochschild homology of compactly generated (or equivalently, small) categories has an algebraic realization via the cyclic bar complex, which we briefly recall; see Section 5.3 in \cite{keller dgcat} for further discussion.  In the below definition, we relax the condition that $\cat{C}$ is pretriangulated; morally it should be thought of as a full subcategory of $F$-fixed compact generators of a cocomplete dg category.
\begin{defn}
The \emph{cyclic bar complex} of a small $k$-linear dg category $\cat{C}$, equipped with a dg-endofunctor $F$, is defined to be the sum totalization of the simplicial chain complexes with\footnote{Note that for consistency we label using cohomological grading, and that we are defining the complex of Hochschild chains and \emph{not} the complex of Hochschild cochains.}
$$\cC^{-n}(\cat{C}, F) = \bigoplus_{X_0, \ldots, X_n \in \mathrm{Ob}(\cat{C})} \Hom^\bullet_{\cat{C}}(X_0, X_1) \otimes \cdots \otimes \Hom^\bullet_{\cat{C}}(X_{n-1}, X_n) \otimes \Hom^\bullet_{\cat{C}}(X_n, F(X_0))$$
where the face maps $d_i: \cC^{-n} \rightarrow \cC^{-(n-1)}$ (for $i = 0, \ldots, n$) compose morphisms, i.e.
$$d_i(f_0 \otimes \cdots \otimes f_n) = f_0 \otimes \cdots f_i f_{i+1} \otimes \cdots \otimes f_n, \;\;\;\;\;\;\; i=0, \ldots, n-1$$
$$d_n(f_0 \otimes \cdots \otimes f_n) = f_n F(f_0) \otimes F(f_1) \otimes \cdots \otimes F(f_{n-1}).$$
If $\cat{C}$ is a monoidal dg category, and $F$ has the structure of a monoidal functor, then $HH(\cat{C}, F)$ is an (associative) dg algebra via functoriality and the shuffle or Eilenberg-Zilber map.
\end{defn}

\medskip

Now, let $k$ be a field of characteristic 0, and $G$ a reductive group over $k$.  For dg categories with a $\Rep(G)$-action, there is an explicit algebraic model for the Hochschild homology due to Block and Getzler~\cite{BG:ECH}.  We define a $\Rep(G)$-internal $\Hom$ for $\Rep(G)$-module categories in the following standard lemma.
\begin{lemma}\label{deeq}
Let $\cat{C}$ be a $\Rep(G)$-module category.  The $\Hom$-sets of $\cat{C}$ are canonically enriched in $\Rep(G)$ such that
$$\Hom_{\cat{C}}(X, Y) = \underline{\Hom}_{\cat{C}}(X, Y)^G$$
where $\underline{\Hom}$ denotes the $\Rep(G)$-internal Hom.  In particular, if $E \in \cat{C}$ is a compact $\Rep(G)$-generator for $\cat{C}$, then $\cat{C}$ is equivalent to modules in $\Rep(G)$ for the internal endomorphism algebra 
$$\underline{A} = \underline{\End}_{\cat{C}}(E)^{op}\in \mathrm{Alg}(\Rep(G)).$$
\end{lemma}
\begin{proof}
The lemma is an application of the rigidity of $\Rep(G)$ and the Barr-Beck-Lurie monadicity theorem.  
The internal $\Hom$ is defined in the following way.  For any $X \in \cat{C}$, the functor $\mathrm{act}_X:\Rep(G)\to \cat{C}$ given by action on $X$ has a $\Rep(G)$-linear colimit-preserving right adjoint $\Psi_X(-)=\underline{\Hom}_{\Rep(G)}(X,-)$.  We define $\underline{\Hom}_{\cat{C}}(X, Y) = \Psi_X(Y)$.  More explicitly, we have
$$\underline{\Hom}_{\cat{C}}(X, Y) = \Hom_{\cat{C}}(X, Y \otimes \cO(G)) = \bigoplus_{V \in \mathrm{Irr}(G)} \Hom_{\cat{C}}(X, Y \otimes V) \otimes V^*$$
where $G$ acts on $\cO(G)$ by conjugation.  Note that $\Psi_E$ takes $E$ to the internal endomorphism algebra, which represents the corresponding monad $\Psi_E \circ \mathrm{act}_E$ on $\Rep(G)$.  By rigidity, this monad is $\Rep(G)$-linear, thus is given by tensoring with $\underline{A} = \underline{\End}_{\cat{C}}(E)^{op}$, its value on the monoidal unit.  The functor $\Psi_X$ is monadic; it preserves colimits since its left adjoint preserves compactness, and it is conservative since $E$ is a $\Rep(G)$-generator, thus the claim follows by Barr-Beck.
\end{proof}

\medskip

Block and Getzler defined a chain complex in \cite{BG:ECH} associated to any dg category $\cat{C}$ enriched in $\Rep(G)$.  We review this notion here.  We often do not take the entire category $\cat{C}$, but a full subcategory which generates under the $\Rep(G)$-action (but is not closed under it).
\begin{defn}\label{alg model}
Let $G$ be a reductive group, and let $\cat{C}$ be a small dg category enriched in $\Rep(G)$ equipped with an $\Rep(G)$-enriched dg-endofunctor $F$.  For any $V \in \Rep(G)$, we abusively denote by $\gamma: V \rightarrow V \otimes k[G]$ the coaction map.  The \emph{Block-Getzler complex} (over $k$) $\cC_G^\bullet(\cat{C}, F)$ is defined to be the sum totalization of the simplicial object in chain complexes with
$$\cC_G^{-n}(\cat{C}, F) = \bigoplus_{X_0, \ldots, X_n \in \mathrm{Ob}(\cat{C})} \Big( \uHom^\bullet_{\cat{C}}(X_0, X_1) \otimes \cdots  \otimes \uHom^\bullet_{\cat{C}}(X_n, F(X_0)) \otimes k[G] \Big)^G$$
where $G$ acts on $k[G]$ by conjugation, and the face maps $d_i: \cC_G^{-n} \rightarrow \cC_G^{-(n-1)}$ (for $i = 0, \ldots, n$) compose morphisms, i.e.
$$d_i(f_0 \otimes \cdots \otimes f_n \otimes g) = f_0 \otimes \cdots f_i f_{i+1} \otimes \cdots \otimes f_n \otimes g, \;\;\;\;\;\;\; i=0, \ldots, n-1$$
$$d_n(f_0 \otimes \cdots \otimes f_n \otimes g) = \gamma(f_n) F(f_0) \otimes F(f_1) \otimes \cdots \otimes F(f_{n-1}) \otimes g.$$
We define the \emph{enhanced Block-Getzler complex} to $\underline{\cC}_G^\bullet(\cat{C}, F)$ to be the  complex above, but without taking $G$-invariants.\footnote{Note that if $F$ is the identity functor, then the Block-Getzler simplicial chain complex is a cyclic object, and thus the associated chain complex has the natural structure of a mixed complex.  However, the enhanced Block-Getzler complex is not cyclic, since the ``rotation'' twists by the coaction $\gamma$ which can be nontrivial on nontrivial $G$-isotypic components.  One can view this object as an $S^1$-equivariant object in $\QCoh(G/G)$.}  Finally, for a specified $g \in G(k)$ we define $$\cC^\bullet_{G,g}(\cat{C}, F) = \underline{\cC}_G^\bullet(\cat{C}, F) \otimes_{k[G]} k_g$$
where $k_g$ is the skyscraper module at $g \in G$.  Note that there are canonical maps
$$\cC^\bullet_G(\cat{C}, F) \hookrightarrow \underline{\cC}_G^\bullet(\cat{C}, F) \rightarrow \cC^\bullet_{G,g}(\cat{C}, F).$$
\end{defn}

\bigskip

We now wish to show that the Block-Getzler complex computes Hochschild homology for $\Rep(G)$-module categories.  Letting $(-)^{\deeq} := - \otimes_{\Rep(G)} \cat{Vect}_k$ denote the de-equivariantization, since $\cat{Vect}_k^\deeq \simeq \QCoh(G)$ for any $g \in G(k)$ we have an automorphism $g_*$ of the category $\cat{C}^\deeq$ induced by the action of the skyscraper sheaf $k_g$ at $g \in G(k)$.  For any $\Rep(G)$-linear endofunctor $F: \cat{C} \rightarrow \cat{C}$, consider the squares
$$\begin{tikzcd}
\cat{C} \arrow[d, "F"] \arrow[r] & \cat{C}^\deeq \arrow[d, "F^\deeq"] \arrow[r, "g_*"] & \cat{C}^\deeq \arrow[d, "F^\deeq"] \\
\cat{C} \arrow[r] & \cat{C}^\deeq \arrow[r, "g_*"] & \cat{C}^\deeq.
\end{tikzcd}$$
The left square is equipped with a canonical commuting structure coming from the $\Rep(G)$-linear structure of $F$, and the right square is equipped with a canonical commuting structure since $F^\deeq$ acquires a natural $\QCoh(G) \simeq \cat{Vect}_k^\deeq$-linear structure.  We denote by $F^\deeq_g := F^\deeq \circ g_* \simeq g_* \circ F^\deeq$, and consider the map of pairs $\Psi: (\cat{C}, F) \rightarrow (\cat{C}^\deeq, F_g^\deeq)$.
\begin{prop}\label{functoriality BG}
Let $G$ be a reductive group (over $k$), $\cat{C}$ be a small dg category with a $\Rep(G)$-action, and $F$ a $\Rep(G)$-linear endofunctor.  Let $\cat{C}_0 \subset \cat{C}$ be a full subcategory, closed under $F$, which generates $\cat{C}$ over $\Rep(G)$.  Then, the map $\cC_G^\bullet(\cat{C}_0, F) \rightarrow \cC^\bullet_{G,g}(\cat{C}_0, F)$ computes the map in Hochschild homology $HH(\Psi): HH(\cat{C}, F) \rightarrow HH(\cat{C}^\deeq, F_g^\deeq)$.
\end{prop}
\begin{proof}
The claim that $\cC_G^\bullet(\cat{C}_0, F)$ computes $HH(\cat{C}, F)$ is similar to Proposition 2.3.6 of \cite{Ch}.  Since $\cat{C}_0$ (compactly) generates $\cat{C}$ under the $\Rep(G)$-action, to compute Hochschild homology we may use the cyclic bar complex with $n$th term
$$\bigoplus_{X_i \in \cat{C}_0} \bigoplus_{V_i \in \mathrm{Irr}(G)} \Hom_{\cat{C}}(X_0 \otimes V_0, X_1 \otimes V_1) \otimes \cdots \otimes \Hom_{\cat{C}}(X_n \otimes V_n, F(X_0) \otimes V_0)$$
$$\simeq \bigoplus_{X_i \in \cat{C}_0} \bigoplus_{V_i \in \mathrm{Irr}(G)} \uHom_{\cat{C}}(X_0 \otimes V_0, X_1 \otimes V_1)^G \otimes \cdots \otimes \uHom_{\cat{C}}(X_n \otimes V_n, F(X_0) \otimes V_0)^G$$
$$\simeq \bigoplus_{X_i \in \cat{C}_0} \bigoplus_{V_i \in \mathrm{Irr}(G)} (V_0^* \otimes \uHom_{\cat{C}}(X_0, X_1) \otimes V_1)^G \otimes \cdots \otimes (V_n^* \otimes \uHom_{\cat{C}}(X_n, F(X_0)) \otimes V_0)^G.$$
By Proposition 2.3.2 of \emph{op. cit.} we have
$$\simeq \bigoplus_{X_i \in \cat{C}_0} \bigoplus_{V_0 \in \mathrm{Irr}(G)} (V_0^* \otimes \uHom_{\cat{C}}(X_0, X_1) \otimes \cdots \otimes \uHom_{\cat{C}}(X_n, F(X_0)) \otimes V_0)^G.$$
By Peter-Weyl, we have
$$\simeq \bigoplus_{X_i \in \cat{C}_0} (\uHom_{\cat{C}}(X_0, X_1) \otimes \cdots \otimes \uHom_{\cat{C}}(X_n, F(X_0)) \otimes k[G])^G.$$
These identifications are compatible with the face maps by a straightforward diagram chase.

The claim that $\cC_{G,g}^\bullet(\cat{C}_0, F)$ computes $HH(\cat{C}^\deeq, F_g^\deeq)$ follows from the observation that if $\cat{C}_0$ (compactly) generates $\cat{C}$ over $\Rep(G)$, then its image in the de-equivariantization (compactly) generates $\cat{C}^\deeq$, and that the $\Hom$-spaces in $\cat{C}^\deeq$ are obtained from the $\Rep(G)$-internal $\Hom$-spaces of $\cat{C}$ after forgetting the $G$-module structure.  Thus $\cC^\bullet_{G,g}(\cat{C}_0, F)$ is just the cyclic bar complex via the identification of the last tensor factor (implicitly using the commuting structure):
$$\Hom_{\cat{C}^\deeq}(\Psi(X_n), \Psi \circ F(X_0)) \tens{\cO(G)} k_g \simeq \Hom_{\cat{C}^\deeq}(X_n, F^\deeq(X_0) \tens{\cO(G)} k_g) \simeq \Hom_{\cat{C}^\deeq}(X_n, F^\deeq_g(X_0)).$$
Verification that the identifications are compatible under $\Psi$ is a straightforward diagram chase.
\end{proof}

\medskip

\subsubsection{Chern character from $K$-theory to Hochschild homology}

Finally, we will use the universal trace map from connective $K$-theory to Hochschild homology constructed in \cite{BGT}.
\begin{defn}\label{K theory chern character}
For any small $k$-linear dg-category $\cat{C}$, the \emph{connective $K$-theory spectrum} $K(\cat{C})$ is the connective $K$-theory of the corresponding Waldhausen category defined in Section 5.2 of \cite{keller dgcat}.  Since Hochschild homology is a localizing invariant, by Theorems 1.1 and 1.3 of \cite{BGT} it receives a canonical and functorial map from the connective $K$-theory spectrum  which we call the \emph{Chern character}:\footnote{We use this terminology to avoid overloading the word ``trace.''} 
$$\chern: K(\cat{C}) \rightarrow \hh(\cat{C}).$$
\end{defn}

\begin{rmk}\label{chern properties}
We note two important properties of the Chern character that we use.  Note that unlike in the definition of Hochschild homology, in this discussion we restrict ourselves to small categories $\cat{C}$ (i.e. the compact objects of a compactly generated cocomplete category).
\begin{enumerate}
\item Via functoriality of the Chern character, for any object $X \in \mathrm{Ob}(\cat{C})$, the Chern character sends $[X] \in K_0(\cat{C}) \mapsto [X] \in \hh_0(\cat{C})$, i.e. equivalence classes in the Grothendieck group to their characters in Hochschild homology in the sense of Definition \ref{character def}.
\item Using the lax monoidal structure of $K$-theory, we see that for a monoidal category $\cat{C}$ the Chern character defines a map of algebras (see also Theorem 1.10 of \cite{BGT2}).
\end{enumerate}
Often in applications to geometric representation theory, we are only interested in (or able to compute) the Grothendieck group $K_0$.  However, note that the map $K_0(\cat{C}) \rightarrow \hh_0(\cat{C})$ does not automatically induce a map of algebras $K_0(\cat{C}) \rightarrow \hh(\cat{C})$ at the chain level.  In order to compare $K_0$ with Hochschild homology, we require certain vanishing conditions to hold.  Namely, if $\hh(\cat{C})$ is concentrated in degrees $\geq  0$, then the Chern character canonically factors through the truncation of $K(\cat{C})$ to degrees $\geq 0$, i.e. $K_0(\cat{C})$ since $K(\cat{C})$ is connective:
$$\begin{tikzcd}
K(\cat{C}) \arrow[rr, "\chern"] \arrow[dr] & & \hh(\cat{C}) \\
& K_0(\cat{C}) \arrow[ur, dotted] & 
\end{tikzcd}$$ 
and we may ask whether this map is an equivalence.  In particular, given this vanishing, when $\cat{C}$ is a monoidal category the induced map from $K_0(\cat{C}) \rightarrow \hh(\cat{C})$ is automatically a map of dg algebras at the chain level.
\end{rmk}


%

\medskip

\subsubsection{Equivariant $\ell$-adic sheaves, weights, and Tate type}\label{l-adic}

In this subsection we review some standard notions concerning weights and the $\ell$-adic cohomology of $BG$.
In this section and the following one, we fix a prime power $q = p^r$ and a prime $\ell \ne p$, and will work with $\ell$-adic sheaves $\cF$ on $\Fqbar$-schemes $X$.
All schemes and sheaves on them that arise are defined over $\Fq$, i.e., $X$ will come with a geometric Frobenius automorphism $\Fr$ and $\cF$ with a $\Fr$-equivariant (Weil) structure, which will be left implicit.

Fix a square root of $q$ in $\bQl$, thereby defining a notion of half Tate twist (this choice can be avoided by judicious use of extended groups as in~\cite{BG, xinwen Satake, bernstein sign}). For $\mathcal{F} \in \Sh(X)$ where $X$ is over $\Fq$, we will denote the Tate twist by $\mathcal{F}(n/2)$ for $n \in \Z$.  For a scheme $X$ with an action by a smooth group scheme $G$, we denote by $\Sh(X/G) = \Sh^G(X)$ the bounded derived category of $G$-equivariant $\bQl$-sheaves on $X$ with constructible cohomology (see Section 1.3 of \cite{BY} and \cite{BL}).  In this context, the cohomology of a sheaf $H^\bullet(X, -)$ will be understood to mean \'{e}tale cohomology.

Following the Appendix of \cite{gaitsgory central}, this notion can be extended to $G$-equivariant ind-schemes (i.e. a functor which is representable by a directed colimit of schemes with transition maps closed embeddings), where $G$ is a pro-affine algebraic group (i.e. an inverse limit of finite-type affine algebraic groups in the category of schemes) acting in a sufficiently finite way. We say a $G$-action on $X$ is \emph{nice} if the following two properties hold: (1) every closed subscheme $Z \subset X$ is contained in a closed $G$-stable subscheme $Z' \subset X$ such that the action of $G$ on $Z'$ factors through a quotient of $G$ which is affine algebraic, and (2) $G$ contains a pro-unipotent subgroup of finite codimension, i.e. if $G = \displaystyle\lim_{\longleftarrow} G_n$, then there is an $n$ such that $\ker(G \rightarrow G_n)$ is a projective limit of unipotent affine algebraic groups. If $G$ is a pro-affine group scheme acting nicely on $X$, and $X = \displaystyle\colim_{\longrightarrow} X_i$ with affine quotient $G_i$ acting on $X_i$, then we define\footnote{This definition is independent of the choice of presentation, since by \cite{BL} Theorem 3.4.1(ii) if $G_i \rightarrow G_j$ is a surjection with unipotent kernel, then $\Sh^{G_j}(Y) \rightarrow \Sh^{G_i}(Y)$ is an equivalence for any $Y$ on which $G_j$ acts.  See also Section A.4 of \cite{gaitsgory central}.} $\Sh^G(X) = \displaystyle\colim_{\longrightarrow} \Sh^{G_i}(X_i)$.

\medskip

We recall the well-known calculation of the $\ell$-adic cohomology ring of $BG$, whose description we repeat for convenience following~\cite{Vi} (in the Hodge-theory context).
\begin{prop}\label{coh BG}
Let $G$ be a pro-affine group scheme with split reductive quotient over $\overline{\bF}_q$. Then, $H^\bullet(BG; \bQl)$ is polynomial, generated in even degrees, and $H^{2k}(BG; \bQl)$ is a Frobenius eigenspace with eigenvalue $q^k$.   Furthermore the dg algebra $C^\bullet(BG; \bQl)$ under the cup product is formal, i.e. there is an algebra quasi-isomorphism $C^\bullet(BG; \bQl) \simeq H^\bullet(BG; \bQl)$.
\end{prop}
\begin{proof}
First, since $G$ is pro-affine, there is a reductive (finite type) algebraic group $G_0$ such that the kernel $\ker(G \rightarrow G_0)$ is pro-unipotent.  By Theorem 3.4.1(ii) in \cite{BL} we may assume that $G$ is reductive (and finite type).  

We first establish the claim that $H^\bullet(BG; \bQl)$ is polynomial in even degrees and compute the action of Frobenius.  It is a standard calculation that $H^\bullet(\G_m, \bQl) = H^0(\G_m, \bQl) \oplus H^1(\G_m, \bQl)$, where $H^0$ is a $1$-eigenspace for the Frobenius and $H^1$ is a $q$-eigenspace.  By Corollary 10.4 of \cite{LO}, it follows that $H^\bullet(B\G_m, \bQl) \simeq \bQl[u]$ where $u$ has cohomological degree $|u| = 2$, and is a $q$-eigenvector for Frobenius.  In particular, by the K\"{u}nneth formula (Theorem 11.4 in \emph{op. cit.}) we have that for a split torus $T$, $H^\bullet(BT; \bQl)$ is polynomial in even degrees, and $H^{2k}$ is a Frobenius-eigenspace with eigenvalue $q^k$.  Thus, the claim is true when $G = T$ is a torus.  Now, assume $T$ is a split torus inside a reductive group $G$, and $B$ is a Borel subgroup with $T \subset B \subset G$.  Applying Theorem 3.4.1(ii) of \cite{BL} again, we have  $H^\bullet(BB; \bQl) \simeq H^\bullet(BT; \bQl)$.  By Theorem 1.1 of \cite{Virk motives}, $H^\bullet(BG; \bQl)$ is a polynomial subring of $H^\bullet(BB; \bQl) \simeq H^\bullet(BT; \bQl)$, completing the claim.  Formality follows by a standard weight-degree argument.
\end{proof}

\medskip

\subsection{Automorphic and spectral realizations of the affine Hecke category}\label{two hecke}

We follow the set-up of Bezrukavnikov in~\cite{roma hecke}, except that we view the group on the automorphic side as dual to a chosen group on the spectral side for ease of notation.  Let $G$ be a fixed reductive algebraic group over $\bQl$ on the spectral side of Langlands duality, and let $G^\vee$ be the extension of scalars to $\Fqbar$ of its dual group split form over $\mathbb{F}_q$ (equipped with corresponding Frobenius automorphism).

Let $F = \Fqbar((t))$ and $O = \Fqbar[[t]]$.   We denote by ${\bfG}$ the loop group, i.e. the group ind-scheme over $\Fqbar$ with ${\bfG}(\Fqbar) = G^\vee(F)$ defined in Section 0.2 of \cite{gaitsgory central}.  We denote by ${\bfG}_0$ the arc group, which is a pro-affine group scheme with $\bfG_0(\Fqbar) = G^\vee(O)$.  There is a group scheme homomorphism $\bfG_0 \rightarrow G^\vee$, and the \emph{Iwahori subgroup} of $\bfG$ is defined $\bfI := \bfG_0 \times_{G^\vee} B^\vee$, which inherits its structure as a closed subgroup and is therefore also a pro-affine group.  We let $\bfI^\circ := \bfG_0 \times_{G^\vee} U^\vee$ denote its pro-unipotent radical.

On the automorphic side, we are interested in equivariant $\bQl$-sheaves on the \emph{affine flag variety} $\Fl=\bfG/\bfI$, an ind-proper ind-scheme constructed in the Appendix of \cite{gaitsgory central}. It carries a left action of $\bfI$ whose orbits are of finite type and naturally indexed the affine Weyl group $W_a$ for the group $G^\vee$.  For $w \in W$, we denote by $\Fl^w$ the corresponding orbit.  Denote by $j_w: \Fl^w \hookrightarrow \Fl$ the inclusion of the corresponding $\bfI$-orbit.  Let $\ell: W_a \rightarrow \mathbb{Z}^{\geq 0}$ denote the length function on the affine Weyl group.  

On the spectral side, the stacks that appear are defined over $\bQl$.  Recall the derived Steinberg variety $\cZ = \wt{\cN} \times_{\mf{g}} \wt{\cN}$ and the classical non-reduced Steinberg variety $\cZ' = \wt{\mf{g}} \times_{\mf{g}} \wt{\cN}$ (see Section \ref{rep thy defs}).  Each of these (derived) schemes has a natural $G$-action, as well as a commuting $\G_m$-action which by our convention acts by scaling on the points of $\mf{g}$, $\wt{\cN}$, and $\wt{\mf{g}}$ by weight -1 (thus on linear functionals by weight 1).  Recall the notation $\GG = G \times \G_m$.

The following is  Theorem 1 of \cite{roma hecke}, while the Frobenius property of $\Phi$ appears as Proposition 53.
\begin{thm}[Bezrukavnikov]\label{roma equiv}
At the level of homotopy categories, there are equivalences of categories $\Phi$ and $\Phi'$ and a commutative diagram
$$\begin{tikzcd}
\Sh^{\bfI^\circ}(\Fl)  \arrow[r, "\Phi'", "\simeq"'] & \Coh(\cZ'/G) \\
\Sh^{\bfI}(\Fl) \arrow[r, "\Phi", "\simeq"'] \arrow[u,  "\pi^*"] & \Coh(\cZ/G) \arrow[u, "i_*"']
\end{tikzcd}$$
where $\pi: \bfI^\circ \bs \Fl \rightarrow \bfI \bs \Fl$ is the quotient map and $i: \cZ/G \hookrightarrow \cZ'/G$ is the inclusion. 
Moreover the functors admits the following natural structures: 
\begin{enumerate}
\item[$\bullet$] $\Phi$ is naturally an equivalence of {\em monoidal} categories, and
\item[$\bullet$] $\Phi$ and $\Phi'$ intertwine the action of Frobenius on $\Sh^{\bfI}(\Fl)$ (resp. $\Sh^{\bfI^\circ}(\Fl)$)  with the action of $q\in \Gm$ on $\Coh(\cZ/G)$ (resp. $\Coh(\cZ'/G)$).
\end{enumerate}
\end{thm}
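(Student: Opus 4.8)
The equivalence $\Phi\colon\Sh^{\bfI}(\Fl)\simeq\Coh(\cZ/G)$ is the affine-flag-variety form of Bezrukavnikov's equivalence $\Hcat_q\simeq\Coh(\cZ/G)$ (Theorem~\ref{roma thm}, \cite{roma hecke, roma ICM}), and the remaining assertions --- the equivalence $\Phi'$, the commutative square, the monoidality of $\Phi$, and the Frobenius intertwiners --- package its standard refinements; we outline the argument. The strategy is to realize both monoidal categories as endofunctor categories of a common module category. On the automorphic side $\Sh^{\bfI}(\Fl)$ acts by convolution on the Iwahori--Whittaker category $\Sh^{(\bfI^0,\psi)}(\Fl)$ of sheaves equivariant against a suitable non-degenerate character $\psi$, and it carries Gaitsgory's central functor $Z\colon\Rep(G)\to\Sh^{\bfI}(\Fl)$, built by the nearby-cycles construction of \cite{gaitsgory central}, which is monoidal and lands in the Drinfeld center. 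On the spectral side $\Coh(\cZ/G)$ with its convolution product acts on $\Coh(\widetilde{\cN}/G)$, and the pullback $\Rep(G)=\Coh(BG)\to\Coh(\cZ/G)$ along $\cZ/G\to BG$ is the analogue of $Z$ after geometric Satake. First I would match these two module categories by the equivalences of Arkhipov--Bezrukavnikov(--Ginzburg), which identify the Iwahori--Whittaker category on $\Fl$ (and its spherical analogue on the affine Grassmannian $\Gr_{G^\vee}$) with $\Coh(\widetilde{\cN}/G)$ in a way carrying convolution with the central sheaves to the action of $\Rep(G)$.

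With the module category understood on both sides, one reconstructs the monoidal category from it. Spectrally $\Coh(\cZ/G)=\Coh\bigl((\widetilde{\cN}/G)\times_{\cN/G}(\widetilde{\cN}/G)\bigr)$ is the category of $\Coh(\cN/G)$-linear endofunctors of $\Coh(\widetilde{\cN}/G)$, by descent along the proper map $p$; automorphically one shows the Iwahori--Whittaker module is a relative generator for $\Sh^{\bfI}(\Fl)$ and that the action functor is fully faithful. Concretely, $\Phi$ is built by sending the standard and costandard sheaves $j_{w!},j_{w\ast}$ (and the Wakimoto sheaves) to the corresponding ``exotic'' perverse-coherent objects on $\cZ/G$, and the central sheaves $Z(V)$ to the pullbacks of $V\in\Rep(G)$; one then checks monoidality and full faithfulness on this generating set, using the exotic $t$-structure on $\Coh(\cZ/G)$ to match abelian hearts with $\Perv_{\bfI}(\Fl)$, both of which are highest-weight categories on which $\Phi$ matches standard objects. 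This is the substance of \cite{roma hecke}.

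For the $\bfI^0$-refinement, note that $\bfI/\bfI^0\cong T^\vee(\Fqbar)$ is a torus and $\pi^\ast\colon\Sh^{\bfI}(\Fl)\to\Sh^{\bfI^0}(\Fl)$ is the fully faithful inclusion of the unipotently monodromic sheaves. On the spectral side, replacing the Springer resolution $\widetilde{\cN}$ by the Grothendieck--Springer resolution $\widetilde{\fg}$ adjoins the Cartan parameter, so that $\cZ'=\widetilde{\cN}\times_{\fg}\widetilde{\fg}$; since $\widetilde{\cN}\hookrightarrow\widetilde{\fg}$ is a closed embedding, $\cZ=\cZ'\times_{\widetilde{\fg}}\widetilde{\cN}$ is cut out by a closed embedding $i\colon\cZ/G\hookrightarrow\cZ'/G$, whose pushforward $i_\ast$ is the spectral counterpart of $\pi^\ast$. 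I would construct $\Phi'$ by the same method as $\Phi$ but with $\widetilde{\fg}$ in place of $\widetilde{\cN}$, and verify $\Phi'\circ\pi^\ast\simeq i_\ast\circ\Phi$ by comparing both functors on the (co)standard generators, where each side is computed by an explicit formula. The Frobenius-equivariance is then a weight computation: the (co)standard sheaves are $\ast$-pure of the expected weights and $H^\bullet(BG)$ is pure by Proposition~\ref{coh BG}, so the equivalences above are mixed, and once a square root of $q$ has been fixed as in Section~\ref{l-adic} the Frobenius autoequivalence $\Fr^\ast$ is intertwined with the action of $q\in\Gm$ on $\cZ/G$ and $\cZ'/G$.

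The hard part will be the reconstruction step: showing that the functor defined on generators is genuinely monoidal and fully faithful. This is not formal --- it rests on the Arkhipov--Bezrukavnikov(--Ginzburg) description of the Iwahori--Whittaker module category, on a generation statement making that category a relative generator, and on a precise comparison of $t$-structures (the exotic $t$-structure on $\Coh(\cZ/G)$ against the perverse $t$-structure on $\Sh^{\bfI}(\Fl)$) so that $\Phi$ is determined by its effect on hearts. Once $\Phi$ and the module identifications are in place, the passage to $\Phi'$, the commutativity of the square, and the Frobenius--$q$ intertwiner are essentially bookkeeping.
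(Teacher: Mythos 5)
This theorem is not proved in the paper at all: it is imported verbatim from Bezrukavnikov's work (Theorem 1 and Lemma 43 of \cite{roma hecke}, with the Frobenius intertwining being Proposition 53 there), so there is no internal proof to compare your proposal against. Judged as a roadmap of the cited proof, your sketch is faithful: the identification of the Iwahori--Whittaker module category with $\Coh(\widetilde{\cN}/G)$ via Arkhipov--Bezrukavnikov, Gaitsgory's central functor matching $\Rep(G)\to\Coh(\cZ/G)$, the reconstruction of the monoidal category as endofunctors of this module category (the spectral half of which is Theorem 1.1.3 of \cite{BNP2}, quoted later in the paper), the role of Wakimoto and (co)standard objects as generators, and the exotic $t$-structure are indeed the ingredients of \emph{loc.\ cit.} You also orient the closed embedding correctly --- $i\colon\cZ/G\hookrightarrow\cZ'/G$, induced by $\widetilde{\cN}\hookrightarrow\widetilde{\fg}$, so that $i_*$ goes from $\Coh(\cZ/G)$ to $\Coh(\cZ'/G)$ as in the displayed square; the paper's prose states the inclusion in the opposite direction, which is inconsistent with its own diagram.

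That said, what you have written is a sketch and not a proof. The steps you yourself flag as ``the hard part'' --- that the Iwahori--Whittaker category is a relative generator, that the functor defined on (co)standard and central generators extends to a monoidal equivalence, and the comparison of the exotic and perverse $t$-structures --- constitute essentially the entire content of \cite{roma hecke}, and nothing in your outline substitutes for them. Since the paper treats the theorem as a black-box input (as is appropriate), the right move here is to cite rather than to re-derive; your outline is a correct account of where the cited proof's weight lies, but it should not be mistaken for an independent argument.
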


\begin{rmk}
We note that while it is expected that the above equivalences lift to $\infty$-categorical enhancements, it is not currently written in the literature explicitly.  In our arguments (e.g. in Proposition \ref{Gm structure}) we do not need this stronger version; we use the equivalence to produce graded lifts of certain standard objects in the spectral affine Hecke category which may be done at the level of homotopy categories.
\end{rmk}

We point out certain distinguished sheaves in $\Sh^\bfI(\Fl)$ and $\Sh^{\bfI^\circ}(\Fl)$ (computed explicitly for $G = SL_2, PGL_2$ in Examples 2.2.3-5 in \cite{NY}).  
\begin{enumerate}[(a)]
\item Let $\lambda \in X_*(T^\vee) = X^*(T) \subset W_a$ be a character of the maximal torus of $G$, considered as an element of the affine Weyl group of the dual group.  The \emph{Wakimoto sheaves} $J_\lambda$ are defined as follows.  When $\lambda$ is dominant, we take $J_\lambda = j_{\lambda, *} \underline{\bQl}_{\Fl^\lambda}[\langle 2\rho, \lambda \rangle]$.  When $\lambda$ is antidominant, we take $J_\lambda = j_{\lambda,!} \underline{\bQl}_{\Fl^\lambda}[\langle 2\rho, -\lambda \rangle]$.  In general, writing $\lambda = \lambda_1 - \lambda_2$, we define $J_\lambda = J_{\lambda_1} * J_{-\lambda_2}$, which is independent of choices due to Corollary 1 in Section 3.2 of \cite{AB:affine flags}.

\item For any $w \in W_a$, we define the corresponding \emph{costandard} (resp. \emph{standard}) object by $\nabla_w := j_{w,*} \underline{\bQl}_{\Fl^w}[\ell(w)]$ (resp. $\Delta_w := j_{w,!} \underline{\bQl}_{\Fl^w}[\ell(w)]$).  They are monoidal inverses by Lemma 8 in Section 3.2 of \cite{AB:affine flags}.  By Lemma 4 of \cite{roma hecke}, we have $\nabla_w * \nabla_{w'} = \nabla_{ww'}$ (and likewise for standard objects) when $\ell(w) + \ell(w') = \ell(ww')$.  If $\lambda \in X_*(T^\vee) = X^*(T)$ is dominant, then the Wakimoto is costandard $J_\lambda = \nabla_{\lambda}$; if $\lambda$ is antidominant, the Wakimoto is standard $J_\lambda = \Delta_{\lambda}$.

\item Let $w_0 \in W_f \subset W_a$ be the longest element of the finite Weyl group.  The \emph{antispherical projector} or \emph{big tilting sheaf} $\Xi \in \Sh^{\bfI^\circ}(\Fl)$ is defined to be the tilting extension of the constant sheaf $\underline{\bQl}_{\Fl^{w_0}}$ of $\Fl^{w_0}$ to $\Fl$, as in Proposition 11 and Section 5 of \cite{roma hecke}.  Note that this object does not descend to $\Sh^{\bfI}(\Fl)$.
\end{enumerate}
We abusively use the same notation to denote sheaves in $\Sh^{\bfI^\circ}(\Fl)$; note that $\pi^* \Delta_w \simeq \Delta_w$ and $\pi^* \nabla_w \simeq \nabla_w$ by base change.  All sheaves above are perverse sheaves, since the inclusion of strata are affine.

\medskip

For our applications, we need to work not with $\cZ/G$ but with $\cZ/\GG$ (recall that $\GG = G \times \G_m$).
The following proposition is the key technical argument we need to construct the semiorthogonal decomposition of $\Coh(\cZ/\GG)$ and hence deduce results on its homological invariants -- a graded lift of standards and costandards under Bezrukavnikov's theorem. It is conjectured in \cite{roma hecke} (and announced in \cite{HL}) that the equivalences in Theorem~\ref{roma equiv} should have mixed versions, relating a mixed form of the Iwahori-equivariant category of $\Fl$ with a $\Gm$-equivariant version of $\Coh(\cZ/G)$, i.e. $\Coh(\cZ/\GG)$, which would immediately give us the desired result. In particular, see Example 57 in \cite{roma hecke} for an expectation of what the sheaves $\Phi(\Delta_w)$ are explicitly and note that they have $\G_m$-equivariant lifts.

\begin{prop}\label{Gm structure}
The objects $\Phi(\nabla_w), \Phi(\Delta_w) \in \Coh(\cZ/G)$ have lifts to objects in $\Coh(\cZ/\GG)$ for all $w \in W_a$, compatible with the action of Frobenius under the equivalence in Theorem \ref{roma equiv}.
\end{prop}
\begin{proof}
We will prove the statements for the standard objects; the statements for costandards follows similarly.  Wakimoto sheaves are sent to twists of the diagonal $\Phi(J_\lambda) \simeq \OO_{\Delta}(\lambda)$ by Section 4.1.1 of \cite{roma hecke}, which evidently have $\G_m$-equivariant lifts.   Convolution is evidently $\G_m$-equivariant, so the convolution of two sheaves with $\G_m$-lifts also has a $\G_m$-lift.  Assuming that the standard objects corresponding to finite reflections have $\G_m$-lifts,  
 by Lemma 4 of \cite{roma hecke} we can write the standard for the affine reflection as a convolution of Wakimoto sheaves and standard objects for finite reflections.  Thus, we have reduced to showing that all standard objects $\Phi(\Delta_w)$ have $\G_m$-lifts for $w$ a simple finite reflection.

By Corollary 42 of \cite{roma hecke} $\Phi'$ has the favorable property that $\cZ'$ is a classical (non-reduced) scheme, and that it restricts to a map on abelian categories on $\Perv^{U^\vee}(G^\vee/B^\vee) \subset \Perv^{\bfI^\circ}(\Fl)$ taking values in $\Coh(\cZ'/G)^\heartsuit$ (though it is not essentially surjective).  In particular, by Proposition 26 and Lemma 28 in \cite{roma hecke} it takes the tilting sheaf $\Xi$ to $\OO_{\cZ'/G}$, which manifestly has a $\G_m$-lift.

We claim that $\G_m$-lifts for the $\Phi'(\Delta_w) \in \Coh(\cZ'/G)$ for $w \in W_f$ induce $\G_m$-lifts for the $\Phi(\Delta_w) \in \Coh(\cZ/G)$.  Since $\cZ$ is a derived scheme, the functor $i_*: \Coh(\cZ/\GG) \rightarrow \Coh(\cZ'/\GG)$ is not fully faithful (i.e. objects on the left may have additional structure).  But since $\Phi'(\Delta_w) \simeq i_* \Phi(\Delta_w)$ are in the heart and $i_*$ is $t$-exact (for the standard $t$-structures) and conservative, we have that $\Phi(\Delta_w) \in \Coh(\cZ/G)^\heartsuit$.  Moreover, the restriction of $i_*$ to $\Coh(\cZ/G)^\heartsuit$ is fully faithful, 
proving the claim.  Thus, we have reduced to showing that the finite simple standard objects $\Phi'(\Delta_w) \in \Coh(\cZ'/G)^\heartsuit$ have $\G_m$-lifts; in particular these are objects in the abelian category of coherent sheaves.

By Lemma 4.4.11 in \cite{BY}, $\Xi$ is a successive extension of standard objects $\Delta_w(\ell(w)/2)$ for $w \in W_f$. Thus, there is a standard object $\Delta_w(\ell(w)/2)$ and a surjection $\Xi \twoheadrightarrow \Delta_w(\ell(w)/2)$, which is Frobenius-equivariant as it arises as a morphism in the mixed category.  This implies that the kernel $K = \ker(\Xi \twoheadrightarrow \Delta_w(\ell(w)/2))$ is a Frobenius-equivariant subobject of $K$.  On the spectral side, using Proposition 53 in \emph{op. cit.}, this means that $\Phi'(K) \subset \Phi'(\Xi) \simeq \cO_{/\cZ'/G}$ is a $q$-equivariant subobject with quotient $\Phi'(\Delta_w(\ell(w)/2))$.  We wish to show that the quotient has a $\G_m$-equivariant lift, which amounts to showing that $\Phi'(K)$ is a $\G_m$-equivariant subobject.

Since $\Phi(K)$ is already endowed with a $\G_m$-equivariant structure, $q$-equivariance for a subobject of a $\G_m$-equivariant object is property, not an additional structure.  We claim that for $q$ not a root of unity, any $q$-closed subsheaf of a $\G_m$-equivariant sheaf on a quotient stack must be $\G_m$-closed as well (i.e. the isomorphism defining the $\G_m$-equivariant structure restricts to the subsheaf).  Assuming this claim, and iterating the above argument replacing $\Xi$ with the kernel $K$, we find that $\Phi'(\Delta_w)$ has a $\G_m$-equivariant lift for every $w \in W_f$ (since the big tilting object contains every $\Delta_w$ as a subquotient), completing the proof.

We now justify the claim.  First, if $\mathcal{F}$ is a sheaf on a quotient stack $X/G$ with a $\G_m$-action, we can forget the $G$-equivariance (i.e. base change to the standard atlas $X \rightarrow X/G$).  Now, by reducing to an open affine $\G_m$-closed cover of $X$, we can assume $X$ is affine.  On an affine scheme $X = \Spec(A)$, the $\G_m$-action gives the structure of a $\Z$-grading on $A$, and a submodule of a graded $A$-module $M' \subset M$ is $q$-equivariant if it is a sum of $q$-eigenspaces, and $\G_m$-equivariant if it is a sum of homogeneous submodules.  The claim follows from the observation that any $m \in M'$ can only have eigenvalues $q^n$ for $n \in \Z$, which are distinct, so the $q$-eigenspaces entirely determine the $\G_m$-weights.
\end{proof}

\medskip

\subsection{A semiorthogonal decomposition}\label{semiorthog}

In this section, we describe an ``Iwahori-Matsumoto'' semiorthogonal decomposition of the mixed affine Hecke category $\Hcat^\mix := \Coh(\cZ/\GG)$, arising from the stratification of the affine flag variety $\Fl$ on the automorphic side of Bezrukavnikov's equivalence Theorem~\ref{roma equiv} and the lifting result in Proposition \ref{Gm structure}.  This will, in turn, induce a direct sum decomposition on Hochschild homology.  First, let us establish terminology.
\begin{defn}
Let $\{ \cat{S}_n \}_{n\in \mathbb{N}}$ denote a collection of full subcategories of a small dg category $\cat{C}$.  We say that $\{ \cat{S}_n \}$ defines a \emph{semiorthogonal decomposition} of $\cat{C}$ if there is an exhaustive left admissible filtration $F_n\cat{C}$ of $\cat{C}$ such that $\cat{S}_n$ is the left orthogonal of $F_{n-1}\cat{C}$ inside $F_n\cat{C}$.  In particular, in this case $\Hom^\bullet_{\cat{C}}(X_n, X_m) \simeq 0$ for $X_i \in \cat{S}_i$ and $n > m$. 
\end{defn}

The following result is standard.
\begin{prop}\label{semiorthog strat}
Let $G$ be a pro-affine group scheme acting nicely on an ind-scheme $X$.  Assume that the stabilizer of each orbit is connected, and that every $G$-closed subscheme of $X$ is a union of finitely many $G$-orbits.  Let $I$ be an indexing set for the $G$-orbits $X_i$ under the (partial) closure relation, i.e. $X_n \subset \overline{X_m}$ implies $m \geq n$, and let $j_n: X_n \hookrightarrow X$ denote the inclusion.  Then, $\langle j_{n!} {\bQl}_{X_n}\rangle$ defines a semiorthogonal decomposition of $\Sh^G(X)$, where the ordering is given by any choice of extension of the partial order to a total order.
\end{prop}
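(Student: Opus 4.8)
The plan is to establish the semiorthogonal decomposition by the standard recollement/gluing argument applied to the stratification of $X$ by $G$-orbits, proceeding one stratum at a time from the open strata inward (or rather, respecting the total order extending the closure partial order). First I would fix a total order on the index set $I$ refining the closure partial order, so that for each $n$ the union $U_n := \bigcup_{m \geq n} X_m$ is open in $X$ (being a union of the orbit $X_n$ with strictly larger orbits, hence a union of an orbit with its ``upward'' neighbors) and $Z_n := \bigcup_{m < n} X_m$ is the complementary closed subset. I would then observe that $j_n \colon X_n \hookrightarrow U_n$ is a closed embedding into the open piece $U_n$, and that on $\Sh^G(U_n)$ we have the usual recollement triangle relating sheaves on $U_n$, sheaves supported on $X_n$, and sheaves on $U_{n+1} = U_n \setminus X_n$. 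Since each orbit $X_n$ is a single $G$-orbit with connected stabilizer, $\Sh^G(X_n)$ is generated by the single object $\bQl_{X_n}$ (its endomorphism algebra being $H^\bullet(BG_{x_n})$, with $G_{x_n}$ the stabilizer), so $j_{n!}\bQl_{X_n}$ generates the ``$n$-th layer.''

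The generation statement — that $\bigcup_n \langle j_{n!}\bQl_{X_n}\rangle$ generates $\Sh^G(X)$ — I would extract by induction on the stratification using the recollement triangles: any object restricts compatibly to the open pieces, and the closed-to-open gluing triangles express it in terms of $j_{n!}$ (extension by zero from a stratum) of things pulled up from the stratum, which are in turn generated by $\bQl_{X_n}$. For an ind-scheme one passes to the colimit over the finite-type closed $G$-stable pieces furnished by the niceness hypothesis, and uses that $\Sh^G(X)$ is the corresponding colimit of the $\Sh^{G_i}(X_i)$ as recalled in Section~\ref{l-adic}; the generation and semiorthogonality statements are compatible with this colimit since each orbit lives at some finite stage.

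For the semiorthogonality, I need $\Hom^\bullet_{\Sh^G(X)}(X_n', X_m') \simeq 0$ when $n > m$ for $X_n' \in \langle j_{n!}\bQl_{X_n}\rangle$ and $X_m' \in \langle j_{m!}\bQl_{X_m}\rangle$; it suffices to check this on the generators, i.e. to show $\Hom^\bullet(j_{n!}\bQl_{X_n}, j_{m!}\bQl_{X_m}) = 0$ for $n > m$. Here I would use adjunction $\Hom^\bullet(j_{n!}\bQl_{X_n}, j_{m!}\bQl_{X_m}) \simeq \Hom^\bullet_{\Sh^G(X_n)}(\bQl_{X_n}, j_n^* j_{m!}\bQl_{X_m})$, and then the key geometric input: since $n > m$, the orbit $X_m$ does \emph{not} lie in the closure of $X_n$, so $X_n \cap \overline{X_m} = \emptyset$ — indeed $X_n$ is disjoint from the closed set $Z_{n} \cup X_n$'s complement... more precisely, $X_m \subset \overline{X_m}$ and $X_m$ is not in the closure of $X_n$, but I need $X_n$ avoiding the support of $j_{m!}\bQl_{X_m}$, which is $\overline{X_m}$; since $X_n$ is a $G$-orbit with $m < n$ in the order and the order refines closure, $X_n \not\subset \overline{X_m}$, and as $X_n$ is an orbit it is either contained in or disjoint from any $G$-stable closed set, hence $X_n \cap \overline{X_m} = \emptyset$. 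Therefore $j_n^* j_{m!}\bQl_{X_m} = 0$ and the Hom vanishes.

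The main obstacle I anticipate is purely bookkeeping: carefully matching the direction of the semiorthogonality inequality with the chosen total order and the $!$-versus-$*$ extension convention, and handling the ind-scheme colimit cleanly (ensuring that the recollement triangles and the vanishing assertions pass through the filtered colimit over finite stages, which is where the niceness hypothesis and the equivalences $\Sh^{G_j}(Y) \simeq \Sh^{G_i}(Y)$ for unipotent-kernel surjections are used). The connectedness-of-stabilizer hypothesis enters precisely to guarantee that each stratum contributes a single generator rather than a family indexed by a component group; without it one would get a decomposition with more factors. None of these steps requires real computation — the content is the recollement formalism plus the elementary fact that a $G$-orbit meets a $G$-stable closed set either not at all or in a closed piece of itself.
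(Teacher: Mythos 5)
Your proposal is correct and is essentially the argument the paper has in mind: the paper's proof consists of the single observation that each orbit is equivariantly $BH$ with $H$ the (connected) stabilizer, so $\Sh(BH)$ is generated by the constant sheaf, with the recollement/stratification formalism you spell out left implicit as standard. One small slip: the adjunction partner of $j_{n!}$ is $j_n^!$, not $j_n^*$, though this is immaterial here since $j_{m!}\bQl_{X_m}$ is supported on $\overline{X_m}$, which is disjoint from $X_n$ for $n>m$, so both restrictions vanish.
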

\begin{proof}
It is standard that stratifications of stacks give rise to semi-orthogonal decompositions on categories of $\ell$-adic sheaves.  We note that each orbit is equivariantly equivalent $BH$ where $H$ is the stabilizer (connected by assumption), and $\Sh(BH)$ is generated by the constant sheaf $\bQl$ when $H$ is connected.
\end{proof}

\begin{cor}\label{semiorthog bruhat}
Fix a Bruhat ordering of the affine Weyl group $W_a$.  The standard objects $\langle \nabla_w  = j_{n!} {\bQl}_{X_n}\rangle$ give a semiorthogonal decomposition of $\Sh^{\mathbf{I}}(\Fl)$.
\end{cor}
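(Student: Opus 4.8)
The plan is to deduce the corollary directly from Proposition~\ref{semiorthog strat}, the only subtlety being that the general statement there concerns the standard (costandard!) objects $\nabla_w = j_{w,*}\underline{\bQl}_{\Fl^w}[\ell(w)]$ rather than the $j_{w,!}$-extensions that appear in the proposition. So first I would check that Proposition~\ref{semiorthog strat} applies to the stratification of $\Fl$ by the $\bfI$-orbits $\Fl^w$, $w \in W_a$: the group $\bfI$ is pro-affine and acts nicely on the ind-proper ind-scheme $\Fl$ (this is exactly the setup of~\cite{gaitsgory central} recalled above), and each orbit $\Fl^w$ is an affine space, so its stabilizer in $\bfI$ is connected (indeed $\Fl^w \simeq B\mathrm{Stab}$ with $\mathrm{Stab}$ a pro-affine group with connected reductive quotient). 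The indexing set is $W_a$ with the Bruhat order, and the closure relation $\Fl^{w'} \subset \overline{\Fl^{w}}$ holds precisely when $w' \leq w$, matching the hypothesis (with $n \leftrightarrow w'$, $m \leftrightarrow w$).

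Next I would pass from the $j_{w,!}$-generators to the costandard objects $\nabla_w = j_{w,*}\underline{\bQl}_{\Fl^w}[\ell(w)]$. Since each stratum inclusion $j_w$ is affine (the $\bfI$-orbits are affine, as noted in the excerpt), both $j_{w,!}$ and $j_{w,*}$ are t-exact (up to the shift by $\ell(w)$), and on the single orbit $\Fl^w \simeq B\mathrm{Stab}$ the constant sheaf generates $\Sh(\Fl^w)$; hence $\langle j_{w,!}\underline{\bQl}_{\Fl^w}\rangle = \langle \Delta_w \rangle$ and $\langle j_{w,*}\underline{\bQl}_{\Fl^w}\rangle = \langle \nabla_w \rangle$ as full subcategories of $\Sh^G(\Fl)$ are the essential image of $\Sh(\Fl^w)$ under $j_{w,!}$ resp. $j_{w,*}$, and in particular $\langle \Delta_w \rangle = \langle \nabla_w \rangle$. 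Thus replacing $j_{w,!}\underline{\bQl}_{\Fl^w}$ by $\nabla_w$ in the statement of Proposition~\ref{semiorthog strat} changes nothing: the generation statement is unaffected, and for the semiorthogonality one uses that $\Hom^\bullet(j_{w',*}\mathcal{F}', j_{w,*}\mathcal{F}) \simeq 0$ for $w' > w$ by the same adjunction/base-change argument (pulling back $j_{w,*}\mathcal{F}$ to the closed-in-$\overline{\Fl^{w}}$ or disjoint stratum $\Fl^{w'}$ vanishes).

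Finally I would observe that the equivariance group in the corollary's statement is written $G$ rather than $\bfI$, but the relevant category $\Sh^G(\Fl)$ — i.e. $\Sh^{\bfI}(\Fl)$ in the notation of Theorem~\ref{roma equiv}, the $\bfI$-equivariant category — is exactly what Proposition~\ref{semiorthog strat} outputs, since $\bfI$ acts on $\Fl$ with the orbits $\Fl^w$. (If one prefers $\bfI^0$-equivariance, the same argument applies verbatim to $\Sh^{\bfI^0}(\Fl)$, since $\bfI^0$ is the pro-unipotent radical and $\Sh^{\bfI^0}(\Fl) \to \Sh^{\bfI}(\Fl)$ of orbit-wise categories agree; but the statement as phrased is the $\bfI$-equivariant one.) There is no real obstacle here: the content is entirely in Proposition~\ref{semiorthog strat}, and the only thing to verify is the mild bookkeeping that the $j_{w,!}$ and $j_{w,*}$ extensions of the constant sheaf on an affine orbit generate the same subcategory, so that the standard/costandard objects may be used interchangeably as generators of a semiorthogonal decomposition. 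The one point worth stating carefully is the direction of the semiorthogonality (which way the $\Hom$'s vanish) relative to the chosen Bruhat order, which follows the convention fixed in Proposition~\ref{semiorthog strat}: $\Hom^\bullet(\nabla_{w'}, \nabla_w) = 0$ when $w' > w$.
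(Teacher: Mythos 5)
Your reduction to Proposition~\ref{semiorthog strat} is the right move, and your verification of its hypotheses (nice action of the pro-affine group $\bfI$, connected stabilizers of the orbits $\Fl^w\simeq B\bfI^w$, closure order given by the Bruhat order) is exactly what the paper leaves implicit. But the step where you pass from the $!$-extensions of the proposition to the $*$-extensions $\nabla_w=j_{w,*}\ubQ_{\Fl^w}[\ell(w)]$ contains two genuine errors.

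First, the equality $\langle \Delta_w\rangle=\langle\nabla_w\rangle$ is false. These are the essential images of $j_{w,!}$ and $j_{w,*}$ respectively (both functors are fully faithful), and those images differ: every object of $\langle\Delta_w\rangle$ has vanishing $*$-restriction to all strata $\Fl^{w'}$ with $w'\neq w$, whereas $j_{w'}^*\nabla_w$ is in general nonzero for $w'<w$ (already for $\mathbb{A}^1=\G_m\sqcup\{0\}$ one has $i^*j_*\ubQ_{\G_m}\neq 0$, so $j_*\ubQ_{\G_m}\notin\langle j_!\ubQ_{\G_m}\rangle$). Generation of the whole category by $\bigcup_w\langle\nabla_w\rangle$ is still true, but it has to be proved by d\'evissage along the recollement triangles on closed unions of strata, not by identifying the two subcategories.

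Second, the direction of semiorthogonality is reversed for $*$-extensions. By adjunction, $\Hom(\nabla_{w'},\nabla_w)=\Hom(j_w^*j_{w',*}\ubQ[\ell(w')],\ubQ[\ell(w)])$, and $j_w^*j_{w',*}$ vanishes precisely when $\Fl^w\not\subset\overline{\Fl^{w'}}$, i.e.\ when $w\not\leq w'$. So the vanishing is $\Hom(\nabla_{w'},\nabla_w)=0$ for $w>w'$ (target larger), not for $w'>w$ as you assert; your justification conflates $*$- and $!$-restriction. This is exactly the content of the Remark following the corollary in the paper: the $j_{n*}$-objects give a semiorthogonal decomposition \emph{in the reverse order}. (The paper's corollary itself is imprecise here — it calls the $\nabla_w$ ``standard'' even though Section~\ref{two hecke} defines $\nabla_w$ as the costandard $*$-extension — and its intended proof is simply the immediate application of Proposition~\ref{semiorthog strat} to the $!$-extensions, with the Remark covering the $*$-extensions in the opposite order.) Your proof would be repaired by proving generation via recollement and by stating the Hom-vanishing for the opposite of the Bruhat order.
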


\begin{rmk}
The costandard objects $\Delta_w = j_{n*} {\bQl}_{X_n}$ define a semiorthogonal decomposition in the reverse order.
\end{rmk}

We would like to lift the above semiorthogonal decomposition of $\Coh(\cZ/G)$ to $\Coh(\cZ/\GG)$. We do so by applying Lemma \ref{deeq} to the $\G_m$-equivariant lifts of the objects $\Phi(\Delta_w)$ from Proposition \ref{Gm structure}.  
We will apply the following result to the setting:
$$\cat{C} = \cat{H}^\mix = \Coh(\cZ/\GG), \;\;\;\;\;\;\; \cat{C}^\deeq = \cat{H} = \Coh(\cZ/G), \;\;\;\;\;\;\; H = \G_m = \Spec k[z,z^{-1}]$$
recalling the de-equivariantization functor $(-)^{\deeq}: \cat{C} \rightarrow \cat{C}^\deeq = \cat{C} \otimes_{\Rep(H)} \cat{Vect}_k$  from Section \ref{HHSec}.
\begin{cor}\label{semiorthog barr beck}
Let $H$ be a group-scheme over a field $k$ of characteristic 0, and $\cat{C}$ a compactly generated cocomplete $\Rep(H)$-module dg category.   Let $\{E_n \in \cat{C} \mid n \in \mathbb{N}\}$ be a linearly ordered set of objects such that $\langle E_n^\deeq \rangle$ defines a semiorthogonal decomposition of $\cat{C}^\deeq$.  Denote by $A_n = \underline{\End}_{\cat{C}}(E_n)^{op}$ the $\Rep(H)$-algebras from Lemma \ref{deeq}. Then, we have an equivalence
$$\hh(\cat{C}) \simeq \bigoplus_\alpha \hh(A_n\dmod_{\Rep(H)}).$$ 
\end{cor}
\begin{proof}
Let $\cat{C}^\deeq_n := \langle E_n^\deeq \rangle$ be the category generated by $E_n^\deeq$, and let $\cat{C}_n$ be the preimage under $(-)^{\deeq}$.  The categories $\cat{C}_n$ form a semiorthogonal decomposition of $\cat{C}$, since $\Hom_{\cat{C}}(X, Y) = \uHom_{\cat{C}}(X, Y)^G$ by Lemma \ref{deeq}, and since $\uHom_{\cat{C}}(X, Y) = \Hom_{\cat{C}^\deeq}(X^\deeq, Y^\deeq)$ after forgetting the $\Rep(G)$-enriched structure on the left.  Hochschild homology is a localizing invariant in the sense of \cite{BGT}, and in particular takes semiorthogonal decompositions to direct sums.  Thus we have an equivalence $\hh(\cat{C}) \simeq \displaystyle\bigoplus_{n \in \mathbb{Z}} \hh(\cat{C}_n).$  
Applying Lemma \ref{deeq}, we find
$\displaystyle \hh(\cat{C}) \simeq \bigoplus_{n \in \mathbb{Z}} \hh(A_n\dmod_{\Rep(\G_m)}).$
\end{proof}

We now compute the endomorphism algebras $A_w$ as algebras in $\Rep(\G_m)$, using the graded lifts from Proposition \ref{Gm structure} and the semiorthogonal decomposition in Corollary \ref{semiorthog bruhat}.
\begin{prop}\label{compute end}
Let $E_w$ denote the $\G_m$-lifts of $\Phi(\Delta_w)$ constructed in Proposition \ref{Gm structure}, and $A_w = \underline{\End}_{\cZ/\GG}(E_w^\deeq)$.  We have quasi-isomorphisms $A_w \simeq \Symp_{\bQl} \mf{h}[-2]$ where $\mathfrak{h}[-2]$ is the universal dual Cartan shifted into cohomological degree 2 with $\G_m$-weight 1.  In particular, $A_w$ is formal.
\end{prop}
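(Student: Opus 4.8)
The plan is to compute $A_w = \End_{\Coh(\cZ/\GG)}(E_w)$ directly, reducing first to the case of the identity element $w = e$. Since the standard objects are convolution-invertible --- $\nabla_w \ast \nabla_{w^{-1}} = \nabla_e = \delta_e$ and similarly for the $\Phi(\Delta_w)$ on the spectral side --- conjugation by the invertible object $\Phi(\Delta_w)$ is a monoidal autoequivalence of $\Coh(\cZ/\GG)$ carrying the unit $\OO_{\tilN/\GG} = \Phi(\Delta_e)$ to $\Phi(\Delta_w)$ (up to the $\G_m$-twist, which does not affect the endomorphism algebra as a graded ring since it merely shifts weights globally). Hence $A_w \simeq A_e = \End_{\Coh(\cZ/\GG)}(\OO_{\tilN/\GG})$ as $\Rep(\G_m)$-algebras, compatibly with the formality claim. (One should be slightly careful here: the $E_w$ are only specified up to the choice of $\G_m$-lift, but any two lifts differ by a twist, so this is harmless.)

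So the task becomes computing $\REnd_{\Coh(\tilN/\GG)}(\OO)$. Here $\tilN = T^*(G/B)$, and the structure sheaf sits diagonally in $\cZ = \tilN \times_{\fg} \tilN$; by the self-intersection/base-change description this endomorphism complex is $R\Gamma$ of the derived self-intersection, i.e. $R\Gamma(\tilN/\GG, \wedge^\bullet \cT_{\tilN/\fg}[-1])$ where $\cT_{\tilN/\fg}$ is the relative tangent complex of the map $p : \tilN \to \fg$. The cleanest route is to observe that $p : \tilN \to \cN^{red}$ is the Springer resolution, a symplectic resolution; the relevant derived self-intersection $\tilN \times_\fg \tilN$ (equivalently $\tilN \times_{\cN} \tilN$ after accounting for the formal neighborhood) has a well-understood structure, and $\REnd(\OO_{\tilN})$ over the Steinberg stack should be computed by pushing forward to a point. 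A direct and perhaps more transparent computation: use the Grothendieck--Springer picture. The map $\tilN \hookrightarrow \gtil$ and the smooth map $\gtil \to \fg$ let one express $\cT_{\tilN/\fg}$ and then $R\Gamma(\tilN/\GG, \Sym \cT_{\tilN/\fg}[-1])$ in terms of cohomology of line bundles on $G/B$ against the torus action. The upshot should be that the answer is $\Symp_{\bQl}(\mf{h}[-2])$ with the stated weight-$1$ $\G_m$-grading on $\mf{h}$: the degree-$2$ part $\mf h$ arises exactly as $H^0(G/B, \cT_{G/B})$-type contributions twisted by the $\Gm$-scaling on the cotangent fibers, matching the classical computation of $\Ext^\bullet_{\cN/G}(\bS)$ (the graded affine Hecke algebra's polynomial part) but now recording the $\G_m$-weight.

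For formality: once the cohomology ring is identified with a free graded-commutative algebra $\Symp_{\bQl}(\mf h[-2])$ concentrated in even degrees, formality is automatic. A polynomial algebra on generators in a single (even) cohomological degree admits no room for nontrivial $A_\infty$-operations into it --- any potential higher product $m_n$ ($n \geq 3$) would land in a degree forced by the grading to be zero, or one can simply note that a free graded-commutative dga on even-degree generators is intrinsically formal. Alternatively one invokes the $\G_m$-equivariant purity: by Proposition \ref{coh BG} and the weight bookkeeping, $H^{2k}(A_w)$ is pure of weight $2k$ (here the extra $\G_m$ is being used exactly to pin down weights), and a pure dga with cohomology generated in degree $2$ is formal by the standard weight/Deligne-splitting argument (cf. the formality results cited around Rider's theorem in the introduction).

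The main obstacle I expect is the explicit identification of $R\Gamma(\tilN/\GG, \Sym \cT_{\tilN/\fg}[-1])$ as precisely $\Symp \mf h[-2]$ with the correct $\G_m$-weights --- in particular checking that no higher cohomology contributes and that the map $p$ being a resolution of the (formal neighborhood of the) nilpotent cone forces the relative cotangent/tangent complex to have the right amplitude so that only the degree-$2$ generators survive. This is morally the $\G_m$-graded refinement of the classical fact $\Ext^\bullet_{\cN/G}(\bS) \simeq \C[\mf h]$, so the computation should be controllable, but getting the weight on $\mf h$ to come out as $1$ (rather than, say, $2$) requires care with the convention for the $\Gm$-action (loop-rotation / cotangent scaling) fixed earlier in the paper.
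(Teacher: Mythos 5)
Your reduction to $w = e$ via invertibility of the standard objects is valid and is a small simplification the paper does not use (though note that \emph{conjugation} by an invertible object fixes the unit rather than moving it to $E_w$; what you actually need is that convolution with the inverse is an equivalence, whence $\End(E_w)\simeq\End(E_e)$ as $\Rep(\G_m)$-algebras). Be careful, too, with your first formality argument: it fails as stated, since e.g. $m_4$ applied to four degree-$2$ classes lands in degree $6$, which is nonzero in $\Symp\mf{h}^*[-2]$. What saves the day is exactly your second argument: the auxiliary $\G_m$-weight grading (weight equal to half the cohomological degree) forces $m_k=0$ for $k\neq 2$ by a shearing argument, and this is what the paper uses.

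The genuine gap is the central computation, which you explicitly defer: identifying the endomorphisms of the monoidal unit of $(\Coh(\cZ/\GG),\ast)$, i.e. $R\Gamma$ over $\tilN/\GG$ of the relative Hochschild complex $\Sym(L_{\tilN/\fg}[1])$, with $\Symp\mf{h}^*[-2]$ \emph{together with its $\G_m$-weights}. This spectral-side computation is not routine: $L_{\tilN/\fg}$ has amplitude $[-1,0]$ with jumping cohomology over the nilpotent locus, and the weight on $\mf{h}$ depends on the normalization of the $\G_m$-action intertwined with Frobenius under $\Phi$ --- precisely the point you say you cannot pin down. The paper avoids all of this by transporting the question through $\Phi$ to the automorphic side, where the unit map $\mathcal F\to j_w^!j_{w!}\mathcal F$ gives $\End(\Delta_w)\simeq R\Gamma(\bfI\backslash \Fl^w,\bQl)\simeq H^\bullet(B\bfI^w)\simeq H^\bullet(BT;\bQl)$, and purity of $H^\bullet(BT)$ (Proposition \ref{coh BG}) simultaneously yields the answer, the $\G_m$-weights (Frobenius weight equals cohomological degree equals twice the $\G_m$-weight, using that $q$ is not a root of unity), and formality. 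Without either completing the spectral-side computation or making this transfer, the proof is incomplete.
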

\begin{proof}
Since $\Phi$ is an equivalence of categories we can compute $A_w$ on the automorphic side. The unit map $\mathcal{F} \rightarrow j^! j_! \mathcal{F}$ is an equivalence for $j$ a locally closed immersion, so that
$$A_w = \Hom(j_{w,!}\underline{\bQl}_{\Fl^w}, j_{w,!}\underline{\bQl}_{\Fl^w}) = \Hom(\underline{\bQl}_{\Fl^w}, j_w^! j_{w,!}\underline{\bQl}_{\Fl^w})  \simeq R\Gamma(\bfI \bs \Fl^w, \underline{\bQl}_{\Fl^w}).$$
Since $\Fl^w$ is an $\bfI$-orbit, letting $\bfI^w$ denote its stabilizer for a choice of base point in $\Fl^w$, we find that $A_w \simeq C^\bullet(B\bfI^w; \bQl)$ is the equivariant cohomology chain complex for $B\bfI^w$ with $\bQl$-coefficients under the cup product.  The reductive quotient (i.e. by the pro-unipotent radical) of $\bfI^w$ is the quotient torus $H^\vee$, so $A_w \simeq C^\bullet(BH^\vee; \bQl)$.  By Proposition \ref{coh BG}, this algebra is formal and isomorphic to $H^\bullet(BH^\vee; \bQl) \simeq \Symp_{\bQl} \mf{h}[-2]$.

For the $\G_m$-weight, recall that the pullback along multiplication by $q$ corresponds under $\Phi$ to the Frobenius automorphism.  Thus, for $q$ not a root of unity, the $q^n$-eigenspace and the homogeneous $\G_m$-weight $n$ part coincide, and the claim follows by Proposition \ref{coh BG}.
\end{proof}

We now apply Corollary \ref{semiorthog barr beck} to the set-up in the above proposition.
\begin{corollary}\label{actual theorem}
Letting $k = \bQl$ or $\C$, we have an isomorphism of $k[z,z^{-1}]$-modules
$$\hh(\Hcat^\mix) \simeq kW_a \otimes_{k} k[z,z^{-1}].$$
In particular, we have that
\begin{enumerate}
\item the Hochschild homology $\hh(\Hcat^\mix)$ is cohomologically concentrated in degree zero,
\item the Chern character $K(\Hcat^\mix) \rightarrow \hh(\Hcat^\mix)$ factors through $K_0(\Hcat^\mix)$, and
\item the map $K_0(\Hcat^\mix) \otimes_{\Z} k \rightarrow \hh(\Hcat^\mix)$ is an equivalence.
\end{enumerate}
\end{corollary}
\begin{proof}
The claim for $\mathbb{C}$ follows from $\bQl$ by fixing an isomorphism.  Fix a Bruhat order on $W_a$, extended to a total order.   Applying Corollary \ref{semiorthog barr beck} in the case $\cat{C} = \Hcat^\mix = \Coh(\cZ/\GG)$, $\cat{C} = \Hcat = \Coh(\cZ/G)$, and $H = \G_m$, we have a canonical equivalence
$$\hh(\Hcat^\mix/\bQl) \simeq \bQl W_a \otimes_{\bQl} \hh(A \dperf_{\Rep(\G_m)}/\bQl)$$
where $A = \Sym_{\bQl} \mf{h}[-2] \simeq A_w$ is the algebra from Proposition \ref{compute end} (which does not depend on $w \in W_a$).  The Hochschild homology of of $A\dperf_{\Rep(\G_m)}$ is computed by the Block-Getzler complex of Definition \ref{alg model}, which we can compute explicitly.  Its terms are $(A^{\otimes n+1} \otimes \bQl[z,z^{-1}])^{\G_m}$, and since $z$ has $\G_m$-weight 0, there is an isomorphism $(A^{\otimes n+1} \otimes \bQl[z,z^{-1}])^{\G_m} \simeq (A^{\otimes n+1})^{\G_m} \otimes \bQl[z,z^{-1}]$ and we observe that $(A^{\otimes n+1})^{\G_m} = \bQl$ since each $A$ is generated over $\bQl$ by positive weights.  Thus, the natural map $\BG^\bullet_{\G_m}(\bQl) \rightarrow \BG^\bullet_{\G_m}(A)$ is a quasi-isomorphism, so the first claim follows.  Factorization through $K_0$ follows since the Hochschild homology is coconnective.

To show that the map $K_0(A\dmod_{\Rep(\G_m)}) \otimes_{\Z} \bQl \rightarrow \hh(A\dmod_{\Rep(\G_m)}/\bQl)$ is an equivalence, first note that since $\hh(A\dmod_{\Rep(\G_m)}/\bQl)$ is concentrated in degree zero, the Chern character factors through $K_0$, i.e. we have a commuting diagram for each summand
$$\begin{tikzcd}
K(\Rep(\G_m)) \otimes_{\Z} \bQl \arrow[r] \arrow[d] & K_0(\Rep(\G_m)) \otimes_{\Z} \bQl \arrow[r, "\simeq"] \arrow[d] & \hh(\Rep(\G_m)/\bQl) \arrow[d, "\simeq"] \\
K(A\dperf_{\Rep(\G_m)}) \otimes_{\Z} \bQl \arrow[r] & K_0(A\dperf_{\Rep(\G_m)}) \otimes_{\Z} \bQl \arrow[r] & \hh(A\dmod_{\Rep(\G_m)}/\bQl).
\end{tikzcd}$$
By Remark \ref{chern properties}, the map $K_0(\Rep(\G_m)) \rightarrow K_0(A\dperf_{\Rep(\G_m)})$ is an equivalence, since both sides are freely generated by $K_0(\Rep(\G_m)) = \hh(\Rep(\G_m))$ by the character of a single object $[A]$, i.e. the free object.  Using the semiorthogonal decomposition, these equivalences induce an equivalence $K_0(\Hcat^\mix) \otimes_{\bZ} \bQl \simeq \hh(\Hcat^\mix/\bQl)$, which is an equivalence of algebras by Remark \ref{chern properties}.
\end{proof}

We also have the following result for the non-$\G_m$-equivariant version.
\begin{cor}\label{non Gm hh}
Let $k = \bQl$ or $\bC$.  The map of algebras $K(\Coh(\cZ/G)) \rightarrow \hh(\Coh(\cZ/G))$ factors through $K_0$ and we have an isomorphism as dg $k$-modules
$$\hh(\Coh(\cZ/G)) \simeq kW_a \otimes H^\bullet(H^\vee \times BH^\vee; k) \simeq kW_a \otimes \Sym_k(\mf{h}[-1] \oplus \mf{h}[-2]).$$
\end{cor}
\begin{proof}
Essentially the same as the previous corollary, along with a direct calculation of the Hochschild homology of the formal dg ring $\hh(\Sh(BH^\vee)) = \hh(\Sym_k(\mf{h}[-2])\dmod).$
\end{proof}

\medskip

\subsection{Hochschild homology of the affine Hecke category}\label{hh aff hecke}

In this section, we will show that the trace decategorification of the mixed affine Hecke category $\Hcat^\mix$ is the affine Hecke algebra $\Haff$, while the trace decategorification of the affine Hecke category $\Hcat$ is a derived variant of the group algebra of the extended affine Weyl group $kW_a$.  We assume that $G$ has simply connected derived subgroup  until Section \ref{not sc}, where we remove the assumption.

\medskip

We begin by quoting the following celebrated theorem by Ginzburg, Kazhdan and Lusztig.
\begin{thm}[Ginzburg-Kazhdan-Lusztig]\label{KL thm}
Let $k = \bQl$ or $\mathbb{C}$, and assume that $G$ has simply connected derived subgroup.  Then there is an equivalence of associative algebras $\Haff \rightarrow K_0(\Hcat^\mix) \otimes_{\Z} k$, compatibly with an identification of the center with $K_0(\Rep(\GG)) \otimes_{\Z} k$.  Likewise, there is an equivalence of associative algebras $kW_a \simeq K_0(\Hcat) \otimes_{\Z} k$ with center $K_0(\Rep(G))$.
\end{thm}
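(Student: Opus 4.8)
\emph{Proof plan.}  The assertion is, in essence, a restatement in the derived-categorical language of the paper of the Kazhdan--Lusztig isomorphism recalled as Theorem~\ref{KL theorem}, together with its $q=1$ degeneration; so the plan is not to reprove \cite{KL,CG,lusztig bases} but to reconcile the two formulations.  The first step is to replace the derived Steinberg scheme $\cZ=\tilN\times_{\mf g}\tilN$ by a classical scheme.  By the conventions fixed in the notation section, $\pi_0(\cZ)$ is canonically the (reduced) classical Steinberg variety, which also coincides with $(\cZ')^{red}$; and $G$-theory, i.e.\ the $K$-theory of $\Coh(-)$, is insensitive to derived and nilpotent thickenings (equivariantly for quotients by affine group schemes).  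Hence
$$K_0(\Coh(\cZ/\GG))\ \cong\ K_0^{\GG}(\pi_0(\cZ))\ =\ K_0(\cZ/\GG),$$
the classical $\GG$-equivariant $K$-group appearing in Theorem~\ref{KL theorem}, and likewise with $\GG$ replaced by $G$.

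Next I would match the algebra structures.  The monoidal structure on $\Coh(\cZ/\GG)$ is convolution along the correspondence $\tilN\times_{\mf g}\tilN\times_{\mf g}\tilN$, formed using $p_{12}^{*}(-)\otimes p_{23}^{*}(-)$ followed by proper pushforward along $p_{13}$ (properness holds because $\tilN\to\cN$ is proper, so the product is defined on all of $\Coh$, not merely on a subcategory with support conditions).  On $K_0$ this decategorifies precisely to the $K$-theoretic convolution product used by Ginzburg and Kazhdan--Lusztig: derived tensor product becomes the $K$-theoretic intersection pairing and proper pushforward becomes proper pushforward.  The $\Gm$-factor in $\GG$ scaling the cotangent fibres is exactly what produces the Hecke parameter $q$ rather than its $q=1$ specialization.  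Granting this, Theorem~\ref{KL theorem} gives the algebra isomorphism $\Haff\simeq K_0(\Hcat)\otimes_{\Z}k$ together with the identification of the centre with $K_0^{\GG}(\pt)\otimes_{\Z}k=K_0(\Rep(\GG))\otimes_{\Z}k$ via the Bernstein isomorphism.  For the last assertion I would specialize $q$ to $1$, i.e.\ pass from $\GG=G\times\Gm$ to $G$: then $\Haff$ degenerates to the group ring $kW_a$ of the extended affine Weyl group $W_a=X^{\bullet}(T)\rtimes W_f$, the same convolution computation identifies it with $K_0(\Coh(\cZ/G))\otimes_{\Z}k$ (again \cite{CG}), and the Bernstein centre $K_0(\Rep(\GG))\otimes k$ becomes $K_0(\Rep(G))\otimes k=k[T]^{W}$, which one checks directly is the centre of $kW_a$ (functions on $T/W$, via $k[X^{\bullet}(T)]^{W}$).

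The only genuine obstacle is bookkeeping rather than conceptual: in the first step one must be careful that the derived structure on $\cZ$, and the distinction between $\cZ$, $\cZ'$ and the reduced Steinberg variety, really is invisible to $K_0(\Coh(-))$; and in the second step one must pin down that the monoidal (tensor-over-a-correspondence) convolution of the paper agrees, up to the expected normalization, with the classical $K$-theoretic convolution on which \cite{KL,CG,lusztig bases} are phrased.  Once these identifications are made, the theorem follows immediately from the cited results.
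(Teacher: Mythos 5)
Your proposal is correct and follows essentially the same route as the paper: reduce to the classical Kazhdan--Lusztig/Ginzburg theorem by observing that $K_0(\Coh(-))$ is insensitive to the derived/nilpotent structure on $\cZ$, and check that the convolution product decategorifies to the classical $K$-theoretic one. The only point the paper adds that you omit is the remark that the proofs in \cite{KL,CG}, though stated over $\C$, are algebraic and hence valid over any algebraically closed field of characteristic zero (which is where the hypothesis on $k$ enters).
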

\begin{proof}
The only difference between our statement and that in \cite{KL} \cite{CG} is their Steinberg stack is the classical stack $\pi_0(\cZ)/\GG$, which has no derived structure.  On the other hand, we are interested in $\cZ/\GG$ which has better formal properties.  The statement follows from the fact that the Grothendieck group is insensitive to derived structure, i.e. the ideal sheaf for the embedding $\pi_0(\cZ)/\GG \hookrightarrow \cZ/\GG$ acts nilpotently on any coherent complex.  Finally, note that while the statement of Theorem 3.5 of \cite{KL} and Theorem 7.2.5 in \cite{CG} are made for $k = \C$, the proofs do not employ topological methods and apply to the isomorphic field $\bQl$.
\end{proof}

For the remainder of the section, we let $k = \bQl$ or $\C$.  We combine the above theorem with Corollary~\ref{actual theorem} to arrive at the following main theorem.  We will remove the simply connectedness assumption in Section \ref{not sc}.
\begin{thm}\label{ThmHH}
Assume that $G$ has simply connected derived subgroup.  There is an equivalence of algebras, and an identification of the center:
$$\begin{tikzcd}
\Haff \arrow[r, "\simeq"] & \hh(\Hcat^\mix)\\
k[G]^G \otimes_k k[q,q^{-1}] \arrow[r, "\simeq"] \arrow[u, hook] & \hh(\Rep(G \times \G_m)) \arrow[u, hook].
\end{tikzcd}$$
\end{thm}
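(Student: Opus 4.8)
The plan is to deduce Theorem~\ref{ThmHH} by combining the two inputs already in hand: the Ginzburg--Kazhdan--Lusztig isomorphism of Theorem~\ref{KL thm}, which identifies $\Haff$ together with its center with $K_0(\Hcat)\otimes_\Z k$, and the computation of Corollary~\ref{actual theorem}, which shows via the Chern character that $K_0(\Hcat)\otimes_\Z k$ \emph{is} the Hochschild homology $\hh(\Hcat)$. The only genuine content beyond quoting these is to check that the resulting identifications are compatible as \emph{algebras} and with the indicated central subalgebras, i.e.\ that the square of the theorem commutes.

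First I would produce the top arrow. Since $\Hcat=\Coh(\cZ/\GG)$ is a monoidal dg category and both connective $K$-theory and Hochschild homology are lax symmetric monoidal functors of dg categories, the Chern character $\chern\colon K(\Hcat)\to\hh(\Hcat)$ is a map of associative algebra objects in $\cat{Vect}_k$ (this is the Remark following the definition of $\chern$). By Corollary~\ref{actual theorem} the target is concentrated in cohomological degree zero, so $\chern$ factors through $K_0(\Hcat)$ and induces an isomorphism $K_0(\Hcat)\otimes_\Z k\risom\hh(\Hcat)$; being the $0$-truncation of a ring map, this is an isomorphism of algebras. Precomposing with the algebra isomorphism $\Haff\risom K_0(\Hcat)\otimes_\Z k$ of Theorem~\ref{KL thm} yields the top horizontal equivalence.

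Next I would handle the center. The structural symmetric monoidal functor $\Rep(\GG)\to\Hcat$ (pullback of sheaves on $B\GG$ along $\cZ/\GG\to B\GG$, equivalently tensoring with the monoidal unit $\cO_{\cZ/\GG}$) is the geometric origin of the Bernstein isomorphism, so applying $\chern$ naturally to it produces a commuting square relating $K_0(\Rep(\GG))\otimes_\Z k\to K_0(\Hcat)\otimes_\Z k$ with $\hh(\Rep(\GG))\to\hh(\Hcat)$. By Theorem~\ref{KL thm} the left vertical map is an isomorphism onto $Z(\Haff)\cong k[\GG]^{\GG}\otimes_\Z k=k[G]^G\otimes_k k[q,q^{-1}]$; in particular it is injective, hence so is the right vertical map, giving the right-hand hook. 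Finally I would identify the bottom corner: by the loop-space (HKR) description recalled in the introduction and the Künneth formula for Hochschild homology, $\hh(\Rep(\GG))=\hh(\Rep(G))\otimes_k\hh(\Rep(\Gm))=\cO(\cL B G)\otimes_k\cO(\cL B\Gm)=k[G]^G\otimes_k k[q,q^{-1}]$, and the Chern character $R(\GG)\otimes_\Z k\to\cO(\GG\git\GG)$ is the standard identification of the complexified representation ring with adjoint-invariant functions; this realizes the bottom horizontal $\simeq$ and makes all four arrows match.

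The main obstacle, such as it is, is this last identification: pinning down that $\chern$ restricted to $\Rep(\GG)$ really is the classical isomorphism $R(\GG)\otimes_\Z k\risom\cO(\GG\git\GG)$, and that the two a priori different central copies inside $\hh(\Hcat)$ --- the image of $\hh(\Rep(\GG))$ and the Bernstein center of $\Haff$ transported along the top arrow --- coincide. Both follow from naturality of the Chern character together with the compatibility-with-center clause of Theorem~\ref{KL thm}, but this is the step where the commutativity of the whole diagram is actually established; everything else is a formal consequence of Corollary~\ref{actual theorem}.
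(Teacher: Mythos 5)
Your proposal is correct and follows essentially the same route as the paper: the paper's proof likewise combines the Ginzburg--Kazhdan--Lusztig identification of $\Haff$ with $K_0(\Hcat)\otimes_\Z k$ and Corollary~\ref{actual theorem}, and invokes functoriality of the Chern character to see that the isomorphism respects the convolution algebra structure. Your additional care with the central square (identifying $\hh(\Rep(\GG))$ with $k[G]^G\otimes_k k[q,q^{-1}]$ and matching it against the Bernstein center) fills in details the paper leaves implicit but does not change the argument.
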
  
\begin{proof}
That the map is an isomorphism is a combination of Theorem \ref{KL theorem} and Corollary \ref{actual theorem}.\end{proof}

The following non-mixed variant may also be of interest, and is the analogue to Corollary \ref{non Gm hh}. In this case, the map to Hochschild homology is not an equivalence, though it does induce an equivalence on $\hh_0$.  
We note that the dg algebra $\Sym_k(\mf{h}[-1] \oplus \mf{h}[-2])$ appearing in the statement is equivalent to $C^\bullet(H^\vee \times BH^\vee)$.
\begin{cor}\label{ThmHH no Gm}
With the assumptions above, there is a commuting diagram of algebras:
$$\begin{tikzcd}
kW_a \otimes_k \Sym_k(\mf{h}[-1] \oplus \mf{h}[-2]) \arrow[r, "\simeq"] & \hh(\Hcat)\\
k[G]^G \arrow[r, "\simeq"] \arrow[u, hook] & \hh(\Rep(G)) \arrow[u, hook].
\end{tikzcd}$$
\end{cor}
\begin{proof}
By Corollary \ref{non Gm hh}, the Hochschild homology $\hh(\Coh(\cZ/G))$ is coconnective, so the Chern character from $K(\Coh(\cZ/G))$ factors through $K_0(\Coh(\cZ/G)) \otimes_{\Z} k = kW_a$.  Thus we have a map of algebras $kW_a \rightarrow \hh(\Coh(\cZ/G))$ which induces an equivalence on $H^0$.  Next, note that the subcategory $\Sh^{\bfI}(\Fl)$ generated by the monoidal unit (i.e. the skyscraper sheaf $\delta_e$), which is closed under the monoidal structure, is in the center of $\Coh(\cZ/G)$, so that the subalgebra $\hh(\langle \delta_e \rangle) \simeq  \Sym_k(\mf{h}[-1] \oplus \mf{h}[-2]) \subset \hh(\Coh(\cZ/G))$ is central.  This defines a map of algebras $\hh(\langle \delta_e \rangle)\dmod \rightarrow \hh(\Coh(\cZ/G))$, which defines a map of algebras out of the tensor product 
$\hh(\langle \delta_e \rangle) \otimes_k kW_a \rightarrow \hh(\Coh(\cZ/G))$ which is an equivalence when restricted to each tensor factor; thus it is an equivalence.  
\end{proof}

\medskip

\subsubsection{$q$-specializations of the affine Hecke algebra}

Let $q: \cZ/G \rightarrow \cZ/G$ be the action by $q \in \G_m$ under our conventions, i.e. multiplying by $q^{-1}$.  In this section we compute the trace of the functor\footnote{Note that our $q_*$ corresponds to $\mathbf{q}^*$ in \cite{AB:affine flags}.} $q_*$ on the category $\Hcat = \Coh(\cZ/G)$.  First, we make the general observation that if $F$ is an automorphism of a small dg category $\cat{C}$ and $\cE \in \cat{C}$, then an $F$-equivariant structure on $\cE$ induces an automorphism of the dg algebra $A = \End_{\cat{C}}(\cE)$, and thus an automorphism of the category $A\dperf$ which we denote $F_A$.  This $F$-equivariant structure on $\cE$ defines a commuting structure for an equivalence of pairs $(A\dperf, F_A) \rightarrow (\langle \cE \rangle, F)$.

\begin{prop}\label{compute trace q}
Let $q \ne 1$ and let $A_w$ denote the algebras from Proposition \ref{compute end}.  Then, $\hh(A_w, q_*) \simeq k.$ 
\end{prop}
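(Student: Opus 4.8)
The plan is to compute the $q^*$-twisted Hochschild homology of $A_w$ by hand, using the very explicit description of $A_w$ and of the automorphism it carries. By Proposition~\ref{compute end} we may take $A_w \simeq \Symp_{\bQl}(\mf h^*[-2])$, a polynomial algebra on a basis $x_1,\dots,x_r$ of $\mf h^*$ placed in cohomological degree $2$. First I would identify the automorphism $\phi$ of $A_w$ induced by the endofunctor $q^*$ of $\Coh(\cZ/G)$: on the automorphic side $\Fr^*$ fixes the $\bfI$-orbit $\Fl^w$, so the Weil structure on $\nabla_w$ makes $\Fr^*\nabla_w \simeq \nabla_w$, whence under $\Phi$ the functor $q^*$ preserves $\langle E_w\rangle$ and acts on $\End(E_w) \simeq C^\bullet(BT;\bQl)$ by the Frobenius automorphism. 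The weight bookkeeping in the proof of Proposition~\ref{compute end}, together with the strong purity of $H^\bullet(BT;\bQl)$ from Proposition~\ref{coh BG}, then shows $\phi$ acts on the degree-$2m$ summand $\Symp^m(\mf h^*[-2])$ by the scalar $q^m$; in particular $\phi(x_i) = q\,x_i$.

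Next I would recall (from the Block--Getzler model $C^\bullet_q(A_w)$ of Definition~\ref{alg model}, equivalently from the identification of twisted Hochschild homology with functions on the $q$-twisted loop space) that $\hh(A_w, q^*) \simeq A_w \otimes^{\mathbf L}_{A_w \otimes A_w^{\mathrm{op}}} {}^{\phi}\!A_w$, the derived coinvariants of the $\phi$-twisted diagonal bimodule. To evaluate it I would resolve $A_w$ as a bimodule by the Koszul complex $A_w \otimes \Lambda^\bullet(\mf h^*[-2]) \otimes A_w \xrightarrow{\ \sim\ } A_w$, whose differential is assembled from the elements $x_i\otimes 1 - 1\otimes x_i \in A_w\otimes A_w^{\mathrm{op}}$; all generators sit in even cohomological degree, so no Koszul signs intervene. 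Tensoring with ${}^{\phi}\!A_w$ over $A_w\otimes A_w^{\mathrm{op}}$ replaces $x_i\otimes 1 - 1\otimes x_i$ by multiplication by $\phi(x_i)-x_i = (q-1)x_i$, so $\hh(A_w,q^*)$ is computed by the complex $A_w\otimes\Lambda^\bullet(\mf h^*[-2])$ whose differential is $(q-1)$ times the Koszul differential of the sequence $(x_1,\dots,x_r)$.

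Finally, since $q\neq 1$ the scalar $q-1$ is a unit, so after rescaling generators this is the ordinary Koszul complex of the regular sequence $x_1,\dots,x_r$ in the polynomial ring $A_w$, which resolves $A_w/(x_1,\dots,x_r)\simeq \bQl$; hence $\hh(A_w,q^*)\simeq \bQl$ concentrated in degree $0$. (Geometrically: $\Spec A_w$ is an affine space on which $q^*$ acts by dilation, and the derived fixed locus of a dilation by a scalar $\neq 1$ is the reduced point, cut out by the regular section $x\mapsto(q-1)x$.) I do not expect a serious obstacle here; the only point requiring care is the bookkeeping needed to pin down that $\phi$ is scaling by $q$ on generators (and in any case scaling by $q$ or by $q^{-1}$, or even by any $\zeta q^m$ with $\zeta$ a root of unity and $m\neq 0$, would give the same conclusion, since the only relevant input is that $\phi$ has no eigenvalue equal to $1$ on $\mf h^*[-2]$).
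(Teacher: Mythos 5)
Your proof is correct, and it is exactly the ``direct calculation using the complex $C_q(A_w,\G_m)$ \ldots via Koszul resolutions'' that the paper's proof names in its first paragraph and then declines to write out, opting instead for a geometric repackaging: the paper applies a Tate shearing to turn $\Symp_k\mf{h}^*[-2]$ into $\OO(\mf{h})$ and then quotes the formula $\hh(\Perf(\mf h),q^*)=\OO(\mf h^q)$, observing that the derived fixed locus of a nontrivial dilation of affine space is the reduced origin. The two arguments are the same computation in different clothing --- the paper's ``$\mf h^q=\{0\}$'' is computed by precisely your Koszul complex on the regular sequence $(q-1)x_1,\dots,(q-1)x_r$ --- so your version simply makes explicit what the paper delegates to the general derived-fixed-point formula. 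Your closing remark that only the absence of eigenvalue $1$ on $\mf h^*$ matters is also the right level of robustness: the paper is slightly inconsistent about whether the generators have weight $+1$ or $-1$, and your formulation makes clear that this does not affect the conclusion for $q\neq 1$.
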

\begin{proof}
First, observe that the functor $q_*$ induces the automorphism on the algebra $A_w \simeq \Symp_k \mf{h}^*[-2]$ arising via the $q$-scaling map on $\mf{h}$ (in particular, $\mf{h}^*$ has weight $-1$).  The claim is a direct calculation using the complex $C_q(A_w, \G_m)$ from Definition \ref{alg model} via Koszul resolutions: $C_q(A_w, \G_m)$ is the derived tensor product $A_w \otimes_{A_w \otimes A_w}^L A_w$ where $A_w$ is the diagonal bimodule for one factor and is twisted by $q_*$ on the other factor.

Rather than a direct calculation, we give a geometric argument.  First, note that $q_*$ preserves the $\G_m$-weights of $A_w \simeq \Sym_k \mf{h}^*[-2]$ (i.e. since $q \in \G_m$ is central).  We apply a Tate shearing (i.e. sending cohomological-weight bidegree $(a, b)$ to $(a - 2b, b)$) to the algebra $\Symp_k \mf{h}^*[-2]$ to obtain the algebra $\OO(\mf{h}) = \Sym_k \mf{h}^*$.  Note that $\hh(\Perf(\h), q_*) = \OO(\h^q)$, i.e. functions on the derived fixed points of action by $q$.  When $q \ne 1$ we have $\h^q = \{0\}$, so $\hh(\Perf(\h), q_*) = k$.  Undoing the shearing, we find that the natural map $\hh(A_w, q_*) \rightarrow \hh(k, q_*)$ is an equivalence.
\end{proof}

\begin{cor}\label{trace q id}
Let $\cH_q$ denote the specialization of the affine Hecke algebra at $q \in \G_m$.  If $q \ne 1$, we have an equivalence of algebras
$$\hh(\Hcat, q_*) \simeq \cH_q.$$
\end{cor}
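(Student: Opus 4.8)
The plan is to combine a $q^*$-equivariant refinement of the semiorthogonal decomposition of $\Coh(\cZ/G)$ with the local computation of Proposition~\ref{compute trace q}, and then to transport the algebra structure along the monoidal forgetful functor from $\Coh(\cZ/\GG)$.

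First I would observe that the semiorthogonal decomposition of $\Coh(\cZ/G)$ by the subcategories $\cat{C}_w:=\langle\Phi(\Delta_w)\rangle$, $w\in W_a$ (coming from Theorem~\ref{roma equiv} and Section~\ref{semiorthog}), is preserved by the autoequivalence $q^*$: under Bezrukavnikov's equivalence $\Phi$ the functor $q^*$ is intertwined with the Frobenius autoequivalence of $\Sh^{\bfI}(\Fl)$, which fixes each stratum $\Fl^w$ and hence each $\langle\Delta_w\rangle$. Hochschild homology with coefficients in a compatible endofunctor is still a localizing invariant — it sends a $q^*$-stable semiorthogonal decomposition to the direct sum of the pieces, with $q^*$ acting summandwise — by the same argument as in Corollary~\ref{semiorthog barr beck}. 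Applied here this gives
$$\hh(\Coh(\cZ/G),q^*)\;\simeq\;\bigoplus_{w\in W_a}\hh(A_w,q^*),$$
where $A_w=\End_{\Coh(\cZ/G)}(\Phi(\Delta_w))^{op}\simeq\Symp_k\h^*[-2]$ as in Proposition~\ref{compute end} and $q^*$ acts through the automorphism analyzed in Proposition~\ref{compute trace q}. By that proposition each summand equals $k$ when $q\neq 1$; in particular $\hh(\Coh(\cZ/G),q^*)$ is concentrated in cohomological degree $0$ and is free over $k$ of rank $|W_a|$.

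Next I would produce the algebra isomorphism. The forgetful functor $\mathrm{oblv}\colon\Coh(\cZ/\GG)\to\Coh(\cZ/G)$ is monoidal and preserves compact objects, and the $\G_m$-equivariant structure supplies a monoidal natural isomorphism $\mathrm{oblv}\simeq q^*\circ\mathrm{oblv}$, so that $\mathrm{oblv}$ together with this isomorphism is a monoidal morphism of pairs $(\Coh(\cZ/\GG),\mathrm{id})\to(\Coh(\cZ/G),q^*)$. The functoriality of traces from Section~\ref{HHSec} then produces an algebra map $\Haff\simeq\hh(\Coh(\cZ/\GG))\to\hh(\Coh(\cZ/G),q^*)$ (using Theorem~\ref{ThmHH}). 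On the Bernstein parameter $k[q,q^{-1}]\subset Z(\Haff)$, which is the image of $\hh(\Rep(\G_m))$ under the monoidal pullback $\Rep(\G_m)\to\Coh(\cZ/\GG)$, this map is evaluation at the point $q$ (the twist on the $\Rep(\G_m)$-factor records the action of $q\in\G_m$), so it kills the maximal ideal cutting out $q$ and descends to an algebra map
$$\cH_q\;=\;\Haff\otimes_{k[q,q^{-1}]}k_q\;\longrightarrow\;\hh(\Coh(\cZ/G),q^*).$$

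Finally I would check that this is an isomorphism by comparing the two $W_a$-indexed decompositions. By Corollary~\ref{actual theorem}, $\Haff\simeq\bigoplus_w\hh(A_w\dmod_{\Rep(\G_m)})$ is free over $k[q,q^{-1}]$, so $\cH_q\simeq\bigoplus_w k$; the displayed map respects these decompositions because $\mathrm{oblv}$ respects both semiorthogonal decompositions (that is how the one on $\Coh(\cZ/\GG)$ was built in Corollary~\ref{semiorthog barr beck}); and on the $w$-th summand it is the specialization map $C^\bullet(A_w,\G_m)\otimes_{k[q,q^{-1}]}k_q\to C^\bullet_q(A_w,\G_m)$ of Block--Getzler complexes from Definition~\ref{alg model}, which is a quasi-isomorphism because $A_w$ is generated in a single sign of $\G_m$-weights, so both complexes reduce compatibly to the case $A_w=k$. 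I expect the main obstacle to be exactly this last compatibility: identifying, inside the trace formalism, the map induced by $\mathrm{oblv}$ with the summandwise Block--Getzler specialization and verifying that the latter is a quasi-isomorphism, together with the bookkeeping needed for the $q^*$-twisted form of Corollary~\ref{semiorthog barr beck}. A more geometric alternative would be to observe that $\hh(\Coh(\cZ/\GG))$ is flat over $k[q,q^{-1}]=\hh(\Rep(\G_m))$ and that $\hh(\Coh(\cZ/G),q^*)$ is its fiber at $q$ in the loop-space description of the Hochschild homology of $\G_m$-invariants, so that the identification follows from Theorem~\ref{ThmHH} by base change; this trades the Block--Getzler computation for the corresponding statement about specializing Hochschild homology along $\hh(\Rep(\G_m))\to k_q$.
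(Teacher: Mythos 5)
Your proposal is correct and follows essentially the same route as the paper: a $q^*$-stable semiorthogonal decomposition reduces the trace to the summands $\hh(A_w,q^*)$, Proposition~\ref{compute trace q} identifies each summand with $k$, and the Block--Getzler specialization map $C^\bullet(A_w,\G_m)\otimes_{k[z,z^{-1}]}k_q\to C^\bullet_q(A_w,\G_m)$ is checked to be a quasi-isomorphism, giving $\hh(\Coh(\cZ/\GG))\otimes_{k[z,z^{-1}]}k_q\simeq\hh(\Coh(\cZ/G),q^*)$ and hence $\Haff_q$ via Theorem~\ref{ThmHH}. Your explicit construction of the algebra map via the forgetful functor as a monoidal morphism of pairs, and the identification of its effect on the Bernstein center with evaluation at $q$, is a welcome elaboration of a step the paper's proof leaves implicit.
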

\begin{proof}
The calculation in Proposition \ref{compute trace q} shows that specialization at $q \in \G_m$ induces an equivalence on Block-Getzler complexes (viewing $A_w$ as an algebra in $\Rep(\G_m)$):
$$\BG^\bullet_{\G_m}(A_w) \otimes_{k[z,z^{-1}]} k_q \rightarrow \underline{\BG}_{\G_m}^\bullet(A_w) \otimes_{k[z,z^{-1}]} k_q \rightarrow \BG_{\G_m,q}^\bullet(A_w)$$  
inducing an equivalence $\hh(\Coh(\cZ/\GG)) \otimes_{k[z,z^{-1}]} k_q \simeq \hh(\Coh(\cZ/G), q_*)$, since the trace of an endofunctor $F$ on a category $\cat{C}$ takes semiorthogonal decompositions preserved by $F$ to direct sums.  Consequently, under the identification of algebras $\hh(\Coh(\cZ/\GG)) \simeq \Haff$, specialization at $q$ defines an equivalence $\hh(\Coh(\cZ/G), q_*) \simeq \Haff_q$.
\end{proof}

\begin{rmk}
The above corollary is evidently untrue for $q=1$, since $\cH$ is flat over $k[z,z^{-1}]$ but $\hh(\Hcat)$ has derived structure by Corollary \ref{ThmHH no Gm}.
\end{rmk}

\begin{rmk}\label{hecke variants}
Our methods also allow for an identification of the following monodromic variants of the affine Hecke category introduced in \cite{roma hecke} (where $\cZ' = \wt{\mf{g}} \times_{\mf{g}} \wt{\cN}$ and $\cZ^\wedge$ is the formal completion of $\wt{\mf{g}} \times_{\mf{g}} \wt{\mf{g}}$ along $\cZ$):
$$ \hh(\Coh(\cZ'/\GG)) \simeq \hh(\Coh(\cZ^\wedge\!/\GG)) \simeq \Haff ,$$
$$\hh(\Coh(\cZ'/G), q_*) \simeq \Haff_q, $$
$$\hh(\Coh(\cZ^\wedge\!/G), q_*) \simeq \begin{cases} kW_a \otimes_k \Sym_k(\mf{h} \oplus \mf{h}[-1]) & q=1, \\ \Haff_q & q \ne 1. \end{cases}$$
The category $\Coh(\cZ'/\GG)$ is not monoidal, so it does not make sense to ask that it is identified with $\cH$ as an algebra.  However, it is equivalent to $\cH$ as a (right) module for $\hh(\Coh(\cZ/\GG)) \simeq \cH$.  The category $\Coh(\cZ^\wedge\!/\GG)$ does not have a monoidal unit, and its monoid structure is trivial; in \cite{CD} an enlargement of $\Coh(\cZ^\wedge\!/\GG)$ will be defined to resolve these issues (see also \cite{BY}) but we will not address it here.

In these cases the generating object $E_w = \underline{\bQl}_{\Fl^w}$ for each stratum on the automorphic side live in different categories, resulting in different endomorphism algebras (see Proposition \ref{compute end}).  Recall that for $\cZ$, the category appearing is $D(BH^\vee)$ so we had $A_w = C^\bullet(BH^\vee; \bQl) \simeq \Sym_{\bQl} \mf{h}[-2]$.  For $\cZ'$, the category is $D(\pt)$, so $A'_w = \bQl$.  For $\cZ^\wedge\!$, the category is  $D^u(H^\vee \brslash H^\vee) \subset D(H^\vee)$, the full subcategory of sheaves with unipotent monodromy, and $A^\wedge_w \simeq C^\bullet(H^\vee; \bQl) \simeq \Sym_{\bQl} \mf{h}[-1]$.

For the enlargement of $\Coh(\cZ^\wedge\!)$ from \cite{CD}, the constant sheaf does not generate on each stratum, and instead one should take a ``cofree monodromic'' sheaf (defined in \emph{op. cit.}) whose endomorphisms $\wh{\Symp}^\bullet_{\bQl} \mf{h}^*$ are Koszul dual to $A^\wedge_w = \Sym \mf{h}[-1]$.  Likewise, for $\Coh(\cZ)$ the constant sheaf is not compact, and rather than coherent sheaves one could have considered the smaller category of compact sheaves.  The generator then is the induced sheaf, which has endomorphisms $C_\bullet(H^\vee;\bQl) \simeq \Sym_{\bQl} \mf{h}^*[1]$, which is Koszul dual to $A_w = \Sym_{\bQl} \mf{h}[-2]$.
\end{rmk}

\medskip

\subsubsection{Groups of non-simply connected type}\label{not sc}

In this section we will remove the simply connectedness assumptions from earlier theorems.  We work in the following set-up.  Let $G$ be a reductive algebraic group with simply connected derived subgroup, and $\phi: G \rightarrow G'$ a central isogeny with kernel $Z$ (i.e. a quotient by a finite subgroup $Z$ of the center).  Following Section 1.5 of \cite{reeder}, this induces a $Z$-action on $\cH_G$ via the formula
\begin{equation}\label{reeder formula}
z \cdot (T_w \otimes e^\lambda) = \lambda(z)(T_w \otimes e^\lambda), \;\;\;\;\;\;\; w \in W_f, \lambda \in X^*(T), z \in Z.
\end{equation}
Equivalently, the affine Hecke algebra has a multiplicative grading by characters of $Z$, i.e. 
$$\cH_G = \bigoplus_{\chi \in X^\bullet(Z)} \cH_{G, \chi}$$
and we have an identification of $\cH_{G'}$ with the trivial graded part or $Z$-invariants
$$\cH_{G'} \simeq \cH_G^Z = \cH_{G, \mathrm{triv}} \hookrightarrow \cH_G.$$

Our goal will be to prove a similar formula in Hochschild homology, which arises when the category is equipped with a $Z$-trivialization in the following sense.
\begin{defn}
Let $G$ be an affine algebraic group with central subgroup $Z \subset G$, and $\cat{C}$ be a $\Rep(G)$-module category.  A \emph{$Z$-trivialization} of $\cat{C}$ is a $\Rep(G/Z)$-linear category $\cat{C}'$ and an equivalence $\cat{C} \simeq \cat{C}' \otimes_{\Rep(G/Z)} \Rep(G)$.
\end{defn}

\begin{rmk}\label{Z inf HH def}
If $G$ is reductive (thus $Z$ is semisimple), then we have a decomposition of $\Rep(G)$ into $\Rep(G/Z)$-module categories by $Z$-characters.  Via the $Z$-trivialization, this gives a decomposition of $\cat{C}$ into $\Rep(G)$-module categories
$$\Rep(G) = \bigoplus_{\chi \in X^\bullet(Z)} \Rep(G)_\chi, \;\;\;\;\;\;\;\;\;\; \cat{C} \simeq \bigoplus_{\chi \in X^\bullet(Z)} \cat{C}_\chi$$
where the natural functor $\cat{C}' \rightarrow \cat{C}$ induces an equivalence $\cat{C}' \simeq \cat{C}_{\mathrm{triv}}$ with the trivial block.    In this setting, the direct sum decomposition of $\cat{C}$ induces a $X^\bullet(Z)$-grading in Hochschild homology
$$HH(\cat{C}) = \bigoplus_{\chi \in X^\bullet(Z)} HH(\cat{C}_\chi)$$ 
such that $HH(\cat{C}_{\mathrm{triv}}) \simeq HH(\cat{C}').$   Since the sum decomposition is evidently functorial for $\Rep(G)$-functors compatible with trivializations, so is the grading on Hochschild homology.
\end{rmk}

\medskip

It remains to show that these $X^\bullet(Z)$-gradings agree via the identifications in Theorem \ref{ThmHH}.
\begin{prop}\label{Z action id}
The identification $\cH \simeq \hh(\Hcat^\mix)$ of Theorem \ref{ThmHH} are compatible with the $X^\bullet(Z)$-gradings defined in Equation \ref{reeder formula} and Remark \ref{Z inf HH def}.
\end{prop}
\begin{proof}
We claim that the $Z$-action on $\Haff$ defined in \cite{reeder} induces a decomposition of $\cH$ into eigenspaces indexed by $W_f$ double cosets $W_f \lambda W_f \subset W_a$ for $\lambda \in X^\bullet(T)$, spanned by Iwahori-Matsumoto basis elements $T_w$ for $w \in W_f \lambda W_f$, with eigenvalue $\lambda|_Z$.  This claim can be directly verified, e.g. using the Bernstein relations in Section 7.1 of \cite{CG}.  This $X^\bullet(Z)$-eigenbasis of $\cH$ corresponds under Theorem \ref{ThmHH} to the basis $\{[\mathrm{id}_{\Phi(\Delta_w)}]  \mid w \in W_a\} \subset HH(\Hcat^\mix)$, i.e. identity maps for the spectral-side standard objects $\Phi(\Delta_w)$ described in Section \ref{two hecke}, which we need to verify is an eigenbasis with corresponding eigenvalues.

By functoriality, for any functor $F: \cat{C} \rightarrow \cat{D}$ of categories in our set-up, if $[\mathrm{id}_X] \in HH_0(\cat{C})$ is a $\lambda$-eigenvector for $Z$, then $[\mathrm{id}_{F(X)}] \in HH_0(\cat{D})$ is as well; the converse is true if $F$ is faithful on the homotopy category (i.e. $H^0(\Hom^\bullet(X, X)) \rightarrow H^0(\Hom^\bullet(F(X),F(X))$ is injective).  We will use this fact repeatedly.  In particular, since the forgetful functor $\Coh(\cZ/\GG) \rightarrow \Coh(\cZ/G)$ is faithful, we can forget $\G_m$-equivariance, and since the $Z$-action is compatible with convolution, it suffices to check our statement for finite reflections and the lattice.

For the lattice, we have $\Phi(\Delta_\lambda) \simeq \Delta_* \cO_{\wt{\cN}}(\lambda) = \Delta_* p^* V_\lambda \in \Coh(\cZ/G)$, where $p: \wt{\cN}/G \rightarrow BB$ is the projection.  The eigenvalue for the identity map of $V_\lambda \in \Coh(BB)$ is evidently $\lambda|_Z$.  For finite simple reflections, since $i_*$ is fully faithful on the homotopy category we may instead consider the equivalence $\Phi'$.  Here, the spectral-side object corresponding to the automorphic big tilting object is $\cO_{\cZ'/G}$.  By applying functoriality to the pullback from a point we see that the identity on any structure sheaf has trivial $Z$-eigenvalue, and therefore any subquotient does, thus $\Phi'(\Delta_w)$ and $\Phi(\Delta_w)$ do.
\end{proof}

\begin{cor}\label{no sc}\label{thm no sc}
The statements of Theorem \ref{ThmHH}, Corollary \ref{ThmHH no Gm} and Corollary \ref{trace q id} hold without the assumption that $G$ has simply connected derived subgroup.
\end{cor}
\begin{proof}
By Theorem \ref{ThmHH}, we have an identification $\hh(\Hcat^\mix_G) \simeq \cH_G$.  Since the center $Z$ acts on $\cZ$ and $\cZ'$ trivially, the categories $\Coh(\cZ/G)$ and $\Coh(\cZ'/G)$ come equipped with natural $Z$-trivializations, and thus their Hochschild homologies have $X^\bullet(Z)$-gradings.  By Proposition \ref{Z action id} the two gradings coincide under our equivalence, proving the claim.
\end{proof}

\section{Traces of representations of convolution categories}\label{traces}

We have seen in Theorem~\ref{ThmHH} that the affine Hecke algebra $\Haff$ is identified with the Hochschild homology of the (mixed) affine Hecke category $\Hcat^\mix=\Coh(\cZ/\GG)$. In this section we describe a general theory of categorical traces in derived algebraic geometry to explain why this is a useful realization. Namely, as an application we will see in Section \ref{section coherent springer} that the geometric realization of Hochschild homology via derived loop spaces implies a realization of the affine Hecke algebra as endomorphisms of the {\em coherent Springer sheaf}, a certain coherent sheaf on the loop space of the stacky nilpotent cone.  Hence, we deduce a localization description of the category of modules for the affine Hecke algebra as the category of coherent sheaves generated by the coherent Springer sheaf.

\medskip

\subsection{Traces of monoidal categories}\label{traces background}

In this section we present the two different trace decategorifications for a monoidal category and their relation.  See \cite{BFN, HSS, CP, BN:NT, GKRV}  for detailed exposition.
\begin{defn}\label{two trace definition}
Let $(\cat{A},\ast)$ denote an $E_1$-monoidal compactly generated cocomplete $k$-linear dg category and $F$ a monoidal endofunctor.  There are two notions of its Hochschild homology or trace.  See definitions in Section \ref{HHSec}.
\begin{enumerate}
\item  The naive or \emph{vertical trace} (or \emph{Hochschild homology}) is a chain complex $\mathrm{tr}(\cat{A}, F)=\hh(\cat{A}, F)$. Via functoriality of traces, and under the assumptions that the multiplication functor $\ast: \cat{A} \otimes \cat{A} \rightarrow \cat{A}$ preserves compact objects and that the monoidal unit is compact, it has the additional structure of an associative (or $E_1$-)algebra $(\hh(\cat{A}),\ast)$. 
\item The 2-categorical or \emph{horizontal trace} (or \emph{monoidal/categorical Hochschild homology}) is a dg category\footnote{The category $\cat{A}^{rv}$ is obtained by reversing the monoidal product, not taking opposite morphisms.} $\Tr((\cat{A}, \ast), F) = \cat{A} \otimes_{\cat{A} \otimes \cat{A}^{rv}} \cat{A}_F$ where $\cat{A}_F$ is the ($E_1$-)monoidal category whose left action is twisted by $F$.\footnote{More generally, the horizontal trace may take as an input an $\cat{A}$-bimodule category $\cat{Q}$; we will not need this.}  Via functoriality of traces, the horizontal trace is the tautological receptacle for characters in $\cat{A}$:
$$[-]: \cat{A} \rightarrow \Tr((\cat{A}, \ast), F).$$
The monoidal unit $1_{\cat{A}}$ itself defines an object $[1_{\cat{A}}]\in \cat{Tr}((\cat{A},\ast),F)$, i.e. $\cat{Tr}((\cat{A},\ast), F)$ is a pointed (or $E_0$-)category.  
\end{enumerate}
We sometimes omit the monoidal product $\ast$ from the notation, and when $F = \mathrm{id}_{\cat{C}}$ we also sometimes omit it from the notation.
\end{defn}

\medskip

We define the notion of characters in horizontal traces more precisely and generally below.  These more general notions are used primarily in Section \ref{standard rep}.
\begin{defn}\label{two trace maps}
One can view the horizontal trace as a trace decategorification in the sense of Definition \ref{def trace decat} in the following way, following Section 3.6 of \cite{GKRV}.  We consider the symmetric monoidal ``Morita'' category $\cat{Mor}_k$, whose objects are the $(\infty, 2)$-categories $\cat{A}\mh\cat{mod}$, i.e. left-module categories for a monoidal category $\cat{A}$, and whose 1-morphisms 
$$\cat{Map}_{\cat{Mor}_k}(\cat{A}\mh\cat{mod}, \cat{B}\mh\cat{mod}) := \cat{B} \otimes \cat{A}^{rv}\mh\cat{mod}$$
are $(\cat{B}, \cat{A})$-bimodule categories, and 2-morphisms are functors of bimodule categories.\footnote{The arguments in \cite{GKRV} do not require the use of non-invertible 3-morphisms in $\cat{Mor}_k$.}  Then, for a monoidal endofunctor $F: \cat{A} \rightarrow \cat{A}$, we have $\mathrm{tr}(\cat{A}\mh\cat{mod}, F) = \Tr(\cat{A}, \cat{A}_F)$.

We can apply Definition \ref{character def} to obtain the following more general notion of character map for the horizontal trace (see Section 3.8.2 in \cite{GKRV}).  That is, the horizontal trace $\Tr(\cat{A}, F)$ can be viewed as the tautological receptacle for characters $[(\cat{M}, F_{\cat{M}})]$ of left $\cat{A}$-module categories $\cat{M}$ equipped with an $F$-semilinear endofunctor $F_{\cat{M}}$, i.e. a map of $\cat{A}$-module categories $F_{\cat{M}}: \cat{M} \rightarrow \cat{M}_F := \cat{A}_F \otimes_{\cat{A}} \cat{M}$.\footnote{Roughly, this is the data of $F_{\cat{M}} \in \cat{End}(\cat{M})$ with natural compatibility isomorphisms $F_{\cat{M}}(A \ast M) \simeq F(A) \ast F_{\cat{M}}(M)$ for $A \in \cat{A}, M \in \cat{M}$, i.e. for a functor to be $\cat{A}$-linear is a structure, not merely a property.}  

The trace $[A]$ of objects $A \in \cat{A}$ in Definition \ref{two trace definition} above is a special case in the following way: consider $\cat{M} := \cat{A}$ as the usual (left) regular $\cat{A}$-module category; for $A \in \mathrm{Ob}(\cat{A})$, we define $F_A(-) := F(-) \ast A$. In this case, we have $[A] = [\cat{A}, F_A]$.  In particular, the trace of the monoidal unit\footnote{The monoidal structure on $F$ gives rise to an $F$-equivariant structure on $1_{\cat{A}}$.} is $[1_{\cat{A}}] = [\cat{A}, F]$, i.e. the trace of the regular representation.
\end{defn}

\medskip

Moreover, the categorical trace provides a ``delooping'' of the naive trace.  To make the relationship between the two traces precise, we first recall the notion of a rigid monoidal category (see Definition 9.1.2 and Lemma 9.1.5 in \cite{GR}). 

\begin{defn}
Let $\cat{A}$ be a compactly generated stable monoidal $\infty$-category, with multiplication $\mu: \cat{A} \otimes \cat{A} \rightarrow \cat{A}$.  We say $\cat{A}$ is \emph{rigid} if the monoidal unit is compact, $\mu$ preserves compact objects, and if every compact object of $\cat{A}$ admits a left and right (monoidal) dual.
\end{defn}

We have the following relationship between vertical and horizontal traces of \cite{GKRV}, which may be interpreted via Theorem 1.1 of \cite{CP} as a compatibility of iterated traces.  Let $\cat{A}$ be a monoidal category, and $F$ a monoidal endofunctor.  We denote by $(\cat{A}, F)\mh\cat{mod}$ the 1-category (i.e. forget the 2-morphisms) of $\cat{A}$-module categories with $F$-semilinear endofunctors as in Definition \ref{two trace maps}.
\begin{thm}[Theorem 3.8.5 \cite{GKRV}, Theorem 1.1 \cite{CP}]\label{traceRelationship}\label{trace theorem}
Assume that $\cat{A}$ is compactly generated and rigid monoidal, and $F$ a monoidal endofunctor. Then, there is an equivalence of algebras\footnote{The opposite algebra appears because we took left modules in Definition \ref{two trace maps}.}
$$HH(\cat{A}, F) \simeq \End_{\Tr(\cat{A}, F)}([\cat{A}, F])^{op},$$
More generally, there is an equivalence of functors from the category of $F$-equivariant module categories:
\begin{equation*}\label{two traces}
HH(-) \simeq \Hom_{\Tr(\cat{A}, F)}([\cat{A},F], [-]): (\cat{A},F)\md\cat{mod}^R \longrightarrow HH(\cat{A}, F)\dmod.
\end{equation*}
In particular, assuming that $[\cat{A}, F]$ is a compact object, then the left adjoint to the functor $\Hom_{\Tr(\cat{A}, F)}([\cat{A},F], -)$ defines a fully faithful embedding which preserves compact objects, whose essential image is the category generated by $[\cat{A}, F]$:
\begin{equation*}
\begin{tikzcd}
\hh(\cat{A}, F)\module  \arrow[dr, "\simeq"] \arrow[rr, hook, "{[\cat{A}, F] \otimes_{\End([\cat{A}, F])} -}", shift left] & &  \arrow[ll, shift left, "{\Hom([\cat{A}, F], -)}"] \Tr(\cat{A}, F) \\
& \langle [\cat{A}, F] \rangle. \arrow[ur, hook] & 
\end{tikzcd}
\end{equation*}
\end{thm}

\medskip

\subsection{Traces in geometric settings}\label{HH geometric}

The geometric avatar for Hochschild homology is the derived loop space (or more generally, the derived fixed points of a self-map), see~\cite{BN:NT,loops and conns} for extended discussions.
\begin{defn}\label{defn loop space}
Let $X$ be a derived stack.
\begin{enumerate}
\item We define the \emph{derived loop space} $\cL X$ (or derived inertia stack) to be
$$\cL X = \Map_{\cat{DSt}_k}(S^1, X) \simeq X \utimes{X \times X} X$$
i.e. the derived mapping stack from a circle, or more concretely the derived self-intersection of the diagonal.
\item More generally, if $\phi: X \rightarrow X$ is a self-map, we define the \emph{derived fixed points} or \emph{$\phi$-twisted loop space} $\cL_\phi X$ to be the fiber product
$$\begin{tikzcd}
\cL_\phi X \arrow[r] \arrow[d, "\mathrm{ev}"] & X \arrow[d, "\Gamma_\phi"] \\
X \arrow[r, "\Delta"] & X \times X.
\end{tikzcd}
$$
i.e. the derived intersection of the diagonal with the graph $\Gamma_\phi = \mathrm{id}_X \times \phi$ of $\phi$.  Note that the derived fixed points of the identity is the derived loop space, i.e. $\cL_{\mathrm{id}_X} X = \cL X$.
\item The formation of derived loop spaces and derived fixed points are functorial, i.e. if $f: X \rightarrow Y$ is map of derived stacks, and $\phi_X, \phi_Y$ are compatible self-maps, then we have a map of derived stacks $\cL_{\phi} f: \cL_{\phi_X} X \rightarrow \cL_{\phi_Y} Y.$
\end{enumerate}
\end{defn}

\begin{exmp}\label{exmp hkr}
For $X$ a scheme over a characteristic 0 field $k$ we have that the derived loop space $\LL X\simeq \mathbb{T}_X[-1]$ is the total space of the shifted tangent complex to $X$ (see Proposition 4.4 in \cite{loops and conns}), while for $X=\pt/G$ we have $\LL X=G/G\simeq \mathrm{Loc}_G(S^1)$, i.e. the classical inertia stack (see Proposition 2.1.8 in \cite{Ch}). For a general stack the loop space is a combination of the shifted tangent complex with the inertia stack.
\end{exmp}

\begin{exmp}
For us, the self-maps above will arise via a action of a group $G$ on $X$, i.e. for $g \in G(k)$ we obtain a map $g: X \rightarrow X$.  Then, we have the relationship $\cL_g X = \cL(X/G) \times_{\cL(BG)} \{g\}$.
\end{exmp}

Note the parallel between the loop space, which is the self-intersection of the diagonal (the identity self-correspondence from $X$) and Hochschild homology (the trace of the identity on a category). As a result the push-pull functoriality of  categories of sheaves under correspondences implies an immediate relation between their Hochschild homology and loop spaces. Since $\QC$ is functorial under $*$-pullbacks and $\QC^!$ under $!$-pullbacks, this produces the following answers, both of which hold in particular for QCA stacks (see Corollary 4.2.2 of \cite{DG}, \cite{BN:NT}, and Example 2.2.10 in \cite{Ch}):
\begin{equation}\label{loop space hh ident}
HH(\QC(X), \phi_*)\simeq \Gamma(\cL_\phi X, \cO_{\cL_\phi X}), \;\;\;\;\;\;\;\;\;\; HH(\QC^!(X), \phi_*)\simeq \Gamma(\cL_\phi X, \omega_{\cL_\phi X}).
\end{equation}
In other words, taking $\phi = \mathrm{id}_X$, the Hochschild homology of $\QC(X)$ (respectively $\QC^!(X)$) is given by functions (respectively volume forms) on the derived loop space.  For $X=\Spec(R)$ a smooth affine scheme, along with Example \ref{exmp hkr} this recovers the Hochschild-Kostant-Rosenberg identification of Hochschild homology of $R\module$ with differentials on $R$, $$HH(R\module)=\cO(\cL X)=\cO(\mathbb{T}_X[-1])=\Sym(\Omega^1_R[1]).$$

\begin{exmp}[Quasicoherent sheaves under tensor product]
Let $X$ be a perfect stack in the sense of \cite{BFN}.  Then, the symmetric monoidal structure on $\QCoh(X)$ via tensor product has compact unit and multiplication.  We have that $\hh(\QCoh(X)) = \OO(\LL X)$ is an algebra object (with the multiplication given by the shuffle product after passing through HKR as in Example \ref{exmp hkr}; see Section 4.2 of \cite{loday} for a discussion of this structure), and the universal trace $\QCoh(X) \rightarrow \Tr(\QCoh(X)) = \QCoh(\LL X)$ given by pullback along evaluation at the identity.  Furthermore, the monoidal unit is $\OO_X \in \QCoh(X)$ with trace $[\OO_X] = \OO_{\LL X} \in \QCoh(\LL X)$.  Finally, we have
$$\OO(\LL X)\dmod \simeq \langle \OO_{\LL X} \rangle \subset \QCoh(\LL X)$$
where the fully faithful inclusion is an equivalence if $X$ is affine.
\end{exmp}

\medskip

We now establish a certain Calabi-Yau property of derived fixed points of smooth stacks (or more generally, smooth maps).  In our arguments it will be useful to factor the loop space of a map $\cL f: \cL X \rightarrow \cL Y$ through the following intermediate derived stack, which we define in three equivalent ways. 
\begin{defn}\label{defn rel loops}
 Let $f: X \rightarrow Y$ be a map of derived stacks with compatible self-maps $\phi_X, \phi_Y$, and define $Z := X \times_Y X$.  We define $\cL_\phi Y_X$ via the pullback diagrams:
$$\begin{tikzcd}
\cL_\phi Y_X \arrow[r] \arrow[d] & X \arrow[d, "\Gamma_\phi"] &\cL_\phi Y_X\arrow[r] \arrow[d] & X \arrow[d, "f \times \phi_X"] & \cL_\phi Y_X \arrow[r] \arrow[d] & \cL_\phi Y \arrow[d, "\mathrm{ev}"] \\
Z \arrow[r] & X \times X & X \arrow[r, "f \times \mathrm{id}_X"'] & Y \times X & X \arrow[r, "f"'] & Y.
\end{tikzcd}$$
Roughly, this is the derived moduli stack of paths in $X$ mapping to loops in $Y$.
\end{defn}

The following lemma is a straightforward verification of the depicted diagrams, which we leave to the reader.
\begin{lemma}\label{lemma rel loops}
The above three presentations are canonically equivalent, and we have a canonical factorization
$$\begin{tikzcd}
\cL_\phi X \arrow[r, "\delta"] & \cL_\phi Y_X \arrow[r, "\pi"] & \cL_\phi Y
\end{tikzcd}$$
where the maps are realized via the base change
$$\begin{tikzcd}
\cL_\phi X \arrow[r, "\delta"] \arrow[d, "\mathrm{ev}_X"'] & \cL_\phi Y_X \arrow[d] \arrow[r] & X \arrow[d, "\Gamma_{\phi_X}"] & \cL_\phi Y_X \arrow[r, "\pi"] \arrow[d, "\mathrm{ev}_{X/Y}"'] & \cL_\phi Y \arrow[d] \arrow[r] & Y \arrow[d, "\Gamma_{\phi_Y}"] \\
X \arrow[r,"\Delta_f = \Delta_{X/Y}"'] & Z \arrow[r] & X \times X & X \arrow[r, "f"'] & Y \arrow[r, "\Delta_Y"'] & Y \times Y.
\end{tikzcd}$$
i.e. $\delta$ is a base change of the relative diagonal for $f$, and $\pi$ is a base change of $f$ itself.
\end{lemma}

\begin{exmp}
When $\phi$ is the identity and $Y = \pt$, the factorization above is just $\cL X \rightarrow X \rightarrow \pt$.
\end{exmp}

When $X$ is a smooth stack, there is an equivalence of categories $\Perf(X) = \Coh(X)$, thus by (\ref{loop space hh ident}) we expect that $\cO(\cL X) \simeq \omega(\cL X)$.  It turns out that this equivalence on global sections comes from a map on the underlying sheaves themselves.  We now establish the following Calabi-Yau property of derived fixed points of smooth stacks, which we will use repeatedly in our arguments.  We refer the reader to Section 8 of \cite{AG} for discussion of quasi-smoothness for derived Artin stacks.
\begin{lemma}\label{calabi yau}
Let $X, Y$ be derived Artin stacks equipped with proper self-maps $\phi_X, \phi_Y$, and let $f: X \rightarrow Y$ be a smooth relative Artin 1-stack\footnote{By this we mean such that the relative cotangent complex is perfect of Tor amplitude $[0, 1]$, i.e. the fibers are are allowed to be stacky, and in particular, this map does not need to be representable by schemes.} commuting with $\phi_X, \phi_Y$. Then, there is a canonical equivalence of functors 
$$\cL_\phi f^! \simeq \cL_\phi f^*: \IndCoh(\cL_\phi Y) \longrightarrow \IndCoh(\cL_\phi X).$$
In particular, if $X$ is a smooth Artin 1-stack with a proper self-map $\phi$, then $\omega_{\cL_\phi  X} \simeq \cO_{\cL_\phi X}$, and if $f$ is proper then $\cL_\phi f^*$ is biadjoint to $\cL_{\phi} f_*$. 
\end{lemma}
\begin{proof}
Following the notation and factorization in Lemma \ref{lemma rel loops}, we have canonical identifications:
$$\omega_{\cL_\phi X/\cL_\phi Y_X} \simeq \mathrm{ev}_X^*\omega_{X/Z}, \;\;\;\;\;\;\;\;\;\;\;\;\;\; \omega_{\cL_\phi Y_X/\cL_\phi Y} \simeq \mathrm{ev}_{X/Y}^*\omega_{X/Y}.$$
Furthermore, after choosing\footnote{The definition of Hochschild homology implicitly requires us to choose an orientation on the circle $S^1$.  We make one such choice, once and for all, which forces a particular choice here (i.e. a choice of sign).} one of the projections $Z = X \times_Y X \rightarrow X$, the usual exact triangle for cotangent complexes for the composition $X \rightarrow Z \rightarrow X$ gives a canonical equivalence 
$$\omega_{X/Z} \simeq \Delta_{X/Y}^*\omega_{Z/X}^{-1} \simeq \omega_{X/Y}^{-1}.$$
Thus, we have a canonical equivalence
$$\omega_{\cL_\phi X/\cL_\phi Y} \simeq \mathrm{ev}_X^*\omega_{X/Y}^{-1} \otimes \delta^*\mathrm{ev}_{X/Y}^*\omega_{X/Y} \simeq \cO_{\cL_\phi X}.$$
By assumption the cotangent complex $\mathbb{L}_f$ is perfect in degrees $[0, 1]$, so the relative cotangent complex $\mathbb{L}_{\Delta_{X/Y}}$ is perfect in degrees $[-1, 0]$; in particular, $\Delta_{X/Y}$ is representable by schemes and quasi-smooth and thus we have a canonical equivalence (see Proposition 7.3.8 of \cite{indcoh}) $\cL_\phi f^! \simeq \cL_\phi f^* \otimes \omega_{\cL_\phi X/\cL_\phi Y} \simeq \cL_\phi f^*$ as desired.
\end{proof}

Furthermore, by functoriality of Hochschild homology, for a map of stacks $f: X \rightarrow Y$ we expect that the pullback and pushforward functors define maps of global functions or volume forms $HH(f^*): \cO(\cL Y) \rightarrow \cO(\cL X)$ and (if $f$ is proper) $HH(f_*): \omega(\cL X) \rightarrow \omega(\cL Y)$.  We identify this map with the global sections of a natural map on the underlying sheaves in two cases of concern (see Appendix \ref{app functoriality} for the proof).
\begin{defn}\label{HH decat functoriality}
Let $f: X \rightarrow Y$ be a map of QCA stacks, and $\phi_X, \phi_Y$ compatible proper self-maps.
\begin{enumerate}
\item If $f$ is proper, then we have a pushforward map $\omega(\cL_\phi f_*): \omega(\cL_\phi X) \rightarrow \omega(\cL_\phi Y)$ of global volume forms.  That is, by Remark 4.6 in \cite{BN:NT}, since $f$ is proper, $\cL_\phi f: \cL_\phi X \rightarrow \cL_\phi Y$ is proper; $\omega(\cL_\phi f_*)$ is the global sections of the counit of the adjunction $(\cL_\phi f_*, \cL_\phi f^!)$ applied to $\omega_{\cL_\phi Y}$.
\item If $f$ is smooth, then we have a ``Gysin'' pullback $\omega(\cL_\phi f^*): \omega(\cL_\phi Y) \rightarrow \omega(\cL_\phi X)$ of global volume forms.  That is, by Proposition \ref{calabi yau}, if $f$ is smooth then $\cL_\phi f$ is Calabi-Yau; passing through this equivalence, $\omega(\cL_\phi f^*)$ is the global sections of the unit of the adjunction $(\cL_\phi f^*, \cL_\phi f_*)$ applied to $\omega_{\cL_\phi Y}$.
\end{enumerate}
\end{defn}

\begin{prop}\label{HH prop}
Let $f: X \rightarrow Y$ be map of QCA stacks with compatible proper self-maps $\phi_X, \phi_Y$.
\begin{enumerate}
\item There are canonical identifications
$$HH(\IndCoh(X), \phi_*) \simeq \omega(\cL_\phi X).$$
\item Suppose $f$ is proper, and consider $f_*: \IndCoh(X) \rightarrow \IndCoh(Y)$.  Then, the map $HH(f_*, \phi_*)$ is canonically identified with the map on global volume forms $\omega(\cL_\phi f_*)$.
\item Suppose that $f$ is smooth, and consider $f^*: \IndCoh(Y) \rightarrow \IndCoh(X)$.  Then, the map $HH(f^*, \phi_*)$ is canonically identified with the map on volume forms $\omega(\cL_\phi f^*)$.
\end{enumerate}
\end{prop}

\medskip

\subsection{Convolution patterns in Hochschild homology}\label{convolution}
Convolution patterns in Borel-Moore homology and algebraic $K$-theory play a central role in the results of \cite{CG}.  We now describe a similar pattern which appears in Hochschild homology. 

\begin{defn}\label{conv setup}
We will work with the following general setup (see Section 1.5 of \cite{BNP}).
\begin{enumerate}
\item[$\bullet$] $f:X\to Y$ is a proper morphism of smooth, QCA stacks over $k$, and $Z=X\times_Y X$.
\item[$\bullet$] $\phi_X: X \rightarrow X$ and $\phi_Y: Y \rightarrow Y$ are (representable) proper self-maps commuting with $f$, inducing a proper self-map $\phi: Z \rightarrow Z$.
\end{enumerate}
We refer to any $Z$ arising from the set-up above a \emph{convolution space}, and call the category $\IndCoh(Z)$ a \emph{convolution category}.
\end{defn}

In this setup the category $\QC^!(Z)$ carries a monoidal structure under convolution\footnote{As  explained in Remark 3.0.7 and Lemma 3.0.8 of \cite{BNP2}, on the compact objects $\Coh(Z)$ there are two monoidal products, given by $*$- or $!$-convolution, intertwined by Grothendieck duality. We will default to the $!$-version, which is amenable to the ind-completed category $\IndCoh(Z)$.}, and $\phi_*$ is a monoidal endofunctor.  The convolution monoidal structure restricts to the compact objects $\Coh(Z)$ thanks to the smoothness of $X$ (hence finite Tor-dimension of the diagonal of $X$) and the properness of $f$; furthermore, since $\phi$ is proper, $\phi_*$ has a colimit-preserving right adjoint, and preserves $\Coh(Z)$.  

\medskip

By Theorem 1.1.3 of \cite{BNP2}, there is an equivalence of small monoidal categories\footnote{Via the discussion in Section 4.7 of \cite{HA}, endofunctor categories naturally possess the structure of an associative monoidal $\infty$-category.  Theorem 1.1.3 in \cite{BNP2} identifies the underlying categories, with convolution corresponding to composition object-by-object.  Thus we can simply define the monoidal structure (with all its higher coherence compatibilities) on the left by transporting it from the right.}
$$\begin{tikzcd}(\Coh(X \times_Y X),\ast) \arrow[r, "\simeq"] & (\Fun^{ex}_{\Perf(Y)}(\Perf(X), \Perf(X)),\circ)\end{tikzcd}$$
which takes an integral kernel $\cK \in \Coh(X \times_Y X)$ to the functor $\cF \mapsto R\pi_{2*}(R\pi_1^!\cF \otimes^L \cK)$ where $\pi_1, \pi_2: X \times_Y X \rightarrow X$ are the projections.  Moreover, we will argue in Theorem \ref{rigid} that $(\QC^!(Z),\ast)$ is rigid monoidal.  The monoidal unit is the dualizing sheaf of the relative diagonal $\omega_{\Delta} := \iota_* \omega_{X}$, where $\iota:X\to X\times_Y X$.

Recall (from Section~\ref{HH geometric}) that the Hochschild homology of $\Coh(Z)$ (or equivalently of its large variant $\QC^!(Z)$ by Remark 2.2.11 of \cite{Ch}) for a stack $Z$ is given geometrically by volume forms on the loop space, or in the case of the trace of $\phi_*$ the derived fixed points:
$$\hh(\IndCoh(Z), \phi_*) \simeq \Gamma(\cL_\phi Z, \omega_{\cL_\phi Z}).$$ Thus the vertical trace of the monoidal category $\Coh(Z)$ defines an algebra structure on global distributions $\Gamma(\cL_\phi Z, \omega_{\cL_\phi Z})$.

We want to relate this convolution structure on sheaves to its decategorified version involving volume forms on the corresponding loop spaces. Thus we consider the loop map $\LL_\phi f:\LL_\phi X\to \LL_\phi Y$ to $f$, whose self-fiber product is $\LL_\phi Z\simeq \LL_\phi X \times_{\LL_\phi Y} \LL_\phi X$.  Note that $\LL_\phi f$ is a proper map of quasismooth derived stacks.  In particular, $\omega_{\LL_\phi X}$ is coherent (a compact object in $\QC^!(\LL_\phi X)$) and $\LL_\phi f_*$ preserves coherence. We thus define our main object of interest.

\begin{defn}\label{univ trace sheaf}\label{universal trace sheaf}
We define the \emph{universal trace sheaf}
$$\cS_{X/Y,\phi} := \cL_\phi f_* \omega_{\cL_\phi X} \simeq \cL_\phi f_* \OO_{\cL_\phi X} \in \Coh(\cL_\phi Y).$$
The latter isomorphism follows since the loop space of smooth stacks are naturally Calabi-Yau (see Lemma \ref{calabi yau}).
\end{defn}

The endomorphisms of the universal trace sheaf have a close relationship to volume forms on the loop space of the convolution space.  Namely, we have a canonical equivalence
$$\omega(\cL_\phi Z) \simeq \End_{\cL_\phi Y}(\cS_{X/Y,\phi}).$$
Furthermore, these equivalences are functorial at the sheafy level; on the left, this was discussed in Definition \ref{HH decat functoriality}.  On the right, the functoriality arises via the following functoriality of the universal trace sheaf.
\begin{defn}\label{springer functorial}
Let $(X, Y, f, \phi)$ and $(X', Y', f', \phi')$ as in Definition \ref{conv setup} (with convolution spaces $Z, Z'$), and write $\cS := \cS_{X/Y,\phi}$ and $\cS' := \cS_{X'/Y',\phi'}$.  Suppose we have maps $\alpha_X: X \rightarrow X'$ and $\alpha_Y: Y \rightarrow Y'$ commuting with $f, f'$, inducing $\alpha_Z: Z \rightarrow Z'$.  Then, we have the following due to base change.
\begin{enumerate}
\item Suppose that $X = X'$ and that $\alpha_Y$ is proper.  Then, there is a canonical equivalence $\cL\alpha_{Y*} \cS \simeq \cS'$, and the functor $\alpha_{Z*}: \Coh(Z) \rightarrow \Coh(Z')$ is monoidal.
\item Suppose that $\alpha_Y$ is smooth and $f$ is base-changed from $f'$, i.e. $X = X' \times_{Y'} Y$.  Then there is a canonical equivalence $\cL\alpha_Y^! \cS' \simeq \cS$, and the functor $\alpha_{Z}^!: \Coh(Z') \rightarrow \Coh(Z)$ is monoidal.
\end{enumerate}
\end{defn}

The functorialities on the two sides of the equivalence are compatible.  
\begin{prop}\label{conv volume}
We let $p: Z \rightarrow Y$ denote the structure map.  In the set-up of Definition \ref{conv setup}, we have a canonical equivalence
$$\zeta: \cL_\phi p_* \omega_{\cL_\phi Z} \simeq \intEnd_{\cL_\phi Y}(\cS)$$
with $\zeta'$ defined analogously, such that if $\alpha: Y \rightarrow Y'$ is proper and $X = X'$, we have commuting squares
$$\begin{tikzcd}[column sep=huge]
\cL_\phi \alpha_{*} \cL_\phi p_*\omega_{\cL_\phi Z} \arrow[r, "\simeq"', "\cL_\phi \alpha_{*} (\zeta)"] \arrow[d, "{\mathrm{Def. \ref{HH decat functoriality}}}"'] & \cL_\phi \alpha_{*} \intEnd_{\cL_\phi Y}(\cS) \arrow[d, "{\mathrm{Def. \ref{springer functorial}}}"] \\
\cL_\phi p'_*\omega_{\cL_\phi Z'} \arrow[r, "\simeq"', "\zeta'"] & \intEnd_{\cL_\phi Y'}(\cS').
\end{tikzcd}$$
while if $\alpha: Y \rightarrow Y'$ is smooth and $X = X' \times_{Y'} Y$, we have commuting squares
$$\begin{tikzcd}[column sep=huge]
\cL_\phi p'_* \omega_{\cL_\phi Z'}\arrow[d, "{\mathrm{Def. \ref{HH decat functoriality}}}"'] \arrow[r, "\simeq"', "\zeta'"] & \intEnd_{\cL_\phi Y'}(\cS') \arrow[d, "{\mathrm{Def. \ref{springer functorial}}}"] \\
\cL_\phi \alpha_{*} \cL_\phi p_*\omega_{\cL_\phi Z}\arrow[r, "\simeq"', "\cL_\phi \alpha_{*} (\zeta)"]  & \cL_\phi \alpha_{*} \intEnd_{\cL_\phi Y}(\cS).
\end{tikzcd}$$
\end{prop}
\begin{proof}
Application of Proposition \ref{app conv volume}, noting that if $f$ is smooth then $\cL_\phi f$ is Calabi-Yau by Proposition \ref{calabi yau}.
\end{proof}

\begin{remark}[Convolution of volume forms and endomorphisms of $\cS_{X/Y}$]
Applying the above proposition to $\cL_\phi f: \cL_\phi X \rightarrow \cL_\phi Y$, i.e. if we sheafify over $\LL_\phi Y$, we can identify this algebra structure more concretely as convolution of volume forms on $\LL_\phi Z$.  That is, $\LL_\phi Z=\LL_\phi X\times_{\LL_\phi Y} \LL_\phi X$ has the structure of proper monoid in stacks over $\LL_\phi Y$, from which one deduces the structure of algebra object in $(\QC^!(\LL_\phi Y),\otimes^!)$ on the pushforward of $\omega_{\LL_\phi Z}$. One can also use proper descent for $\LL_\phi f:\LL_\phi X\to \LL_\phi Y$ to identify this sheaf of algebras with the internal endomorphism sheaf of $\cS_{X/Y}$ -- an analog, in the setting of derived categories of coherent sheaves on derived stacks, of the standard proof (see e.g. \cite{CG}) that self-Ext of the Springer sheaf is identified with Borel-Moore homology of $Z$. It would be interesting to see how these arguments globalize over $\LL_\phi Y$ to give the isomorphism $\Gamma(\cL_\phi Z, \omega_{\LL_\phi Z})\simeq \End_{\IndCoh(\cL_\phi Y)}(\cS_{X/Y})$ of Theorem~\ref{traces convolution}.
\end{remark}

\medskip

\subsubsection{Horizontal trace of convolution categories}\label{section horizontal trace}

Recall that Theorem \ref{traceRelationship} identifies the vertical trace $\hh(\IndCoh(Z),\ast)$ as the endomorphism algebra of the distinguished object in the horizontal trace $\Tr(\IndCoh(Z),\ast)$, under the assumption that this distinguished object is compact (and a rigidity condition to be addressed in Theorem \ref{rigid}).  In this section we discuss this horizontal trace in the context of convolution spaces following \cite{BNP}, slightly generalizing the main theorem of \emph{op. cit.}

For this we require a discussion of singular supports; we summarize the main points and refer the reader to \cite{AG, BNP} for details.  Note that singular supports do not appear in our main application Theorem \ref{thm endomorph}, since the singular support condition there is actually a classical support condition (see Remark \ref{no sing supp}).
\begin{defn}
Let $f: X \rightarrow Y$ be a representable map of quasi-smooth stacks.  
\begin{enumerate}
\item We define the \emph{scheme of singularities} or (classical) \emph{odd cotangent bundle} to be
$$\TT_X := \Spec_X \Sym_X H^1(\cT_X) = \Spec_X \Sym_X H^0(\cT_X[1])$$
where $\cT_X$ denotes the tangent complex of $X$, i.e. the $\cO_X$-linear dual of the cotangent complex. 
\item Any ind-coherent sheaf $\cF \in \IndCoh(X)$ has a closed conical \emph{singular support} $\mathrm{SS}(\cF) \subset \TT_X$. To any subset $\Lambda \subset \TT_X$ we can associate the full category $\IndCoh_\Lambda(X) \subset \IndCoh(X)$ consisting of sheaves with the specified singular support.
\item Let $\Lambda_X \subset \TT_X$ and $\Lambda_Y \subset \TT_Y$, and consider the correspondence
$$\begin{tikzcd} \TT_X & \arrow[l, "df"'] \arrow[r, "p"] \TT_Y \times_Y X & \TT_Y \end{tikzcd}.$$
One can push forward and pull back singular support conditions 
$$f_*\Lambda_X = \overline{p(df^{-1}(\Lambda_X))}, \;\;\;\;\;\;\; f^!\Lambda_Y = \overline{df(p^{-1}(\Lambda_Y))}$$ 
such that the pushforward and pullback functors preserve singular supports, i.e.
$$f_*: \IndCoh_{\Lambda_X}(X) \rightarrow \IndCoh_{f_*\Lambda_X}(Y), \;\;\;\;\;\;\; f^!: \IndCoh_{\Lambda_Y}(Y) \rightarrow \IndCoh_{f^!\Lambda_Y}(X).$$
\end{enumerate}
\end{defn}

\begin{exmp}\label{ssupp example}
If $X$ is smooth, then $\TT_X = X$, i.e. there are no possible singular codirections to consider.  In particular, the nontrivial fibers of the map $\TT_X \rightarrow X$ live over the singular locus of $X$.

When $\Lambda = \TT_X$, we have $\IndCoh_\Lambda(X) = \IndCoh(X)$.  At the opposite extreme, when $\Lambda = \{0\}_X$ is the zero section, we have $\IndCoh_\Lambda(X) = \QCoh(X)$.  If $Z \subset X$ is a closed subscheme and $\Lambda = Z \times_X \TT_X$, then $\IndCoh_\Lambda(X) = \IndCoh_Z(X)$, i.e. the full subcategory of ind-coherent sheaves with classical support at $Z \subset X$.  If instead we take $\Lambda = Z \times \{0\}_X$, then $\IndCoh_\Lambda(X) = \QCoh_Z(X)$.
\end{exmp}

\medskip

The following singular support condition appears when taking traces of convolution categories.
\begin{defn}\label{trace correspondence}
Recall the notation from Definition \ref{defn loop space} and Definition \ref{defn rel loops}.  We have the following \emph{trace correspondence}:
$$\begin{tikzcd}
Z=X \times_Y X & \arrow[l, "\delta"']  \cL_\phi Y_X = Z \utimes{X \times X} X \simeq X \utimes{Y \times X} X \arrow[r, "\pi"]  & \cL_\phi Y.
\end{tikzcd}$$
We define a singular support condition $\Lambda_{X/Y,\phi} := \pi_*\delta^!\TT_Z$.
\end{defn}

We now give a description of the horizontal trace.  The following statement is more general than the statement of Theorem 3.3.1 in \cite{BNP}, but follows from the same argument in the proof with the definitions given above; the proof is in Appendix \ref{app horiz}.
\begin{thm}\label{convolution category trace}
There is a canonical identification of the horizontal trace (i.e. the monoidal Hochschild homology)
$$\Tr((\IndCoh(Z),\ast), \phi_*) \simeq \IndCoh_{\Lambda_{X/Y,\phi}}(\cL_\phi Y),$$ with the universal trace given by\footnote{Note that our trace functor is given by $\delta^!$ rather than the $\delta^*$ in \cite{BNP}, since we employ the $!$-transform rather than the $*$-transform.}
$$[-] = \pi_* \delta^!: \IndCoh(Z) \rightarrow \IndCoh_{\Lambda_{X/Y,\phi}}(\cL_\phi Y).$$
\end{thm}

Next we identify the universal trace sheaf (i.e. coherent Springer sheaf) as the trace of the monoidal unit (which is a compact object of the trace category) or regular representation.

\begin{lemma}\label{traceSpringerSheaf}
There is a natural equivalence $\cS_{X/Y,\phi} \simeq [\omega_\Delta] = \pi^* \delta^! \omega_{\Delta}$ in $\Coh(\cL_\phi Y)$.
\end{lemma}
\begin{proof}
The calculation of $\delta^! \omega_{\Delta} = \delta^! \Delta_* \omega_X$ arises via base change along the diagram
$$\begin{tikzcd}
\cL_\phi X \arrow[d] \arrow[r] & \cL_\phi Y_X = Z \times_{X \times X} X \arrow[d] \\
X \arrow[r, "\Delta"'] & Z = X \times_Y X
\end{tikzcd}$$
and the statement follows.
\end{proof}

\medskip

\subsubsection{Trace delooping in convolution categories}

We now deduce the main structural relation between universal trace sheaves (see Definition \ref{universal trace sheaf}) and iterated categorical traces of convolution categories.
\begin{thm}\label{traces convolution}\label{rigid}
Let $f: X \rightarrow Y$ be as in Definition \ref{conv setup}.  Then, the convolution category $\IndCoh(X \times_Y X)$ is rigid.  In particular, the statements of Theorem \ref{trace theorem} apply: the vertical trace of the convolution category $(\IndCoh(Z),\ast)$ is identified as an algebra with the endomorphisms of the universal trace sheaf
$$\hh(\IndCoh(X \times_Y X), \phi_*) \simeq \End_{\QC^!(\cL_\phi Y)}(\cL_\phi f_* \omega_{\cL_\phi X}).$$
\end{thm}
\begin{proof}
We need to verify that $\IndCoh(Z)$ is rigid monoidal.  Standard arguments show that integral transforms arising via coherent sheaves preserve compact objects; this statement is also contained within Theorem 1.1.3 in \cite{BNP2}; one further immediately observes that the monoidal unit $\Delta_* \omega_X$ is a compact object, i.e. coherent, since the diagonal is a closed embedding.  It remains to verify the existence of right and left duals of coherent sheaves $\cK \in \IndCoh(Z)$.  Using \emph{loc. cit.}, it suffices to show that the right and left adjoints of the corresponding integral transform $F_{\cK}: \QCoh(X) \rightarrow \QCoh(X)$ preserve compact objects, thus are realized by integral transforms with coherent kernels.  We note that since the projection maps $p: Z \rightarrow X$ are quasi-smooth, the functors $p^!$ and $p^*$ differ by a shifted line bundle.  By Lemma 3.0.8 in \emph{op. cit.} we can consider equivalently either the $*$ or $!$-transforms up to twisting by Grothendieck duality.  For convenience we will consider the $*$-transform.

To see the claim, note that we can write the $*$-integral transform $F_{\cK}$ as a composition:
$$\begin{tikzcd}[column sep=huge]
\QCoh(X) \arrow[r, "p^*"] & \QCoh(Z) \arrow[r, "{- \otimes \cK}"] & \IndCoh(Z) \arrow[r, "p_*"] & \IndCoh(X).
\end{tikzcd}$$
We claim that the right adjoint preserves compact objects.  The claim for the left adjoint follows similarly by replacing $p^*$ with a twist of $p^!$ by a shifted line bundle.  The right adjoints define a sequence of functors
$$\begin{tikzcd}[column sep=huge]
\QCoh(X) & \arrow[l, "p_*"'] \QCoh(Z)&  \arrow[l, "{\intHom_{\IndCoh(Z)}(\cK, -)}"']  \IndCoh(Z)&  \arrow[l, "{p^! = p^* \otimes \cL}"'] \IndCoh(X).
\end{tikzcd}$$
The functor $\intHom_{\IndCoh(Z)}(\cK, -): \IndCoh(Z) \rightarrow \QCoh(Z)$ is defined as follows.  Given $\cG \in \IndCoh(Z)$, we may write $\cG = \colim_i \cG_i$ with $\cG_i \in \Coh(Z)$.  Since $\cK$ is compact, we may define:
$$\intHom_{\IndCoh(Z)}(\cK, \cG) := \lim_i \intHom_Z(\cK, \cG_i) \in \QCoh(Z)$$
where the internal Hom on the right is taken inside $\Coh(Z) \subset \QCoh(Z)$ as usual.  Let us justify the claim that this functor is a right adjoint to tensoring with $\cK$.  Let $\cF \in \QCoh(Z)$, and write $\cF = \colim_j \cF_j$ with $\cF_j \in \Perf(Z)$.  Then, by the usual adjunction in $\QCoh(Z)$, and using the facts that the $\cF_j$ are compact in $\QCoh(Z)$ and that $\cF_j \otimes \cK \in \Coh(Z)$ are compact in $\IndCoh(Z)$ since $\cF_j$ are perfect, we have:
$$\Hom_{\QCoh(Z)}(\cF, \intHom_{\QCoh(Z)}(\cK, \cG)) \simeq \Hom_{\QCoh(Z)}(\colim_j \cF_j, \lim_i \intHom_Z(\cK, \cG_i)) $$
$$\simeq \lim_{i,j} \Hom_{\QCoh(Z)}(\cF_j, \intHom_Z(\cK, \cG_i)) \simeq \lim_{i,j} \Hom_{\IndCoh(Z)}(\cF_j \otimes \cK, \cG_i) \simeq \Hom_{\IndCoh(Z)}(\cF \otimes \cK, \cG).$$

Finally, we claim that $\intHom_{\IndCoh(Z)}(\cK, -)$ sends $\Perf(Z)$ to $\Coh(Z)$.  Assuming this claim, then the composite of the sequence of right adjoints above preserves compact objects, which finishes the proof: since $X$ is smooth, the image of $\Coh(X) = \Perf(X)$ under $p^! = p^* \otimes \cL$ takes values in $\Perf(Z)$, and since $p$ is proper (and again since $X$ is smooth) the image of $\Coh(Z)$ under $p_*$ takes values in $\Coh(X) = \Perf(X)$.  To prove the claim, note that the Grothendieck dual $\mathbb{D}(\cK) = \intHom_Z(\cK, \omega_Z)$ is coherent, and since $Z$ is quasi-smooth, $\omega_Z$ is a line bundle, so we have for $\cE \in \Perf(Z)$:
$$\intHom_{\IndCoh(Z)}(\cK, \cE) = \intHom_{\IndCoh(Z)}(\cK, \omega_Z) \otimes_{\cO_Z} \omega_Z^{-1} \otimes_{\cO_Z} \cE \simeq \bD(\cK) \otimes_{\cO_Z} \omega_Z^{-1} \otimes_{\cO_Z} \cE$$
which is coherent.
\end{proof}

\medskip

\subsection{Trace of the standard categorical representation}\label{standard rep}

In Lemma \ref{traceSpringerSheaf}, we have computed the trace of the regular representation $\IndCoh(Z)$ of $\IndCoh(Z)$ to be the universal trace sheaf, i.e. $[\IndCoh(Z), \phi_*] \simeq \cS_{X/Y, \phi} := \cL_\phi f_* \cO_{\cL_\phi X}$.  Our convolution set-up comes equipped with another natural module category: the \emph{standard representation}, i.e. the module category $\IndCoh(X)$.  In this section we compute the trace of this categorical representation, and relate it to the trace of the regular representation in a special case.  We first note a degenerate example.

\begin{exmp}\label{degenerate trace}
Consider the case when $X = Y = Z$ is smooth.  In this case, $\IndCoh(Y) = \QCoh(Y)$, and the trace correspondence of Definition \ref{trace correspondence} is simply given by pullback along the evaluation $\mathrm{ev}: \cL_\phi Y \rightarrow Y$:
$$\begin{tikzcd}
Y & \arrow[l, "{\mathrm{ev}}"'] \cL_\phi Y \arrow[r, equals] & \cL_\phi Y
\end{tikzcd}$$
and the corresponding singular support condition $\Lambda_{Y/Y,\phi} = \{0\}_{\cL_\phi Y}$ is the zero section, i.e. we have $\Tr(\IndCoh(Y),\phi_*) = \QCoh(\cL_\phi Y)$ (see Corollary 5.2 of \cite{BFN}).  The standard representation is the regular representation, and by Theorem 3.3.1 of \cite{BNP} (and Proposition \ref{calabi yau}), the trace of the regular representation is the structure sheaf
$$[\IndCoh(Y), \phi_*] = [\omega_Y] = \omega_{\cL_\phi Y} \simeq \cO_{\cL_\phi Y}.$$
\end{exmp}

\medskip

We recall a few notions from Section 2.3 of \cite{BNP}.  The following functors allow us to pass between categories with different singular supports.
\begin{defn}
For a pair $(X, \Lambda_X)$, there is an adjoint pair of functors (see Definition 2.3.2 of \cite{BNP}):
$$\iota_\Lambda: \begin{tikzcd} \IndCoh_\Lambda(X) \arrow[r, shift left] & \IndCoh(X) \arrow[l, shift left] \end{tikzcd} : \Gamma_\Lambda$$
where $\iota_\Lambda$ is the natural inclusion, and $\Gamma_\Lambda$ is the corresponding colocalization.\footnote{I.e. a ``projection'' functor to the subcategory $\IndCoh_{\Lambda}(X)$, which we view as a singular support analogue of local cohomology.  Note the abusive notation, i.e. the local cohomology functor usually refers to the functor $\iota_\Lambda \circ \Gamma_\Lambda$.}  
\end{defn}

\medskip

We need an identification of the relative tensor product of convolution categories, with specified support. We work in the set-up of Definition \ref{conv setup}: let $X_i$ be smooth QCA stacks over $k$, proper over $Y$, and let $Z_{ij} = X_i \times_Y X_j$.
\begin{defn}
Let $\Lambda_{12} \subset \TT_{Z_{12}}$ and $\Lambda_{23} \subset \TT_{Z_{23}}$.  Consider the diagram
$$\begin{tikzcd}
Z_{12} \times Z_{23} & \arrow[l, "\delta"'] \arrow[r, "\pi"] X_1 \times_Y X_2 \times_Y X_3 & Z_{13}.
\end{tikzcd}$$
We define the \emph{convolution of singular supports}
$$\Lambda_{12} \ast \Lambda_{23} = \pi_* \delta^!(\Lambda_{12} \boxtimes \Lambda_{23}).$$
We say that $\Lambda_{ij}$ is \emph{$Z_{ii}$-stable} if $\TT_{Z_{ii}} \ast \Lambda_{ij} \subset \Lambda_{ij}$.
\end{defn}

\begin{rmk}
The trace singular support condition $\Lambda_{X/Y}$ of Definition \ref{trace correspondence} can be viewed as the convolution of $\TT_Z$ with itself ``in a circle.''
\end{rmk}

We immediately observe that the convolution action restricts to an action of $\IndCoh_{\Lambda_{ii}}(Z_{ii})$ on $\IndCoh_{\Lambda_{ij}}(Z_{ij})$ if and only if $\Lambda_{ij}$ is $\Lambda_{ii}$-stable.  In particular, we have the following identification, which we prove in Appendix \ref{app horiz}; a proof will also appear in \cite{CD}.
\begin{prop}\label{conv comp}
In the set-up above, let $\Lambda_{12} \subset \TT_{Z_{12}}$ and $\Lambda_{23} \subset \TT_{Z_{23}}$ be $Z_{22}$-stable.  Define $\Lambda_{13} := \Lambda_{12} \ast \Lambda_{23}$.
Then convolution defines an equivalence of categories:
$$\begin{tikzcd}
\IndCoh_{\Lambda_{12}}(Z_{12}) \otimes_{\IndCoh(Z_{22})} \IndCoh_{\Lambda_{23}}(Z_{23}) \arrow[r, "\simeq"] & \IndCoh_{\Lambda_{13}}(Z_{13}).
\end{tikzcd}$$
Furthermore, we have the following functoriality of supports: let $\Lambda_{i,i+1} \subset \Lambda_{i,i+1}'$ be another singular support condition on $Z_{i,i+1}$ (for $i=1,2$) with $\Lambda_{13}' := \Lambda_{12}' \ast \Lambda_{23}'$.  Then, $\Lambda_{13} \subset \Lambda_{13}'$, and the following squares commute:
$$\begin{tikzcd}
\IndCoh_{\Lambda_{12}}(Z_{12}) \tens{\IndCoh(Z_{22})} \IndCoh_{\Lambda_{23}}(Z_{23}) \arrow[r, "\simeq"] \arrow[d, "\iota_{\Lambda_{12}} \otimes \iota_{\Lambda_{23}}"', shift right] & \IndCoh_{\Lambda_{13}}(Z_{13}) \arrow[d, "\iota_{\Lambda_{13}}"', shift right] \\
\IndCoh_{\Lambda_{12}'}(Z_{12}) \tens{\IndCoh(Z_{22})} \IndCoh_{\Lambda_{23}'}(Z_{23}') \arrow[r, "\simeq"] \arrow[u, "\Gamma_{\Lambda_{13}} \otimes \Gamma_{\Lambda_{23}}"', shift right] & \IndCoh_{\Lambda_{13}'}(Z_{13}). \arrow[u, shift right, "\Gamma_{\Lambda_{13}}"'] 
\end{tikzcd}$$
\end{prop}

\medskip

These actions are canonically $\phi_*$-semilinear.   We now compute the trace of the categorical representation, which arises via functoriality of horizontal traces (see Section 3.5 of \cite{BN:NT} for details).  Namely, consider the functor
$$T(-) := \IndCoh(X) \otimes_{\QCoh(Y)} -: \QCoh(Y)\mh\cat{mod} = \IndCoh(Y)\mh\cat{mod} \longrightarrow \IndCoh(Z)\mh\cat{mod}.$$
Note that the $\QCoh(Y)$-action on $\QCoh(X) = \IndCoh(X)$ via pullback commutes with the $\IndCoh(Z)$-action by convolution.  
This functor defines a functor on horizontal traces:\footnote{Note that, as discussed in Example \ref{ssupp example}, $\IndCoh_{\{0\}_{\cL_\phi Y}}(\cL_\phi Y) = \QCoh(\cL_\phi Y)$.}
$$\Tr(T, \phi_*): \Tr(\QCoh(Y), \phi_*) = \IndCoh_{\{0\}_{\cL_\phi Y}}(\cL_\phi Y) \longrightarrow \Tr(\IndCoh(Z), \phi_*) = \IndCoh_{\Lambda_{X/Y,\phi}}(\cL_\phi Y).$$
By definition,
$$[\IndCoh(X), \phi_*] = \Tr(T, \phi_*)([\QCoh(Y), \phi_*]) = \Tr(T, \phi_*)(\cO_{\cL_\phi Y}).$$

\begin{rmk}
A variant of the functor $T$ for quasi-coherent sheaves, and in the setting where $f: X \rightarrow Y$ is surjective, was studied in \cite{BFN appendix}.  Note that unlike in their setting, this functor $T$ is not an equivalence since we are considering ind-coherent sheaves $\IndCoh(Z)$ rather than quasi-coherent sheaves $\QCoh(Z)$.  Furthermore, the failure of $f$ to be surjective in our setting requires the application of local cohomology in the calculation of its trace. 
\end{rmk}

\medskip

 We now identify the trace of the standard representation.
\begin{prop}\label{trace of standard} 
Define the singular support condition $\{0\}_{f(X)} := \{0\}_{\cL_\phi Y} \cap \Lambda_{X/Y, \phi}$.  There is a canonical identification of functors
$$\Tr(T, \phi_*) \simeq \iota_{{\{0\}_{f(X)}}} \circ \Gamma_{{\{0\}_{f(X)}}}: \IndCoh_{\{0\}_{\cL_\phi Y}}(\cL_\phi Y) \rightarrow \IndCoh_{\Lambda_{X/Y,\phi}}(\cL_\phi Y).$$
Furthermore, letting $\mathrm{ev}^{-1} f(X) \subset \cL_\phi Y$ corresponding to $\{0\}_{f(X)}$, we have
$$[\IndCoh(X), \phi_*] \simeq \Gamma_{\mathrm{ev}^{-1} f(X) }(\omega_{\cL_\phi Y}).$$
\end{prop}
\begin{proof}
We claim that the right dual to $T$ is
$$T^R(-) := \IndCoh(X) \otimes_{\IndCoh(Z)} -: \IndCoh(Z)\mh\cat{mod} \rightarrow \IndCoh(Y)\mh\cat{mod}$$
where $\IndCoh(X)$ here is considered as a \emph{right} $\IndCoh(Z)$-module, so that we have
$$T^R \circ T(-) = (\IndCoh(X) \otimes_{\IndCoh(Z)} \IndCoh(X)) \otimes_{\IndCoh(Y)} - \simeq \IndCoh_{f(X)}(Y) \otimes_{\QCoh(Y)} -,$$
$$T \circ T^R(-) = (\IndCoh(X) \otimes_{\IndCoh(Y)} \IndCoh(X)) \otimes_{\IndCoh(Z)} - \simeq \IndCoh_{\{0\}_Z}(Z) \otimes_{\IndCoh(Z)} -.$$
The convolution $\QCoh(Y)$-action can be re-interpreted as the usual pullback and tensor product, while the $\IndCoh(Z)$-action is by convolution.  
The first isomorphism is due to Proposition \ref{conv comp}, whereby
$$\IndCoh(X) \otimes_{\IndCoh(Z)} \IndCoh(X) \simeq \IndCoh_{f(X)}(Y)$$
i.e. the full subcategory of $\IndCoh(Y) = \QCoh(Y)$ with classical support on the closed subset $f(X)$ (since $Y$ is smooth there are no possible singular codirections).  The second isomorphism is due to Theorem 4.7 of \cite{BFN}, i.e.  we have $\IndCoh(X) \otimes_{\IndCoh(Y)} \IndCoh(X) = \QCoh(Z) = \IndCoh_{\{0\}_Z}(Z)$.

To establish duality, we need to write down unit and counit maps
$$\eta: \IndCoh(Y) \longrightarrow \IndCoh(X) \otimes_{\IndCoh(Z)} \IndCoh(X) \simeq \IndCoh_{f(X)}(Y),$$
$$\epsilon: \IndCoh_{\{0\}_Z}(Z) \simeq \IndCoh(X) \otimes_{\IndCoh(Y)} \IndCoh(X) \rightarrow \IndCoh(Z)$$
satisfying the usual ``Zorro's identities''.    We define $\eta := \Gamma_{f(X)}$ to be the local cohomology functor, and $\epsilon = \iota_{\{0\}_Z}$ to be the fully faithful inclusion.  The verification of Zorro's identities is immediate from the observation that tensoring $\eta$ or $\epsilon$ with $\mathrm{id}_{\IndCoh(X)}$ (on either side) gives rise to the identity functor, i.e. that the following diagrams commute:
$$\begin{tikzcd}
\IndCoh(X) \tens{\IndCoh(Y)} \IndCoh(Y) \arrow[r, "\simeq"] \arrow[d, "{\mathrm{id}_{\IndCoh(X)} \otimes \eta}"'] & \IndCoh(X) \arrow[d, "\mathrm{id}_{\IndCoh(X)}"] & \IndCoh_{\{0\}_Z}(Z) \tens{\IndCoh(Z)} \IndCoh(X) \arrow[d, "{\epsilon \otimes \mathrm{id}_{\IndCoh(X)}}"'] \arrow[r, "\simeq"] & \IndCoh(X) \arrow[d, "{\mathrm{id}_{\IndCoh(X)}}"] \\
\IndCoh(X) \tens{\IndCoh(Y)} \IndCoh_{f(X)}(Y) \arrow[r, "\simeq"] & \IndCoh(X) &\IndCoh(Z) \otimes_{\IndCoh(Z)} \IndCoh(X) \arrow[r, "\simeq"]& \IndCoh(X) 
\end{tikzcd}$$
This follows by Proposition \ref{conv comp} and the singular support calculations (note that $X$ is smooth and thus $\TT_X$ has no singular codirections):
$$\{0\}_X \ast f(X) = \{0\}_X, \;\;\;\;\;\; \{0\}_Z \ast \{0\}_X = \{0\}_X.$$
This establishes the duality of $(T, T^R)$.

Now, we compute the map on traces, using the functoriality described in Section 3.5 of \cite{BN:NT}.  There is a canonical commuting structure $\psi: T \circ \phi_{Y*} \rightarrow \phi_{Z*} \circ T$, which for us is an equivalence (thus induces an equivalence on traces).  We let $f(X) \subset \TT_Y = Y$ denote the (necessarily, since $Y$ is smooth) classical support condition, and define $\Lambda := \mathrm{ev}^!(f(X))$, i.e. the loops with base points classically supported over $f(X) \subset Y$ and no singular codirections.  We have $\{0\}_{\cL Y} \supset \Lambda \subset \Lambda_{X/Y}$.  
$$\begin{tikzcd}
\Tr(\IndCoh(Y), \phi_{Y*}) \arrow[d, "{\Tr(\IndCoh(Y), \eta \circ \mathrm{id}_{\phi_*})}"'] \arrow[r, "\simeq"] & \IndCoh_{\{0\}_{\cL_\phi Y}}(\cL_\phi Y) \arrow[d, "\Gamma_\Lambda \circ \iota_{\{0\}} = \Gamma_\Lambda"]\\
\Tr(\IndCoh(Y), T^R \circ T \circ \phi_{Y*}) \arrow[d, "{\Tr(\IndCoh(Y), \mathrm{id}_{T^R} \circ \psi)}"', "\simeq"] \arrow[r, "\simeq"] & \IndCoh_{\mathrm{ev}^!f(X)}(\cL_\phi Y) \arrow[d, equals] \\
\Tr(\IndCoh(Y), T^R \circ \phi_{Z*} \circ T) \arrow[d, "\simeq"] \arrow[r, "\simeq"]& \IndCoh_{\mathrm{ev}^!f(X)}(\cL_\phi Y)  \arrow[d, equals] \\
\Tr(\IndCoh(Z), \phi_{Z*} \circ T \circ T^R) \arrow[d, "{\Tr(\IndCoh(Z),\mathrm{id}_{\phi_*}\circ \epsilon)}"'] \arrow[r, "\simeq"]& \IndCoh_{\delta_*\pi^! \{0\}_Z}(\cL_\phi Y)  \arrow[d, "\Gamma_{\Lambda_{X/Y}} \circ \iota_\Lambda = \iota_\Lambda"] \\
\Tr(\IndCoh(Z), \phi_{Z*}) \arrow[r, "\simeq"]& \IndCoh_{\Lambda_{X/Y}}(\cL_\phi Y)
\end{tikzcd}$$

The top and bottom isomorphisms are given by Theorem 3.3.1 in \cite{BNP}.  We argue the middle isomorphisms.  A combination of the arguments of Propositions \ref{convolution category trace} and \ref{conv comp} gives rise to identifications
$$\Tr(\IndCoh(Z), T \circ T^R \circ \phi_{Y*}) = \IndCoh(Y) \otimes_{\IndCoh(Y \times Y)} \IndCoh_{f(X)}(Y) \simeq \IndCoh_{\mathrm{ev}^!(f(X))}(\cL_\phi Y),$$
$$\Tr(\QCoh(Y), \phi_{Z*} \circ T^R \circ T) = \IndCoh(Z) \otimes_{\IndCoh(Z \times Z)} \IndCoh_{\{0\}_Z}(Z)  \simeq \IndCoh_{\delta_*\pi^!\{0\}_Z}(\cL_\phi Y),$$
where $\delta_*\pi^!\{0\}_Z$ is the pull-push of $\{0\}_Z$ along the correspondence in Theorem \ref{convolution category trace}.  We note that $\delta_*\pi^!\{0\}_Z = \delta_* \{0\}_{\cL_\phi Y_X} =  \mathrm{ev}^!f(X) = \{0\}_{f(X)}$.  The identification of the vertical functors follows via the functoriality of supports in Proposition \ref{conv comp} applied to the setting of Proposition \ref{convolution category trace}, and the observation that $\{0\}_{\cL_\phi Y} \supset \Lambda \subset \Lambda_{X/Y,\phi}$.  This establishes the first statement of the theorem.

For the second statement, note that $\omega_{\cL_\phi Y}$ is perfect (since $\cL Y$ is quasi-smooth), i.e. has no singular codirections.   In general, for singular support conditions $\Lambda_1, \Lambda_2 \subset \TT_X$, we have $\Gamma_{\Lambda_2} \circ \iota_{\Lambda_1} \circ \Gamma_{\Lambda_1} = \Gamma_{\Lambda_1 \cap \Lambda_2}.$  Now, take $\Lambda_1 = \{0\}_{\cL Y}$ (i.e.  no singular codirections with unrestricted classical support) and $\Lambda_2 = \mathrm{ev}^{-1}f(X) \times_{\cL_\phi Y} \TT_{\cL_\phi Y}$ (i.e. all singular codirections with restricted classical support).  The second statement follows, since $\Gamma_{\Lambda_1}(\omega_{\cL_\phi Y}) = \omega_{\cL_\phi Y}$ and $\Gamma_{\Lambda_2}$ is the classical local cohomology functor with support $\mathrm{ev}^{-1} f(X)$.
\end{proof}

\begin{cor}\label{standard module hh}
The functor
$$\Hom(\cS_{X/Y, \phi}, -):  \Tr(\IndCoh(Z), \phi_*) \simeq \IndCoh_{\mathrm{ev}^{-1}f(X)}(\cL_\phi Y)  \longrightarrow \End(\cS_{X/Y,\phi})\dmod  $$
takes $\Gamma_{\mathrm{ev}^{-1}f(X)}(\cO_{\cL_\phi Y})$ to the $\hh(\IndCoh(Z), \phi_*)$-module $\hh(\IndCoh(X), \phi_*)$.
\end{cor}
\begin{proof}
By Theorem \ref{trace theorem}, it suffices to identify the trace of the $\IndCoh(Z)$-module category $\IndCoh(X)$.  By the above theorem, $[\IndCoh(X), \phi_*] \simeq \Gamma_{\mathrm{ev}^{-1}f(X)}(\omega_{\cL_\phi Y}) \simeq \Gamma_{\mathrm{ev}^{-1}f(X)}(\cO_{\cL_\phi Y})$ (the latter isomorphism by Proposition \ref{calabi yau}).  
\end{proof}

\begin{rmk}
Note that it is immediate via adjunctions that
$$\Hom_{\cL_\phi Y}(\cS_{X/Y,\phi}, \Gamma_{\mathrm{ev}^{-1}f(X)}\cO_{\cL_\phi Y}) \simeq \Hom_{\cL_\phi X}(\cO_{\cL_\phi X}, \cO_{\cL_\phi X}) \simeq \cO(\cL_\phi X) \simeq HH(\IndCoh(X), \phi_*).$$
By working at the level of categorical traces, we automatically deduce that this is an identification as $HH(\IndCoh(Z), \phi_*)$-modules.
\end{rmk}

\medskip

\subsubsection{Splitting the universal trace sheaf}\label{app enhanced}

The coherent Springer sheaf $\cS_{X/Y, \phi}$ may be realized as the character of the regular $\IndCoh(Z)$-representation $[\IndCoh(Z), \phi_*]$, but also as the character of the $\QCoh(Y)$-representation $[\QCoh(X), \phi_*]$.  In this section, we will take the latter point of view.  This allows us to do something sneaky in the proof of Theorem \ref{thm endomorph}: we swap out $X = \wt{\cN}/G$ with $\wt{\mf{g}}/G$, and use the observation that their $q$-fixed points are canonical equivalent for $q$ not a root of unity.  There is a canonical map 
$$[\IndCoh(X), \phi_*] = \cS_{X/Y,\phi} = \cL_\phi f_* \omega_{\cL_\phi X} \longrightarrow \omega_{\cL_\phi Y} = [\IndCoh(Y), \phi_*]$$
arising via the pushforward of volume forms.  In this section we investigate when this map splits, realizing the trace of the standard representation as a summand of the trace of the regular representation.  To do so we require a discussion of enhanced vertical traces, i.e.  the realization of vertical traces of module categories for a monoidal category as characters in the horizontal trace of the monoidal category.

\begin{defn}
Let us fix a monoidal dg category $\cat{A}$, and a monoidal endofunctor $F$.  For any $\cat{A}$-module category $\cat{C}$ equipped with a commuting structure $F_{\cat{M}}$ for $F$ (see Definitions \ref{character def} and \ref{two trace maps}), we define the \emph{enhanced Hochschild homology} to be 
$$\underline{\hh}(\cat{C}, F_{\cat{M}}) := [\cat{C}, F_{\cat{M}}] \in \Tr(\cat{A}, F).$$  
By Theorem \ref{trace theorem}, the usual Hochschild homology can be recovered by applying the functor $\Hom_{\Tr(\cat{A}, F)}([\cat{A}, F], -)$.
\end{defn}

\begin{rmk}
We have seen examples of this enhanced Hochschild homology in Section \ref{HH geometric}, namely that in geometric settings Hochschild homology and maps induced by functoriality often sheafify, i.e. arise as global objects via local ones by taking global sections.    The category $\QCoh(Y)$ is monoidal, and for any module category $\cat{C}$ the Hochschild homology $HH(\cat{C}) := [\cat{C}] \in \cat{Vect}_k$ has an enhancement $\underline{\hh}(\cat{C}) \in \Tr(\QCoh(Y)) = \QCoh(\cL Y).$     Though we do not need or prove it, the enhanced Block-Getzler complex in Definition \ref{alg model} is also an example of this phenomenon, where we view the Hochschild homology of a $\Rep(G)$-module category as an object of $\Tr(\Rep(G)) = \QCoh(G/G)$.    
\end{rmk}

We now compute the enhanced trace in an example of interest; see Appendix \ref{app horiz} for a proof.
\begin{prop}\label{qcoh trace}
Let $f: X \rightarrow Y$ be a map of QCA (or more generally, perfect) stacks, and $\phi_X, \phi_Y$ compatible self-maps such that $\phi_{Y*}: \QCoh(Y) \rightarrow \QCoh(Y)$ is monoidal and $\phi_{X*}: \QCoh(X) \rightarrow \QCoh(X)$ is $\phi_{Y*}$-semilinear.  Consider $\QCoh(X)$ as a $\QCoh(Y)$-module category.  Then, we have
$$\underline{\hh}(\QCoh(X), \phi_{X*}) = [\QCoh(X), \phi_{X*}] \simeq \cL_\phi f_*\cO_{\cL_\phi X} \in \Tr(\QCoh(Y), \phi_{Y*}) = \QCoh(\cL_\phi Y).$$ 
\end{prop}

\medskip

We now establish the desired splitting.  We note that in the below, we take the category $\QCoh$ of quasi-coherent sheaves rather than ind-coherent sheaves.
\begin{prop}\label{splitting}
Let $f: X \rightarrow Y$ be a proper morphism of smooth QCA stacks, with compatible self-maps $\phi_X, \phi_Y$, such that $\phi_{Y*}$ is a monoidal endofunctor of $\QCoh(Y)$ and $\phi_{X*}$ is a $\phi_{Y*}$-semilinear endofunctor of $\QCoh(X)$.  Furthermore, assume that $f_*\cO_X \simeq \cO_{Y} \otimes_k V$ for a $\phi_*$-equivariant vector space $V$ such that $\mathrm{tr}(\phi_*, V) \ne 0$.  Then, $[\QCoh(X), \phi_{X*}] \simeq \cL_\phi f_* \cO_{\cL_\phi X}$ contains $[\QCoh(Y), \phi_{Y*}] \simeq \cO_{\cL_\phi Y}$ as a summand in $\Tr(\QCoh(Y), \phi_{Y*}) = \QCoh(\cL_\phi Y)$.
\end{prop}
\begin{proof}[Proof of Proposition \ref{splitting}]
To prove the claim, we need to produce a splitting. Since the character of the monoidal unit in $\QCoh(Y)$ is the monoidal unit in $\underline{HH}(\QCoh(Y), \phi_*)$, applying Proposition \ref{qcoh trace} we obtain a diagram (where $\ni$ means an element of global sections, i.e. non-enhanced Hochschild homology):
$$\begin{tikzcd}[column sep=small]
\underline{HH}(\QCoh(Y), \phi_*) \arrow[r, equals] \arrow[d, "{\underline{HH}(f^*, \phi_*)}"'] & \cO_{\cL_\phi Y} \arrow[d]  & \arrow[l, phantom, "\ni"'] 1_{\cL_\phi Y} \arrow[d, mapsto] & \arrow[l, equals] [\cO_Y, \phi_*]\arrow[d, mapsto]  \\
\underline{HH}(\QCoh(X), \phi_*) \arrow[r, equals] \arrow[d, "{\underline{HH}(f_*, \phi_*)}"'] &  \cL_\phi f_* \cO_{\cL_\phi X} \arrow[d] & \arrow[l, phantom, "\ni"'] 1_{\cL_\phi X} \arrow[d, mapsto] &  \arrow[l, equals] [f^*\cO_Y, \phi_*] = [\cO_X, \phi_*] \arrow[d, mapsto] \\
\underline{HH}(\QCoh(Y), \phi_*)\arrow[r, equals] & \cO_{\cL_\phi Y}& \arrow[l, phantom, "\ni"'] \mathrm{tr}_{\cO(Y)}(\phi_*, V) \cdot 1_{\cL_\phi Y}& \arrow[l, equals] [f_*\cO_X, \phi_*] = [\cO_Y \otimes V, \phi_*].
\end{tikzcd}$$
Note that $f^*$ always preserves perfect objects, and $f_*$ preserves perfect objects since $f$ is proper and $X$ and $Y$ are smooth, giving us the functoriality on the left following Proposition \ref{qcoh trace}.  To see that the composition is an isomorphism, note that a map $\cO_{\cL_\phi Y} \rightarrow \cO_{\cL_\phi Y}$ is determined by where the constant function maps; by the above, it maps to $[f_*, \cO_X, \phi_*] \in HH(\QCoh(Y), \phi_*)$.  We will show this is a unit, thus the composition of the arrows on the left is an isomorphism.

To this end, let $p: Y \rightarrow \Spec k$ be the ($\phi$-equivariant) projection to a point where $\phi$ acts trivially on $\Spec k$, and note that $f_* \cO_X \simeq p^* E$.  In particular,  $[p^* V, \phi_*]$ is the image of $[V, \phi_*] = \mathrm{tr}(V, \phi_*)$ under the pullback map $\cO(\cL_\phi p^*): k \rightarrow \cO(\cL_\phi Y)$, which is a non-zero multiple of the identity by assumption, thus a unit as required.
\end{proof}

\section{The affine Hecke algebra and the coherent Springer sheaf}\label{section coherent springer}

We now specialize the discussion of Section \ref{traces} to our Springer theory setting.   In this section, we will take $k = \bQl$ or $\C$.  We are interested in the following special cases.
\begin{defn}[Coherent Springer sheaves]\label{def q loop space}
Recall that $\GG = G \times \G_m$, and the set-up in Definition \ref{conv setup} and the universal trace sheaf of Definition \ref{univ trace sheaf}.
\begin{enumerate}
\item We take 
$$f = \mu: X = \wt{\cN}/\GG \longrightarrow \wh{\cN}/\GG \hookrightarrow Y = \mf{g}/\GG$$
to be the scaling-equivariant Springer resolution (with codomain in the Lie algebra rather than the nilpotent cone).  We call the resulting sheaf $\cS$ on $\cL(\wh{\cN}/\GG)$ (or equivalently, on $\cL(\mf{g}/\GG)$ supported over $\cN$) the \emph{coherent Springer sheaf}.
\item We take 
$$f = \mu: X = \wt{\cN}/G \longrightarrow \wh{\cN}/G \hookrightarrow Y = \mf{g}/G$$ to be the above Springer resolution without $\G_m$-equivariance, and $\phi := q$ to be multiplication by $q \in \G_m(k)$.  Then we have the derived $q$-fixed points:
$$\cL_q(\wh{\cN}/G) \simeq \cL(\wh{\cN}/\GG) \times_{\cL(B\G_m)} \{q\}.$$
This is the stack $\bL^u_{q,G}$ from the introduction.  We call the sheaf $\cS_q$ on $\cL_q(\wh{\cN}/G)$ the \emph{coherent $q$-Springer sheaf}.
\end{enumerate} 
\end{defn}

We note the following convenient presentation of the stacks $\LL(\NN/\GG)$ and $\LL(\wh{\cN}/\GG)$.
\begin{rmk}\label{Lq id}
We realize $\LL(\wh{\cN}/\GG)$ as the formal completion of $\LL(\mf{g}/\GG) \rightarrow \mf{g}/\GG$ over the nilpotent cone.  By Proposition 2.1.8 of \cite{Ch}, we can write $\LL(\mf{g}/\GG)$ as the pullback
$$\begin{tikzcd}
\LL(\mf{g}/\GG) \arrow[d] \arrow[r]  &\mf{g}/\GG \arrow[d, "\Delta"] \arrow[r] & \{0\}/\GG \arrow[d, hook] \\
 (\mf{g} \times \GG)/\GG \arrow[r, "a \times p"]  &  (\mf{g} \times \mf{g})/\GG \arrow[r, "-"] & \mf{g}/\GG
\end{tikzcd}$$
where the bottom right map is given by subtraction in $\mf{g}$, $a$ is the action map, $p$ the projection, and $\Delta$ the diagonal.  Explicitly, the map $\mf{g} \times \GG \rightarrow \mf{g}$ is given by $(x, g, q) \mapsto q^{-1} \mathrm{Ad}_g(x) - x$.  We also have a version for fixed $q$:
$$\begin{tikzcd}
\LL_q(\mf{g}/G) \arrow[d] \arrow[r]  &\mf{g}/G \arrow[d, "\Delta"] \arrow[r] & \{0\}/\GG \arrow[d, hook] \\
 (\mf{g} \times G)/G \arrow[r, "a_q \times p"]  &  (\mf{g} \times \mf{g})/\GG \arrow[r, "-"] & \mf{g}/\GG.
\end{tikzcd}$$
where $a_q$ is the $q$-twisted action map.  There is a similar description for $\LL(\NN/\GG) = \LL(\mf{n}/\wt{B})$:
$$\begin{tikzcd}
\LL(\NN/\GG) \arrow[d] \arrow[r]  & \NN/\GG \arrow[d, "\Delta"] \arrow[r] & (G/B)/\GG \arrow[d, hook] \\
 (\NN \times \GG)/\GG \arrow[r, "a \times p"]  &  (\NN \times \NN)/\GG \arrow[r, "-"] & \NN/\GG.
\end{tikzcd}$$
\end{rmk}

We record the following mild generalization and direct consequence of Proposition 4.2 in \cite{curtis} and Proposition 2.1 in \cite{hellmann} (also proven for $q$ a prime power in Proposition 3.1.5 of \cite{xinwen survey}).  In particular, when $q$ is not a root of unity (e.g. for arithmetic applications), we may replace $\wh{\cN}$ or $\mf{g}$ with $\cN$ in the $q$-twisted loop spaces.
\begin{prop}\label{no derived structure}
If $q$ is not a root of unity, then $\LL_q(\wh{\cN}/G)$ is a classical stack, i.e. has trivial derived structure and is supported at the nilpotent cone.  The maps 
$$\cL_q(\cN/G) \longrightarrow \cL_q(\wh{\cN}/G) \longrightarrow \cL_q(\mf{g}/G)$$
are isomorphisms of classical (but a priori derived) stacks.
\end{prop}
\begin{proof}
We first argue that $\LL_q(\mf{g}/G)$ is supported over the nilpotent cone, thus $\LL_q(\mf{g}/G) = \LL_q(\wh{\cN}/G)$.  The formation of (twisted) loop spaces commutes with products; note the Cartesian square
$$\begin{tikzcd}
\cN/G \arrow[r] \arrow[d]  & \mf{g}/G \arrow[d] \\
\{0\} \arrow[r] & \mf{h}//W.
\end{tikzcd}$$
The morphisms are $\G_m$-equivariant, where $\G_m$ acts on $\mf{h}$ by weight 1, and on $\mf{h}//W$ by weights $\geq 1$.  Thus if $q$ is not a root of unity, then the (derived and classical) $q$-fixed points of $\mf{h}//W$ is precisely $\{0\}$.  Thus the map on the bottom is an equivalence, and the claim follows.  The vanishing of derived structure follows by Proposition 4.2 in \cite{curtis} and in view of Remark 2.2(b) of \cite{hellmann}.
\end{proof}

\begin{rmk}
It is necessary to exclude roots of unity; when $G = \SL_2$, the weight of $\mf{h}//W$ is 2, so the argument fails for $q = \pm 1$.  When $G = \SL_3$, the weights of $\mf{h}//W$ are 2 and 3, so the argument fails for $q = \pm 1$ and any cubic root of unity.  A sharper statement is possible: for a fixed group $G$, the proposition is true if we avoid roots of unity with order dividing any fundamental invariant of $\mf{g}$.  The statements also hold for $G$ a parabolic subgroup, except that $\cL_q(\cN_P/P)$ may fail to be a classical stack (i.e. may have derived structure).
\end{rmk}

\medskip

We now give an alternative characterization of the coherent Springer sheaf (and likewise for the $q$-version) via coherent parabolic induction.
\begin{defn}
Consider the parabolic induction correspondence
$$\begin{tikzcd}
\wh{\cN}/\GG & \arrow[l, "\mu"'] \wh{\mf{n}}/\Bgr \arrow[r, "\nu"] & \wh{\{0\}}/\Hgr.
\end{tikzcd}$$
We define the \emph{coherent Springer sheaf} by applying the loop space of the above correspondence to the reduced structure sheaf of $\cL(\{0\}/\Hgr)$:
$$\cS := \cL\mu_* \cO_{\wt{\cN}/\GG} = \cL\mu_* \cL\nu^* \cO_{\cL(\{0\}/\Hgr)} \in \Coh(\cL(\wh{\cN}/\GG)).$$
We define the \emph{coherent $q$-Springer sheaf} analogously, or equivalently we can take $\cS_q := \iota_q^* \cS$, where $\iota_q: \cL_q(\wh{\cN}/G) \rightarrow \cL(\wh{\cN}/\GG)$.
\end{defn}
\begin{rmk}
Note that a priori, one could define $\cS_q$ via either the $*$ or $!$-pullback.  However, the map $\iota_q$ is base-changed from the map $i_q: \{q\} \rightarrow \G_m/\G_m$.  Since $\{q\} \subset \G_m$ has trivial normal bundle and $i_q$ has relative dimension zero, we have a canonical equivalence $\iota_q^! \simeq \iota_q^*$, i.e. it did not matter which definition we took.  Likewise, since derived loop spaces of smooth stacks (or smooth morphisms) are Calabi Yau by Proposition \ref{calabi yau}, we have an equivalence $\cL \nu^* \simeq \cL\nu^!$ and can use either.
\end{rmk}

\medskip

For number theory applications, we will be interested in specializing at $q$ a prime power.  These are the algebraic specializations of the affine Hecke algebra, which have no derived structure since $\Haff$ is flat over $k[z,z^{-1}]$.
\begin{defn}
We define the \emph{Iwahori-Hecke algebra} by 
$$\Haff_q := \Haff \otimes_{k[z,z^{-1}]} k[z,z^{-1}]/\langle z - q\rangle.$$  
\end{defn}

A potentially different algebra arises when specializing geometrically, i.e. taking endomorphisms of a $q$-specialized Springer sheaf.  We introduce the following unmixed version of the affine Hecke algebra, which is obtained by taking $G$-equivariant endomorphisms of the Springer sheaf \emph{without} taking $\G_m$-invariants, i.e. by passing to the base changed stack $\cL(\wh{\cN}/\GG) \times_{B\G_m} \pt$.

\begin{defn}
Let $\cL^{un}(\wh{\cN}/\GG) := \cL(\wh{\cN}/\GG) \times_{B\G_m} \pt$.  We define the \emph{unmixed affine Hecke algebra} and its specialization by
$$\Haff^{\un} := \End_{\LL^{un}(\wh{\cN}/\GG)}(\cS), \;\;\;\;\;\;\; \Haff^{un}_q := \Haff^{\un} \otimes^L_{k[z,z^{-1}]} k[z,z^{-1}]/\langle z - q\rangle.$$  
The algebra $\Haff^{\un}$ has the additional structure of a $\G_m$-representation, i.e. a weight grading.  
\end{defn}


The unmixed affine Hecke algebra arises naturally when considering the trace by pullback by various $q \in \G_m$ acting on the affine Hecke category $\Hcat = \Coh(\cZ/G)$ (as opposed to the mixed affine Hecke category $\Hcat^\mix = \Coh(\cZ/\GG)$).
\begin{prop}\label{unmixed ident}
There is a natural equivalence of algebras 
$$\cH_q^{\un} \simeq HH(\Hcat, q_*) \simeq \End_{\cL_q(\wh{\cN}/G)}(\cS_q).$$ 
That is,
$$\cH_q^{\un} \simeq \begin{cases} kW_a \otimes_k \Symp_k(\mf{h}^*[-1] \oplus \mf{h}^*[-2]) & \text{ when } q=1, \\ 
\cH_q & \text{ when } q\ne 1.\end{cases}$$
\end{prop}
\begin{proof}
We adopt the shorthand notation $\cS^{\un}$ for the corresponding coherent Springer sheaf on $\cL^{un}(\wh{\cN}/\GG)$.    Let $\iota_q: \cL_q(\wh{\cN}/G) \hookrightarrow \cL^{\un}(\wh{\cN}/\GG)$ be the base change along the closed immersion $\{q\} \hookrightarrow \G_m$.  Consider the forgetful functor for the natural map of algebras 
$$\cH^{\un} = \End_{\LL^{\un}(\wh{\cN}/\GG)}(\cS^{\un}) \rightarrow \Hom_{\LL_q(\wh{\cN}/G)}(\iota_q^* \cS^{\un}, \iota_q^* \cS^{\un}) = HH(\cat{H}, q_*).$$
obtained via functoriality (Proposition \ref{functoriality BG}).  Using the $(\iota_q^*, \iota_{q,*})$ adjunction, we have $\iota_{q,*} \iota_q^* \cF = \cone(q: \mathcal{F} \rightarrow \mathcal{F})$, and an equivalence of complexes
$$\begin{tikzcd}\Hom_{\LL^{un}(\wh{\cN}/\GG)}(\cS^{un}, \iota_{q,*} \iota_q^* \cS^{un}) & \arrow[l, "\simeq"'] \Hom_{\LL^{un}(\wh{\cN}/\GG)}(\cS^{un}, \cS^{un}) = \Haff^{un}  \\
& \Hom_{\LL^{un}(\wh{\cN}/\GG)}(\cS^{un},\cS^{un}) = \Haff^{un}. \arrow[u, "q"]
\end{tikzcd}$$
The equivalence is an equivalence of dg algebras, so $HH(\cat{H}, q_*) \simeq \cH^{un}_q$.  Finally, we have $HH(\cat{H}, q_*) \simeq \cH_q$  by Corollary \ref{trace q id} (and the identification for $q=1$ by Corollary \ref{ThmHH no Gm}), proving the claim.
\end{proof}

\begin{rmk}\label{unmixed id}
The algebra $\cH^{un}$ can be recovered as the $\G_m$-enhanced Hochschild homology of $\Hcat^\mix$ discussed in \cite{GKRV} and Section \ref{app enhanced}.  In particular, take coordinates $\cO(\G_m) = k[z,z^{-1}]$, let $\mf{h}[-n]$ denote the shifted Cartan algebra in cohomological-weight bidegree $(n, 1)$, and define the graded $k[z,z^{-1}]$ algebra
$$A^{[-n]} := \cO(\cL(\mf{h}^*[n]/\G_m) = \Sym_{\cO(\G_m)}(\mf{h}[-n] \otimes_k \cO(\G_m)) / \langle x(z - 1) \mid x \in \mf{h}[-n] \rangle.$$
One can compute (in a similar manner as Corollaries \ref{ThmHH no Gm} and \ref{trace q id}) that
$$\cH^{un} = \underline{\hh}^{\G_m}(\mathbf{H}^\mix) = \cH \otimes_{\cO(\G_m)} A^{[-2]}$$
recovering the above proposition on specialization at various $z = q$.  One can do the same for the variants in Remark \ref{hecke variants}, i.e.
$$\underline{\hh}^{\G_m}(\Coh(\cZ'/\GG)) = \cH, \;\;\;\;\;\;\; \underline{\hh}^{\G_m}(\Coh(\cZ^\wedge\!/\GG)) = \cH \otimes_{\cO(\G_m)} A^{[-1]}.$$
Note that Theorem 4.4.4 in \emph{op. cit.} establishes a relationship similar to this one.
\end{rmk}

\begin{rmk}
One can similarly argue that $\cH_q$ can be realized as the endomorphisms of the restriction of $\cS$ along the base change of the inclusion $\{q\}/\G_m \hookrightarrow \cL(B\G_m)$, i.e. where we retain $\G_m$-equivariance.
\end{rmk}

Our main result is the following theorem (see Proposition \ref{no derived structure}).
\begin{thm}\label{thm endomorph}
Assume that $q \ne 1$.  
\begin{enumerate}
\item The dg algebra of endomorphisms of the coherent Springer sheaf is concentrated in degree zero and is identified with the affine Hecke algebra, 
$$\End_{\LL(\wh{\cN}/\GG)}(\cS) \simeq \Haff, \;\;\;\;\;\;\;\;\;\;\;\;\;\; \End_{\cL_q(\wh{\cN}/G)}(\cS_q) \simeq \Haff_q.$$
In particular, $\cS$ generates full embeddings, the \emph{Deligne-Langlands functors}:
$$\mathrm{DL}: \Haff\module \hookrightarrow \IndCoh(\cL(\wh{\cN}/\GG)),\;\;\;\;\;\;\; \mathrm{DL}_q: \Haff_q\module \hookrightarrow \IndCoh(\cL_q(\wh{\cN}/G)).$$
\item On the anti-spherical modules $M^{\mathrm{asp}} := \Ind_{\cH^f}^{\cH} (\mathrm{sgn})$ and $M^{\mathrm{asp}}_q := \Ind_{\cH^f_q}^{\cH_q}(\mathrm{sgn})$, these functors take values
$$\mathrm{DL}(M^{\mathrm{asp}}) \simeq \mathrm{pr}_{\cS}(\omega_{\cL(\wh{\cN}/\GG)}), \;\;\;\;\;\;\; \mathrm{DL}_q(M^{\mathrm{asp}}_q) \simeq \mathrm{pr}_{\cS_q}(\omega_{\cL_q(\wh{\cN}/G)}),$$ 
where $\mathrm{pr}_{\cS} = \mathrm{DL} \circ \mathrm{DL}^R$ (resp. $\mathrm{pr}_{\cS_q} = \mathrm{DL}_q \circ \mathrm{DL}_q^R$), i.e. the composition of the Deligne-Langlands functor with its right adjoint.  Furthermore, when $q$ is not a root of unity,
$$\mathrm{DL}_q(M^{\mathrm{asp}}_q) \simeq \mathrm{pr}_{\cS_q}(\omega_{\cL_q(\wh{\cN}/G)}) = \omega_{\cL_q(\cN/G)} \simeq \cO_{\cL_q(\cN/G)}$$
and $\cO_{\cL_q(\cN/G)}$ is a summand of $\cS_q$.

\item These embeddings are compatible with parabolic induction, i.e. for a parabolic $P \supset B$ with quotient Levi $M$, we have commuting diagrams
$$\begin{tikzcd}
\Haff_{M}\dmod \arrow[d, "{\cH_G \otimes_{\cH_M} -}"']\arrow[r, hook] & \IndCoh(\cL(\wh{\cN}_M/M_{\ggr}))\arrow[d, "\cL\mu_* \circ \cL\nu^*"] & & \Haff_{q,M}^{un}\dmod \arrow[d, "{\cH^{}_{q,G} \otimes_{\cH^{}_{q,M}} -}"']\arrow[r, hook] & \IndCoh(\cL_q(\wh{\cN}_M/M))\arrow[d, "\cL_q \mu_* \circ \cL_q \nu^*"]\\
\Haff_{G}\dmod \arrow[r, hook] &  \IndCoh(\cL(\wh{\cN}_G/\GG)) & & \Haff^{}_{q,G}\dmod \arrow[r, hook] &  \IndCoh(\cL_q(\wh{\cN}_G/G)).
\end{tikzcd}$$
That is, the parabolic induction functor is the pull-push along the correspondence obtained by applying $\cL$ or $\cL_q$ to the usual correspondence
$$\begin{tikzcd}
\wh{\cN}_M/M_{\ggr} & \arrow[l, "\mu"'] \wh{\cN}_P/P_{\ggr} \arrow[r, "\nu"] & \wh{\cN}_G/\GG.
\end{tikzcd}$$
\end{enumerate}
\end{thm}
\begin{proof}
The first claim of the theorem is a combination of Theorems \ref{ThmHH} and Theorem \ref{traces convolution}, Corollaries \ref{trace q id} and \ref{ThmHH no Gm}, and Proposition \ref{unmixed ident}, for both general $q$ and specific $q$.  It remains to prove the claims regarding the anti-spherical module and compatibility with parabolic induction.

We first address the claim regarding anti-spherical modules.  
By Corollary \ref{standard module hh}, we have an equivalence as $\End(\cS) \simeq \hh(\Coh(\cZ/\GG))$-modules
$$\Hom(\cS, \omega_{\cL(\wh{\cN}/\GG)}) \simeq \hh(\Coh(\wt{\cN}/\GG)).$$
Thus,  it follows that $\mathrm{pr}_{\cS}(\omega_{\cL(\wh{\cN}/\GG)}) \simeq \hh(\Coh(\wt{\cN}/\GG)$ as $\hh(\Coh(\cZ/\GG))$-modules (and similarly for special $q$).  Thus, we need to compute the module $\hh(\Coh(\wt{\cN}/\GG))$ (and likewise for special $q$), and we need to identify the projection for $q$ not a root of unity.

We first produce an isomorphism $\hh(\Coh(\wt{\cN}/\GG)) \simeq M^{\mathrm{asp}}$ as $\hh(\Coh(\cZ/\GG))$-modules, and isomorphisms $\hh(\Coh(\wt{\cN}/G), q_*) \simeq M^{\mathrm{asp}}_q$ as $\hh(\Coh(\cZ/G), q_*)$-modules.  The first isomorphism follows via the identification of $K_0(\Coh(\wt{\cN}/\GG))$ as the anti-spherical module for $K_0(\Coh(\cZ/\GG))$ in Section 7.6 of \cite{CG}\footnote{In our convention, we identify $K_0(\Coh(\wt{\cN}/\GG))$ with the anti-spherical module, and $K_0(\Coh_{\cB/\GG}(\wt{\cN}/\GG))$ with the spherical module.} once we establish an equivalence $K_0(\Coh(\wt{\cN}/\GG)) \simeq \hh(\Coh(\wt{\cN}/\GG))$ as $K_0(\Coh(\wt{\cN}/\GG)) \simeq \hh(\Coh(\wt{\cN}/\GG))$-modules, and the second would follow from an equivalence $HH(\Coh(\wt{\cN}/G, q_*) \simeq HH(\Coh(\wt{\cN}/\GG)) \otimes_{k[\G_m]} k_q$ (similar to the identification in Proposition \ref{unmixed ident}).

To see this, note that $\Coh(\wt{\cN}/\GG)$ has a semiorthogonal decomposition indexed by $\lambda \in X^\bullet(H)$ characters of the quotient torus $H = B/[B,B]$, where each subcategory $\Coh(\wt{\cN}/\GG))_\lambda$ is generated over $\Rep(\G_m)$ by the line bundle $\cO_{\wt{\cN}/\GG}(\lambda)$.  Computing via the Block-Getzler complex of Definition \ref{alg model} (see also Corollary \ref{semiorthog barr beck}), and noting that $\End_{\wt{\cN}/\GG}(\cO_{\wt{\cN}/\GG}(\lambda)) = k$ we have that the specialization at $q$ map is:
$$\begin{tikzcd}
\hh(\Coh(\wt{\cN}/\GG))_\lambda) \arrow[r] \arrow[d, "\simeq"] & \underline{\hh}^{\G_m}(\Coh(\wt{\cN}/\GG))_\lambda)\arrow[r] \arrow[d, "\simeq"] & \hh(\Coh(\wt{\cN}/G))_\lambda, q_*) \arrow[d, "\simeq"] \\
\cO(\G_m) \arrow[r, equals] & \cO(\G_m) \arrow[r] & k_q.
\end{tikzcd}$$
The equivalence on the left induces an equivalence $K_0(\Coh(\wt{\cN}/\GG))_\lambda) \simeq \hh(\Coh(\wt{\cN}/\GG))_\lambda)$.  Summing over each subcategory in the semiorthogonal deomposition, this establishes both claims.

It remains to compute the projection $\mathrm{pr}_{\cS_q}(\omega_{\cL_q(\wh{\cN}/G)})$ for $q$ not a root of unity.  By Proposition \ref{no derived structure}, $\cL_q(\wh{\cN}/G) \simeq \cL_q(\cN/G)$; it suffices to show that $\omega_{\cL_q(\cN/G)} \simeq \cO_{\cL_q(\cN/G)}$ is a summand of $\cS_q$.  Since derived fixed points commutes with fiber products, the diagrams
$$\begin{tikzcd}
\cL_q(\cN/G) \arrow[r] \arrow[d] & \cL_q(\mf{g}/G) \arrow[d] &  \cL_q(\wt{\cN}/G) \arrow[r] \arrow[d] & \cL_q(\wt{\mf{g}}/G) \arrow[d]\\
\cL_q(\{0\}) \arrow[r] & \cL_q(\mf{h}//W)& \cL_q(\{0\}) \arrow[r] & \cL_q(\mf{h})
\end{tikzcd}$$
are Cartesian.  When $q$ is not a root of unity, we have $\cL_q(\{0\}) = \cL_q(\mf{h}//W) = \cL_q(\mf{h})$, so that $\cL_q(\cN/G) = \cL_q(\mf{g}/G)$ and $\cL_q(\wt{\cN}/G) = \cL_q(\wt{\mf{g}}/G)$.  We then apply Proposition \ref{splitting} to the Grothendieck-Springer resolution $\mu': \wt{\mf{g}}/G \rightarrow \mf{g}/G$ and $\phi = q_*$ to obtain the splitting, observing that $\mu'_*\cO_{\wt{\mf{g}}} \simeq \cO_{\mf{g}} \otimes_{\cO(\mf{h})^W} \cO(\mf{h})$, and that by the main theorem of \cite{demazure} $\cO(\mf{h})$ is a free graded $\cO(\mf{h})^W$-module of rank $|W|$ with homogeneous basis of degrees $-\ell(w)$ for $w \in W$ (in our sign convention), so we may further write 
$$\mu'_*\cO_{\wt{\mf{g}}} \simeq \cO_{\mf{g}} \otimes_k V$$ 
where $V \in \Rep(\bG_m)$ is the $k$-linear span of these $\bG_m$-eigenvector basis elements.  In particular the trace of the action of $q_*$ on the free $\cO(\mf{h})^W$-module $\cO(\mf{h})$ is the Poincar\'{e} polynomial of $W$ evaluated at $q$, which is non-zero when $q$ is not a root of unity \cite[Cor. 2.5]{macdonald}.

\medskip

We now address compatibility with parabolic induction.  First, note that by Proposition \ref{calabi yau} we have $\cL \nu^* = \cL \nu^*$, since $\nu$ is smooth.  Let $H = B/U$, fix a parabolic $P \supset B$ with quotient Levi $M$, and let $B_M \subset B$ denote the Borel subgroup defined to be the image of $B \subset P$ under the quotient.  Consider the correspondence
$$\begin{tikzcd}[column sep=-3em]
& \arrow[dl, "i"'] \cZ_P/P_{\ggr} := \mf{n}/\Bgr \times_{\mf{p}/P_{\ggr}} \mf{n}/\Bgr \arrow[dr, "p"] & \\
\cZ_G/\GG := \mf{n}/\Bgr \times_{\mf{g}/\GG} \mf{n}/\Bgr & & \cZ_M/M_{\ggr} := \mf{n}_M/B_{M,\ggr} \times_{\mf{m}/M_{\ggr}} \mf{n}_M/B_{M, \ggr}.
\end{tikzcd}$$
Note that the correspondence satisfies the conditions of Proposition \ref{springer functorial}, i.e. since $\mf{n}/B = \mf{b}/B \times_{\mf{h}/H} \{0\}/H$ (and similarly for $B_M$), and the formation of loop spaces commutes with fiber products, we have via base change that $\cS_G = \cL\mu_* \cO_{\cL(\mf{n}/B)} \simeq \cL\mu_*\cL \nu^* \cO_{\cL(\{0\}/H)}$, and similar formulas hold for $\cS_M$.  That is, the coherent Springer sheaf is the parabolic induction of the structure sheaf of $\cL(\{0\}/H)$.  Thus, we have a Cartesian diagram
$$\begin{tikzcd}
& & \cL(\mf{b}/B) \arrow[dl] \arrow[dr] & & \\
& \cL(\mf{b}_M/B_M) \arrow[dl] \arrow[dr] & & \cL(\mf{p}/P) \arrow[dl, "\nu"'] \arrow[dr, "\mu"] & \\
\cL(\mf{h}/H) & & \cL(\mf{m}/M) & & \cL(\mf{g}/G)
\end{tikzcd}$$
thus $\cL\mu_* \cL\nu^* \cS_M \simeq \cS_G$ by base change.  By the commuting diagram
$$\begin{tikzcd}[column sep=large]
HH(\Coh(\cZ_M/M_{\ggr})) \arrow[r, "\simeq"', "{\text{Prop. } \ref{HH prop}}"] \arrow[d, "HH(i_*p^*)"] & \omega(\cL(\cZ_M/M_{\ggr})) \arrow[r, "\simeq"', "{\text{Prop. } \ref{conv volume}}"] \arrow[d, "{\text{Def. } \ref{HH decat functoriality}}"] & \End(\cS_M) \arrow[d, "{\text{Def. } \ref{springer functorial}}"] \\
HH(\Coh(\cZ_G/\GG)) \arrow[r, "\simeq"', "{\text{Prop. } \ref{HH prop}}"] & \omega(\cL(\cZ_G/\GG)) \arrow[r, "\simeq"', "{\text{Prop. } \ref{conv volume}}"] & \End(\cS_G),
\end{tikzcd}$$
it remains to check that the map $HH(\Coh(\cZ_M/M_{\ggr})) \rightarrow HH(\Coh(\cZ_G/\GG))$ induces the parabolic induction map on affine Hecke algebras.  By Corollary \ref{actual theorem} we can argue for $K_0$ instead, i.e. we show that the map
$$\cH_M \simeq K_0(\Coh(\cZ_M/M_{\ggr})) \longrightarrow K_0(\Coh(\cZ_G/\GG)) \simeq \cH_G$$
agrees with the natural parabolic induction map of affine Hecke algebras $\cH_M \rightarrow \cH_G$ which takes $T_{M,w} \mapsto T_{G,w}$ where $w \in W_{a,M}$ (in the notation of Section 7.1 of \cite{CG}).   We will assume $G$ has simply connected derived subgroup, but the general case follows by passing to invariants of finite central subgroups (i.e. as in Section \ref{not sc}).  
It suffices to show that they agree for finite simple reflections and on the lattice.  Via the proof of Theorem 7.2.5 in \cite{CG}, it is clear that the map is as claimed on the lattice; we argue that parabolic induction on $K_0$ sends $[\mathcal{Q}_{M,s}] \mapsto [\mathcal{Q}_{G,s}]$ where $s$ is a finite simple reflection of $M$.

Let us recall the definition of $\cQ_{M, s}$.  The underlying closed, reduced scheme of $\cZ_M$ is a disjoint union of conormal bundles to closures of $M$-orbits $\overline{Y}_{M,s} \subset M/B_M \times M/B_M$; we denote these subschemes and the projection by $\pi_{M,s}: \cZ_{M,s} \rightarrow \overline{Y}_{M,s}$ and the inclusion $\iota_{M,s}: \cZ_{M,s} \hookrightarrow \cZ_M$.  We define $\cQ_{M,s} := \iota_{M,s,*} \pi^*_{M,s} \Omega^1_{\overline{Y}_{M,s}/(M/B_M)^2}$.

We have a similar description of $\cZ_{P,s} \subset \cZ_P$.  The map $p: \cZ_P \rightarrow \cZ_M$ is a $\mf{u}/U$-fibration, base changed from the quotient the quotient map $\mf{p}/P \rightarrow \mf{m}/M$.  In particular, $\cZ_{P,s}$ and $\cZ_{M,s} \times_{\cZ_M} \cZ_P$ are closed reduced underived subschemes of $\cZ_P$ with the same points, and thus agree.  On the other hand, we have
$\overline{Y}_{P, s} = (B \bs P / B) \times_{\overline{Y}_{M,s}} (B_M \bs M/B_M)$, so that denoting the projection $p: \overline{Y}_{P,s} \rightarrow \overline{Y}_{M,s}$ we have $\Omega^1_{\overline{Y}_{P,s}/(P/B)^2} \simeq p^*\Omega^1_{\overline{Y}_{M,s}/(M/B_M)^2}$ and thus $p^* \cQ_{M,s} \simeq \cQ_{P,s}$ by base change.  We have $\cQ_{G,s} = i_* \cQ_{P,s}$ by definition, and the claim follows.  Finally, the statements for specialized $q$ follow by Proposition \ref{unmixed id}, completing the proof.
\end{proof}

\begin{rmk}\label{DL not equivalence}
A few remarks on the theorem.
\begin{enumerate}
\item Analogous statements hold when $q=1$, where Hochschild homology of the Steinberg stack does not agree with the Grothendieck group, i.e. we have 
$$HH(\Coh(\cZ/G)) \simeq \End_{\cL(\cN/G)}(\cS_1)  \simeq kW^a \otimes \Symp(\mf{h}^*[-1] \oplus \mf{h}^*[-2]) \simeq \cH_1^{un}$$
while $K_0(\Coh(\cZ/G))_k \simeq kW^a \simeq \cH_1$.    However, the anti-spherical module arising via Hochschild homology agrees with $K_0$, i.e. 
$$HH(\Coh(\wt{\cN}/G)) \simeq K_0(\Coh(\wt{\cN}/G))_k \simeq k W^a \otimes_{k W^f} k_{\mathrm{sgn}},$$
 where $\mf{h}^*[-1] \oplus \mf{h}^*[-2] \subset \cH_1^{un}$ acts by zero.
\item Compatibility with parabolic induction implies that the action of the lattice subalgebra $k[X^\bullet(H)] \subset \cH$ on the coherent Springer sheaf comes from the $\cO(\cL(\wt{\cN}/\GG))$-action on $\cS = \cL \mu_* \cO_{\cL(\wt{\cN}/\GG)}$ via the natural $\cO(\cL(\{0\}/H)) = \cO(H)$-algebra structure on $\cO(\cL(\NN/\GG))$.
\item If $G = T$ is a torus, then $\cL(\cN_T/T) \simeq \wh{\{e\}} \times T \times BT$ and $\cS = \cO_{\{e\} \times T \times BT}$, and we see immediately that $\End(\cS) \simeq k[T] = k[X^\bullet(T)]$.
\item Let $\wt{\mf{g}}_P = G \times^P \mf{p}$; applying our methods in Sections \ref{standard rep} and \ref{app enhanced} to the $\QCoh(\mf{g}/G)$-module category $\QCoh(\wt{\mf{g}}_P/G)$, one can show that for $q$ not a root of unity, the coherent ``partial Whittaker'' sheaves, obtained by applying the parabolic induction correspondence 
$$\bL^u_{q,M} = \cL_q(\cN_M/M) \longleftarrow \bL^u_{q,P} = \cL_q(\cN_P/P) \longrightarrow \bL^u_{q,G} = \cL_q(\cN_G/G)$$ 
to the structure sheaf $\cO_{\bL^u_{q,M}}$ are also summands of the coherent Springer sheaf.  For example, at the extremes taking $P = G$ we obtain the statement for the anti-spherical sheaf, and taking $P = B$ we obtain the coherent Springer sheaf itself.
\end{enumerate}
\end{rmk}

\begin{rmk}\label{no sing supp}
We explain the absence of a singular support condition.  There are two Koszul dual versions of the Steinberg variety leading to two versions of the unipotent affine Hecke algebra: the ``Springer'' version $\cZ = \wt{\cN} \times_{\mf{g}} \wt{\cN}$ we consider and a ``Grothendieck-Springer'' version $\cZ_{\mf{g}} := \wt{\mf{g}} \times_{\mf{g}} \wt{\mf{g}}$.  Theorem 4.4.1 of \cite{BNP} shows the singular support condition appearing for trace sheaves in $\Tr(\Coh(\cZ_{\mf{g}}/\GG))$ in the ``Grothendieck-Springer'' version can be characterized by a nilpotence condition.

We now argue that the singular support condition for the ``Springer'' version $\Tr(\Coh(\cZ/\GG))$ is vacuous, i.e. that the singular support locus $\Lambda_{\wt{\cN}/\mf{g}}$ is the entire scheme of singularities $\Sing(\cL(\wh{\cN}/\GG))$.   The singular locus of $\LL(\wh{\cN}/\GG)$ at a $k$-point $\eta = (n, z = (g, q))$ where $gng^{-1} = qn$ is the set (after identifying $\mf{g} \simeq \mf{g}^*$ via a non-degenerate form $\langle-,-\rangle$):
$$\Sing(\LL(\wh{\cN}/\GG))_\eta = \{v \in \mf{g} \mid gvg^{-1} = q^{-1}v, [n, v] = 0, \langle n, v \rangle = 0\}.$$
A calculation\footnote{In contrast to the singular support calculation for $\Coh(\cZ_{\mf{g}}/\GG)$, it is the Lie algebra of the Borel $\mf{b}$ that appears in the above condition rather than its nilradical $\mf{n}$ since
$$\Sing(\wt{\cN} \times_{\mf{g}} \wt{\cN})_{(n, B, B')} \subset \mf{b} \cap \mf{b}', \;\;\;\;\;\;\; \Sing(\wt{\mf{g}} \times_G \wt{\mf{g}})_{(x, B, B')} \subset \mf{n} \cap \mf{n}'.$$} shows that the singular support locus is given by:
$$(\Lambda_{\wt{\cN}/\mf{g}})_\eta = \{v \in \Sing(\LL(\wh{\cN}/\GG))_\eta \mid \exists \text{ Borel } B \subset G \text{ such that } n, v \in \mf{b} = \mathrm{Lie}(B)\}.$$
Note that $n, v$ generate a two-dimensional solvable Lie algebra, thus are contained in a Borel, so $\Sing(\cL(\wh{\cN}/\GG))_\eta = \Lambda_{\wt{\cN}/\mf{g}}$.  In particular, the singular codirection $v$ need not be nilpotent.

The analogous claim at specific $q \in \G_m$ follows by a similar argument  and a calculation of the singular support locus at a point $\eta \in \{(n, g) \in \cN \times G \mid gng^{-1} = qn\}$ as
$$\Sing(\LL_q(\wh{\cN}/G))_\eta = \{v \in \mf{g} \mid gvg^{-1} = q^{-1}v, [n,v] = 0\}.$$
In the case of $q$ not a root of unity, the argument in Proposition \ref{no derived structure} shows that the singular codirection $v$ must be nilpotent, i.e. $\Sing(\cL_q(\wh{\cN}/G))$ can only contain nilpotent singular codirections.  This condition is not imposed by the singular support condition itself, i.e. in this case all singular codirections are nilpotent to begin with.
\end{rmk}

\medskip

It is natural to conjecture that the coherent Springer sheaf is in fact a sheaf -- i.e., lives in the heart of the dg category $\Coh(\LL(\mf{g}/\GG))$. We prove this in the case $G = \GL_2, \SL_2$ in Proposition \ref{sl2 springer heart}.

\begin{conj}\label{springer heart}
The Springer sheaf $\cS$ lives in the abelian category $\Coh(\LL(\wh{\cN}/\GG))^\heartsuit$. 
\end{conj}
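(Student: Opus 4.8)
The plan is to reduce the assertion to a statement ``one parameter at a time'' and then transport it through Koszul duality to classical Springer theory. Note first that this does \emph{not} follow formally from the vanishing of the higher self-$\Ext$ groups of $\cS$ established in Theorem~\ref{traces convolution}: that vanishing is exactly the evidence for an \emph{exotic} $t$-structure, and is perfectly consistent with $\cS$ having cohomology in several degrees for the standard one. Instead, since $\cS$ is constructed uniformly over the base map $\LL(\cN/\GG)\to\GG/\GG$, one would first show that it is enough to check, for each semisimple parameter $z=(s,q)\in\GG$, that the $z$-completion $\cS(\widehat{s,q})$ lies in the heart of the standard $t$-structure on the corresponding category of coherent sheaves on $\LL(\cN^z/G^z)$; this uses the equivariant localization of~\cite{Ch} together with the explicit presentation of $\LL(\cN/\GG)$ in Remark~\ref{Lq id}. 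When $q$ is not a root of unity, $\LL_q(\cN/G)$ is a classical stack by Proposition~\ref{no derived structure}, so no negative cohomology can occur and only a vanishing-of-higher-cohomology statement remains.

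Next, at such a parameter one invokes the Koszul-duality compatibility of Corollary~\ref{springer parameters}: the Hodge graded lift of $\cS(\widehat{s,q})$ is identified, under the renormalized Koszul duality of Theorem~\ref{kd kperf} (\cite{koszul}), with the $\GG^{(s,q)}$-equivariant $(s,q)$-Springer sheaf $\mathbf{S}(\widehat{s,q})=\mu^z_*\C_{\wt{\cN}^{(s,q)}}[d(s,q)]$ equipped with its Hodge filtration. Because the map $\mu^z\colon \wt{\cN}^{(s,q)}\to\cN^{(s,q)}$ is semismall --- the fixed-point locus of the semismall Springer resolution under a commuting automorphism is again semismall, which is precisely the geometric input behind Lusztig's reduction to graded affine Hecke algebras (cf.\ \cite{CG}) --- the sheaf $\mathbf{S}(\widehat{s,q})$ is a genuine equivariant perverse sheaf, i.e.\ lies in the heart of the perverse $t$-structure. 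Granting that the relevant renormalized form of Koszul duality is, up to the standard shift, $t$-exact between the perverse $t$-structure on the $D$-module side and the standard $t$-structure on the coherent side, one concludes that $\cS(\widehat{s,q})$, and hence $\cS$, lies in the coherent heart.

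The main obstacles are threefold, which is why we verify the conjecture only for $G=SL_2,GL_2$ in Proposition~\ref{sl2 springer heart}. First, one must make precise the $t$-exactness of the renormalized/filtered ind-coherent ($\KPerf$-theoretic) form of Koszul duality, and check that the extra $B\Ga\rtimes\Gm$-equivariance together with the Hodge grading carried by $\mathbf{S}(\widehat{s,q})$ does not shear cohomology out of the heart; the naive dictionary between perverse and coherent hearts is the prototype, but the renormalization and the grading bookkeeping require genuine care. Second, and more seriously, one must handle parameters with $q$ a root of unity, above all $q=1$, where $\LL_q(\cN/G)$ genuinely carries derived structure (it is a derived commuting-type scheme): there the argument above controls only higher cohomology, and ruling out cohomology in negative degrees seems to require an acyclicity statement for the explicit Koszul complex presenting $\LL(\wt{\cN}/\GG)=\LL(\fn/\wt B)$ over $\LL(\fg/\GG)$. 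Third, reassembling the pointwise facts into the assertion about $\cS$ itself requires descent for the standard $t$-structure along $\LL(\cN/\GG)\to\GG/\GG$, which is delicate because the fiber dimension jumps; in the rank-one cases all of this can be carried out by direct computation.
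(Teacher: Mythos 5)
First, a framing point: this statement is a \emph{conjecture} in the paper, not a theorem. The paper proves it only for $G = SL_2$ and $GL_2$ (Proposition \ref{sl2 springer heart}), and there the argument is entirely elementary and quite different from what you propose: in low rank $\GG$ acts on $\tilN$ with finitely many orbits, so $\LL(\tilN/\GG)$ is a classical stack (Remark \ref{springer heart remark}); hence no negative cohomological degrees can occur and one only needs $H^{>0}$-vanishing for the structure sheaf of a classical scheme, which is killed by the bound $\dim(G/B)=1$ together with a short exact sequence and the projection formula. You assert at the end that "in the rank-one cases all of this can be carried out by direct computation," but you never carry it out, so your proposal does not establish even the case the paper actually proves.

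More importantly, the central step of your general strategy --- transporting the statement through Koszul duality and deducing it from semismallness/perversity of the fixed-point Springer sheaf $\mathbf{S}(\widehat{s,q})$ --- has a genuine gap that the paper itself warns against. Koszul duality here involves a Tate shearing that mixes cohomological degree with weight, and it does \emph{not} carry the perverse $t$-structure on the $D$-module side to the standard $t$-structure on coherent sheaves, even up to a shift (the shift would have to depend on the weight). The toy example at the end of Section \ref{t structure section} is exactly this point: $\OO_X$ with its order filtration lies in the heart on the $D$-module side, while its Koszul dual $\OO_{\LL X}$ does \emph{not} lie in the heart of $\Coh(\LL X)$, its global sections being $\bigoplus_{p,q} H^p(X,\Omega^q_X)$ placed in degree $p-q$. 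This mismatch is the entire reason the paper speaks of an "exotic" $t$-structure. Consequently, perversity of $\mathbf{S}(\widehat{s,q})$ --- which does follow from semismallness --- is not the relevant input. What the conjecture requires at each parameter is a \emph{coherent} vanishing statement about the Hodge-filtered pushforward, namely that $\mu^z_*\omega$ with its Hodge filtration has vanishing higher cohomology (the paper's companion conjecture, modeled on Ginzburg's isospectral commuting variety result); this is a Grauert--Riemenschneider-type assertion about $R^{>0}\mu^z_*$ of the associated graded and does not follow from the decomposition theorem. Your other two flagged obstacles (derived structure at roots of unity, and reassembling the completions at all semisimple parameters into a global statement) are real but secondary to this one.
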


\begin{rmk}
One consequence of the conjecture would be an explicit description of the endomorphisms of the cohrent Springer sheaf.  Namely, it is easy to see that the underived parabolic induction from $\cL(\{0\}/H)$ is generated as a module by the lattice $X^\bullet(H)$, and via the identification with $K$-theory and Theorem 7.2.16 of \cite{CG} we would obtain a description of the action of finite simple reflections in terms of Demazure operators.
\end{rmk}

\begin{rmk}
A variant of Conjecture \ref{springer heart} was answered in the affirmative in Corollary 4.4.6 of \cite{ginzburg isospectral}.  Namely, in \emph{loc. cit.} it is proven that the Lie algebra version of our coherent Springer sheaf at $q=1$ has vanishing higher cohomology.
\end{rmk}

\begin{rmk}\label{springer heart remark}
When $\GG$ acts on $\NN$ by finitely many orbits, then $\LL(\NN/\GG)$ has trivial derived structure, and the conjecture is implied by the vanishing of higher cohomology of a classical scheme $H^i(\LL(\NN/\GG) \times_{B\GG} \pt, \pi_0(\OO_{\LL(\NN/\GG) \times_{B\GG} \pt}))$ for $i > 0$.  The $G$-orbits in the Springer resolution are known to be finite exactly in types $A_1, A_2, A_3, A_4, B_2$ by \cite{kashin}.  
\end{rmk}

\medskip
\subsection{Conjectures and examples for $G = \SL_2, \GL_2, \PGL_2$}\label{sec conjectures}

In this case, $\GG$ acts on both $\wh{\cN}$ and $\NN$ by finitely many orbits, the derived loop spaces $\LL(\wh{\cN}/\GG)$ and $\LL(\NN/\GG)$ are classical stacks.  Recall that $\wh{\cN}$ is a formal completion; if the reader would rather do so, they may replace $\wh{\cN}$ with $\mf{g}$, which is also acted on by finitely many orbits.  We prove Conjecture \ref{springer heart} in these cases.
\begin{prop}\label{sl2 springer heart}
Conjecture \ref{springer heart} holds for $G = \SL_2, \GL_2, \PGL_2$.
\end{prop}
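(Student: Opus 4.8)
<br>

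The plan is to make everything completely explicit for $G = SL_2$, exploiting that $\GG$ acts on $\cN$ (and $\NN$) with finitely many orbits, so all the derived loop spaces in sight are classical stacks by Remark~\ref{springer heart remark}. Concretely, I would first describe $\LL(\cN/\GG)$ and $\LL(\NN/\GG)$ as honest quotient stacks using the presentation of Remark~\ref{Lq id}: a point of $\LL(\cN/\GG)$ is a triple $(n,g,q)$ with $n$ nilpotent, $(g,q)\in\GG$, and $q^{-1}\mathrm{Ad}_g(n) = n$, modulo $\GG$; similarly $\LL(\NN/\GG) = \LL(\mf n/\wt B)$ parametrizes $(n,b,q)$ with $n\in\mf n$, $(b,q)\in\wt B := B\times\Gm$, and $q^{-1}\mathrm{Ad}_b(n)=n$. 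For $SL_2$ these are low-dimensional and can be stratified by the two nilpotent orbits ($n=0$ and $n\neq 0$), giving a handful of explicit pieces; over each stratum the fibers of $\LL\mu$ are easy to name.

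Next I would compute $\cS = \LL\mu_*\OO_{\LL(\NN/\GG)}$ directly. Since $\LL\mu$ is proper and, over each $\GG$-orbit in $\cN$, a fibration with fibers that I can identify explicitly (these will be things like $\Gm$, a point, or a resolution of a $\Pp^1$-fiber intersected with a fixed-point condition), the pushforward is computed by a small number of $R\Gamma$ computations on these fibers together with the base-change description. The key point to verify is that at each stage the higher direct images vanish — i.e. $R^i\mu_* = 0$ for $i>0$ — which for the $n=0$ stratum amounts to cohomology vanishing for line bundles on (a cover of) $\cB = \Pp^1$ in the relevant degrees, and for the $n\neq 0$ stratum is immediate since the fiber is affine or a point. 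I would then glue these strata contributions using the standard excision/recollement triangle for the closed embedding of the zero stratum, checking that the connecting maps do not create cohomology in negative or positive degree, so that $\cS$ indeed lands in the heart $\Coh(\LL(\cN/\GG))^\heartsuit$.

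The case $G = GL_2$ then follows from the $SL_2$ case by the standard device that $GL_2$ and $\GG_{SL_2} = SL_2\times\Gm$ differ only by a central torus: $\LL(\cN/GL_2\times\Gm)$ is obtained from $\LL(\cN_{SL_2}/SL_2\times\Gm)$ by an additional $B\Gm$-factor (and a reindexing of the $q$-action coming from the center), which is exact for the t-structure and commutes with $\LL\mu_*$; hence heart-ness is preserved. Alternatively one repeats the explicit computation above with $GL_2$ in place of $SL_2$ — the orbit structure on $\cN$ is identical — and reads off the same vanishing.

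The main obstacle I expect is not any single hard theorem but the bookkeeping of the recollement triangle: one must be careful that $\LL(\cN/\GG)$ for $SL_2$ has nontrivial automorphism groups along the zero stratum (a copy of $SL_2\times\Gm$ modulo the $q$-condition) so that $R\Gamma$ of the relevant sheaves on $B(SL_2\times\Gm)$-type substacks contributes in a controlled range of degrees, and that the gluing map from the open (regular nilpotent) stratum to the closed one is the zero map in the degrees that would otherwise push $\cS$ out of the heart. Establishing this degree control — essentially an explicit weight/parity argument on $H^\bullet(\Pp^1)$ and $H^\bullet(BSL_2)$, $H^\bullet(B\Gm)$ — is where the real work lies, but it is a finite check since everything is fully explicit for $SL_2$.
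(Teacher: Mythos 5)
Your overall strategy --- stratify the target $\LL(\cN/\GG)$ by nilpotent orbits, compute $R\Gamma$ of the fibers of $\LL\mu$ over each stratum, and glue with a recollement triangle --- has a genuine gap at the gluing and base-change steps, and it is a gap the paper's actual argument is designed to avoid. First, the ``excision/recollement triangle'' you invoke is a constructible-sheaf device: for coherent sheaves there is no $j_!$ for the open stratum, and the restriction of $\cS$ to the closed stratum is a \emph{derived} restriction carrying Tor terms, so the complex does not decompose into ``strata contributions'' connected by a boundary map whose vanishing you could check. Second, even granting fiberwise computations (a point over a regular nilpotent, $(\Pp^1)^g$ over $n=0$), passing from $H^{>0}(\text{fiber},\OO)=0$ to $R^{>0}\LL\mu_*\OO=0$ requires cohomology-and-base-change, which needs flatness; $\LL\mu$ is not flat here, since the fiber dimension jumps from $0$ to $1$ as $n\to 0$. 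Neither obstacle is cosmetic: this is exactly where your proposed ``degree control'' would have to live, and you have not supplied a mechanism for it.

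The paper's proof sidesteps all of this with one observation you are missing: after pulling back along the smooth ($t$-exact) atlas $\pt\to B\GG$, the \emph{target} $\LL(\cN/\GG)\times_{B\GG}\pt$ is a closed subscheme of the affine $\cN\times\GG$, hence affine, so $R^i\LL\mu_*\OO$ is just $H^i(X,\OO_X)$ for the single classical scheme $X=\LL(\NN/\GG)\times_{B\GG}\pt$ --- no stratification or fiberwise analysis is needed. Then $X\subset \NN\times\GG$ lies over $\Pp^1=\cB$, so $H^i(X,-)=0$ for $i>1$, and $H^1(X,\OO_X)=0$ follows from the ideal-sheaf exact sequence $0\to\cI\to\OO_{\NN\times\GG}\to i_*\OO_X\to 0$ together with $H^{>0}(\NN,\OO_{\NN})=0$ and the vanishing of $H^2(\NN\times\GG,\cI)$. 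I would also flag your $GL_2$ reduction: $GL_2=(SL_2\times\Gm)/\mu_2$, so $\LL(\cN/GL_2\times\Gm)$ is not simply the $SL_2$ loop space times a $B\Gm$ factor (e.g.\ all central $\lambda I$ contribute loops, not just $\pm 1$); the safe route, and the one the paper takes, is to rerun the same cohomology computation verbatim for $GL_2$.
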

\begin{proof}
We give a proof for $G = \SL_2$; the case of $G = \GL_2$ is the same.  In view of Remark \ref{springer heart remark}, it suffices to forget equivariance and show vanishing of higher cohomology.  Since $X := \LL(\NN/\GG) \times_{B\GG} \pt$ is a closed subscheme of $\mf{g} \times G/B \times G$, and $\dim(G/B) = 1$, we know that $R\Gamma^i(X, -) = 0$ for $i > 1$.  To verify vanishing for $i=1$, let $i: X \hookrightarrow \NN \times \GG$ be the closed immersion.  We have a short exact sequence of sheaves:
$$0 \rightarrow \mathcal{I} \rightarrow \OO_{\NN \times \GG} \rightarrow i_* \OO_X \rightarrow 0$$
leading to a long exact sequence with vanishing $H^2$ terms (for the above reason).  Thus, it suffices to show that $H^1(\NN \times \GG, \OO_{\NN \times \GG})$.  By the projection formula, we have $H^1(\NN \times \GG, \OO_{\NN \times \GG}) \simeq H^1(\NN, \OO_{\NN}) \otimes_k \OO(\GG)$, but it is well-known that $H^i(\NN, \OO_{\NN}) = 0$ for $i > 0$.
\end{proof}

\begin{exmp}[Geometry of the loop space of the Springer resolution]
We describe the geometry of the looped Springer resolution $\cL(\NN/\GG) \rightarrow \cL(\wh{\cN}/\GG)$ for $G = \SL_2$.  Though this example is well-known, we reproduce it for the reader's convenience.  Let $A(s, n)$ denote the component group of the double stabilizer group, i.e. the component group of $\{g \in G \mid gng^{-1} = n, gs = sg\}$.  Let $\mathbb{A}^1_{\mathrm{node}} = \Spec k[x,y]/xy$ denote the affine nodal curve, and $(-)^\nu$ the normalization. \\
\begin{center}
\begin{tabular}{|c|c|c|c|c|c|}
$q$ &$n$ &   $s = \begin{pmatrix} \lambda &0\\0&\lambda^{-1} \end{pmatrix}$ & $\cN^{(s,q)} \rightarrow \NN^{(s,q)}$ &$A(s, n)$ & $ G^s$ \\ \hhline{|=|=|=|=|=|=|}
\multirow{2}*{$q=1$} & $n=0$&  \multirow{2}*{$\lambda=\pm 1$} & \multirow{2}*{$\NN \rightarrow \cN$} & $1$ & \multirow{2}*{$G$} \\
&   $n \ne 0$ & & & $\Z/2$& \\ 
\hline
$q=1$ & $n=0$ & $\lambda\ne \pm 1$ & $\pt \cup \pt \rightarrow \pt$ & 1 & $T$ \\ \hhline{|=|=|=|=|=|=|}

\multirow{3}*{$q=-1$} &  $n=0$ &\multirow{3}*{$\lambda = i$} & \multirow{3}*{$\mathbb{A}^{1, \nu}_{\mathrm{node}} \rightarrow \mathbb{A}^1_{\mathrm{node}}$} & $1$& \multirow{3}*{$T$} \\
& $n\ne 0$, upper triangular  & & & $\Z/2$&  \\
& $n \ne 0$, lower triangular  & & & $\Z/2$&  \\ \hline
$q = -1$ & $n=0$ &  $\lambda = \pm 1$ & $\mathbb{P}^1 \rightarrow \pt$ & $1$ & $G$ \\ \hline
$q=-1$ & $n=0$ & $\lambda\ne \pm 1$ & $\pt \cup \pt \rightarrow \pt$ & 1 & $T$ \\ \hhline{|=|=|=|=|=|=|}

\multirow{2}*{$q \ne \pm 1$} & $n=0$ & \multirow{2}*{$\lambda = \pm \sqrt{q}$} & \multirow{2}*{$\mathbb{A}^1 \cup \pt \rightarrow \mathbb{A}^1$} & $1$ & \multirow{2}*{$T$} \\
& $n \ne 0$& &  & $\Z/2$ & \\ \hline
$q\ne \pm 1$ & $n=0$ &  $\lambda = \pm 1$ & $\mathbb{P}^1 \rightarrow \pt$ & $1$ & $G$ \\ \hline
$q \ne \pm 1$ &$n=0$ &  $\lambda\ne \pm 1, \pm \sqrt{q}$ & $\pt \cup \pt \rightarrow \pt$ & $1$ & $T$ \\ \hhline{|=|=|=|=|=|=|}
\end{tabular} \\
\end{center}
\end{exmp}

\begin{exmp}[Generators and relations]\label{sl2 demazure}
For $G = \SL_2$, with some work, one can write down generators and relations for the (underived) scheme $\cL(\mf{g}/\GG)$ and the coherent Springer sheaf $\cS$.  Let us fix coordinates
$$g = \begin{pmatrix} a&b\\c&d\end{pmatrix} \in \SL_2, \;\;\;\;\;\;\; N = \begin{pmatrix} x&y\\z&-x\end{pmatrix} \in \cN_{\mf{sl}_2},\;\;\;\;\;\;\; q \in \G_m.$$
We implicitly impose the equations $ad-bc=1$ and $x^2+yz=0$, and by convention we take the commuting relation $gxg^{-1} = qx$; note that this is the relation that arises when $\G_m$ acts on fibers by weight -1 (i.e. inversely).  Then, we have that 
$\cS$ is the module with generators $\lambda^{n}$ for $n \in \mathbb{Z}$:
$$\displaystyle \frac{\cO(\SL_2 \times \cN_{\mf{sl}_2} \times \G_m)[\lambda, \lambda^{-1}]}{\substack{\displaystyle a+d=\lambda + \lambda^{-1}, (x,y,z) (q - \lambda^2) = 0, \\ \displaystyle z(\lambda - d) = ax, y(a - \lambda) = bx, x(d - \lambda) = cy, x(\lambda - a) = bz.}}$$
In particular, multiplication by $\lambda^{n}$ defines the action of the lattice, and one can verify that the Demazure operator for the anti-spherical module (see Theorem 7.2.16 of \cite{CG}) defines the endomorphism
$$T(\lambda^n) = \frac{\lambda^{n} - \lambda^{-n +2}}{\lambda^2 - 1} - q\displaystyle\frac{\lambda^n - \lambda^{-n}}{\lambda^2 - 1} $$
corresponding to the finite reflection.  In particular, it preserves the relations in the module, and the endomorphism satisfies $(T -q)(T+1)=0$.
For fixed $q$, and letting $k_{\mathrm{sgn}}$ denote the character of $\cH^f$ with $T \mapsto -1$, one can verify that $\cS \otimes_{\cH^f} k_{\mathrm{sgn}} \simeq \cO_{\cL_q(\wh{\cN}/G)}$, i.e. amounts to imposing the relation $\lambda^2 = q$, thus identifying the structure sheaf with the anti-spherical module.
\end{exmp}

\section{Moduli of Langlands parameters for \texorpdfstring{$\GL_n$}{GLn}}\label{moduli}

We now turn to arithmetic applications of our results, in particular the study of moduli spaces of Langlands parameters for $G = \GL_n$.  Let $F$ be a non-archimedian local field with residue field ${\mathbb F}_q$, and let $G^\vee$ denote a connected, split, reductive group over $F$ (i.e. on the automorphic side of Langlands).
\medskip

The derived category $D(G^\vee)$ of smooth complex representations of $G^\vee$ admits a decomposition into blocks, and the so-called {\em principal block} of $D(G^\vee)$ (that is, the block containing the trivial representation) is naturally
equivalent to the category of ${\mathcal H}_q$-modules, where ${\mathcal H}_q$ now denotes the affine Hecke algebra associated to $G$ with parameter $q$.  Theorem \ref{thm endomorph} then gives a fully faithful embedding
from this principal block into $\IndCoh(\cL_q(\wh{\cN}/G))$.

The space $\LL_q(\wh{\cN}/G)$ has a natural interpretation in terms of Langlands (or Weil-Deligne) parameters for $G^\vee(F)$.  Recall that a
Langlands parameter for $G^\vee$ is a pair $(\rho,N)$, where $\rho: W_F \rightarrow G(\C)$ is a homomorphism with open
kernel, and $N$ is a nilpotent element of $\Lie G$ such that, for all $\sigma$ in the inertia group $I_F$ of $W_F$, one has
$\operatorname{Ad}(\rho(\Fr^n \sigma))(N)  = q^n N,$ where $\Fr$ denotes a Frobenius element of $W_F$.

On the other hand, the underlying stack
of $\cL_q(\wh{\cN}/G)$ can be regarded as the moduli stack of pairs $(s,N)$, where $s \in G(\C)$, $N \in \Lie G$, and $\operatorname{Ad}(s) (N) = qN$, up
to $G$-conjugacy (i.e. the map $\rho$ above vanishes on inertia).  To such a pair we can attach the Langlands parameter $(\rho,N)$, where $\rho$ is the unramified representation
of $W_F$ taking $\Fr$ to $s$.  Such a Langlands parameter is called {\em unipotent}, and this construction identifies $\LL_q(\wh{\cN}/G)$ with
the moduli stack of unipotent Langlands parameters, modulo $G$-conjugacy.\footnote{Strictly speaking, a Langlands parameter is a pair $(\rho,N)$ as above in which {\em $\rho$ is semisimple}.  When building a
moduli space of Langlands parameters we must drop this condition, however, as the space of semisimple parameters is not a well-behaved geometric object.
In particular the locus in $\LL_q$ consisting of pairs $(s,N)$ in which $s$ is semisimple is neither closed nor open in $\LL_q$.}  We thus obtain a fully faithful embedding from the principal block of $D(G^\vee)$ into the category of ind-coherent sheaves
on the moduli stack of unipotent Langlands parameters.  

It is natural to ask if this extends to an embedding
of \emph{all} of $D(G^\vee)$ into a category of sheaves on the moduli stack of {\em all} Langlands parameters.  We will show
that, at least when $G = \GL_n$ over $F$, this is indeed the case.   For the remainder of the section, we will take $G = G^\vee = \GL_n$.

\medskip

\subsection{Blocks, semisimple types, and affine Hecke algebras}\label{types}

Our argument proceeds by reducing to the principal block.  On the representation theory side, this reduction is a consequence
of the Bushnell-Kutzko theory of types and covers \cite{BK structure, BK}, which we now recall.  \emph{For this subsection only, we will reverse our conventions} to avoid cumbersome notation; that is, we let $G$ be a connected reductive split group over $F$ on the automorphic side of Langlands duality.

\medskip

\subsubsection{Supercuspidal support}

Let $P \subset G$ be a parabolic subgroup with Levi $M$ and unipotent radical $U$, and let $\pi$ be a smooth complex representation of $M$.  Recall
that the parabolic induction $i_{P}^{G}(\pi)$ is obtained by inflating $\pi$
to a representation of $P$, twisting by the square root of the modulus character
of $P$, and inducing to $G$.  The parabolic induction functor $i_{P}^{G}$ has
a natural left adjoint, the parabolic restriction $r_{G}^{P}$ (restriction to $P$, untwist, and $U$-coinvariants).

\begin{defn} A complex representation $\pi$ of $G$ is \emph{supercuspidal}
if, for all proper parabolic subgroups $P$ of $G$, the parabolic restriction
$r_G^P(\pi)$ vanishes. 
Let $\pi$ be an irreducible supercuspidal representation of $M$; an irreducible complex representation $\Pi$ has \emph{supercuspidal support} $(M,\pi)$
if $\Pi$ is isomorphic to a subquotient of $i_P^G(\pi)$ (this is well-defined up to conjugacy).

A character $\chi$ of $M$ is {\em unramified} if it is trivial on
every compact open subgroup of $M$, and the Levi-supercuspidal pairs $(M,\pi)$ and $(L,\pi')$ are {\em inertially equivalent}
if there exists an unramified character $\chi$ of $L$ such that $(M, \pi)$ and $(L, \pi' \otimes \chi)$ are $G$-conjugate.
\end{defn}

For such a pair $(M, \pi)$ up to inertial equivalence, following Bernstein-Deligne~\cite{bernstein-deligne}, we
define  $D(G)_{[M,\pi]} \subset D(G)$ to be the full subcategory of objects such that every subquotient of $\Pi$ has supercuspidal support inertially equivalent to $(M,\pi)$.  Then Bernstein-Deligne show:

\begin{thm} The full subcategory $D(G)_{[M,\pi]}$ is a block of $D(G)$, i.e. summing over supercuspidals up to inertial equivalence,
$$D(G) = \bigoplus D(G)_{[M, \pi]}.$$
\end{thm}

\medskip

\subsubsection{Types and Hecke algebras}

We recall the notion of a type.
\begin{defn}
A {\em type} for $G$ is a pair $(K,\tau)$, where $K \subset G$ is a compact
open subgroup and $\tau$ is an irreducible complex representation of $K$, such that\footnote{See pp. 594 of \cite{BK structure} for why this is necessary.} the full subcategory $\Rep(G,K, \tau)^\heartsuit \subset \Rep(G)^\heartsuit$ of representations $V$ which are generated by the image of the evaluation $\Hom_K(\tau, V) \otimes \tau \rightarrow V$ is closed under taking subquotients.   Attached to a type we have its Hecke algebra 
$$\cH(G,K,\tau) := \End_{G}(\cind_K^G(\tau))$$
and an equivalence of abelian categories $\Rep(G, K, \tau)^\heartsuit \simeq D(\cH(G, K, \tau))^\heartsuit$.
\end{defn}

The main result of~\cite{BK} describes an arbitrary block of
$D(G)$ as a category of modules for a certain tensor product of Hecke algebras, via the theory of $G$-covers, providing a connection between parabolic induction methods (which involve subgroups which are not compact open) and Hecke algebra methods (which only make sense for compact open subgroups).

\medskip

We first consider the block $D(L)_{[L, \pi]}$, where $L^{}$ be a Levi subgroup of $G$ and $\pi$ a supercuspidal representation of $L$.  We denote by $L_0 \subset L$ the smallest subgroup containing every compact open; then $L/L_0$ is free abelian of rank equal to $\dim(Z(L))$.  Furthermore, the unramified characters of $L$ are in bijection with the characters of $L/L_0$.  There is a bijection
$$X^\bullet(L/L_0)/H \longleftrightarrow \mathrm{Irr}(L)_{[L^{},\pi]}, \;\;\;\;\;\;\; \chi \mapsto \pi \otimes \chi$$ 
where we denote $X^\bullet(L/L_0) = \Hom(L/L_0, \C^\times)$ and $H \subset X^\bullet(L/L_0)$ is the subgroup of unramified characters $\chi$ such that $\pi \otimes \chi \simeq \pi$.  Moreover, there is an equivalence of
categories:
$$D(L^{})_{[L^{},\pi]} \simeq D(\C[X^\bullet(L/L_0)]^H), \;\;\;\;\;\;\; \pi \otimes \chi \mapsto \C_\chi.$$

\medskip

We may rephrase this equivalence in terms of types and Hecke algebras as follows: first, we may (by Section 1.2 in \cite{BK}) choose a maximal simple cuspidal
type $(K_{L^{}},\tau_{L^{}})$ occurring in $\pi$.  One then has a natural support-preserving
isomorphism of $\cH(L^{},K_{L^{}},\tau_{L^{}}) \simeq \C[X^\bullet(L/L_0)]^H$, and thus an (inverse) equivalence
$$D(\C[X^\bullet(L/L_0)]^H) \simeq D(G)_{[L,\pi]}, \;\;\;\;\;\;\; V \mapsto V \otimes_{\cH(L, K_{L}, \tau_{L})} \cind_{K_{L}}^{L} \tau_{L}.$$

\medskip

We are interested in understanding the induction of $(L, \pi)$ to $G$.  This is achieved by the following composite of results of~\cite{BK}; we refer the reader to \emph{op. cit.} for the definitions of simple type and $G$-cover.

\begin{thm}[\cite{BK}] \label{thm:cover}
Let $[L,\pi]$ and the cuspidal type $(K_{L^{}},\tau_{L^{}})$ be as above, and
let $P \subset G^{}$ be a parabolic subgroup with Levi factor $L^{}$.
There exists an intermediate\footnote{Defined to be the smallest Levi containing the $G$-normalizer of the type $(K_L, \tau_L)$.} Levi subgroup $L \subset L^{\dagger} \subset G^{}$,
and types $(K^{\dagger},\tau^{\dagger})$ of ${L^{\dagger}}$ and $(K,\tau)$ of $G^{}$
with the following properties:

\begin{enumerate}
\item The type $(K^{\dagger},\tau^{\dagger})$ is a simple type of $L^\dagger$.
\item $(K,\tau)$ is a $G^{}$-cover of $(K^{\dagger},\tau^{\dagger})$, and $(K^{\dagger},\tau^{\dagger})$ is an  $L^\dagger$-cover of $(K_{L^{}},\tau_{L^{}})$.
In particular we have natural injections:
$$\begin{tikzcd}[row sep = tiny] 
T_{P^{} \cap L^{\dagger}}:  \, \cH(L^{},K_{L^{}},\tau_{L^{}}) \arrow[r, hook] & \cH(L^{\dagger},K^{\dagger},\tau^{\dagger}) \\
T_{L^{\dagger} P^{}}: \, \cH(L^{\dagger},K^{\dagger},\tau^{\dagger}) \arrow[r, hook, "\simeq"] & \cH(G^{},K,\tau)\end{tikzcd}$$
with $T_{L^{\dagger} P^{}}$ an isomorphism.
\item The functors 
$$\begin{tikzcd}[row sep=tiny] \Hom_K(\tau, -):\, D(G^{})_{[L^{},\pi]} \arrow[r, "\simeq"] & D(\cH(G^{},K,\tau)) \\
\Hom_{K^{\dagger}}(\tau^{\dagger}, -):\, D(L^{\dagger})_{[L^{},\pi]} \arrow[r, "\simeq"] & D(\cH(L^{\dagger},K^{\dagger},\tau^{\dagger}))\\
\Hom_{K_{L^{}}}(\tau_{L^{}},-): \,D(L^{})_{[L^{},\pi]} \arrow[r,"\simeq"] &  D(\cH(L^{},K_{L^{}},\tau_{L^{}}))
\end{tikzcd}$$
are equivalences of categories. Moreover, for any representation $V$ in $D(L^{})$, one has an isomorphism of $\cH(G^{},K,\tau)$-modules:
$$\Hom_K(\tau, i_{P'}^{G^{}} V) \cong 
\Hom_{K_{L^{}}}(\tau_{L^{}}, V) \otimes_{\cH(L^{},K_{L^{}},\tau_{L^{}})} \cH(G^{},K,\tau),$$
where $P'$ denotes the opposite parabolic to $P^{}$,
and where $\cH(G^{},K,\tau)$ is regarded as an $\cH(L^{},K_{L^{}},\tau_{L^{}})$-module via the map $T_{P^{}} := T_{L^{\dagger} P^{}} \circ T_{P^{} \cap L^{\dagger}}$.
\item Suppose $L^\dagger \simeq \prod_i L^\dagger_i$, with each $L^\dagger_i \simeq \GL_{n_i}$ for some $n_i$.  
Let $L_i$ be the projection of $L$ to $L^\dagger_i$, and
let $\pi_i$ be the projection of $\pi$ to $L^{}_i$.  Let $H_i$ denote the group of unramified characters $\chi$ of $L^{\dagger}_i$ such that $\pi \otimes \chi \simeq \pi$,
and let $r_i$ denote the order of $H_i$.
Then $n_i = r_im_i$ for some positive integer $m_i$, and there is a natural isomorphism (depending on $\pi$):
$$\cH(L^{\dagger},K^{\dagger},\tau^{\dagger}) \cong \bigotimes_i \Haff_{q^{r_i}}(m_i),$$
where $\Haff_{q^{r_i}}(m_i)$ denotes the affine Hecke algebra associated to $\GL_{m_i}$ with parameter $q^{r_i}$.
\end{enumerate}
\end{thm}

These constructions are naturally compatible with parabolic induction, in the following sense: let $M$ be a Levi with $L \subset M \subset G$, and with parabolic $Q = M^{} P^{}$.  Then Theorem~\ref{thm:cover}
gives us an $M^{}$-cover $(K_{M^{}},\tau_{M^{}})$ of $(K_{L^{}},\tau_{L^{}})$ and a $G^{}$-cover $(K,\tau)$ of $(K_{L^{}},\tau_{L^{}})$,
as well as maps:
$$T_{P^{} \cap M^{}}: \cH(L^{}, K_{L^{}}, \tau_{L^{}}) \rightarrow \cH(M^{}, K_{M^{}}, \tau_{M^{}}), \;\;\;\;\;\;\; T_{P^{}}: \cH(L^{}, K_{L^{}}, \tau_{L^{}}) \rightarrow \cH(G^{}, K, \tau).$$
We then have:
\begin{thm}[\cite{BK}] \label{thm:cover induction}
There exists a unique map:
$$T_{Q^{}}: \cH(M^{}, K_{M^{}},\tau_{M^{}}) \rightarrow \cH(G^{}, K, \tau)$$
such that $T_{P^{}} = T_{Q^{}} \circ T_{P^{} \cap M^{}}$.  Moreover, for any $V \in D(M^{})$, we have an isomorphism of $\cH(G^{},K,\tau)$-modules:
$$\Hom_K(\tau, i_{Q'}^{G^{}} V) \cong 
\Hom_{K_{M^{}}}(\tau_{M^{}}, V) \otimes_{\cH(M^{},K_{M^{}},\tau_{M^{}})} \cH(G^{},K,\tau).$$
\end{thm}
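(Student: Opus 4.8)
The plan is to deduce Theorem~\ref{thm:cover induction} directly from Theorem~\ref{thm:cover}, applied twice: once to the pair $[L^\vee,\pi]$ inside $M^\vee$, and once to the same pair inside $G^\vee$. Write $Q^\vee = M^\vee P^\vee$ as in the statement and let $P^\vee_M := P^\vee \cap M^\vee$, a parabolic of $M^\vee$ with Levi $L^\vee$; note $P^\vee_M$ is the "$P^\vee$ relative to $M^\vee$" in the sense that $Q^\vee$ is the parabolic of $G^\vee$ with Levi $M^\vee$ associated to $P^\vee$, so that $(P_M')^\vee$ (the opposite of $P_M^\vee$ in $M^\vee$) and $(Q')^\vee$ (the opposite of $Q^\vee$ in $G^\vee$) satisfy transitivity of parabolic induction: $i^{G^\vee}_{(P')^\vee} \simeq i^{G^\vee}_{(Q')^\vee} \circ i^{M^\vee}_{(P_M')^\vee}$. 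The Hecke algebra covers produced by Theorem~\ref{thm:cover}(2) for $M^\vee$ and for $G^\vee$ fit into a chain $(K_{L^\vee},\tau_{L^\vee}) \subset (K_{M^\vee},\tau_{M^\vee}) \subset (K,\tau)$, with $(K,\tau)$ a $G^\vee$-cover of $(K_{M^\vee},\tau_{M^\vee})$ as well (covers compose, by~\cite{BK}).

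First I would construct $T_{Q^\vee}$. By the theory of covers in~\cite{BK}, a $G^\vee$-cover $(K,\tau)$ of the $M^\vee$-type $(K_{M^\vee},\tau_{M^\vee})$ gives, for the parabolic $Q^\vee$, a support-preserving injection $T_{Q^\vee}: H(M^\vee,K_{M^\vee},\tau_{M^\vee}) \to H(G^\vee,K,\tau)$; this is exactly the map furnished by Theorem~\ref{thm:cover}(2) read with $M^\vee$ in place of $(L^\dagger)^\vee$. The uniqueness clause follows from the fact that $T_{P^\vee\cap M^\vee}$ has image generating $H(M^\vee,K_{M^\vee},\tau_{M^\vee})$ in an appropriate sense after inverting the relevant invertible elements — more precisely, $T_{P^\vee \cap M^\vee}$ is injective and its image contains enough of the algebra (the "positive" part together with invertibility of the Bernstein-type elements) to pin down any algebra map out of $H(M^\vee,K_{M^\vee},\tau_{M^\vee})$ once its precomposition with $T_{P^\vee\cap M^\vee}$ is fixed. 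So I would verify $T_{P^\vee} = T_{Q^\vee}\circ T_{P^\vee\cap M^\vee}$ by checking it on the image of $T_{P^\vee\cap M^\vee}$, where it reduces to the transitivity of the "tensor" maps $T_{(-)}$ under composition of covers — a compatibility established in~\cite{BK} (composition of $t$-maps for towers of covers).

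Next I would prove the module identity. Apply Theorem~\ref{thm:cover}(3) with $M^\vee$ in the role of $G^\vee$: for $V \in D(M^\vee)$ we already have $\Hom_{K_{M^\vee}}(\tau_{M^\vee}, -)$ an equivalence $D(M^\vee)_{[L^\vee,\pi]} \xrightarrow{\sim} H(M^\vee,K_{M^\vee},\tau_{M^\vee})\text{-mod}$, and applying Theorem~\ref{thm:cover}(3) for $G^\vee$ with the representation $i^{G^\vee}_{(Q')^\vee}V \in D(G^\vee)$ — or rather directly with the Levi $M^\vee$ and parabolic $Q^\vee$, which is the shape of the statement in~\cite{BK} — yields
\[
\Hom_K(\tau, i^{G^\vee}_{(Q')^\vee} V) \cong \Hom_{K_{M^\vee}}(\tau_{M^\vee}, V) \otimes_{H(M^\vee,K_{M^\vee},\tau_{M^\vee})} H(G^\vee,K,\tau),
\]
where $H(G^\vee,K,\tau)$ is an $H(M^\vee,K_{M^\vee},\tau_{M^\vee})$-module via $T_{Q^\vee}$. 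Consistency with Theorem~\ref{thm:cover}(3) for the pair $(L^\vee, G^\vee)$ is then a formal check using transitivity of parabolic induction, the chain of covers, and associativity of tensor product: $\Hom_{K_{L^\vee}}(\tau_{L^\vee}, V') \otimes_{H(L^\vee,\ldots)} H(G^\vee,\ldots)$ for $V' = i^{M^\vee}_{(P_M')^\vee}V$ factors through $H(M^\vee,\ldots)$ exactly because $T_{P^\vee} = T_{Q^\vee}\circ T_{P^\vee\cap M^\vee}$.

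The main obstacle is the uniqueness assertion and, relatedly, making precise the sense in which $T_{P^\vee\cap M^\vee}$ "generates enough" of $H(M^\vee,K_{M^\vee},\tau_{M^\vee})$ to force $T_{Q^\vee}$ — i.e. extracting from~\cite{BK} the correct statement that the $t$-map attached to a parabolic, together with invertibility of Bernstein elements corresponding to the center of the Levi, determines any map it factors through. This is essentially bookkeeping in the Bernstein presentation of these Hecke algebras (which by Theorem~\ref{thm:cover}(4) are tensor products of affine Hecke algebras of type $A$), but one must be careful that the "positive" subalgebra generated by $T_{P^\vee\cap M^\vee}$ and the invertible elements it contains genuinely generate the whole algebra, rather than merely a localization; the standard reference for this is the structure theory of~\cite{BK} for simple types, together with the compatibility of covers with the parabolic $t$-maps. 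Everything else is a diagram chase combining transitivity of parabolic induction with the two instances of Theorem~\ref{thm:cover}.
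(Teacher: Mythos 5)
The paper states this theorem with no proof at all --- like Theorem~\ref{thm:cover}, it is presented as a composite of results from~\cite{BK} --- so there is nothing to compare your route against line by line. Your reconstruction of the existence of $T_{Q^{\vee}}$ (transitivity of covers: $(K,\tau)$ is a $G^{\vee}$-cover of $(K_{M^{\vee}},\tau_{M^{\vee}})$), of the factorization $T_{P^{\vee}} = T_{Q^{\vee}}\circ T_{P^{\vee}\cap M^{\vee}}$ (transitivity of the $t$-maps for towers of covers), and of the module isomorphism (the defining induction property of a $G^{\vee}$-cover, applied to the pair $(M^{\vee},G^{\vee})$ and the parabolic $Q^{\vee}$) is exactly the right reduction to~\cite{BK}, and the consistency check via transitivity of parabolic induction is fine.

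The gap is in the uniqueness argument, and you have correctly located the weak point but proposed a fix that does not work. The image of $T_{P^{\vee}\cap M^{\vee}}$ is (an isomorphic copy of) the \emph{commutative} algebra $H(L^{\vee},K_{L^{\vee}},\tau_{L^{\vee}})\cong \C[X_{\bullet}]$, sitting inside $H(M^{\vee},K_{M^{\vee}},\tau_{M^{\vee}})\cong\bigotimes_i\Haff_{q^{r_i}}(m_i)$ as the Bernstein lattice subalgebra. This subalgebra does \emph{not} generate the tensor product of affine Hecke algebras: the finite Hecke generators $T_w$, $w\in W(M^{\vee})$, are not in the subalgebra it generates, so an algebra homomorphism out of $H(M^{\vee},K_{M^{\vee}},\tau_{M^{\vee}})$ is not determined by its restriction to this image. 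Inverting the Bernstein elements only recovers all of $\C[X_{\bullet}]$ from its dominant part; it produces none of the $T_w$. The paper's own discussion of the Iwahori case implicitly concedes this: there $T_{Q^{\vee}}$ is characterized by the factorization \emph{together with} the support condition $T_{Q^{\vee}}(K_{M^{\vee}}wK_{M^{\vee}}) = KwK$ for $w\in W(M^{\vee})$, which would be redundant if the factorization alone sufficed. To close the gap you should either (i) prove uniqueness among \emph{support-preserving} homomorphisms, which is how Bushnell--Kutzko characterize $t_{Q}$ (the image of a positive element is supported on the expected single double coset, and this pins down the images of the $T_w$), or (ii) carry out the Bernstein-presentation computation showing that an element of $\Haff_{q^{r}}(m)$ satisfying both the quadratic relation and the Bernstein--Lusztig commutation relations with the full lattice subalgebra, subject to the relevant normalization, is unique --- neither of which is the "the image generates" argument you sketch.
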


\medskip

\begin{exmp}
The fundamental (and motivating) example for this is when $L=T$ is the standard maximal torus with parabolic $P=B$ the standard Borel,
and $\tau = 1$ is the trivial character of $T^{}$.  In this setting $K_{L^{}}$ is the maximal compact subgroup
$T^{}_0 = T(O) \subset T^{}$, and $\tau_{L^{}}$ is the trivial character.  Moreover $L^\dagger = G^{}$, the subgroup $K = I \subset G$ is the
Iwahori subgroup, and $\tau$ is the trivial representation of $I$.  We then have natural identifications of the Hecke algebra:
$$\cH(L^{},K_{L^{}}, 1) \simeq \C[T/T_0] \simeq \C[X_\bullet(T)].$$
and a commutative diagram:
$$
\begin{tikzcd}
\C[X_{\bullet}] \arrow[r, "\simeq"]  \arrow[d, hook] & \cH(T,T(O), 1) \arrow[d, "T_{P}", hook] \\
\Haff_q \arrow[r, "\simeq"] &  \cH(G,I, 1).
\end{tikzcd}
$$

\medskip

More generally, if $M \subset G$ is a Levi subgroup and $Q$ is its standard parabolic, then $K_M$
is the Iwahori subgroup $I \cap M$ of $M$, and the map
$$T_Q: \cH(M, I \cap M, 1) \rightarrow \cH(G,I,1)$$ 
is uniquely determined by the following properties:
\begin{enumerate}
\item $T_Q \circ T_{B \cap M} = T_B$,
\item If $w \in W(M)$ is an element of the Iwahori-Weyl group of $M$, then $T_Q(I_M w I_M) = I w I$. 
\end{enumerate}
\end{exmp}

\medskip

This picture is compatible with the general situation in the following sense.  Suppose for simplicity that $L^\dagger = G^{}$.  Then $L^{}$ is a product
of $m$ copies of $\GL_{\frac{n}{m}}$ for some divisor $m$ of $n$, and (after an unramified twist) we may assume that $\pi$ has the form $\pi_0^{\otimes m}$.
There is an extension $E/F$ of degree $\frac{n}{m}$ and ramification index $r$, and an embedding $\GL_m(E) \subset G = \GL_n(F)$, such that the intersection $L \cap GL_m(E)$ is the standard maximal torus of $GL_m(E)$.  

We denote the subgroup $GL_m(E)$ by $G_E$, its standard maximal torus by $T_E$ and its standard Iwahori by $I_E$.  Let $M$ be a Levi such that $L \subset M \subset G$, define $M_E = M \cap G_E$ and take $(K_M, \tau_M)$ to be a cover of $(K_L, \tau_L)$ via Theorem \ref{thm:cover}.  The choice of $\pi$ then gives rise to an isomorphism $\C[X_{\bullet}(T)] \simeq \cH(L^{},K_{L^{}},\tau_{L^{}}),$ such that for each
coharacter $\lambda \in X_\bullet(T)$ the image of $\lambda$ is supported on the double coset $K_{L^{}} \lambda(\varpi_E) K_{L^{}}$, and such that the induced action
of $X_{\bullet}(T)$ on the Hecke module attached to $\pi$ is trivial.  We then have:

\begin{thm}[Theorem 6.4 \cite{BK gln}] \label{thm:cover compatibility}
Assume that $L^\dagger=G$.  There is an isomorphism  $\Haff_{q^r}(m) \simeq \cH(G^{},K,\tau)$ fitting into a commutative diagram:
$$
\begin{array}{ccccc}
\cH(T^{}_E,(T^{}_E)_0, 1) & \cong & \C[X_{\bullet}(T)] & \cong & \cH(L^{},K_{L^{}},\tau_{L^{}})\\
\downarrow & & \downarrow & & \downarrow\\
\cH(M^{}_E, I_E \cap M^{}_E, 1) & \cong & \bigotimes_{m_i} \Haff_{q^r}(m_i) & \cong & \cH(M^{}, K_{M^{}},\tau_{M^{}})\\
\downarrow & & \downarrow & & \downarrow\\
\cH(G^{}_E, I_E, 1) & \cong & \Haff_{q^r}(m) & \cong & \cH(G^{},K,\tau).
\end{array}
$$
\end{thm}

Thus when $[L,\pi]$ is ``simple'' (that is, when $L^\dagger = G$), we have a natural reduction of $D(G^{})_{[L,\pi]}$ to the principal block of
$D(G_E^{})$, in a manner compatible with parabolic induction.  In general we obtain a reduction of $D(G^{})_{[L,\pi]}$ to a tensor product of
such principal blocks.

\medskip

\subsection{The moduli spaces \texorpdfstring{$X_{F,G}^{\nu}$}{Xv}}

We now turn to our study of moduli stacks of Langlands parameters for $G = \GL_n$.  Henceforth we revert to our default notation, where $G$ denotes a group on the spectral side of Langlands duality. 

Moduli stacks of Langlands parameters for $\GL_n$ have been studied extensively in
mixed characteristic, for instance in \cite{curtis} in the case of $\GL_n$, or more recently in
\cite{bellovin-gee,booher-patrikis}, and \cite{DHKM} for more general groups.  Since in our present context we work over $\C$, the results we need are in general simpler than the
results of the above papers, and have not appeared explicitly in the literature in the form we need.

We first consider these moduli spaces as underived stacks; it will follow by Proposition \ref{no derived structure} that they have trivial derived structure.  As in the previous section, we take $G = \GL_n$, considered as the Langlands dual of $G^{\vee} = \GL_n(F)$.  We use $X_{F,G}$ to denote the moduli scheme whose quotient stack is the moduli stack $\bL_{F, G}$ in the introduction.

\begin{defn}
Let $I$ be an open normal subgroup of the inertia subgroup $I_F \subset W_F$.  Then there is a scheme $X_{F,G}^I$ parameterizing pairs
$(\rho,N)$, where $\rho: W_F/I \rightarrow \GL_n$ is a homomorphism, and $N$ is a nilpotent $n$ by $n$ matrix
such that for all $\sigma \in I_F$, $\operatorname{Ad} \rho(\Fr^n \sigma) (N) = q^n N$.  For any $\nu: I_F/I \rightarrow \GL_n(\C)$,
we may consider the subscheme $X_{F,G}^{\nu} \subset X_{F,G}^I$ corresponding to pairs $(\rho,N)$ such that the restriction of $\rho$
to $I_F$ is conjugate to $\nu$; it is easy to see that $X_{F,G}^{\nu}$ is both open and closed in $X_{F,G}^I$.  We will say that
a Langlands parameter is of ``type $\nu$'' if it lies in $X_{F,G}^{\nu}$.
\end{defn}

\begin{exmp}
When $\nu = 1$ is the trivial representation, the quotient stack $X_{F,G}^1/G$ is isomorphic to
the underlying underived stack of $\LL_q(\wh{\cN}/G)$, as we remarked in the previous section.
\end{exmp}

We will show that
in fact, for $\nu$ arbitrary, the stack $X_{F,G}^{\nu}/G$ is isomorphic to
a product of stacks of the form $\LL_{q^{r_i}}(\wh{\cN}_i/G_i)$, in a manner that exactly parallels the
type-theoretic reductions of the previous section.  This will allow us to transfer the structures we have built up
on $\LL_{q^{r_i}}(\wh{\cN}_i/G_i)$ to stacks of the form $X_{F,G}^{\nu}/G$ for arbitrary $\nu$.  Our approach very closely parallels the construction of Sections 7 and 8 of \cite{curtis}
with the exception that we are able to work with the full inertia group $I_F$, whereas the integral
$\ell$-adic setting of \cite{curtis} requires one to work with the prime-to-$\ell$ inertia instead.

\medskip

Our strategy will be to rigidify the moduli space $X_{F.G}^\nu$.  For any $\C$-algebra $R$, let us fix a representative $\rho: W_F/I \rightarrow \GL_n(R)$ of type $\nu$, i.e. of the conjugacy class.

For any irreducible complex representation $\eta$ of $I_F$, let $W_{\eta}$ be the finite index subgroup
of $W_F$ consisting of all $w \in W_F$ such that $\eta^w$ is isomorphic to $\eta$.  Then $\eta$
extends to a representation of $W_{\eta}$, although not uniquely; let $\tilde \eta$ be a choice of such an extension.  This choice defines a natural  $W_{\eta}/I_F$-action on the space $\Hom_{I_F}(\eta,\rho)$, and an injection of $W_\eta$-representations
$${\tilde \eta} \otimes \Hom_{I_F}(\eta,\rho) \hookrightarrow \rho.$$
Frobenius reciprocity then gives an injection:
$$\Ind_{W_{\eta}}^{W_F} \left({\tilde \eta} \otimes \Hom_{I_F}(\eta,\rho)\right) \hookrightarrow \rho.$$
The image of this injection is the sum of the $I_F$-subrepresentations of $\rho$ isomorphic to a $W_F$-conjugate of
$\eta$.  We thus have a direct sum decomposition of $W_F$-representations:
$$\rho \cong \bigoplus_{\eta} \Ind_{W_{\eta}}^{W_F} \left({\tilde \eta} \otimes \Hom_{I_F}(\eta,\rho)\right),$$
where $\eta$ runs over a set of representatives for the $W_F$-orbits of irreducible representations of $I_F/I$.
Moreover, the map\footnote{I.e. viewed as a map $N: I_F \twoheadrightarrow I_F/P_F \simeq \prod_{\ell'} \overline{\mathbb{Q}}_{\ell'} \twoheadrightarrow \bQl \simeq \C \rightarrow \GL_n(R)$.} $N$ is $I_F$-equivariant, and thus induces, for each $\eta$, a nilpotent endomorphism
$N_{\eta}$ of $\Hom_{I_F}(\eta,\rho)$.  If $\Fr_{\eta}$ is a Frobenius element of $W_{\eta}$,
we have $\Fr_{\eta} N_{\eta} \Fr_{\eta}^{-1} = q^{r_{\eta}} N_{\eta}$.

\medskip

Let $n_{\eta}(\rho)$ be the dimension of the space $\Hom_{I_F}(\eta,\rho)$; since $n_{\eta}(\rho)$ only depends on the type $\nu$ of $\rho$, we may also write this as $n_{\eta}(\nu)$.
A choice of $R$-basis for $\Hom_{I_F}(\eta,\rho)$ then gives a homomorphism:
$$\rho_{\eta}: W_F/I_F \rightarrow \GL_{n_i}(R)$$
and realizes $N_{\eta}$ as a nilpotent element of $M_{n_i}(R)$ such that
$(\rho_{\eta},N_{\eta})$ is an $R$-point of $X^1_{E_{\eta}, \GL_{n_{\eta}(\rho)}}$.  We thus define:
\begin{defn}
A {\em pseudo-framing} of a Langlands parameter $(\rho,N)$ over $R$ is a choice, for all $\eta$ such that $n_{\eta}(\rho)$ is nonzero,
of an $R$-basis for $\Hom_{I_F}(\eta,\rho)$.  Let ${\wt X}_{F,G}^{\nu}$ be the moduli scheme parameterizing parameters $(\rho,N)$ of type $\nu$ together with a pseudo-framing, and define
$$G_\nu := \prod_{\{\eta \mid n_\eta(\nu) \ne 0\}} \GL_{n_\eta}.$$
The scheme $\wt{X}_{F,G}^\nu$ is equipped with a $G \times G_\nu$-action.
\end{defn}

We denote by $E_{\eta}$ the fixed field
of $W_{\eta}$, by $r_{\eta}$ the degree of $E_{\eta}$ over $F$, and by $d_{\eta}$  the dimension of $\eta$.  We see that $G_{\nu}$
acts on ${\wt X}_{F,G}^{\nu}$ via ``change of pseudo-framing'', and this action makes ${\wt X}_{F,G}^{\nu}$ into a
$G_{\nu}$-torsor over $X^{\nu}_{F,G}$.  On the other hand, given an $R$-point $(\rho,N)$ of ${\wt X}_{F,G}^{\nu}$, the pseudo-framing gives, for each $\eta$,
an $R$-point $(\rho_{\eta},N_{\eta})$ of $X^1_{E_{\eta},\GL_{n_{\eta}(\nu)}}$.  We thus obtain a natural map:
$${\wt X}_{F,G}^{\nu} \rightarrow \prod_{\eta} X^1_{E_{\eta},\GL_{n_{\eta}(\nu)}}$$
which is a torsor for the conjugation action of $G$ on $\wt{X}_{F,G}^\nu$.  We thus obtain natural isomorphisms of quotient stacks:
$$X_{F,G}^{\nu}/G \cong {\wt X}_{F,G}^{\nu}/(G \times G^{\nu}) \cong \left(\prod_{\eta} X^1_{E_{\eta},\GL_{n_{\eta}(\nu)}} \right)/G_{\nu} \simeq \prod_\eta \LL_{q^{r_{\eta}}}(\wh{\cN}_{n_{\eta}(\nu)}/\GL_{n_{\eta}(\nu)}).$$
Note that the composite isomorphism depends on the choice, for each $\eta$, of an extension ${\tilde \eta}$ of $\eta$ to $W_F$.

\medskip
\subsection{The \texorpdfstring{$\nu$}{v}-Springer sheaves}
%
We define a Springer sheaf by transporting across the above isomorphism.
\begin{defn}
We define the \emph{$\nu$-Springer sheaf} $S_\nu \in \Coh(X^\nu_{F,G}/G)$ to be the product, over $\eta$, of the sheaves $\cS_{q^{r_{\eta}}}$ on the moduli stack $X^1_{E_{\eta},\GL_{n_{\eta}(\nu)}}/\GL_{n_{\eta}(\nu)}$.
\end{defn}

By Theorem \ref{thm endomorph}, the endomorphisms of the $\nu$-Springer sheaf are a tensor product of affine Hecke algebras, and we introduce the notation
$$\Haff_\nu := \bigotimes_{\eta} \Haff_{q^{r_{\eta}}}(n_{\eta}(\nu)).$$
We thus obtain a fully faithful embedding $D(\Haff_\nu) \hookrightarrow \IndCoh(X_{F,G}^\nu/G)$.  However, since our identifications depend, ultimately, on our choices of $\tilde \eta$, this embedding will also depend on these choices.  (By contrast, the sheaf $\cS_{\nu}$
itself is, at least up to isomorphism, independent of the choices of $\tilde \eta$.)  We can remove this dependence
by rephrasing this embedding in terms of smooth representations of $G^{\vee}$, via the type theory of the previous section.

\begin{prop}
There is a $G$-type $(K_\nu, \tau_\nu)$ such that $\cH(G^\vee, K_\nu, \tau_\nu) \simeq \Haff_\nu$ (depending on choices), and an identification of dg algebras
$$\End^\bullet(\cS_{\nu}) \simeq \cH(G^{\vee},K_{\nu},\tau_{\nu})$$
which is is independent of the choices of ${\tilde \eta}$.
\end{prop}
\begin{proof}
Let $L_{\nu}^{\vee}$ be the standard Levi of $G^{\vee}$ corresponding to block diagonal matrices whose blocks consist, for each $\eta$, of $n_{\eta}(\nu)$ blocks
of size $r_{\eta} d_{\eta}$.  Let $\pi^0_{\eta}$ be the cuspidal representation of $\GL_{r_{\eta} d_{\eta}}$ corresponding to $\Ind_{W_{\eta}}^{W_F} \tilde \eta$
under the local Langlands correspondence, and let $\pi_{\nu}$ be the cuspidal representation:
$$\pi_{\nu} := \bigotimes_{\eta} (\pi^0_{\eta})^{\otimes n_{\eta}(\nu)}$$
of $L_{\nu}^{\vee}$.  Then representations in the block $D(G^{\vee})_{[L^{\vee}_{\nu},\pi_{\nu}]}$ correspond, via local Langlands, to Langlands parameters for $G$
of type $\nu$.

For each $\eta$, we can find a cuspidal type $(K_{\eta},\tau_{\eta})$ in $\GL_{r_{\eta}d_{\eta}}$ 
for $\pi^0_{\eta}$.  From this we can form the type $(K_{L_{\nu}},\tau_{L_{\nu}})$ in $L_{\nu}^{\vee}$, by setting
$K_{L_{\nu}} = \prod_{\eta} K_{\eta}^{n_{\eta}(\nu)}$ and $\tau_{L_{\nu}} = \bigotimes_{\eta} \tau_{\eta}^{\otimes n_{\eta}(\nu)}$.  This type is associated to
the block $[L^{\vee}_{\nu},\pi_{\nu}]$ in $D(L_{\nu}^{\vee})$.  Let $P^{\vee}$ be the standard parabolic of $G^{\vee}$ with Levi $L^{\vee}$, and let $(P')^{\vee}$ denote
the opposite parabolic.  The theory of section~\ref{types} then gives us a Levi subgroup $(L^{\dagger})^{\vee}$ of $G^{\vee}$ containing $L^{\vee}_{\nu}$,
an $(L^{\dagger})^{\vee}$-cover $(K^{\dagger}_{\nu},\tau^{\dagger}_{\nu})$ of $(K_{L_{\nu}},\tau_{L_{\nu}})$,
and a $G^{\vee}$-cover $(K_{\nu},\tau_{\nu})$ of $(K^{\dagger}_{\nu},\tau^{\dagger}_{\nu})$.
These covers depend on a choice of parabolic with Levi $L^{\vee}$; we choose our covers to be the ones associated to {\em the opposite parabolic} $(P')^{\vee}$.
In particular we obtain a map 
$$T_{(P')^{\vee}}: \cH(L_{\nu}^{\vee}, K_{L_{\nu}^\vee}, \tau_{L_{\nu}^\vee}) \rightarrow \cH(G^{\vee}, K_{\nu}, \tau_{\nu})$$ 
that is compatible with the parabolic induction functor $i_{P^{\vee}}^{G^{\vee}}$ on $D(L_{\nu}^{\vee})$ in the sense of Theorem~\ref{thm:cover}.

One verifies, by compatibility of local Langlands with unramified twists, that for each $\eta$ the group of unramified characters $\chi$
of $\GL_{r_{\eta}d_{\eta}}$ such that $\pi^0_{\eta} \otimes \chi$ is isomorphic to $\pi^0_{\eta}$ is $r_{\eta}$.  Thus there is an isomorphism
of Hecke algebras $\cH(G^{\vee},K_{\nu},\tau_{\nu}) \simeq \Haff_{\nu}$.  Moreover, the composition:
$$\cH(G^{\vee},K_{\nu},\tau_{\nu}) \cong \Haff_{\nu} \cong \End(\cS_{\nu})$$
is independent of the choices of ${\tilde \eta}$.  This essentially boils down to the compatibility of the local Langlands correspondence with unramified twists
and parabolic induction.
\end{proof}

Since $D(G^{\vee})_{[L_{\nu}^\vee,\pi_{\nu}]}$ is canonically equivalent to the category of $\cH(G^{\vee},K_{\nu},\tau_{\nu})$-modules, and this equivalence
associates the representations $\cind_{K_{\nu}}^{G^{\vee}} \tau_{\nu}$ to the free $\cH(G^{\vee},K_{\nu},\tau_{\nu})$-module of rank one, we have shown:

\begin{thm} \label{derived local Langlands}
For each $\nu$ there is a natural fully faithful functor:
$$\operatorname{LL}_{G,\nu}: D(G^{\vee})_{[L_{\nu}^\vee,\pi_{\nu}]} \hookrightarrow \IndCoh(X_{F,G}^{\nu})$$
that takes the generator $\cind_{K_{\nu}}^{G^{\vee}} \tau_{\nu}$ to $\cS_{\nu}$.
\end{thm}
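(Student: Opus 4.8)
The plan is to assemble Theorem~\ref{derived local Langlands} from the pieces already in place: the identification of the block $D(G^\vee)_{[L_\nu^\vee,\pi_\nu]}$ with $\Haff_\nu\dmod$ via the theory of types (Theorem~\ref{thm:cover}), the isomorphism $X_{F,G}^\nu/G \simeq \bigl(\prod_\eta X^1_{E_\eta,\GL_{n_\eta(\nu)}}\bigr)/G_\nu$ identifying the latter (with its transported derived structure) with $\prod_\eta \LL_{q^{r_\eta}}(\cN_{n_\eta(\nu)}/\GG_{n_\eta(\nu)})$, and finally the Deligne--Langlands functor of Proposition~\ref{q DL functor} (in its $q$-specialized form $DL_q$) applied factor by factor. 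First I would combine the $DL_{q^{r_\eta}}$ across the product: since $\cS_\nu$ is by construction the external product $\boxtimes_\eta \cS_{q^{r_\eta}}$ and $\End(\cS_{q^{r_\eta}}) \simeq \Haff_{q^{r_\eta}}(n_\eta(\nu))$, the external product of the embeddings $\langle \cS_{q^{r_\eta}}\rangle \hookrightarrow \IndCoh(\LL_{q^{r_\eta}}(\cN_{n_\eta(\nu)}/\GG_{n_\eta(\nu)}))$ yields a full embedding of $\bigl(\bigotimes_\eta \Haff_{q^{r_\eta}}(n_\eta(\nu))\bigr)\dmod = \Haff_\nu\dmod$ into $\IndCoh$ of the product stack, sending the free rank-one module to $\cS_\nu$. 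One must check that external products of fully faithful embeddings of this type remain fully faithful, which follows from the K\"unneth formula for $\IndCoh$ and the fact that Hochschild homology (equivalently, $\End$ of the generating objects) is multiplicative under external product; this is where the compactness of each $\cS_{q^{r_\eta}}$ from Proposition~\ref{q springer compact} enters.

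Next I would descend this embedding along the $G_\nu$-quotient. The stack $X_{F,G}^\nu/G$ is the quotient of $\prod_\eta X^1_{E_\eta,\GL_{n_\eta(\nu)}}$ by the reductive group $G_\nu = \prod_\eta \GL_{n_\eta(\nu)}$, and the equivariance of the construction of $\cS_\nu$ means the product embedding is $G_\nu$-equivariant; taking invariants (i.e. pushing forward along the smooth atlas $\prod_\eta X^1_{E_\eta,\GL_{n_\eta(\nu)}} \to X_{F,G}^\nu/G$ and using that $\IndCoh$ satisfies smooth descent for quotients by reductive groups) produces the functor $\operatorname{LL}_{G,\nu}: \Haff_\nu\dmod \to \IndCoh(X_{F,G}^\nu)$. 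Fullness and faithfulness survive because $\End_{X_{F,G}^\nu/G}(\cS_\nu)$ still computes $\Haff_\nu$ — this is exactly the statement that the Hochschild homology of the affine Hecke category is insensitive to passing between $\cZ/G$ and $\cZ/\GG$-type quotients after the appropriate specialization, already used in Proposition~\ref{q DL functor}.

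The final task is to identify $\Haff_\nu\dmod$ with the block $D(G^\vee)_{[L_\nu^\vee,\pi_\nu]}$ \emph{compatibly} with the generators: the type theory of Section~\ref{types} gives an equivalence $D(G^\vee)_{[L_\nu^\vee,\pi_\nu]} \simeq H(G^\vee,K_\nu,\tau_\nu)\dmod$ sending $\cind_{K_\nu}^{G^\vee}\tau_\nu$ to the free rank-one module, and Theorem~\ref{thm:cover}(4) together with the compatibility of local Langlands with unramified twists identifies $H(G^\vee,K_\nu,\tau_\nu) \simeq \Haff_\nu$; composing, $\cind_{K_\nu}^{G^\vee}\tau_\nu$ goes to the free $\Haff_\nu$-module, hence to $\cS_\nu$ under $\operatorname{LL}_{G,\nu}$. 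The main obstacle, and the point requiring the most care, is \emph{canonicity}: all the identifications in Section~\ref{moduli} (the isomorphism $X_{F,G}^\nu/G \simeq \bigl(\prod_\eta X^1\bigr)/G_\nu$, the construction of the cover $(K_\nu,\tau_\nu)$, the isomorphism $H(G^\vee,K_\nu,\tau_\nu)\simeq\Haff_\nu$) depend on the auxiliary choices of extensions $\tilde\eta$ of each $\eta$ to $W_\eta$, and one must verify that the composite $H(G^\vee,K_\nu,\tau_\nu) \simeq \Haff_\nu \simeq \End(\cS_\nu)$ — and therefore the functor $\operatorname{LL}_{G,\nu}$ itself — is independent of these choices. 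This reduces, as indicated in the text, to the compatibility of the local Langlands correspondence with unramified twists and with parabolic induction, together with the fact that $\cS_\nu$ is independent (up to isomorphism) of the $\tilde\eta$; I would spell out that a change of $\tilde\eta$ induces matching unramified twists on both the automorphic side (altering $(K_\eta,\tau_\eta)$ within its inertial class) and the spectral side (an automorphism of the product stack covering the identity on $X_{F,G}^\nu/G$), and that these intertwine the two embeddings. Uniqueness of the functor with the stated properties then follows since $\cS_\nu$ generates its essential image and $\Haff_\nu = \End(\cS_\nu)$ pins down the functor on that subcategory.
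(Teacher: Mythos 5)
Your proposal is correct and follows essentially the same route as the paper: identify $X_{F,G}^{\nu}/G$ with the product $\prod_{\eta}\LL_{q^{r_{\eta}}}(\cN_{n_{\eta}(\nu)}/\GG_{n_{\eta}(\nu)})$, take the external product of the $q$-specialized Deligne--Langlands embeddings to realize $\Haff_{\nu}\dmod$ as $\langle \cS_{\nu}\rangle$, and then transport through the type-theoretic equivalence $D(G^{\vee})_{[L_{\nu}^{\vee},\pi_{\nu}]}\simeq H(G^{\vee},K_{\nu},\tau_{\nu})\dmod$, with independence of the choices of $\tilde\eta$ handled exactly as in the text. The only cosmetic difference is that your separate ``descent along the $G_{\nu}$-quotient'' step is already subsumed in taking the product of the quotient stacks, and your explicit K\"unneth/compactness check for fullness of the external product is a detail the paper leaves implicit.
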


\begin{rmk} \label{cuspidal skyscrapers}
We will say that an inertial type $\nu$ is {\em cuspidal} if the representations of $W_F$ corresponding to points of $X^{\nu}_{F,G}$ are irreducible.  For $G = \GL_n$ this happens precisely when $n_{\eta} = 1$
for a single $\eta$ and is zero for all other $\eta$.  In such cases $X^{\nu}_{F,G}$ is simply a copy of ${\mathbb G}_m$, the sheaf $\cS_{\nu}$ is the structure sheaf, and the corresponding
affine Hecke algebra is simply $\C[T,T^{-1}]$, which our choices above identify with the global functions on $X^{\nu}_{F,G} \cong {\mathbb G}_m$.  In particular for such $\nu$ the functor
$\operatorname{LL}_{G,\nu}$ is an abelian equivalence, that takes an irreducible $\C[T,T^{-1}]$-module to a skyscraper sheaf on the corresponding point of $X^{\nu}_{F,G}$.

By taking products of the above picture we see that a similar statement holds for Levi subgroups $M$ of $G$ (with a suitable torus in place of ${\mathbb G}_m$.)
\end{rmk}

\medskip

\subsubsection{A direct construction of $\cS_{\nu}$}
In this section we give a more intrinsic construction of $\cS_{\nu}$.  Fix a particular $\nu$, and let $L_\nu$ denote the Langlands dual of $L_{\nu}^{\vee}$; we identify $L_\nu$
with the standard block diagonal Levi of $G$ containing $n_{\eta}(\nu)$ blocks of size $r_{\eta} d_{\eta}$.  Let $\nu': I_F \rightarrow L_\nu$ be the representation of $I_F$ on $L$ whose projection to each block of $L_\nu$ of type $\eta$ is the sum of the $W_F$-conjugates of $\eta$.
We then have a moduli space $X_{F,L_\nu}^{\nu'}$ parameterizing Langlands parameters for $L_\nu$ that are of type $\nu'$.

Let $P$ be the standard (block upper triangular) parabolic of $G$ containing $L_{\nu}$.  We then also have a moduli space $X_{F,P}^{\nu'}$ parameterizing Langlands parameters
for $G$ that factor through $P$, and whose projection to $L_{\nu}$ is of type $\nu'$.  The inclusion of $P \hookrightarrow G$, and the projection of $P \twoheadrightarrow L$ induce parabolic induction maps
$$\begin{tikzcd}
X^{\nu'}_{F, L_\nu} & \arrow[l, "\pi_P"'] X^{\nu'}_{F,P} \arrow[r, "\iota_P"] & X^{\nu'}_{F,G}
\end{tikzcd}$$
We then have:
\begin{thm} \label{Springer induction}
There are natural isomorphisms:
$$\cS_{\nu} \cong (\iota_P)_* {\mathcal O}_{F,P}^{\nu'} \cong (\iota_P)_* \pi_P^* {\mathcal O}_{F,L_\nu}^{\nu'},$$
where ${\mathcal O}_{F,P}^{\nu'}$ and ${\mathcal O}_{F,L_\nu}^{\nu'}$ denote the structure sheaves on $X_{F,P}^{\nu'}/P$ and $X_{F,L_\nu}^{\nu'}/L_\nu$, respectively.
\end{thm}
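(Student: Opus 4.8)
The plan is to reduce the statement to the product decomposition of the previous subsection together with the definition of the $q$-coherent Springer sheaf. First note that the second isomorphism is formal: $\pi_P^*$ carries the structure sheaf to the structure sheaf, so $\pi_P^* \mathcal{O}_{F,L}^{\nu'}\cong \mathcal{O}_{F,P}^{\nu'}$ and hence $(\iota_P)_*\pi_P^*\mathcal{O}_{F,L}^{\nu'}\cong (\iota_P)_*\mathcal{O}_{F,P}^{\nu'}$. Thus the content is the first isomorphism $\cS_\nu\cong (\iota_P)_*\mathcal{O}_{F,P}^{\nu'}$. Recall that $\cS_\nu$ is by definition $\boxtimes_\eta \cS_{q^{r_\eta}}$ transported across the isomorphism of the previous subsection
$$X_{F,G}^{\nu}/G\;\cong\;\Big(\prod_\eta X^1_{E_\eta,\GL_{n_\eta(\nu)}}\Big)\big/G_\nu\;\cong\;\prod_\eta \LL_{q^{r_\eta}}\big(\CN_{n_\eta(\nu)}/\GG_{n_\eta(\nu)}\big),$$
and that by Definition~\ref{q-Springer} each factor $\cS_{q^{r_\eta}}$ equals $(\LL_{q^{r_\eta}}p)_*\mathcal{O}$ for the looped Springer resolution $\LL_{q^{r_\eta}}(\widetilde{\mathcal{N}}_{n_\eta(\nu)}/\GG_{n_\eta(\nu)})\to \LL_{q^{r_\eta}}(\CN_{n_\eta(\nu)}/\GG_{n_\eta(\nu)})$.

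The main step I would carry out is to run the pseudo-framing construction of the previous subsection relative to the parabolic $P$ and to its Levi $L$, compatibly with the correspondence. Concretely, for an $R$-point $(\rho,N)$ lying in $X_{F,P}^{\nu'}$ the isotypic decomposition $\rho\cong\bigoplus_\eta \Ind_{W_\eta}^{W_F}(\tilde\eta\otimes \Hom_{I_F}(\eta,\rho))$ is functorial for the inclusion of $P$-valued parameters into $G$-valued ones and for the projection to $L$. Since $\rho|_{I_F}$ is semisimple of type $\nu$, it is $P$-conjugate into a Levi, and then the condition $\mathrm{Ad}(\rho(\sigma))N=N$ for $\sigma\in I_F$ forces $N$ (and the off-Levi part of $\rho(\Fr)$) to lie in the $I_F$-invariants of $\mathfrak{p}$, i.e. in the within-$\eta$ blocks; cross-$\eta$ unipotent-radical directions vanish because $\Hom_{I_F}(\Ind_{W_\eta}\eta,\Ind_{W_{\eta'}}\eta')=0$ for non-conjugate $\eta,\eta'$. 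Under a pseudo-framing these within-$\eta$ directions become exactly the nilradical/flag data of $\GL_{n_\eta(\nu)}$, so that after passing to quotient stacks one obtains compatible isomorphisms
$$X_{F,P}^{\nu'}/P\;\cong\;\prod_\eta \LL_{q^{r_\eta}}\big(\widetilde{\mathcal{N}}_{n_\eta(\nu)}/\GG_{n_\eta(\nu)}\big),\qquad X_{F,L}^{\nu'}/L\;\cong\;\prod_\eta \LL_{q^{r_\eta}}\big(\widehat{\{1\}}/(T_{n_\eta(\nu)}\times\Gm)\big),$$
with $\iota_P$ corresponding to $\prod_\eta \LL_{q^{r_\eta}}p$ and $\pi_P$ to the product of the partial-flag projections. (Here one also checks $X_{F,L}^{\nu'}/L$ is reduced, since type $\nu'$ forces $N=0$ on each block, so $\mathcal{O}_{F,L}^{\nu'}$ is the reduced structure sheaf appearing in Definition~\ref{q-Springer}(2).)

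Granting these compatible decompositions, the proof finishes by Künneth: $\iota_P$ is proper, so
$$(\iota_P)_*\mathcal{O}_{F,P}^{\nu'}\;=\;\Big(\prod_\eta \LL_{q^{r_\eta}}p\Big)_*\Big(\boxtimes_\eta\mathcal{O}\Big)\;\cong\;\boxtimes_\eta\big(\LL_{q^{r_\eta}}p\big)_*\mathcal{O}\;=\;\boxtimes_\eta\cS_{q^{r_\eta}}\;=\;\cS_\nu,$$
where the external-product compatibility of pushforward holds for these QCA stacks over the base field. I expect the main obstacle to be precisely the second step: verifying that the pseudo-framing decomposition descends \emph{as stacks} relative to $P$ (the torsor bookkeeping for $\widetilde X_{F,P}^{\nu'}$ paralleling that for $\widetilde X_{F,G}^{\nu}$), that the cross-$\eta$ unipotent directions really disappear in the quotient rather than merely pointwise, and that the $\GG$-actions and the $q\mapsto q^{r_\eta}$ twists match up so that the looped Springer resolution of $\GL_{n_\eta(\nu)}$ with parameter $q^{r_\eta}$ is the one that appears; everything else is formal once this is in place.
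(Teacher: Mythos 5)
Your proposal is correct and its substance coincides with the paper's argument: the heart of the matter is exactly the identification of $X_{F,P}^{\nu'}/P$ with $\prod_\eta \LL_{q^{r_\eta}}$ of the (Borel form of the) Springer resolution, with $\iota_P$ becoming the product of the looped Springer maps, after which the K\"unneth/pushforward step is formal. The only organizational difference is how the crux --- that the cross-$\eta$ unipotent-radical directions of $P$ contribute nothing --- gets verified. You propose to check it directly via the $I_F$-equivariance of $N$ and $\rho(\Fr)$ and the vanishing of $\Hom_{I_F}$ between non-conjugate induced types (which is the right reason); the paper instead packages this by introducing the intermediate Levi $L^{\dagger}$ (block sizes $n_\eta(\nu)r_\eta d_\eta$) and parabolic $Q$, writing $(\iota_P)_*\pi_P^*$ as an iterated pull-push through $X_{F,Q}^{\nu''}/Q$ and $X_{F,P\cap L^\dagger}^{\nu'}/(P\cap L^\dagger)$ via a cartesian square, and then observing that the maps $X_{F,Q}^{\nu''}/Q \to X_{F,L^\dagger}^{\nu''}/L^\dagger \;(\text{and}\to X_{F,G}^{\nu}/G)$ become identities under the product-of-loop-spaces identifications. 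That cartesian-square device is precisely the "torsor bookkeeping relative to $P$" you flag as the main obstacle, so if you adopt it your remaining steps go through unchanged.
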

\begin{proof}
Let $L^{\dagger}$ be the standard Levi of $G$ that is block diagonal of block sizes $n_{\eta}(\nu) r_{\eta} d_{\eta}$. 
Let $Q$ be the standard block upper triangular parabolic of $G$ with Levi $L^{\dagger}$, and
let $\nu''$ be the composition of $\nu'$ with the inclusion of $L_\nu$ in $L^{\dagger}$. 
We then have spaces $X_{F,L^{\dagger}}^{\nu''}$ and $X_{F,Q}^{\nu''}$, where the former parameterizes pairs $(\rho,N)$ for $L^{\dagger}$ that are of type $\nu''$, and the latter parameterizes pairs $(\rho,N)$ for $G$ that
factor through $Q$ and whose projection to $L^{\dagger}$ is of type $\nu''$.  We may also consider the space $X_{F,P \cap L^{\dagger}}^{\nu'}$, which parameterizes pairs $(\rho,N)$ for $L^{\dagger}$ that factor
through $P \cap  L^{\dagger}$ and whose projection to $L$ is of type $\nu'$.  We then have a natural Cartesian diagram:

$$\begin{tikzcd}
X_{F,P}^{\nu'} / P \arrow[r] \arrow[d] & X_{F,P \cap L^{\dagger}}^{\nu'} / P \cap L^{\dagger}\arrow[d, "\iota_{P \cap L^\dagger}"] \\
X_{F,Q}^{\nu''} / Q \arrow[r, "\iota_Q"] & X_{F,L^{\dagger}}^{\nu''} / L^{\dagger}
\end{tikzcd}
$$
from which we conclude that $(\iota_P)_* \pi_P^* {\mathcal O}_{F,L_\nu}^{\nu'}$ is isomorphic to
$ (\pi_Q)_* \iota_Q^* (\iota_{P \cap L^{\dagger}})_* \pi_{P \cap  L^{\dagger}}^* {\mathcal O}_{F,L_\nu}^{\nu'}$,
where $\pi_Q: X_{F,Q}^{\nu''}/Q \rightarrow X_{F,G}^{\nu}/G$, and
$\pi_{P \cap L^{\dagger}}: X_{F, P \cap L^{\dagger}}^{\nu'}/ (P \cap L^{\dagger}) \rightarrow X_{F,L_\nu}^{\nu'}/L_\nu$.

On the other hand, let $B_{\eta}$ and $T_{\eta}$ denote the standard Borel subgroup and maximal torus of $\GL_{n_{\eta}(\nu)}$, for each $\eta$.  We then have
a commutative diagram (note that we transport derived structures across the isomorphisms by definition):


$$
\begin{array}{ccc}
\prod_{\eta} \LL_{q^{r_{\eta}}}(\wh{\cN}_{T_{\eta}}/{ T}) & \cong & X_{F,L}^{\nu'}/L_\nu\\
\uparrow & & \uparrow\\
\prod_{\eta} \LL_{q^{r_{\eta}}}(\wh{\cN}_{B_{\eta}}/{ B}) & \cong & X_{F,P \cap L^{\dagger}}^{\nu'} / (P \cap L^{\dagger})\\
\downarrow & & \downarrow\\
\prod_{\eta} \LL_{q^{r_{\eta}}}(\wh{\cN}_{n_{\eta}(\nu)}/{ G_{n_{\eta}(\nu)}}) & \cong & X_{F, L^{\dagger}}^{\nu''} / L^{\dagger}\\
\uparrow & & \uparrow\\
\prod_{\eta} \LL_{q^{r_{\eta}}}(\wh{\cN}_{n_{\eta}(\nu)}/{ G_{n_{\eta}(\nu)}}) & \cong & X_{F,Q}^{\nu''} / Q\\
\downarrow & & \downarrow\\
\prod_{\eta} \LL_{q^{r_{\eta}}}(\wh{\cN}_{n_{\eta}(\nu)}/{ G_{n_{\eta}(\nu)}}) & \cong & X_{F,G}^{\nu} / G\\
\end{array}
$$
where the bottom two vertical maps on the left are the identity.  It follows that the iterated pull-push
$(\iota_Q)_* \pi_Q^* (\iota_{P \cap L^{\dagger}})_* \pi_{P \cap L^{\dagger}}^* {\mathcal O}_{F,L_\nu}^{\nu'}$ corresponds, under the bottom isomorphism, to $\cS_{\nu}$, as the latter is simply the pushforward to
$\prod_{\eta} \LL_{q^{r_{\eta}}}(\wh{\cN}_{n_{\eta}(\nu)}/{ G_{n_{\eta}(\nu)}})$ of the structure sheaf on
$\prod_{\eta} \LL_{q^{r_{\eta}}}(\wh{\cN}_{B_{\eta}}/{B})$.
\end{proof}


\medskip

\subsubsection{Compatibility with parabolic induction}

As in the previous subsection, we fix a particular $\nu$ and let $L_{\nu}^{\vee}$, $L_\nu$ and $P$ be as above.  Let $Q$ be a standard Levi subgroup of $G$ whose standard Levi subgroup $M$
contains $L_\nu$, and let $M^{\vee}$ and $Q^{\vee}$ be the corresponding dual subgroups of $G^{\vee}$.  Let $\nu'$ be the inertial type $I_F \rightarrow L_{\nu}$ constructed in the
previous subsection, and let $\nu''$ be the composition of $\nu'$ with the inclusion of $L_{\nu}$ in $M$.  We have a diagram with the square Cartesian:
$$
\begin{tikzcd}[column sep=large]
X_{F,L_{\nu}}^{\nu'}/L_{\nu} & \arrow[l, "\pi_{P \cap M}"']   \arrow[d, "\iota_{P \cap M}"] X_{F,P \cap M}^{\nu'} / P \cap M   & X_{F,P}^{\nu'} /P \arrow[d, "\iota_{P,Q}"] \arrow[l, "\pi_{P, P \cap M}"'] \\
& X_{F,M}^{\nu''}/M & \arrow[l, "\pi_Q"']  \arrow[d, "\iota_Q"] X_{F,Q}^{\nu''}/Q \\
& & X_{F,G}^{\nu}/G.
\end{tikzcd}
$$

Theorem \ref{Springer induction} shows that $\cS_{\nu}$ is isomorphic to the pushforward to $X_{F,G}^{\nu}/G$ of the structure sheaf on $X_{F,P}^{\nu'} / P$, and the corresponding sheaf
$\cS_{\nu,M}$ on $X_{F,M}^{\nu''}$ is the pushforward to $X_{F,M}^{\nu''} / M$ of the structure sheaf on $X_{F,P \cap M}^{\nu'}/ (P \cap M)$.  The above diagram then gives us a natural isomorphism:
$$\cS_{\nu} \cong (\iota_Q)_* \pi_Q^* \cS_{\nu,M}.$$
Via functoriality and this isomorphism one obtains an embedding of $\End(\cS_{\nu,M})$ in $\End(\cS_{\nu})$.

Recall that we have identified these endomorphism rings with certain Hecke algebras via type theory.  In particular, we have the type $(K_{L_{\nu}},\tau_{L_{\nu}})$ of $L_{\nu}^{\vee}$,
an $M^{\vee}$-cover $(K_{M^{\vee}},\tau_{M^{\vee}})$ coming from the parabolic $(P')^{\vee} \cap M^{\vee}$ opposite $P^{\vee} \cap M^{\vee}$, and a $G^{\vee}$-cover $(K,\tau)$ coming
from the parabolic $(P')^{\vee}$ opposite $P^{\vee}$.  Theorem~\ref{thm:cover induction} then gives us a map:
$$T_{(Q')^{\vee}}: \cH(M^{\vee},K_{M^{\vee}}, \tau_{M^{\vee}}) \rightarrow \cH(G^{\vee},K,\tau).$$

\begin{lemma} 
We have a commutative diagram:
$$
\begin{tikzcd}
\cH(M^{\vee},K_{M^{\vee}},\tau_{M^{\vee}}) \arrow[r, "\simeq"] \arrow[d,"T_{(Q')^{\vee}}"'] &  \End(\cS_{\nu,M}) \arrow[d] \\
\cH(G^{\vee},K,\tau) \arrow[r, "\simeq"] & \End(\cS_{\nu})
\end{tikzcd}
$$
where the right hand map is induced by the isomorphism of $\cS_{\nu} \simeq (\iota_Q)_* \pi_Q^* \cS_{\nu,M}$.
\end{lemma}
\begin{proof}
The machinery of the previous subsection, together with the compatibility of the general case with the Iwahori case in section~\ref{types} allow us to reduce to the case where $\nu = 1$.
In this case the claim reduces to the compatibility of the Ginsburg-Kazhdan-Lusztig interpretation of the affine Hecke algebra as $K_0$ of the Steinberg variety with parabolic induction, checked in the proof of Theorem \ref{thm endomorph}.
\end{proof}

As a consequence, we deduce:

\begin{thm} \label{thm:LL parabolic induction}
We have a commutative diagram of functors:
$$
\begin{tikzcd}
D(M^{\vee})_{[L_{\nu},\tau_{\nu}]} \arrow[rr, "{\operatorname{LL}_{M, \nu}}", hook]  \arrow[d, "i_{Q^{\vee}}^{G^{\vee}}"'] & & \IndCoh(X_{F,M}^{\nu}) \arrow[d, "(\iota_Q)_* \pi_Q^*"] \\
D(G^{\vee})_{[L_{\nu},\tau_{\nu}]}  \arrow[rr, hook, "{\operatorname{LL}_{G,\nu}}"] & & \IndCoh(X_{F,G}^{\nu}).
\end{tikzcd}$$
\end{thm}

\begin{proof}
We have isomorphisms:
\begin{eqnarray*}
\operatorname{LL}_{G,\nu} (i_{Q^{\vee}}^{G^{\vee}} V) & \cong & \Hom(\cind_K^{G^{\vee}}\tau, i_{Q^{\vee}}^{G^{\vee}} V) \otimes_{\cH(G^{\vee},K,\tau)} \cS_{\nu}\\
& \cong & \Hom_{M^{\vee}}(\cind_{K_{M^{\vee}}}^{M^{\vee}} \tau_{M^{\vee}}, V) \otimes_{\cH(M^{\vee}, K_{M^{\vee}}, \tau_{M^{\vee}})} (\iota_Q)_* \pi_Q^* \cS_{M^\vee,\nu}\\
& \cong & (\iota_Q)_* \pi_Q^* (\operatorname{LL}_{M,\nu} V)
\end{eqnarray*}
from which the result follows.
\end{proof}

\appendix
\section{Proofs}\label{app proofs}

This appendix contains proofs of technical results used in the body of the paper.


\subsection{Functoriality of Hochschild homology in geometric settings}\label{app functoriality}

\begin{proof}[Proof of Proposition \ref{HH prop}]
The first and second statements are Theorem 2.21 (or Proposition 5.5) in \cite{BN:NT}.  We give a direct argument for the third statement (which can also be adapted toward the second).  
We let $Z := X \times_Y X$, and denote the diagonals by $\Delta_X: X \hookrightarrow X \times X$ (and likewise for $Y$), the relative diagonal by $\Delta: X \hookrightarrow Z = X \times_Y X$, and its inclusion by $i: Z =  X \times_Y X \hookrightarrow X \times X$.  

Note that we use $!$-integral transforms in our convention; thus to describe the integral transforms it is convenient to pass between $*$-pullbacks and $!$-pullbacks.  For any quasi-smooth map $g: E \rightarrow B$ we denote by $\beta_g^*: f^*(-) \simeq f^!(-) \otimes_{\cO_X} \omega_{E/B}^{-1}$ and $\beta_g^!: f^!(-) \simeq f^*(-) \otimes_{\cO_X} \omega_{E/B}$ the canonical equivalences.

The integral transform corresponding to $f_* f^*: \Coh(Y) \rightarrow \Coh(Y)$ is given by the kernel 
$$\cK_{f_*f^*} := \Delta_{Y*} f_*(\omega_X \tens{\cO_X} \omega_{X/Y}^{-1}).$$
Letting $\eta_f$ denote the unit for the adjunction $(f^*, f_*)$, the unit  $\eta \in \Hom_{Y \times Y}(\Delta_{Y*} \omega_Y,\cK_{f_*f^*})$ is defined:
$$\eta := \Delta_{Y*}(\beta_f^* \circ \eta_{f}): \Delta_{Y*}\omega_Y \longrightarrow \Delta_{Y*}(f_* f^* \omega_Y)  \simeq \Delta_{Y*}(f_*(f^!\omega_Y \tens{\cO_X} \omega_{X/Y}^{-1})).$$

The integral transform corresponding to $f^* f_*: \Coh(X) \rightarrow \Coh(X)$ is given by the kernel:
$$\cK_{f^*f_*} := i_*(\omega_{Z} \tens{\cO_{Z}} \omega_{Z/X}^{-1}).$$
Letting $\eta_{\Delta}$ denote the unit for the adjunction $(\Delta^*, \Delta_{*})$, the counit $\epsilon \in \Hom_{X \times X}(\cK_{f^*f_*}, \Delta_{X*} \omega_X)$ is defined:
$$\epsilon := i_*(\beta_{\Delta}^{!-1} \circ \eta_{\Delta}): i_*(\omega_{Z} \tens{\cO_{Z}} \omega^{-1}_{Z/X}) \rightarrow i_* \Delta_{*}( \Delta^* \omega_{Z}\tens{\cO_{Z}} \omega_{X/Z}) \simeq i_*\Delta_{*}\omega_{X}$$
where we implicitly use the canonical identification $\Delta^* \omega_{Z/X}^{-1} \simeq \omega_{X/Z}$ (i.e. since $\omega_{X/X}$ is canonically trivial).  We leave verification of the adjunction identites to the reader.

The functoriality $\omega(\cL_\phi Y) \rightarrow \omega(\cL_\phi X)$ is given by composing the unit and counit after applying $\Gamma \circ \Gamma_{\phi_Y}^!$ and $\Gamma \circ \Gamma_{\phi_X}^!$ (where, somewhat confusingly, $\Gamma$ denotes the global sections functor, and $\Gamma_{\phi}$ denotes the graph).  Recall the factorization and notation of Lemma \ref{lemma rel loops}, 
 let $p_X: \cL_\phi Y_X  \rightarrow X$ and $p_Y: \cL_\phi Y_X \rightarrow Y$ denote the natural maps, and $\mathrm{ev}_X: \cL_\phi X \rightarrow X$ the evaluation (and likewise for $Y$).  
For the unit map $\eta$, we have
$$\Gamma_{\phi_Y}^! \eta: \Gamma_{\phi_Y}^! \Delta_{Y*} \omega_Y \longrightarrow \Gamma_{\phi_Y}^! \Delta_{Y*} f_*(\omega_X \otimes_{\cO_X} \omega_{X/Y}^{-1}).$$
We perform a base change along the diagram:
$$\begin{tikzcd}
\cL_\phi Y_X \arrow[r, "\pi"] \arrow[d, "p_X"] & \cL_\phi Y \arrow[r, "{\mathrm{ev}_Y}"] \arrow[d,"{\mathrm{ev}_Y}"] & Y \arrow[d, "\Gamma_{\phi_Y}"] \\
X \arrow[r, "f"] & Y \arrow[r, "\Delta_Y"] & Y \times Y.
\end{tikzcd}$$
to find 
$$\Gamma_{\phi_Y}^! \Delta_{Y*} f_*(\omega_X \otimes_{\cO_X} \omega_{X/Y}^{-1}) \simeq p_{Y*} p_X^! (\omega_X \otimes_{\cO_X} \omega_{X/Y}^{-1}) \simeq p_{Y*}(\omega_{\cL_\phi Y_X} \otimes_{\cO_{\cL_\phi Y_X}} p_X^*\omega_{X/Y}^{-1})$$
$$ \simeq p_{Y*}(\omega_{\cL_\phi Y_X} \otimes_{\cO_{\cL_\phi Y_X}} \omega_{\cL_\phi Y_X/\cL_\phi Y}^{-1}) \simeq \mathrm{ev}_{Y*} \pi_*\pi^* \omega_{\cL_\phi Y}$$
and an identification of $\eta$ with the unit $\eta_\pi$ for the adjunction $(\pi^*, \pi_*)$:
$$\eta \simeq \mathrm{ev}_{Y*}(\eta_\pi(\omega_{\cL_\phi Y})): \mathrm{ev}_{Y*} \omega_{\cL_\phi Y} \longrightarrow \mathrm{ev}_{Y*} \pi_* \pi^* \omega_{\cL_\phi Y}.$$
For the counit map $\epsilon$, we have
$$\Gamma_{\phi_X}^! \epsilon: \Gamma_{\phi_X}^! i_* (\omega_{Z} \otimes_{\cO_{Z}} \omega_{Z/X}^{-1}) \longrightarrow \Gamma_{\phi_X}^!\Delta_{X*} \omega_X.$$
We perform a base change along the diagram:
$$\begin{tikzcd}
\cL_\phi X \arrow[d, "{\mathrm{ev}_X}"] \arrow[r, "\delta"] & \cL_\phi Y_X \arrow[d, "s"] \arrow[r, "p_X"] & X \arrow[d, "\Gamma_{\phi_X}"] \\
X \arrow[r, "\Delta"] &Z\arrow[r, "i"] & X \times X.
\end{tikzcd}$$
to find that
$$\Gamma_{\phi_X}^! i_*(\omega_Z \otimes_{\cO_Z} \omega_{Z/X}^{-1}) \simeq p_{X*} s^!(\omega_Z \otimes_{\cO_Z} \omega_{Z/X}^{-1}) \simeq p_{X*}(\omega_{\cL_\phi Y_X} \otimes_{\cO_{\cL_\phi Y_X}} s^* \omega^{-1}_{Z/X})$$
$$\simeq p_{X*}(\omega_{\cL_\phi Y_X} \otimes_{\cO_{\cL_\phi Y_X}} \omega_{\cL_\phi Y_X/\cL_\phi Y}^{-1}) \simeq p_{X*} \delta^* \omega_{\cL_\phi Y}.$$
Due to the canonical Calabi-Yau equivalence $\omega_{\cL_\phi X/\cL_\phi Y} \simeq \cO_{\cL_\phi X}$ of Proposition \ref{calabi yau}, we have a canonical equivalence $\omega_{\cL_\phi X/ \cL_\phi Y_X} \simeq \delta^*\omega_{\cL_\phi Y_X/\cL_\phi Y}^{-1}$.  Passing through this equivalence, we have
$$\Gamma_{\phi_X}^! i_* \Delta_* \omega_X \simeq p_{X*} s^! \Delta_* \omega_X \simeq p_{X*} \delta_* \delta^! \omega_{\cL_\phi Y_X} \simeq p_{X*} \delta_*( \delta^* \omega_{\cL_\phi Y_X} \otimes_{\cO_{\cL_\phi X}} \omega_{\cL_\phi X/\cL_\phi Y_X})$$
$$\simeq p_{X*} \delta_*\delta^*(\omega_{\cL_\phi Y_X} \otimes_{\cO_{\cL_\phi Y_X}} \omega_{\cL_\phi Y_X/\cL_\phi Y}^{-1}).$$
Thus, $\epsilon$ is identified with the unit $\eta_\delta$ for the adjunction $(\delta^*, \delta_*)$:
$$\epsilon \simeq p_{X*}(\eta_\delta(\omega_{\cL_\phi X/\cL_\phi Y_X} \otimes_{\cO_{\cL_\phi Y_X}} \omega_{\cL_\phi Y_X/\cL_\phi Y}^{-1})): p_{X*} \pi^* \omega_{\cL_\phi Y} \rightarrow \mathrm{ev}_{X*} \omega_{\cL_\phi X}.$$
Taking global sections and composing, we see that the map 
$$\omega(\cL_\phi Y) \rightarrow \Gamma(\cL_\phi Y_X, \omega_{\cL_\phi Y_X} \otimes \omega^{-1}_{\cL_\phi Y_X/\cL_\phi Y}) \simeq \Gamma(\cL_\phi Y_X, \omega_{\cL_\phi Y_X} \otimes \delta_*\omega_{\cL_\phi X/\cL_\phi Y_X}) \rightarrow \omega(\cL_\phi X)$$ is induced by the unit of the adjunction $(\cL_\phi f^*, \cL_\phi f_*)$, twisted by the Calabi-Yau equivalence.
\end{proof}

The following is a generalization of Proposition \ref{conv volume}.  While Proposition \ref{conv volume} is stated in the setting of derived loop spaces, the arguments hold in the following more general setting.
\begin{prop}\label{app conv volume}
Let $f: X \rightarrow Y$ be a proper map of derived stacks, and let $Z = X \times_Y X$ with projections $p_1, p_2: Z \rightarrow X$ and $p: Z \rightarrow Y$.  There is a canonical equivalence:
$$\zeta_f: p_*\intHom_Z(\cO_Z, \omega_Z) \simeq \intHom_Y(f_*\cO_X, f_*\omega_X).$$
In particular, if $X$ is Calabi-Yau, then we have a natural equivalence $\omega(Z) \simeq \End_Y(f_*\omega_X)$.  This equivalence is functorial in the following sense.  Let $f': X' \rightarrow Y'$ (and $p': Z' \rightarrow Y'$) be as above.
\begin{itemize} 
\item Suppose that $\alpha_Y: Y \rightarrow Y'$ is proper, and that $X = X'$.  We let $f: X \rightarrow Y$ be as above, $f' = \alpha_Y \circ f: X \rightarrow Y \rightarrow Y'$.  We have commuting squares
$$\begin{tikzcd}
\alpha_{Y*} p_*\intHom_Z(\cO_Z, \omega_Z) \arrow[r, "\simeq"', "\alpha_{Y*} (\zeta_f)"] \arrow[d, "{\mathrm{Def. \ref{HH decat functoriality}}}"'] & \alpha_{Y*} \intHom_Y(f_*\cO_X, f_*\omega_X) \arrow[d, "{\mathrm{Def. \ref{springer functorial}}}"] \\
p'_*\intHom_{Z'}(\cO_{Z'}, \omega_{Z'}) \arrow[r, "\simeq"', "\zeta_{f'}"] & \intHom_{Y'}(f_*\cO_{X'}, f_*\omega_{X'}).
\end{tikzcd}$$
\item Suppose that $\alpha_Y: Y \rightarrow Y'$ is Calabi-Yau, and that $X = X' \times_{Y'} Y$ (so $\alpha_X$ is also Calabi-Yau).  Then we have commuting squares
$$\begin{tikzcd}
p'_*\intHom_{Z'}(\cO_{Z'}, \omega_{Z'})\arrow[d, "{\mathrm{Def. \ref{HH decat functoriality}}}"'] \arrow[r, "\simeq"', "\zeta_{f'}"] & \intHom_{Y'}(f_*\cO_{X'}, f_*\omega_{X'}) \arrow[d, "{\mathrm{Def. \ref{springer functorial}}}"] \\
\alpha_{Y*} p_*\intHom_Z(\cO_Z, \omega_Z) \arrow[r, "\simeq"', "\alpha_{Y*} (\zeta_f)"]  & \alpha_{Y*} \intHom_Y(f_*\cO_X, f_*\omega_X).
\end{tikzcd}$$
\end{itemize}
\end{prop}
\begin{proof}
The first statement is a formal consequence of adjunctions and base change:
$$p_* \intHom_Z(\cO_Z, \omega_Z) \simeq f_*\intHom_X(\cO_X, p_{1*}\omega_Z) \simeq f_* \intHom_X(\cO_X, f^!f_*\omega_X) \simeq \intHom_Y(f_*\cO_X, f_*\omega_X).$$
Functoriality for proper morphisms follows by a diagram chase on:
$$\begin{tikzcd}
\alpha_{Y*} p_* \intHom_Z(\cO_Z, \omega_Z) \arrow[d, "\simeq"] \arrow[r] & p'_* \intHom_{Z'}(\cO_{Z'}, \omega_{Z'})  \arrow[d, "\simeq"] \\
f'_*\intHom_X(\cO_X, p_{1*}\omega_Z) \arrow[d, "\simeq"] \arrow[r] &  f'_*\intHom_{X}(\cO_{X}, p'_{1*}\omega_{Z'}) \arrow[d, "\simeq"]  \\
f'_* \intHom_X(\cO_X, f^!f_*\omega_X)  \arrow[d, "\simeq"] \arrow[r] &   f'_* \intHom_{X}(\cO_{X}, f'^!f'_*\omega_{X})\arrow[d, "\simeq"] \\
   \alpha_{Y*} \intHom_Y(f_*\cO_X, f_*\omega_X) \arrow[r] & \alpha_{Y*} \intHom_{Y'}(f'_*\cO_{X}, f'_*\omega_{X})
\end{tikzcd}$$
where we use the identification in the middle left terms $\alpha_{Y*} f_* \simeq f'_*\alpha_{X*} \simeq f'_*$ (i.e. since $X=X'$ and $\alpha_X = \mathrm{id}_X$), and the middle horizontal maps are given by functoriality of pushforwards of dualizing sheaves.  In the Calabi-Yau case, we pass to left adjoints, apply the base change $\alpha^*f'_* \simeq f_*\alpha^*$ and chase the diagram:
$$\begin{tikzcd}
p_* \alpha_{Z}^* \intHom_{Z'}(\cO_{Z'}, \omega_{Z'}) \arrow[r] \arrow[d, "\simeq"] &p_* \intHom_Z(\cO_Z, \omega_Z) \arrow[d, "\simeq"] \\
f_*\alpha_{X}^*\intHom_{X'}(\cO_{X'}, p'_{1*}\omega_{Z'})\arrow[d, "\simeq"]  \arrow[r] &f_*\intHom_X(\cO_X, p_{1*}\omega_Z) \arrow[d, "\simeq"] \\
f_*\alpha_{X}^* \intHom_{X'}(\cO_{X'}, f'^!f'_*\omega_{X'})\arrow[r] \arrow[d, "\simeq"] &f_* \intHom_{X'}(\cO_X, f^!f_*\omega_X) \arrow[d, "\simeq"] \\
\alpha_{Y}^*\intHom_{Y'}(f'_*\cO_{X'}, f'_*\omega_{X'}) \arrow[r] &\intHom_Y(f_*\cO_X, f_*\omega_X)
\end{tikzcd}$$
where the middle arrows arise by functoriality of Calabi-Yau pullback (as in Definition \ref{HH decat functoriality}) after passing to left adjoints.
\end{proof}

\subsection{Horizontal trace of convolution categories}\label{app horiz}

\begin{proof}[Proof of Theorem \ref{convolution category trace}]
We will employ the notation in Theorem 3.3.1 of \cite{BNP} to point out how its argument can be modified to acommodate this more general setting.  First, note that the surjectivity condition is not needed nor used in the proof of the theorem; it is subsumed by the singular support condition, so we omit it from the statement.  The quasi-smoothness of $q_n$ follows by quasi-smoothness of the graph $\Gamma_\phi$. 
We replace, in the definition of $\cC_\bullet$, the diagonal module $\Perf(X)$ with the module defined by the graph $\Gamma_\phi$.  In the definition of $Z_\bullet$, this amounts to replacing $\cL Y$ with $Y^\phi$ (informally, introducing a twist by $\phi$ as we ``come around the circle,'' i.e. in Lemma 3.3.2 the automorphism $\ell$ lives in $\Map_{Y(k)}(y, \phi(y))$).  In the definition of $W_\bullet$, this amounts to replacing the last factor of $X \times_Y X = X \times_{f,Y,f} X$ representing the ``segment containing the twist by $\phi$'' with $X \times_{f,Y,\phi_X \circ f} X$ (i.e. in Lemma 3.3.3, the final point $x_n$ should lie in the fiber $f^{-1}(\phi(y))$ rather than $f^{-1}(y)$).  The rest of the proof goes through without modification as the formulas still hold with the $\phi$-twist.
\end{proof}


\begin{proof}[Proof of Proposition \ref{conv comp}]
The argument in Theorem 3.3.1 of \cite{BNP} may be adapted in the following way. Let $\cat{M} = \IndCoh(Z_{12})$ and $\cat{N} = \IndCoh(Z_{23})$, and following the notation of \emph{loc. cit.} we let $\cat{A} = \IndCoh(Z_{22})$ and $\cat{B} = \IndCoh(X_2)$.  Then, writing $\cat{M} \otimes_{\cat{A}} \cat{N} = \cat{M} \otimes_{\cat{A}} \cat{A} \otimes_{\cat{A}} \cat{N},$ and (following the argument of \emph{loc. cit.}) resolving $\cat{A}$ as a $\cat{A} \otimes_{\cat{B}} \cat{A}^{rv}$-module via the relative bar complex for $\cat{A}$ over $\cat{B}$, we find that $\cat{M} \otimes_{\cat{A}} \cat{N}$ can be realized as the geometric realization of the cosimplicial object:
$$\cat{M} \otimes_{\cat{A}} \cat{N} = \colim(\IndCoh_{\Lambda_n}(Z_n))$$
where we define
$$q_n: Z_n := X_1 \utimes{Y} \overbrace{X_2 \utimes{Y} \cdots \utimes{Y} X_2}^{n+1} \utimes{Y} X_3 \longrightarrow  W_n := Z_{12} \times Z_{22}^{n} \times Z_{23},$$
$$ \Lambda_n = q_n^!(\Lambda_{12} \boxtimes \overbrace{\TT_{Z_{22}} \boxtimes \cdots \boxtimes \TT_{Z_{22}}}^{n} \boxtimes \,\Lambda_{23}).$$
Explicitly, for $\eta = (x_1, x_2^{(0)}, \ldots, x_2^{(n)}, x_{3}) \in Z_n(k)$ with each coordinate living in the fiber over $y \in Y(k)$, we have
$$\TT_{Z_n} = \{(\omega_{12}, \omega_{22}^{(01)}, \ldots, \omega_{22}^{(n-1,n)}, \omega_{23}) \in \bT^*_{Y,y} \mid df_1^* \omega_{12} = 0, df_3^* \omega_{23} = 0, df_2^* \omega_{22}^{(i,j)} = df_2^*\omega_{22}^{(i',j')} \},$$
$$\Lambda_n =  \{(\omega_{12}, \omega_{22}^{(01)}, \ldots, \omega_{22}^{(n-1,n)}, \omega_{23}) \in \bT^*_{Y,y} \mid \omega_{12} \in \Lambda_{12, \eta}, \omega_{23} \in \Lambda_{23, \eta}, df_2^* \omega_{22}^{(i,j)} = 0 \}.$$
Here, we note that the fiber of the singular support condition $\Lambda_{ij}$ at the point $(x_i, x_j) \in Z_{ij}(k)$  in the fiber over $y$ is naturally a subset $\Lambda_{ij, (x_i, x_j)} \subset \bT^*_{Y,y}$.  The singular support stability condition implies that the face maps $(Z_m, \Lambda_m) \rightarrow (Z_n, \Lambda_n)$ are maps of pairs.  Pullback along the augmentation is conservative by definition of $\Lambda_{13}$.  Analogous formulas in Lemma 3.3.9 of \emph{op. cit.} hold in this situation (without the need to ``loop around''), and the strictness condition follows by an argument analogous to Proposition 3.3.8 of \emph{op. cit.}  Thus, we have an equivalence
$$\IndCoh_{\Lambda_{13}}(Z_{13}) \simeq \Tot(\IndCoh_{\Lambda_n}(Z_n)).$$

For functoriality, we note that the resulting maps $(Z_n, \Lambda_n) \rightarrow (Z_n, \Lambda'_n)$ are maps of pairs by our description above for $n \geq 0$, and the case $n=-1$ is a straightforward verification.  The claim then follows by functoriality of the descent with support discussed in Section 2.4 of \cite{BNP}.  We adopt the notation of \emph{loc. cit.}: 
let $(X_\bullet, \Lambda_\bullet) \rightarrow (X_{-1}, \Lambda_{-1})$ and $(Y_\bullet, \Theta_\bullet) \rightarrow (Y_{-1}, \Theta_{-1})$ be augmented simplicial diagrams of maps of pairs satisfying the descent conditions of Theorem 2.4.1 and Corollary 2.4.2 of \cite{BNP}, and let $g_\bullet: (X_\bullet, \Lambda_\bullet) \rightarrow (Y_\bullet, \Theta_\bullet)$ be a level-wise proper map of augmented simplicial diagrams of pairs.  We claim that we have a limit $\Tot(\mf{g}_\bullet^!) \simeq \mathfrak{g}_{-1}^!$ and a colimit $\mathrm{Real}(\mf{g}_{\bullet *}) \simeq \mathfrak{g}_{-1*}$, which proves the functoriality claims (i.e. since the maps $\mf{g}_\bullet$ are the identity, the functors $\mf{g}_{\bullet *}$ are the inclusion functors and $\mf{g}_{\bullet}^!$ are the local cohomology functors).  The first statement follows by commutativity of $!$-pullbacks with supports (see Remark 2.3.3 of \cite{BNP}) and by universal property of the limit.  The second statement follows by passing to left adjoints (as in Corollary 2.4.2 of \emph{op. cit.}).
\end{proof}

\begin{proof}[Proof of Proposition \ref{qcoh trace}]
Consider the functors
$$T(-) := - \otimes_{\QCoh(k)} \QCoh(X): \cat{dgCat}_k \rightarrow \QCoh(Y)\mh\cat{mod},$$
$$T^R(-) := - \otimes_{\QCoh(Y)} \QCoh(X): \QCoh(Y)\mh\cat{mod} \rightarrow \cat{dgCat}_k.$$
We claim that $(T, T^R)$ are adjoint.  Let $\Delta_X: X \rightarrow X \times X$ denote the diagonal, $p: X \rightarrow \pt$ denote the structure map, and $\Delta_{X/Y}: X \rightarrow X \times_Y X$ the relative diagonal.  We define the unit $\eta: \mathrm{id}_{\cat{dgCat}_k} \rightarrow T^R \circ T$ via the functor $\Delta_{X/Y*}p^*: \QCoh(\pt) \rightarrow \QCoh(X \times_Y X)$ and the counit $\epsilon: T \circ T^R \rightarrow \mathrm{id}_{\QCoh(Y)\mh\cat{mod}}$ by the functor $f_*\Delta_X^*: \QCoh(X \times X) \rightarrow \QCoh(Y).$    Verification of the adjunction axioms is a straightforward application of base change and Theorem 4.7 of \cite{BFN}.  To compute the trace, we apply base change and find that $[\QCoh(X), \phi_{X*}]$ is the pull-push of $k \in \QCoh(\pt)$ along the diagram (where $\Delta_Y: Y \rightarrow Y \times Y$ is the diagonal):
$$\begin{tikzcd}[column sep=0.5em]
& & X \times_Z \cL_\phi Y_X \simeq \cL_\phi  X \arrow[dl] \arrow[dr] & & \\
& X \arrow[dl, "p"'] \arrow[dr, "\Delta_{X/Y}"] & & \cL_\phi Y_X\simeq X \utimes{(f, f \circ \phi_X), Y \times Y, \Delta_Y} Y \arrow[dl, "\Gamma_\phi \times \mathrm{id}_Y"'] \arrow[dr, "f \times \mathrm{id}_Y"] & \\
\pt & & Z = X \times_Y X = (X \times X) \utimes{(f, f), Y \times Y, \Delta_Y} Y & & \cL_\phi  Y,
\end{tikzcd}$$
i.e. $[\QCoh(X), \phi_{X*}] \simeq \cL_\phi f_* \cO_{\cL_\phi X}$.
\end{proof}

\end{document}